\newtheorem{theoremABC}{Theorem}
\numberwithin{equation}{section}
\newtheorem{theorem}{Theorem}[section]
\newtheorem*{theorem*}{Theorem}
\newtheorem{corollary}[theorem]{Corollary}
\newtheorem{lemma}[theorem]{Lemma}
\newtheorem{proposition}[theorem]{Proposition}
\theoremstyle{definition}
\newtheorem{definition}[theorem]{Definition}
\newtheorem*{remark*}{Remark}
\newcommand{\ep}{\epsilon}
\newcommand{\bbi}{{{\bf i}}}
\newcommand{\bbj}{{{\bf j}}}
\newcommand{\bbk}{{{\bf k}}}
\newcommand{\ZZ}{{\mathbb Z}}
\newcommand{\RR}{{\mathbb R}}
\newcommand{\WNb}{{\widetilde{\mathcal{N}}}}
\newcommand{\Nb}{{\mathcal{N}}}
\newcommand{\rin}{{\operatorname{1}}}
\newcommand{\rout}{{\operatorname{0}}}
\newcommand{\RN}[1]{%
  \textup{\uppercase\expandafter{\romannumeral#1}}%
}
\newcommand{\nat}{\natural}
\DeclareMathOperator{\Real}{Re}
\DeclareMathOperator{\Ima}{Im}
\DeclareMathOperator{\Hom}{Hom}
\DeclareMathOperator{\rank}{rank}
 \renewcommand{\qed}{\hfill$\square$}
\newcommand{\ba}{\bar{a}}
\newcommand{\bb}{\bar{b}}
\newcommand{\bc}{\bar{c}}
\newcommand{\be}{\bar{e}}
\newcommand{\bff}{\bar{f}}
\newcommand{\bq}{\bar{q}}
\newcommand{\bp}{\bar{p}}
\newcommand{\bh}{\bar{h}}
\newcommand{\bg}{\bar{g}}
\newcommand{\tracelessTwoSphere}{{\mathbb{S}_\bbi}} 
\newcommand{\FPS}{{(S^2, 4)}} 
\newcommand{\preScaledNatural}[2]{\raisebox{1pt}{\scalebox{2.3}[1]{$#1\natural$}}}
\newcommand{\scaledNatural}{{\mathpalette\preScaledNatural\relax}}
\DeclareMathOperator{\NAT}{{\scaledNatural}}
\newcommand{\tNAT}{\widetilde{\scaledNatural}}
\newcommand{\hNAT}{\widehat{\scaledNatural}}
\DeclareMathOperator{\Lag}{\mathrm{Lag}}
\author{Christopher M. Herald}
\address{Department of Mathematics and Statistics, University of Nevada,  Reno, NV 89557} 
\email{herald@unr.edu}
 \author{Paul Kirk}
\address{Department of Mathematics, Indiana University, Bloomington, IN 47405} 
\email{pkirk@indiana.edu}
 \date{\today}
\thanks{CH is supported by a Simons Collaboration Grant for Mathematicians. PK acknowledges support from the Fourier Institute in Grenoble and the Max Planck Institute in Bonn, where this work was carried out. }
\subjclass[2010]{Primary 57K18, 57K31, 57R58; Secondary 81T13} 
\keywords{Pillowcase, holonomy perturbation, flat moduli space, traceless character variety,  Lagrangian correspondence}
\begin{document}

\title{An endomorphism on immersed curves in the pillowcase}

\begin{abstract} 
We examine the holonomy-perturbed traceless $SU(2)$ character variety of the trivial four-stranded tangle $\{p_1,p_2,p_3,p_4\} \times [0,1]$ in $S^2 \times [0,1]$ equipped  with a strong marking, either an {\em earring} or a {\em bypass}.  Viewing these marked tangles as endomorphisms in the cobordism category from the four-punctured sphere to itself, we identify the images of these endomorphisms in the Weinstein symplectic partial category under the partially defined holonomy-perturbed traceless character variety functor. We express these endomorphisms on immersed curves in the pillowcase in terms of doubling and figure eight operations
and prove they have the same image.

\end{abstract}

\maketitle


\section{Introduction}\label{intro}

The operation of introducing a particular {\em strong marking} (in the sense of \cite[Section 2.4]{KMweb}), which we called the {\em earring},  on  the product tangle 
  $(S^2\times I,\{p_1,p_2,p_3,p_4\}\times I)$ induces an endomorphim on the set of immersed curves in the traceless $SU(2)$ character variety   of a four-punctured two-sphere. The main result in this article completes the description, begun in \cite{CHKK}, of   this endomorphism. We also extend the result to another strong marking which we call the {\em bypass}.  These two markings are  illustrated on the left and right in  Figure \ref{atomfig}.
  Such markings are used in instanton gauge theory to ensure that 
  critical points of the Chern-Simons function are separated from  the orbit singularities in 
  its domain.

We interpret the operation of introducing a marking on a link or tangle in a 3-manifold as  a surgery operation. First, choose a {\em Conway sphere}  (an embedded 2-sphere meeting  the link of tangle  transversely in four points, denoted in this article by $(S^2,4)$).  Then, remove a tubular neighborhood of the Conway sphere (such a neighborhood is illustrated in the middle in Figure \ref{atomfig2}). Finally,  glue in either  the {\em earring tangle} or the {\em  bypass tangle}  illustrated in Figure \ref{atomfig2} (these tangles  have trivalent vertices).

 From a TQFT perspective, this semi-local operation replaces the product tangle, which represents the identity endomorphism of the Conway sphere  in the cobordism category, by the 
 earring or bypass tangle.  Our main results identify the image of this morphism in the Weinstein symplectic partial category \cite{weinstein} under the (partially defined) {\em holonomy perturbed traceless character variety}  functor.

 \medskip

  The traceless $SU(2)$ character variety $P$ of  the Conway sphere  is a {\em pillowcase}, analytically isomorphic to  the quotient of a torus by its elliptic involution. It is homeomorphic to a  two-sphere, with four distinguished orbifold points (called {\em corners}) which correspond to the four fixed points of the elliptic involution.  The complement of these four points is a smooth open variety which we denote by $P^*$; it is endowed with its Atiyah-Bott-Goldman symplectic form.

  The boundary of the earring (resp. bypass) tangle is a disjoint union of two Conway spheres, with character variety a {\em product} $P_0\times P_1$ of two pillowcases.   In the earlier article \cite{CHKK}, the $s$-holonomy perturbed traceless character variety of the  earring tangle (denoted  $\NAT_s$)   is identified, for small non-zero $s$,  as a smooth genus three surface. Furthermore,  the restriction map $$u_s:\NAT_s\to P_0^*\times P_1^*,$$
 a Lagrangian immersion, is   determined {\em up to homology}.  The tools used to prove the  results of \cite{CHKK}  are insufficient to determine what endomorphism is induced on the set of  regular homotopy classes of immersed curves in the pillowcase  by  Weinstein composition  \cite{weinstein} with $u_s$.

 This article provide a more precise geometric description of the Lagrangian immersion $u_s$, sufficient to determine its action  on regular homotopy classes of immersed curves in the pillowcase.  We also identify  the corresponding Lagrangian immersion $u_s'$ for the bypass tangle and prove that the endomorphisms induced  by $u_s$ and $u_s'$ are equivalent.
\medskip

Our main results  are the following. Theorem \ref{thm6.1} states:  
 
\begin{theoremABC}\label{thmA}  For small nonzero perturbation parameter $s$, the restriction-to-the-boundary map from the holonomy perturbed traceless character variety $\NAT_s$ (resp.~$\NAT_s'$), a closed genus three surface, to a product of pillowcases
$$u_s:\NAT_s \to P_0^{*}\times P_1^*$$  is a Lagrangian bifold immersion into the smooth top stratum with respect to the Atiyah-Bott-Goldman symplectic structure. The  compositions ${\rm proj}_0\circ u_s$ and ${\rm proj}_1\circ u_s$  have the same critical set, a disjoint union of four circles which separate $\NAT_s$ into two four-punctured two-spheres. Under these maps to the factors, the four critical circles smoothly embed and  encircle  the four corners of $P_0$ and $P_1$ and each complementary four-punctured two-sphere in $\NAT_s$ embeds into  $P_0$ and $P_1$. The identical statement holds for $u_s':\NAT_s'\to P_0^*\times P_1^*$.
\end{theoremABC}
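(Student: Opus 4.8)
The plan is to replace the homological information of \cite{CHKK} with an explicit real--algebraic model of $\NAT_s$ and then study the two coordinate projections of $u_s$ directly. A point of $\NAT_s$ is the conjugacy class of a representation $\rho$ of the fundamental group of the perturbed earring--tangle complement; this group is generated by meridians $x_1^{(0)},\dots,x_4^{(0)}$ and $x_1^{(1)},\dots,x_4^{(1)}$ of the strands meeting the two Conway spheres, each constrained to be traceless, together with a generator $e$ for the earring hoop, subject to the relations at the two trivalent vertices and the $s$--dependent holonomy--perturbation relation; by \cite{CHKK} this system cuts out a smooth closed genus--three surface for small $s\neq 0$. I would choose a conjugation slice adapted to the incoming pillowcase (normalizing the pair $(\rho(x_1^{(0)}),\rho(x_2^{(0)}))$), solve the vertex and perturbation relations, and so express $\NAT_s$ in terms of a small set of angle coordinates, with $r_0 := \mathrm{proj}_0\circ u_s$ and $r_1 := \mathrm{proj}_1\circ u_s$ given by closed formulas; essentially everything in the statement should then be read off this model.

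The heart of the argument is to compute the differentials $dr_0$ and $dr_1$ and to locate where they drop rank. I expect the critical locus to be cut out by a single real equation -- plausibly the vanishing of a bending angle, equivalently the centrality of the earring--hoop holonomy -- and, crucially, by the \emph{same} equation for both projections, so that $C_0 = C_1 =: C$; in the angle coordinates this locus should be manifestly a disjoint union of four circles, one lying over each corner of the pillowcase. I would then check two transversality points: that along $C$ the kernel of $dr_j$ is transverse to $C$, so $r_j$ has a genuine fold there, and that the kernel lines $\ker dr_0$ and $\ker dr_1$ are everywhere distinct along $C$ -- the latter being exactly what guarantees that $u_s = (r_0,r_1)$ is an immersion, with image in the smooth top stratum $P_0^{*}\times P_1^{*}$, hence a Lagrangian bifold immersion there (the Lagrangian property itself being the general fact that the restriction map from the holonomy--perturbed traceless character variety of a compact tangle to that of its boundary is isotropic for the Atiyah--Bott--Goldman structure wherever it meets the smooth stratum, together with the dimension count $\dim\NAT_s = 2 = \tfrac12\dim(P_0^{*}\times P_1^{*})$).

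The topological conclusions then follow by cutting along $C$. Since $C$ is a union of four disjoint circles, $\NAT_s\setminus C$ has at most four components, and I would show from the explicit model that there are exactly two, $N^{+}$ and $N^{-}$, each a four--holed sphere -- consistent with the standard fact that gluing two four--holed spheres along all four boundary circles yields a genus--three surface ($0+0+4-1=3$), and with $\chi(\NAT_s) = -4$. From the formulas I would then show that $r_j$ restricted to the closure of each $N^{\pm}$ is injective, hence an embedding whose image is $P_j$ with four small disks about its corners removed, under which the four circles of $C$ map to embedded loops each encircling one corner exactly once. Informally, each projection presents $\NAT_s$ as two copies of the corner--punctured pillowcase folded together along the four corner circles; this is the geometric origin of the ``doubling'' description of the endomorphism.

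For the bypass tangle the fundamental--group presentation and the perturbation relation have the same shape, with the bypass vertex relations in place of the earring ones, so the whole chain -- slice, reduction to angle coordinates, identification of the fold locus, cutting, embedding -- runs in parallel and yields the identical statement for $u_s'$. The part I expect to be genuinely delicate, rather than bookkeeping, is twofold: reducing the $SU(2)$ relations to a usable normal form and confirming that the fold locus is \emph{exactly} four circles with the stated fold behavior -- this is precisely the degeneracy the holonomy perturbation is introduced to resolve, so the analysis is inherently $s$--dependent and must be controlled uniformly for all small nonzero $s$ -- and establishing genuine, not merely local, injectivity of $r_j$ on each of the two pieces while ruling out any additional critical points.
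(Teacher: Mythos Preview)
Your outline matches the paper's overall architecture---partial gauge fixing to angle coordinates on $T\times\tracelessTwoSphere$, asymptotic analysis in $s$ to locate the fold locus of $\pi_0$, then topology of the complement---but you are missing the paper's two main shortcuts, and one of your guesses is wrong.

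First, you propose to compute $r_1=\pi_1$ directly from closed formulas and then match its critical equation against that of $r_0$. The paper avoids this entirely. It observes that the tangle carries an orientation-reversing involution $U$ exchanging the two boundary Conway spheres, inducing an involution $U_s$ on $\NAT_s$ with $\pi_1=\Psi\circ\Theta\circ\pi_0\circ U_s$ for explicit symplectomorphisms $\Psi,\Theta$ of pillowcases. Thus all fold analysis for $\pi_1$ is inherited from $\pi_0$ via a diffeomorphism; no separate computation of $r_1$ is needed. Computing $\pi_1$ directly is genuinely unpleasant, since $c$ and $d$ involve the perturbation variables $p,q$ nontrivially.

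Second, the equality $C_0=C_1$ is not obtained by matching defining equations. The paper quotes a general theorem (from \cite{CHK}) that restriction to the boundary is a Lagrangian immersion for any holonomy-perturbed traceless character variety, and then a short linear-algebra lemma (Lemma~\ref{folding}) shows that for \emph{any} Lagrangian immersion $F\to F_0^-\times F_1$ of surfaces the two factor differentials have equal rank pointwise, with transverse kernels where the rank is one. So $C_0=C_1$ and the bifold property follow formally once you know $u_s$ is a Lagrangian immersion; you have the logical dependence reversed.

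Third, your speculation that the critical locus is ``centrality of the earring-hoop holonomy'' is incorrect: the $w_2$ condition forces $\rho(w)=-1$ always, and the other earring meridian $h$ is always traceless, so neither is ever central. At $s=0$ the critical set of $\pi_0$ is the preimage of the four corners of $P_0$ (where the restriction to $\Pi^0$ becomes abelian), a union of four circles parameterized by the free angle $\tau$ of $h$; the delicate part---which you correctly flag---is the $s$-asymptotic analysis (Lemmas~\ref{estimate2} and~\ref{fuglylemma}) showing that for small $s\neq 0$ these circles persist as embedded fold circles whose images encircle the corners.
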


In this theorem, a {\em Lagrangian bifold} refers to a Lagrangian immersion of a surface into a product of two symplectic surfaces whose projection to each factor has only fold singularities. We give a precise definition of this notion, which may be of  independent interest,   in Section \ref{bifolds}.   

\medskip

In preparation for the statement of our second main result, let $\Lag(P^*)$  denote  the set of good immersed curves in   $P^*$.   Also,  $[\Lag(P^*)]$ denotes the set of regular homotopy classes and $[g]$ denotes the regular homotopy class of $g\in \Lag(P^*)$.   
 These notions are given precise definitions in Section \ref{LaginP}.
 
In Section \ref{operations}, we define the following operations on regular homotopy classes. First,
 $D([g])$  refers to the  {\em double} of   $[g]\in[\Lag (P^*)] $    (Definition  \ref{double}).  If $g$ is a good immersion of an interval, $F8([g])$ denotes the regular homotopy class of the {\em Figure Eight curve  supported by}  $g$   (Definition \ref{deffig8} and Figure \ref{Fig5fig}).  The symplectomorphism $\Psi:P_0^*\to P_1^*$, defined in 
Equation (\ref{Psi}),  is the  induced by the cylindrical identification of the two boundary components of the tangle.

\begin{theoremABC}  \label{thm7.1}   The Lagrangian immersions $u_s:\NAT_s\to P_0^*\times P_1^*$
induce a well defined function 
 $$
\lim_{s\to 0}(u_s)_*:[\Lag(P_0^*)] \to [\Lag (P_1^*)],
$$
determined  by the following two assertions:
\begin{enumerate}
\item If  $g:J\to P_0^*$ is a good immersion of an open interval, then $$\lim_{s\to 0}(u_s)_*([g])=\Psi(F8([g])).$$
\item If  $g:S^1\to P_0^*$ is an immersion, then 
$$\lim_{s\to 0}(u_s)_*([g])=\Psi(D([g])).$$
\end{enumerate}

The identical statement holds for $\NAT_s'$.
\end{theoremABC}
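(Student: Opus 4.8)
The plan is to read the geometric (Weinstein) composition directly off the bifold description of $u_s$ supplied by Theorem \ref{thmA}. Write $f_0=\mathrm{proj}_0\circ u_s$ and $f_1=\mathrm{proj}_1\circ u_s$. By Theorem \ref{thmA}, $\NAT_s=\Sigma^+\cup\Sigma^-$, the two four-punctured two-spheres being glued along the common critical set $\{c_1,\dots,c_4\}$ of $f_0$ and $f_1$; both $f_0$ and $f_1$ fold along each $c_i$; and $f_0$ (resp.\ $f_1$) restricts on each $\Sigma^{\pm}$ to an embedding onto the complement in $P_0$ (resp.\ $P_1$) of four disks $D_i^0$ (resp.\ $D_i^1$), one around each corner, with $f_0(c_i)=\partial D_i^0$ and $f_1(c_i)=\partial D_i^1$. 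The geometric composition of the correspondence $u_s$ with an immersed curve $g\colon\gamma\to P_0^*$ is, whenever $g$ is transverse to $f_0$, the immersed curve $f_1\bigl(f_0^{-1}(g(\gamma))\bigr)$ in $P_1^*$, so the whole problem is to understand $f_0^{-1}$ of a curve and then its $f_1$-image. I also record one fact that should be extractable from the construction behind Theorem \ref{thmA}: since $u_s$ coincides with the correspondence $\mathrm{graph}(\Psi)$ of the unmarked product tangle away from the perturbation region, each sheet map $f_1\circ(f_0|_{\Sigma^{\pm}})^{-1}$ is regularly isotopic to $\Psi$ on its domain.

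First I would settle transversality and well-definedness. For a fixed $g$ and all small $s$ the disks $D_i^0,D_i^1$ shrink into arbitrarily small neighborhoods of the corners, so after a regular homotopy of $g$ supported near the corners we may take $g$ transverse to $f_0$: in case (2) with $g(S^1)$ disjoint from $\bigcup_i D_i^0$, and in case (1) with $g$ crossing transversally, in one point, each $\partial D_i^0$ that it approaches. Since the bifold $u_s$ varies smoothly with $s$ for small $s$, varying $s$ gives a regular homotopy of the composition; together with the standard invariance of the geometric composition (as a regular homotopy class) under regular homotopy of $g$ through transverse representatives, this shows $\lim_{s\to 0}(u_s)_*([g])$ exists, is independent of all choices, and equals the composition computed at any single small $s$. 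Consistently, the right-hand sides involve $\Psi$, $D$, $F8$ but not $s$.

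Now the two cases. In case (2), with $g(S^1)\cap\bigcup_i D_i^0=\varnothing$, the set $f_0^{-1}(g(S^1))$ avoids the fold circles and is an honest double cover of $g(S^1)$; as $\Sigma^{\pm}$ are precisely the two sheets and each embeds into $P_0$, the lifts $g^{\pm}\colon S^1\to\Sigma^{\pm}$ are global, so $f_0^{-1}(g(S^1))=g^+(S^1)\sqcup g^-(S^1)$. Then each $f_1(g^{\pm})=\bigl(f_1\circ(f_0|_{\Sigma^{\pm}})^{-1}\bigr)\circ g$ is regularly homotopic to $\Psi\circ g$, so the composition is the union of the two curves $f_1(g^+),f_1(g^-)$, which is $\Psi(D([g]))$ by Definition \ref{double}. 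In case (1) the ends of the good arc $g$ limit to corners of $P_0$, so $g_{\mathrm{out}}:=g(J)\cap\bigl(P_0\setminus\bigcup_i D_i^0\bigr)$ is a compact arc whose two endpoints lie on fold images $\partial D_i^0$. Over the interior of $g_{\mathrm{out}}$ the preimage $f_0^{-1}(g_{\mathrm{out}})$ has the two lifts $g_{\mathrm{out}}^{\pm}\subset\Sigma^{\pm}$, and over each endpoint the fold normal form of Section \ref{bifolds} shows these lifts join at the single preimage point on the relevant $c_i$; hence $\tilde g:=f_0^{-1}(g_{\mathrm{out}})$ is a single immersed circle in $\NAT_s$ crossing the critical circles once at each end of $g_{\mathrm{out}}$ — the essential structural difference from case (2). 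For its image, the two sheets give $f_1(g_{\mathrm{out}}^{\pm})$ regularly homotopic to $\Psi(g_{\mathrm{out}})$, while near each end $f_1$ is again a fold along $c_i$: composing the two fold normal forms of $u_s$ along $c_i$ with the order-two orbifold structure (cone angle $\pi$) at the corresponding corner of $P_1$ identifies the local picture of $f_1(\tilde g)$ there with the crossing that joins the two strands in a figure-eight. Assembling the local pieces along $\Psi(g_{\mathrm{out}})$ identifies $f_1(\tilde g)$ with the Figure Eight curve supported by $\Psi(g)$, i.e.\ with $\Psi(F8([g]))$ by Definition \ref{deffig8}.

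I expect the crux to be this last identification at the corners: one must match the bifold normal form of $u_s$ along each $c_i$ — in particular how the fold of $f_1$ is twisted relative to that of $f_0$, information produced only in the proof of Theorem \ref{thmA} — with the orbifold structure of the pillowcase corners, and check that the two strands $g_{\mathrm{out}}^{\pm}$, closed up through the folds, produce exactly the self-crossing of $F8$ rather than the ``teardrop'' closure, which represents a different regular homotopy class. A secondary point is confirming that each sheet map $f_1\circ(f_0|_{\Sigma^{\pm}})^{-1}$ really is regularly isotopic to $\Psi$, i.e.\ that the marking disturbs the correspondence only on a region disjoint from the interiors of $\Sigma^{\pm}$; this should follow from the semi-local nature of the earring and bypass tangles. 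Finally, the bypass case requires no new argument: by Theorem \ref{thmA} the map $u_s'\colon\NAT_s'\to P_0^*\times P_1^*$ carries the identical bifold structure, so the computation above applies verbatim and yields the same two formulas.
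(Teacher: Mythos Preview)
Your strategy for case (2) is essentially the paper's: the two global lifts $g_s^\pm$ exist because $\pi_0$ is a trivial two-fold cover off the fold, and the composition is two copies of something regularly homotopic to $\Psi\circ g$.  However, your justification that each sheet map $f_1\circ(f_0|_{\Sigma^\pm})^{-1}$ is regularly isotopic to $\Psi$ is only a heuristic.  The statement ``$u_s$ coincides with $\mathrm{graph}(\Psi)$ away from the perturbation region'' is not obviously true and in any case does not directly yield a regular isotopy of the sheet maps.  The paper instead establishes the factorization $\pi_1=\Psi\circ\Theta\circ\pi_0\circ U_s$ (Lemma \ref{factor}), computes $U_0$ explicitly (Equation (\ref{Tatzero})), and shows that at $s=0$ the composition is literally $\Psi\circ D(g)$ (Lemma \ref{lem6.7}); the regular homotopy is then just the variation in $s$.

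For case (1) there is a genuine gap, and you have correctly located it.  Theorem \ref{thmA} tells you only that $f_0$ and $f_1$ fold along the same four circles and that these circles encircle the corners; it does \emph{not} record how the fold of $f_1$ is twisted relative to the fold of $f_0$ along each $c_i$.  Without that information the local picture near a corner is genuinely ambiguous: the two strands $g_{\mathrm{out}}^\pm$ could close up as a figure-eight, as a teardrop, or as a simple arc pair, and these are distinct regular homotopy classes.  Your appeal to the orbifold cone angle at the corner of $P_1$ does not resolve this, since the fold image $f_1(c_i)$ encircles the corner rather than passing through it, so the local model of the composition lives entirely in the smooth stratum $P_1^*$.

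The paper resolves this not by extracting more from the bifold normal form but by an explicit calculation: Lemma \ref{magicformula} writes down, by hand, a circle of representations parameterizing $K_s=\pi_0^{-1}(\beta)$ for the bottom edge $\beta$, and Proposition \ref{theta0} then evaluates the three characters of Lemma \ref{actofT2} along this circle to show that $(u_s)_*(\beta)$ is literally (for the bypass) or regularly homotopic to (for the earring) the figure-eight $\Psi\circ F8(\beta)$.  The symmetries $W_1,W_2$ of Lemma \ref{2invols} transport this to the other three corners, and Theorem \ref{FIG8baby} handles a general good arc by regularly homotoping it to a concatenation $g_1*g_2*g_3$ where $g_1,g_3$ are the standard short horizontal arcs near corners (already understood) and $g_2$ avoids the fold (handled by Lemma \ref{lem6.7}).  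The explicit computation in Lemma \ref{magicformula} is the step your outline is missing, and there does not appear to be a way to bypass it using only the qualitative content of Theorem \ref{thmA}.
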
 
\begin{proof}
 Assertion (1) is Theorem \ref{FIG8baby}.  Assertion (2) is Theorem \ref{interiorc}.
\end{proof}
 
 Limits are used to streamline  notation. An alternative formulation is:   for any $g\in \Lag(P_0^*)$, there exists an $s(g)>0$ so that for all $0<|s|<s(g)$, $g$  is Weinstein-composable with $u_s$ 
and satisfies $[(u_s)_*([g])]=\Psi(F8([g]))$ or $\Psi(D([g]))$,  and similarly for $u_s'$.

\medskip

We can now explain precisely why Theorem \ref{thm7.1} strengthens  the corresponding result in \cite{CHKK}.
In that article, a  family of  Lagrangian immersions  $v_\delta:\NAT_s\to P_0^*\times P_1^*, \delta>0$, was constructed, called the {\em model maps}, which satisfy
\begin{itemize}
\item $v_\delta$ is $\delta$-homotopic to $u_s$,
\item $\lim_{\delta\to 0}(v_\delta)_*$ satisfies the conclusion of Theorem  \ref{thm7.1}.
\end{itemize}

All the conclusions in \cite[Sections 9--11]{CHKK} concerning $v_\delta$  depend only on its satisfying the conclusions of Theorem \ref{thm7.1}. Hence:

\begin{theoremABC}\label{CHKK1} All assertions in \cite[Sections 9--11]{CHKK}
concerning the model map $v_\delta:\NAT_s\to P_0^*\times P_1^*$  are valid for the map $u_s$.
\end{theoremABC}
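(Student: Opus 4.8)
The plan is to deduce Theorem~\ref{CHKK1} from Theorem~\ref{thm7.1} together with Theorem~\ref{thmA}, by a section-by-section audit of \cite[Sections 9--11]{CHKK}. Recall that in \cite{CHKK} the model map $v_\delta$ was introduced solely as a computational surrogate for $u_s$: it is $\delta$-homotopic to $u_s$, and every statement in \cite[Sections 9--11]{CHKK} mentioning $v_\delta$ is ultimately derived from just two inputs — (a) that $v_\delta$ is a Lagrangian immersion of the closed genus three surface $\NAT_s$ into the smooth top stratum $P_0^*\times P_1^*$ with the bifold/critical-circle structure described in Theorem~\ref{thmA}, and (b) that the induced map on regular homotopy classes of good immersed curves, in the $\delta\to 0$ limit, is $\Psi\circ F8$ on arcs and $\Psi\circ D$ on loops (equivalently, the composability-plus-formula statement in the alternative formulation following Theorem~\ref{thm7.1}). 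The strategy is therefore to verify that these are the only properties used, and then to substitute $u_s$ for $v_\delta$, since Theorems~\ref{thmA} and \ref{thm7.1} grant $u_s$ (and $u_s'$) exactly properties (a) and (b).

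First I would formulate an \emph{interface lemma}: if $w:\NAT_s\to P_0^*\times P_1^*$ is a Lagrangian bifold immersion into the smooth top stratum whose critical set and complementary pieces behave as in Theorem~\ref{thmA}, and for which, for the relevant range of parameter, $w$ is Weinstein-composable with every good immersed curve $g$ in $P_0^*$ with $[w_*([g])]=\Psi(F8([g]))$ for arcs and $[w_*([g])]=\Psi(D([g]))$ for loops, then $w$ satisfies verbatim every assertion of \cite[Sections 9--11]{CHKK} about $v_\delta$. Proving this lemma is pure bookkeeping: each assertion in those sections is of one of two types. Type (I) assertions are statements about the endomorphism $[\Lag(P_0^*)]\to[\Lag(P_1^*)]$ itself — its effect on particular curves, its interaction with the pillowcase's algebraic and Floer-theoretic structures, consequences for bordered invariants — and these depend only on the $F8$/$D$ formula, hence are insensitive to replacing $v_\delta$ by $w$. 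Type (II) assertions are geometric statements about the Lagrangian (transversality of intersections after composition, counting of intersection points, the local shape of the composed immersed curve near the corners), and these are governed by the bifold normal form near the fold circles and by $w$ landing in the top stratum — precisely the content packaged in the definition of Lagrangian bifold immersion in Section~\ref{bifolds} and in Theorem~\ref{thmA}.

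Granting the interface lemma, Theorem~\ref{CHKK1} is immediate: Theorem~\ref{thmA} says $u_s$ (and $u_s'$) is a Lagrangian bifold immersion into the top stratum with the stated critical-circle structure, and Theorem~\ref{thm7.1} says $\lim_{s\to 0}(u_s)_*$ is $\Psi\circ F8$ on arcs and $\Psi\circ D$ on loops, with composability holding for all $0<|s|<s(g)$; so $u_s$ satisfies the hypotheses of the interface lemma and therefore inherits all the conclusions of \cite[Sections 9--11]{CHKK}.

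The step I expect to be the main obstacle is the honest verification, inside the interface lemma, that \cite[Sections 9--11]{CHKK} never secretly relies on a finer feature of $v_\delta$ that $u_s$ is not known to possess — for instance an explicit parametrization, a quantitative estimate in $\delta$, or a specific local model not subsumed by the bifold definition. Should any such dependence surface, the remedy is either to show the feature is in fact implied by Theorem~\ref{thmA} (this is the reason the bifold normal form was axiomatized in Section~\ref{bifolds}), or, where that fails, to re-prove the affected lemma from scratch using only the $F8$/$D$ description. This is also why Theorem~\ref{thmA} is invoked alongside Theorem~\ref{thm7.1}: the purely homotopy-theoretic formula of Theorem~\ref{thm7.1} does not by itself suffice for the Type~(II) assertions, which require the geometric input that Theorem~\ref{thmA} supplies.
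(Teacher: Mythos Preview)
Your approach is essentially the same as the paper's: both argue that the assertions in \cite[Sections 9--11]{CHKK} about $v_\delta$ depend only on properties that $u_s$ is now known to share, and therefore transfer to $u_s$. The paper's argument is a single sentence---it asserts that those conclusions depend \emph{only} on $v_\delta$ satisfying the conclusion of Theorem~\ref{thm7.1}, so Theorem~\ref{thm7.1} alone suffices---whereas you propose a more careful section-by-section audit and additionally invoke Theorem~\ref{thmA} to cover what you call Type~(II) geometric assertions. Your extra caution is not wrong, but according to the paper it is unnecessary: the bifold structure of Theorem~\ref{thmA} is not actually used in \cite[Sections 9--11]{CHKK}, so your interface lemma could be simplified to require only hypothesis~(b).
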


\subsection{Relation to  \cite{CHKK}}

This article is self-contained and does not presuppose that the reader is familiar with the results or notation of \cite{CHKK}; we recall and reprove here those results we need.  However,  many of the mathematical ideas explored in this article were known to all four authors of \cite{CHKK}, as highlighted by Theorem \ref{CHKK1} above.  

What is new in the present article can be summarized as follows.  First, we carry out several asymptotic calculations, notably Lemmas \ref{estimate2} and \ref{fuglylemma}, which provide enough control  over $u_s$ to conclude it is a Lagrangian bifold with embedded fold locus. Second,  we use symmetries and {\em one} calculation, Lemma  \ref{magicformula}, to determine the effect of the Lagrangian correspondence on {\em all} immersed curves up to   {\em regular homotopy}, rather than up to {\em homology} as  in  \cite{CHKK}.   Third, all results are extended to the bypass marking, using   similarities between the fundamental groups of the earring and bypass tangle complements, as calculated in Propositions \ref{summ} and \ref{summ2}. 

\medskip

A quick overview of the results of this article can be obtained by glancing at  Figure \ref{atomfig2}, which illustrates the tangles in $S^2\times I$ whose holonomy perturbed character variety this article is about, and the figures in Section \ref{IBE}, which illustrate the statement of Theorem \ref{thm7.1} for the example of the traceless character varieties associated to a tangle decomposition of the $(3,7)$ torus knot.

\subsection{Context}
A reader wishing to understand the project which this article completes   may want to first read the introduction to \cite{CHKK}, then read this article, and finally read   Sections 10 and 11 of \cite{CHKK} {\em with Theorem \ref{CHKK1} of the current article understood}.   These sections of \cite{CHKK} discuss  the low-dimensional topology context, the relationship to  bounding cochains, and Figure Eight bubbling in immersed Floer theory \cite{bottman, bottman-wehrheim, fukaya}.

One aim of  this article and the earlier articles \cite{HHK,HHHK,HK,FKP} is  to develop an {\em immersed curve calculus in the  pillowcase} for Kronheimer-Mrowka {\em reduced singular  instanton  homology}  \cite{KMweb, KM1, KM2}.)    A calculus for immersed curves in  once-punctured tori for    {\em Heegaard-Floer} homology is constructed by Hanselman-Rasmussen-Watson \cite{HRW} and in pillowcases for {\em Khovanov} homology  by Kotelskiy-Watson-Zibrowius \cite{KWZ}.  These approaches leverage the fact that the Riemann mapping theorem permits a construction  of Lagrangian-Floer theory in surfaces which avoids the use of moduli spaces of $J$-holomorphic polygons \cite{abou}.  These approaches further make use of the theorem of Haiden-Katzarkov-Kontsevich \cite{HaKaKo} concerning representability,  by immersed curves, of objects of the triangulated envelope of the Fukaya category of a punctured surface.

Our results should be viewed in the context of the Atiyah-Floer conjecture and, more broadly,  {\em Floer field theory} \cite{WW2,WW1,WW3, Caz,bottman-wehrheim,fukaya,AJ, FOOO1}.  From this perspective, our paper calculates the value of the character variety Floer field theory functor on the non-trivial endomorphisms of the Conway sphere in the cobordism category given by the earring and bypass tangles.   None of the proofs in this article depend mathematically on any analytic or algebraic aspects of Lagrangian-Floer theory or Floer field theory.  Although we borrow terminology  from symplectic geometry,  the proofs we provide rely only on classical  differential topology, the geometry of $SU(2)$, and the result that $u_s$ and $u_s'$ are Lagrangian immersions, whose proof can be found in \cite{CHK}.

\medskip

As a study  of $SU(2)$ character varieties of  three-manifolds,
Theorems \ref{thmA} and \ref{thm7.1} echo the results of Klassen's 1993 article \cite{Klassen} which contains a calculation of the character variety of the Whitehead link complement.  While that article was not focused on symplectic topology, restriction to the character variety to the boundary (two disjoint tori) gives a Lagrangian immersion into the product of two pillowcases, just like our $u_s, u'_s$. Boozer's article \cite{Boozer1}, which  discusses 1-stranded tangles in  solid tori, and    Smith's article,  \cite{KaiSmith} which examines the {\em tangle addition} tangle, whose boundary is a union of three Conway spheres, also describe holonomy perturbed character varieties  and their boundary restriction maps to symplectic character varieties of dimension greater than two.

 \begin{figure}[ht]
\begin{center}
\def\svgwidth{5.5in}
\includegraphics[width=0.7\textwidth]{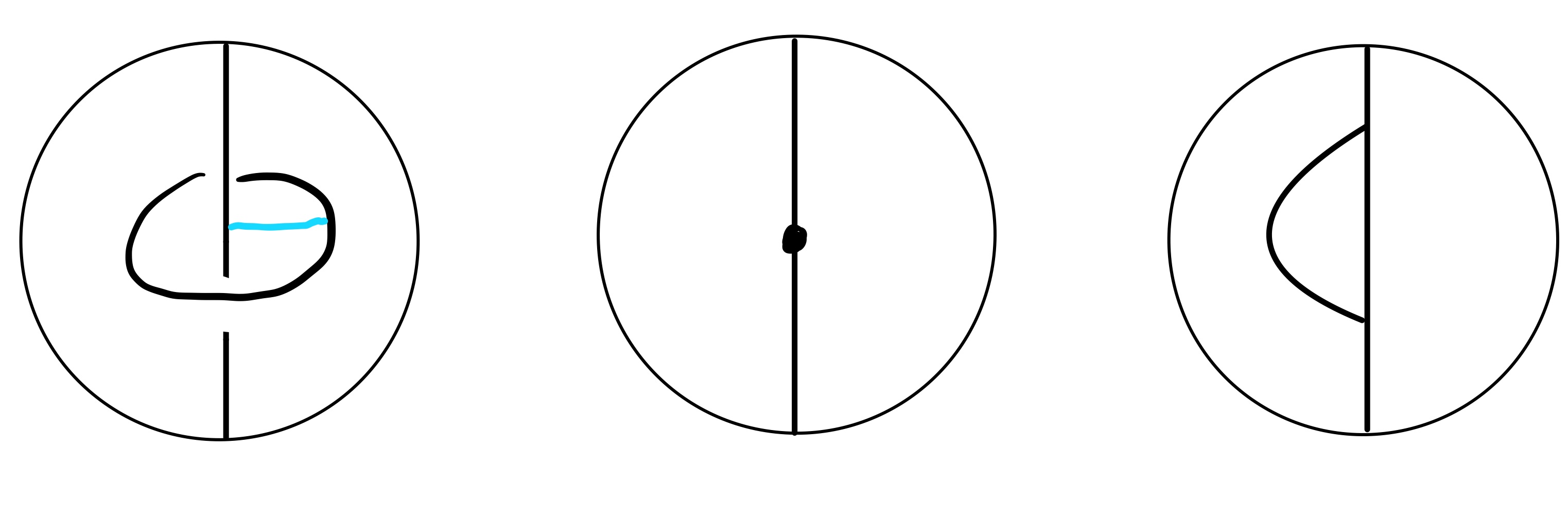}
 \caption{The middle illustrates a  three-ball neighborhood of a base point chosen on a link in a  three-manifold. The left picture illustrates the result of introducing the earring atom to the link at this base point, with the blue arc Poincar\'e dual to the second Stiefel-Whitney class of an SO(3) bundle over the tangle complement.  The right illustrates the result of introducing the bypass  atom to the link at this point, and working in the trivial  bundle.\label{atomfig}}
\end{center}
\end{figure}

 \begin{figure}[ht]
\begin{center}
\def\svgwidth{5.5in}
\includegraphics[width=.8\textwidth]{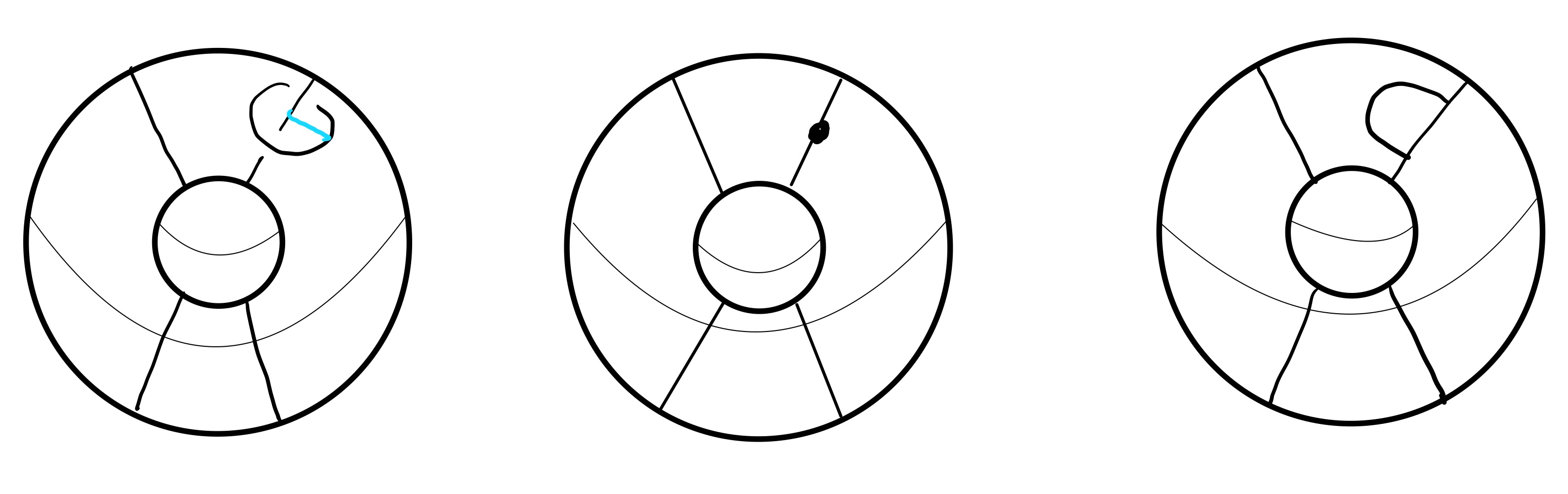}
 \caption{The middle illustrates the product tangle $(S^2,4)\times [0,1]$, with a base point placed on one strand. The left picture illustrates the  earring tangle and the right illustrates   the bypass  tangle.\label{atomfig2}}
\end{center}
\end{figure}

 \noindent{\em Acknowledgement.} We thank G.~ Cazassus and A.~ Kotelskiy  for their many contributions to this project.

\section{Summary of \cite{CHKK} and extension to the bypass tangle}\label{sumsec}

 In this section, we recall and reformulate  those calculations and  notation of \cite{CHKK} about the earring tangle needed for  the present article and we extend them to the bypass tangle.

 \begin{figure}[ht]
\begin{center}
\includegraphics[width=.8\textwidth]{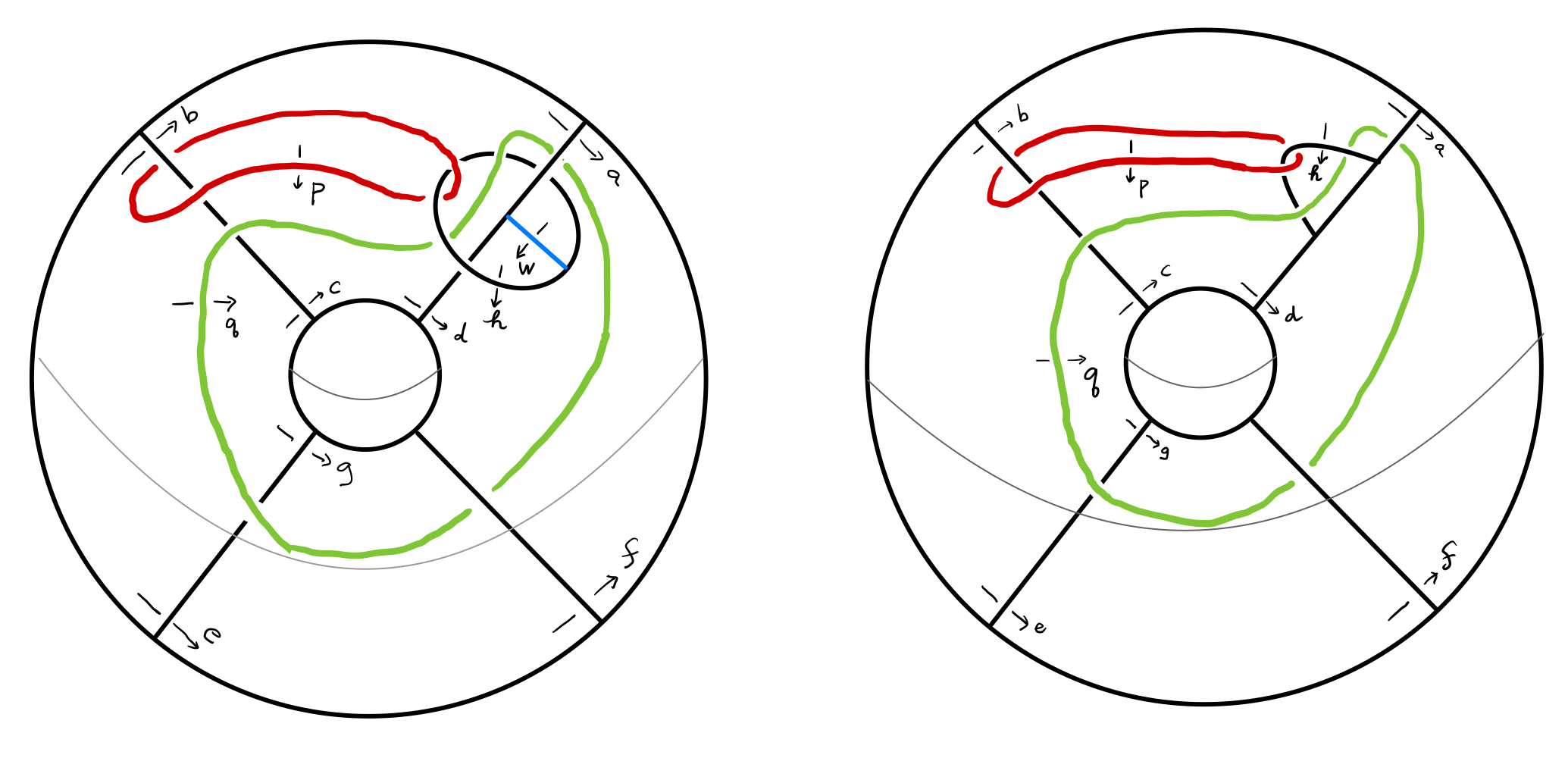}
\\
(a)\hskip2.9in (b)   
 \caption{
 The earring and bypass tangles in $S^2\times I$, enhanced with perturbation curves.  Each labelled arrow represents an element of the fundamental group; a loop based at a point above the page is obtained by concatenating a straight path from the base point to the tail of the arrow, the arrow, and a straight path from the head of the arrow back to the base point.
  \label{earpertfig}}
\end{center}
\end{figure}

Figure \ref{earpertfig}.a illustrates the earring tangle,  enhanced by the addition of two embedded {\em perturbation curves}.  Also illustrated is a  collection of  based loops in the complement, $a,b,e,f,c,d,g,h,p,q,$ and $w$. (Figure \ref{earpertfig}.b is the corresponding illustration for the bypass tangle, which does not appear in \cite{CHKK}, but is analyzed in this article as well.)

The following proposition, which sharpens Proposition 3.1  \cite{CHKK},  provides a  presentation   of the fundamental group $\Pi$  of the complement of this enhanced graph.   It can be proved either using Wirtinger's algorithm  \cite{rolfsen}  or by  direct topological arguments.  The loops $c, e, g, d, $ and $w$ are not needed to generate $\Pi$, but naming them simplifies notation throughout the article.
 Here, and throughout this article,  the inverse of any group element $g$ is denoted  $\bar{g}$.

  \begin{proposition}\label{summ}
The set of loops
$\{ a,b,  f,p,q, h\}$ generates $\Pi$, and $\Pi$ has the presentation
$$\Pi=\langle  a,b,  f,p,q, h~|~ [p,\lambda_p], ~ [q,\lambda_q]\rangle,
$$
where  $\lambda_p=bh\text { and }\lambda_q=f\bar{a} h$ denote 
the longitudes of the perturbation curves.

The remaining labelled loops in Figure \ref{earpertfig}.a  are expressed in terms of these as 
$$c=\bq\bp b p q, ~ e=ba\bff, ~g= \bq e q =\bq ba \bff q,~d=\bc g f=\bq\bp \bb p  ba\bff q f
,$$
and
$$w=[\ba h \bq \bp  , h].
 $$

The inclusions of the boundary components $$ \FPS^i\subset  S^2\times I,~ i=\rout,\rin$$ induce peripheral subgroups $$\Pi^\rout=\langle a,b,e,f~|~ ba\bff \be\rangle, \qquad \Pi^\rin=\langle d,c ,g,f~|~ cd\bff \bg\rangle,$$ 
with the inclusion homomorphisms $\Pi^\rout \to \Pi, ~ \Pi^\rin \to \Pi$ preserving the labeling of the generators.\qed

\end{proposition}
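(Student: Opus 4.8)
The plan is to read off a Wirtinger presentation from the projection of the enhanced earring tangle in Figure~\ref{earpertfig}.a and reduce it by Tietze transformations to the displayed form; a parallel ``direct topological'' route, which I also indicate, assembles the complement from standard pieces using van Kampen's theorem.

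For the topological route I would first analyze the complement $M_0$ of the unperturbed earring tangle, i.e.\ the spatial web obtained from the four strands by attaching the small earring loop to one strand along an edge. Since the four-punctured sphere crossed with $I$ is a genus-three handlebody and attaching the earring loop together with its connecting edge amounts to a one-handle stabilization, $M_0$ is again a handlebody; fixing the basepoint above the page and the straight-line basepaths of Figure~\ref{earpertfig}.a, one checks by tracing loops that $\pi_1(M_0)$ is free on $\{a,b,f,h\}$ (this is the content, in the unperturbed case, of \cite[Proposition~3.1]{CHKK}). Now drill out the two perturbation curves one at a time. For an embedded circle $\gamma$ in the interior of a handlebody $H$, van Kampen applied to the splitting $H=(H\setminus N(\gamma))\cup_{T^2}(S^1\times D^2)$ gives $\pi_1(H\setminus N(\gamma))=\langle\,\pi_1(H),\mu_\gamma\mid[\mu_\gamma,\lambda_\gamma]\,\rangle$, where $\mu_\gamma$ is a meridian and $\lambda_\gamma\in\pi_1(H)$ is the framed longitude. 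Reading off from Figure~\ref{earpertfig}.a that the two perturbation curves are framed so that $\lambda_p=bh$ and $\lambda_q=f\ba h$, two applications yield exactly $\Pi=\langle a,b,f,p,q,h\mid[p,\lambda_p],[q,\lambda_q]\rangle$, and the quotient by $\langle\langle p,q\rangle\rangle$ recovers $\pi_1(M_0)$ as a consistency check.

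With $\Pi$ in hand, the remaining named loops are computed by following them through the figure: the conjugating factors $p^{\pm1},q^{\pm1}$ appearing in the formulas for $c$, $g$, $d$, and $w$ reflect that those loops are threaded through the two perturbation curves, and the earring loop $w$, being a meridian adjacent to a trivalent vertex of the web that is linked with both perturbation curves, works out to the commutator $[\ba h\bq\bp,h]$ after substituting the words already found. For the peripheral data, the spheres $S^2\times\{0\}$ and $S^2\times\{1\}$ meet the tangle in four points and are disjoint from the perturbation curves, so each end of the complement deformation retracts onto a four-punctured sphere, whose fundamental group is freely generated by any three of its four boundary meridians subject to the single relation that a suitable ordered product of all four is trivial. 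With the basepaths already fixed these meridians are $a,b,e,f$ at the outgoing end and $d,c,g,f$ at the incoming end; the sphere relations become $ba\bff\be=1$ and $cd\bff\bg=1$, consistent with $e=ba\bff$ and $d=\bc g f$, and the inclusion homomorphisms carry each generator to the like-named element of $\Pi$ by construction of the basepaths.

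The main obstacle is bookkeeping: choosing basepoint and orientation conventions so that all eleven loops of Figure~\ref{earpertfig}.a evaluate simultaneously to the displayed words --- the $p,q$-conjugators and the commutator form of $w$ are where sign and orientation errors are easiest to make --- together with verifying that no relations beyond the two commutators survive the Tietze reduction, equivalently that the unperturbed earring-web complement $M_0$ genuinely carries a free fundamental group.
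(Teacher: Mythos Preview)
Your proposal is correct and aligns with the paper's approach: the paper gives no detailed proof, stating only that the proposition ``can be proved either using Wirtinger's algorithm or by direct topological arguments'' and citing \cite{CHKK} for the unperturbed case, then appending a \qed. Your sketch follows exactly these two suggested routes and is in fact more explicit than anything the paper provides.

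One small caution on the topological route: the assertion that for an arbitrary embedded circle $\gamma$ in a handlebody $H$, van Kampen yields $\pi_1(H\setminus N(\gamma))=\langle\pi_1(H),\mu_\gamma\mid[\mu_\gamma,\lambda_\gamma]\rangle$ is stated too generally --- van Kampen applied to $H=(H\setminus N(\gamma))\cup_{T^2}(S^1\times D^2)$ computes $\pi_1(H)$ from $\pi_1(H\setminus N(\gamma))$, not the other way around, and for a sufficiently knotted $\gamma$ the drilled complement can have extra relations. What makes the formula valid here is that each perturbation curve is unknotted in the handlebody $M_0$ (visibly isotopic into $\partial M_0$ in Figure~\ref{earpertfig}.a), so $M_0\setminus N(\gamma)$ is again a handlebody of one higher genus; once you note this, your presentation follows. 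Since your primary route is the Wirtinger computation anyway, this is a minor point of phrasing rather than a gap.
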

 The geometric explanation for the expression for $w$ as a commutator is that the boundary of the tubular neighborhood of the earring is a torus punctured once by the arc $w$, with meridian $h$ and longitude $\ba h \bq \bp $.

\medskip

\subsection{Traceless $SU(2)$ representations}
Identify the Lie group $SU(2)$ with the  three-sphere of  unit quaternions. The traceless elements thereby correspond to the equatorial  two-sphere of unit quaternions with real part zero, which we denote by $$\tracelessTwoSphere=\{x\in SU(2)~|~\Real (x)=0\},
$$
since it coincides with the conjugacy class of $\bbi$.

\medskip

Evaluation on the generators $\{ a,b,f,h,p,q \}$   embeds
$\Hom(\Pi,SU(2))$ in $SU(2)^6$
 as an affine algebraic set. Similarly, $\Hom(\Pi^0,SU(2))$ and $\Hom(\Pi^1,SU(2))$
 are affine algebraic sets.  Inclusion of  the boundary defines a restriction map
\begin{equation}\label{Hom} \Hom(\Pi,SU(2))\to \Hom(\Pi^0,SU(2))\times\Hom(\Pi^1,SU(2))\end{equation}
which is equivariant with respect to the conjugation action of $SU(2)$.

By definition a {\em meridian} of a tangle is any based loop freely homotopic to the boundary of a small disk meeting the (unenhanced) tangle transversely in a point.

\begin{definition}\label{ptrep}   For any $s\in \RR$, we call a homomorphism $\rho:\Pi\to SU(2)$ an  {\em $s$-perturbed   traceless $SU(2)$ representation    satisfying the $w_2$ condition}\footnote{ 
In this article, the notion of {\em holonomy perturbation} is given a one-parameter, knot-theoretical treatment via the choice of perturbation curves and condition (3) of Definition \ref{ptrep}. We omit any discussion of the origin of this definition and its relation to generically 
perturbing the Chern-Simons function, which is due to Taubes \cite{taubesCI}.  The family of perturbations we use are compatible both with holonomy perturbations in instanton gauge theory, and with Hamiltonian perturbations in Lagrangian-Floer theory  \cite{Heraldthesis, HK, CHK}.
}
  if $\rho$ satisfies 
\begin{align*}
&\text{(1)} &&\rho(w)=-1&& \text{($w_2$ condition)}\\
&\text{(2)} &&0=\Real(\rho(z))\text{ for  any meridian } z \text{ other than $w$}&& \text{(traceless condition)}\\
&\text{(3)} &&\rho(p)=e^{s \Ima(\rho(\lambda_p))},~\rho(q)=e^{s \Ima(\rho(\lambda_q))}&&  \text{($s$-perturbed)}
\end{align*}

For fixed $s\in\RR$, we let $\tNAT_s \subset \Hom(\Pi,SU(2))$ denote the subspace of  $s$-perturbed  traceless representations of $\Pi$ satisfying the $w_2$ condition
and let  $\NAT_s$ denote its orbit space under the conjugation action of $SU(2)$:
$$
\NAT_s=\tNAT_s/_{\rm conjugation}\subset (SU(2)^6)/_{\rm conjugation}.
$$
\end{definition}
 The space $\tNAT_s$  is a real {\em analytic} subset  of $SU(2)^6$, because the perturbation condition is analytic rather than algebraic;  it follows that its orbit space $\NAT_s$ is a  semi-analytic set  \cite{BKR}.

\medskip

\subsection{Restriction maps to the peripheral traceless character variety}
We define $P_0$ to be the space of conjugacy classes of traceless representations of the first peripheral subgroup $\Pi^0$:
$$P_0=\{\rho\in\Hom(\Pi^0,SU(2))~|~ 0=\Real(\rho(a))=\Real(\rho(b))=\Real(\rho(e))=\Real(\rho(f))\}/_{\small \rm conjugation}.$$
The space $P_0$ is called a {\em pillowcase}.  It is a   two-dimensional  real semi-algebraic set, homeomorphic to $S^2$, with four singular points, which we call {\em corners} (see Section \ref{pi0pi1}). 
Denote by $P^*_0$ the complement of the four corner points.  This is a smooth  four-punctured  two-sphere \cite{Lin}  equipped with the Atiyah-Bott-Goldman symplectic form \cite{AB, Gold}.    The smooth stratum $P^*_0$  consists of   irreducible (i.e., non-abelian) representations. The corners correspond to the abelian representations.  
Define 
$$P_1=\{\rho\in\Hom(\Pi^1,SU(2))~|~ 0=\Real(\rho(c))=\Real(\rho(d))=\Real(\rho(g))=\Real(\rho(f))\}/_{\small \rm conjugation}, $$  the traceless character variety of the other  peripheral subgroup
$\Pi_1$.

\bigskip

Restriction of homomorphisms to the peripheral subgroups  defines a map
\begin{equation}\label{rest} u_s:\NAT_s\to P_0  \times P_1,\end{equation}  
whose properties are the central topic of this article.

\medskip

  It is explained in \cite{CHKK}  that the space $\NAT_0$ is not a smooth manifold, but rather 
 a singular space obtained by crushing four singular fibers of a certain Seifert-fibered  three-manifold to points.  The restriction map
 $$u_0:\NAT_0\to P_0\times P_1$$
is not  a  Lagrangian immersion, even on top stratum.  We summarize some properties about $u_s$ for small non-zero $s$ established in \cite{CHKK} and \cite{CHK}.    
\begin{theorem}
 For small non-zero $s$,  $\NAT_s$ is a smooth compact surface of genus three.  The restriction map $u_s$  has image in the top smooth stratum $P_0^*\times P_1^*$ and is a smooth Lagrangian immersion with respect to the Atiyah-Bott-Goldman symplectic form.  The critical set of  $\operatorname{proj}_0\circ u_s:\NAT_s\to P_0^*$ is a disjoint union of four smooth circles, which separates $\NAT_s$ into two four-punctured  two-spheres.\end{theorem}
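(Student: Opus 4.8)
The Lagrangian-immersion and smoothness assertions I would take largely from the literature. That the restriction map of character varieties is coisotropic, and Lagrangian on the locus of irreducibles, is Goldman's theorem \cite{Gold, AB}, and the holonomy perturbation of Definition~\ref{ptrep} is of Hamiltonian type; so $u_s$ is a Lagrangian immersion wherever $\tNAT_s$ is a smooth manifold and $u_s$ lands in the top (irreducible) stratum $P_0^*\times P_1^*$. The verification that for $s\ne 0$ small $\tNAT_s$ is a smooth compact submanifold of $\Hom(\Pi,SU(2))$ (compactness being automatic since $SU(2)^6$ is compact), on which $SU(2)$ acts with stabilizer $\{\pm1\}$ — so that $\NAT_s$ is a smooth closed surface — and that $u_s$ misses the four corners of each pillowcase, is carried out in \cite{CHK, CHKK}. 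Granting this, I would obtain the genus of $\NAT_s$ and the critical set of $\operatorname{proj}_0\circ u_s$ from an explicit model.

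\emph{The model.} From the presentation $\Pi=\langle a,b,f,p,q,h\mid [p,\lambda_p],[q,\lambda_q]\rangle$ of Proposition~\ref{summ}, record a representation by $(\rho(a),\rho(b),\rho(f),\rho(h))$ together with $\rho(p),\rho(q)$. The key observation is that $e^{s\,\Ima(v)}$ commutes with $v$ for every $v\in SU(2)$, so once condition~(3) of Definition~\ref{ptrep} holds, the relations $[p,\lambda_p]=1$ and $[q,\lambda_q]=1$ are automatic, and (3) expresses $\rho(p),\rho(q)$ as explicit functions of $\rho(a),\rho(b),\rho(f),\rho(h)$ and $s$. Hence $\tNAT_s$ is the solution set, in a product of copies of $\tracelessTwoSphere$, of the traceless conditions on the strand meridians $c,d,e,g$ — which reduce, via the word expressions of Proposition~\ref{summ}, to finitely many equations (essentially the defining equation $\Real(\rho(e))=0$ of $P_0$) — together with the single equation $\rho(w)=-1$. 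Expanding everything with $uv=-\langle u,v\rangle+u\times v$ for $u,v\in\tracelessTwoSphere$ and gauge-fixing ($\rho(a)=\bbi$, then a partial normalization of $\rho(h)$) puts $\NAT_s$ in the form of the zero set of an explicit finite real-analytic system in a few angular coordinates, with parameter $s$ and the expected dimension $2$.

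\emph{Genus.} At $s=0$ one has $\rho(p)=\rho(q)=1$ and $\rho(w)=[\rho(a)^{-1}\rho(h),\rho(h)]$, and the equation $\rho(w)=-1$ degenerates — the differential of $\rho\mapsto\rho(w)$ drops rank along its zero set — so the $s=0$ model is the singular space of \cite{CHKK}, a Seifert-fibered three-manifold with four singular fibers crushed to points; the crushed points are exactly the configurations projecting to the four corners of $P_0$, i.e.\ the abelian representations of $\Pi^0$. Away from these four loci the implicit function theorem already yields a smooth surface at $s=0$, which persists for small $s$; near each of them, Taylor-expanding the $w$-equation in $s$ — where genericity of the perturbation in the sense of Taubes \cite{taubesCI} enters — shows that for $s\ne 0$ the crushed point is replaced by a smooth annular neck. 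Tracking how the four necks are attached (or recognizing $\NAT_s$ as the double of a four-holed sphere across its four boundary circles) gives $\chi(\NAT_s)=-4$, hence genus $3$ after an orientability check.

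\emph{Critical set and main difficulty.} As a map of surfaces, $\operatorname{proj}_0\circ u_s$ has critical set the vanishing locus of its Jacobian, which I would compute in the model: the fiber over $x\in P_0^*$ is the set of extensions over $\Pi$ of the irreducible representation of $\Pi^0$ labelled by $x$, subject to the $w_2$ and perturbation conditions, and the Jacobian vanishes exactly where two such extensions coalesce. The $s\to 0$ asymptotics place this locus along a smooth embedded circle projecting, under $\operatorname{proj}_0$, onto a small circle around the $i$-th corner of $P_0$, $i=1,\dots,4$; so the critical set is four embedded circles. For the separation, one checks that off these circles $\operatorname{proj}_0\circ u_s$ is a $2$-to-$1$ local diffeomorphism onto $P_0$ minus small disks about the corners; the deck involution extends over $\NAT_s$ with fixed locus exactly the four circles, and since its quotient — the image — is an orientable four-holed sphere, $\NAT_s$ minus the four circles is two copies of it, each a four-punctured two-sphere embedding homeomorphically onto its image. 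The real obstacle throughout is the $s\to 0$ asymptotic analysis near the four degenerate configurations: it is what resolves the singularities into annular necks and what pins the fold locus to exactly four corner-encircling circles — precisely the quantitative refinement of \cite{CHKK}, where $u_s$ is determined only up to homology — so a self-contained proof must establish those estimates directly.
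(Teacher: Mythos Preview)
Your outline is reasonable and correctly identifies the asymptotic analysis near the four degenerate configurations as the crux. However, the paper takes a cleaner route that you miss, and your separation argument has a genuine gap.

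\textbf{The rescaling trick.} You propose to work with the singular space $\NAT_0$ and argue that for $s\ne 0$ each crushed point is resolved into an annular neck. The paper instead observes that the first component $G_1$ of the defining map $G=(G_1,G_2)$ vanishes identically at $s=0$, so one may write $G_1=s\widehat G_1$ and replace $G$ by the pair ${\bf G}_s=(\widehat G_1 -G_2R_2,\; G_2(1+sR_6))$, which has the same zero set for $s\ne 0$. The point is that ${\bf G}_0=(\sin\theta\cos\tau-\sin\gamma\sin\tau,\ \nu)$ has $(0,0)$ as a \emph{regular} value, so $V_0={\bf G}_0^{-1}(0,0)$ is already a smooth closed surface. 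A direct inspection shows $V_0\to T$ is a trivial double cover over $T\setminus\{\text{fixed points}\}$ with circle fibers over the four fixed points; this gives $\chi(V_0)=-8$, and one connectedness check yields genus five. The free involution $\hat\iota$ then gives genus three for $V_0/\hat\iota$, and the whole family $\{V_s\}$ is a smooth trivial bundle over $[-s_0,s_0]$. This sidesteps the singular space entirely and makes the genus computation immediate; your approach would need to carry out the blow-up analysis you allude to.

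\textbf{The separation gap.} You argue that the deck involution of the double cover $\NAT_s\setminus C_s\to P_0^*\setminus(\text{four disks})$ extends over $\NAT_s$ with fixed set $C_s$, and conclude that $\NAT_s\setminus C_s$ is two copies of the four-holed sphere. But this does not follow: the base has $\pi_1$ free on three generators, so there are seven connected double covers in addition to the trivial one, all with the same Euler characteristic. The existence of an extending involution with fixed set $C_s$ does not by itself force the cover to be trivial. The paper establishes triviality directly from the explicit formula for $V_0$ (the two sheets are parametrized by the two choices of sign in $(\cos\tau,\sin\tau)=\pm(\sin\gamma,\sin\theta)/\|(\sin\gamma,\sin\theta)\|$), which then persists for small $s$; you would need some analogous concrete check.
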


 This theorem is reproved in Section \ref{coords},
for the convenience of the reader, to establish its extension to the bypass tangle, and to indicate how it follows from  Lemma \ref{estimate2}.

\subsection{Coordinates on the pillowcase and its embedding in $\RR^3$}
 It is useful to identify $P_0$ with the quotient of the  two-torus $T=S^1\times S^1$ by the elliptic involution \begin{equation}\label{elliptic}\iota(e^{\gamma\bbi},e^{\theta\bbi})=(e^{-\gamma\bbi},e^{-\theta\bbi})\end{equation} via the map
\begin{equation}\label{TtoP0}T\to P_0, ~(e^{\gamma\bbi},e^{\theta\bbi})\mapsto \big(a\mapsto \bbi,~ b\mapsto e^{\gamma\bbk}\bbi,~ f\mapsto  e^{\theta\bbk}\bbi,~ e\mapsto
e^{(\gamma+\theta)\bbk}\bbi\big).\end{equation}
We use $(\gamma,\theta)$ to denote either 
a point in $T$ or else in its  universal cover 
 $\RR^2.$
 The image of $(\gamma,\theta)$ in $P_0$ is denoted $[\gamma,\theta]$, which equals $[-\gamma,-\theta].$ It is proven in \cite{CHK} that the Atiyah-Bott-Goldman symplectic form on $P_0^*$ is given in these coordinates by $c d\gamma\wedge d\theta$ for some non-zero constant $c$.

Our standard depiction of the pillowcase uses the $[\gamma,\theta]$ coordinates and views $P_0$ as the identification space obtained from the rectangle $[0,\pi]\times[0,2\pi]$  by folding it along $[0,\pi]\times \{\pi\}$ and identifying edges. In illustrations and our discussion we make the following conventions. The {\em bottom left} corner  is $[0,0]$, and similarly the  {\em bottom right} corner is $[\pi,0]$, the  {\em top left} corner is $[0,\pi]$ and the  {\em top right} corner is $[\pi,\pi]$. These corners are joined by the {\em bottom edge} $\{[\gamma,0]~|~\gamma\in [0,\pi]\}$ as well as the {\em top, left}, and {\em right} edges.

\medskip
An alternative way to view $P_0$ is as a real semi-algebraic subset of $\RR^3$.
The characters of the three peripheral loops $ba^{-1}$, $fa^{-1}$ and $bf^{-1}$ embed   the pillowcase $P_0$ as a singular algebraic surface in $\RR^3$.      Explicitly, if $\rho:\Pi_0\to SU(2)$, then  
$$\Real(\rho(ba^{-1}))=\cos \gamma ,~
\Real(\rho(fa^{-1}))=\cos \theta ,~ \text{ and } \Real(\rho(bf^{-1}))=\cos(\gamma-\theta)$$
are the values of these three characters on $\rho$.
They define the map    \begin{equation}
\label{curvypillow}
P_0\to \RR^3 ,~
[\gamma,\theta]\mapsto(\cos \gamma ,\cos \theta , \cos(\gamma-\theta)),\end{equation}
which {\em embeds}  $P_0$ into $\RR^3$, with image the  two-dimensional singular real semi-algebraic set 
$$\{x^2+y^2+z^2-2xyz=1, x^2\leq 1, y^2\leq 1, z^2\leq 1\}.$$   In this manifestation of $P_0$,  the corners are the four singular points $(1,1,1), (-1,-1,1), (-1,1,-1),$ and $(1,-1,-1)$.
The  bottom edge  corresponds to  the arc
 $\{(\cos\gamma, 1,\cos\gamma)~| ~0\leq \gamma\leq \pi  \}$. The  top edge  corresponds to $\{(\cos\gamma, -1,\cos\gamma)\}$,   the  left edge  corresponds to $\{(1,\cos\theta, \cos\theta)\}$ and the   right  to $ \{(-1,\cos\theta,-\cos\theta)\}$.
 A similar remark applies to the other peripheral subgroup.

 \medskip

 The following lemma, which identifies the restriction map of Equation (\ref{rest}), is an immediate consequence of the definitions.
\begin{lemma}
 \label{mapstor3} The composite 
  $\operatorname{proj}_0\circ u_s:\NAT_s\to P_0$ is determined  by the three characters:
\begin{equation}\label{charcs1}\rho\mapsto \big(\Real(\rho(b\bar a)),\Real(\rho(f\bar a)), \Real(\rho(b\bar f))\big),\end{equation}
and
$\operatorname{proj}_1\circ u_s:\NAT_s\to P_1$ is determined  by the three characters
\begin{equation}\label{charcs2}\rho\mapsto \big(\Real(\rho(c\bar d)),\Real(\rho(f\bar d)), \Real(\rho(c\bar f))\big).
\end{equation}
\end{lemma}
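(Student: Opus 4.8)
The plan is to unwind the definition of $u_s$ and invoke the fact, recorded around Equation (\ref{curvypillow}), that the assignment $\rho\mapsto(\Real(\rho(ba^{-1})),\Real(\rho(fa^{-1})),\Real(\rho(bf^{-1})))$ \emph{embeds} $P_0$ into $\RR^3$. An embedding is in particular injective, so a map into $P_0$ is determined by its composition with this embedding.

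First I would recall that, by Equations (\ref{Hom}) and (\ref{rest}), the map $u_s$ is induced by restriction of representations to the peripheral subgroups, so that $\operatorname{proj}_0\circ u_s$ sends the class of $\rho\in\tNAT_s$ to the class of $\rho|_{\Pi^0}$ in $P_0$. By Proposition \ref{summ} the inclusion $\Pi^0\hookrightarrow\Pi$ takes the generators $a,b,e,f$ of $\Pi^0$ to the loops of the same name in $\Pi$; hence the value on $\rho|_{\Pi^0}$ of the character of $ba^{-1}\in\Pi^0$ is exactly $\Real(\rho(b\bar a))$, and likewise for $fa^{-1}$ and $bf^{-1}$. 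Composing $\operatorname{proj}_0\circ u_s$ with the embedding (\ref{curvypillow}) of $P_0$ into $\RR^3$ therefore produces precisely the map (\ref{charcs1}), and since (\ref{curvypillow}) is injective this determines $\operatorname{proj}_0\circ u_s$. The statement for $\operatorname{proj}_1\circ u_s$ is obtained identically, using the analogous embedding of $P_1$ into $\RR^3$ by the characters of $cd^{-1},fd^{-1},cf^{-1}$ and the fact that $\Pi^1\hookrightarrow\Pi$ preserves the generators $c,d,g,f$.

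There is essentially no obstacle here: the only thing to check beyond unwinding definitions is that the character values computed in $\Pi^0$ (resp.\ $\Pi^1$) agree with those computed in the ambient group $\Pi$, which is immediate from the compatibility of generators asserted in Proposition \ref{summ}. I would expect the final write-up to be a single short paragraph.
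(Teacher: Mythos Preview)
Your proposal is correct and matches the paper's treatment: the paper simply states that the lemma ``is an immediate consequence of the definitions'' and gives no further argument. Your write-up spells out exactly those definitions---the restriction to peripheral subgroups, the preservation of generators under $\Pi^0,\Pi^1\hookrightarrow\Pi$, and the injectivity of the character embedding (\ref{curvypillow})---so it is a faithful expansion of what the paper leaves implicit.
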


  \medskip

Different analytic models of the pillowcase  can have different formal tangent spaces at the four singular points.  In this article, $P_0$ is understood as the traceless $SU(2)$ character variety of $(S^2,4)$, which has a  four-dimensional formal tangent space at each corner (see e.g.  \cite[Section 4]{CHK})   whereas the hypersurface $x^2+y^2+z^2-2xyz=1$ has a  three-dimensional formal  tangent space  at each singular point.  None of the arguments in this article depend on this distinction.

\subsection{Elimination of variables and partial gauge fixing}\label{eliminating}

 The  semi-analytic space $\NAT_s$ is described above as the orbit space of an $SU(2)$ action on  $\tNAT_s$.    Following Section 4 of  \cite{CHKK}, we introduce 
 a  lower-dimensional model for $\NAT_s$, namely as a quotient of a subspace of  $\tNAT_s$ by an involution.  The following is a self-contained description of this slice-with-involution, for the convenience of the reader. 
 
 \medskip
 
 Extend  the elliptic involution  $\iota$ on $T$ of Equation (\ref{elliptic})  to $T\times \tracelessTwoSphere$  by the formula 
\begin{equation}\label{hatiota}\hat\iota(e^{\gamma\bbi},e^{\theta\bbi}, H)=
(e^{-\gamma\bbi},e^{-\theta\bbi},-\bbi H\bbi).\end{equation}

 \medskip

Consider the smooth $s$-dependent embedding
\begin{equation}\label{s-embed}L_s:T\times \tracelessTwoSphere \xrightarrow{(a,b,f,h,p,q)}  SU(2)^6, ~\end{equation}
 given by 
\begin{equation} \label{Ls} L_s(e^{\gamma\bbi},e^{\theta\bbi }, H)=(a,b,f,h,p,q)=(\bbi, e^{\gamma\bbk}\bbi,  e^{\theta\bbk}\bbi, H,  e^{s\Ima(e^{\gamma\bbk}\bbi H)}, e^{s\Ima(e^{\theta\bbk}H)}).\end{equation}  
The image of $L_s(T\times \tracelessTwoSphere)\subset SU(2)^6$ is a smooth analytic set. Moreover, since $[p,\lambda_p]=1$ and $[q,\lambda_q]=1$ for any point in this image, every point in $L_s(T\times \tracelessTwoSphere)$ determines an  $s$-perturbed traceless representation of $\Pi$. In addition, any   representation satisfying these two    conditions of Definition \ref{ptrep}  can be conjugated so that $(a,b,f, h,p,q)$ lies in   $L_s(T\times \tracelessTwoSphere)$.

To identify the intersection $L_s(T\times \tracelessTwoSphere) \cap\widetilde\NAT_s$, 
we introduce the following function.
Let
\begin{equation}\label{initial F} G(s,-)=(G_1(s,-),G_2(s,-)):T\times \tracelessTwoSphere\to\RR^2,\end{equation}
be given by 
\begin{equation}\label{initial F2} G_1(s,e^{\gamma\bbi},e^{\theta\bbi}, H)= 
\Real\left(h \ba h \bq \bp \right)=\Real(p q \bh  a \bh)\end{equation} 
and
\begin{equation}\label{initial F3} G_2(s,e^{\gamma\bbi},e^{\theta\bbi}, H) =\Real\left(  \ba h \bq \bp  \right)=\Real(pq\bh a),\end{equation} 
using Equation (\ref{Ls}).
 For any $s\in\RR$,  denote by $$\hNAT_s=(G(s,-))^{-1}(0,0)\subset T\times \tracelessTwoSphere$$ the zero set of $G(s,-)$.

\medskip

The pair of functions  $F=(F_2,F_3)=( \Real(\bp a  \bff q f), \Real(h \bp a\bff q f ))$ were introduced in Section 4 of \cite{CHKK}, and in that article, $\widehat\NAT_s$ is defined to be the preimage of zero under $F(s,-)$.   The following lemma
implies that $G(s,-)$ and $F(s,-)$ have the same zero set.

\begin{lemma}\label{shuffle}
The components of $L_s$ satisfy the relation $ \bp a  \bff q f
 =hpq\bh a,$ and
 $$G=-F.$$
 
 Moreover, the following are equivalent.
\begin{itemize}
\item The  representation determined by $L_s(e^{\gamma\bbi},e^{\theta\bbi},H)$ satisfies the $w_2$ condition,
\item $G(s, e^{\gamma\bbi},e^{\theta\bbi},H)=(0,0)$,
\item $[\ba h \bq \bp  , h]=-1$.

\end{itemize}

\end{lemma}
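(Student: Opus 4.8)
The plan is to prove the three assertions in order; the word identity $\bar p a\bar f q f = hpq\bar h a$ carries essentially all of the content, and the rest is bookkeeping in the algebra of $\tracelessTwoSphere$. For the word identity I would argue directly from Equation (\ref{Ls}). By construction $p = e^{s\Ima(\lambda_p)}$ and $q = e^{s\Ima(\lambda_q)}$ with $\lambda_p = bh$, $\lambda_q = f\bar a h$, so $p$ commutes with $\lambda_p$ and $q$ with $\lambda_q$. Using conjugation-equivariance of $\Ima$ (hence $x e^{v} x^{-1} = e^{xvx^{-1}}$), one gets $\bar f q f = e^{s\Ima(\bar a h f)}$, then $a\bar f q f = e^{s\Ima(hf\bar a)}\,a$, so the left-hand side equals $e^{-s\Ima(bh)}\,e^{s\Ima(hf\bar a)}\,a$; similarly $hpq\bar h = e^{s\Ima(hb)}\,e^{s\Ima(hf\bar a)}$, so the right-hand side equals $e^{s\Ima(hb)}\,e^{s\Ima(hf\bar a)}\,a$. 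These agree once $\Ima(bh) = -\Ima(hb)$, which holds because $b$ and $h$ are traceless: for purely imaginary quaternions $\overline{bh} = hb$, so $bh + hb = 2\Real(bh)$ is real.

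For the identity $G = -F$, set $Y := pq\bar h a$, so that the word identity reads $\bar p a\bar f q f = hY$ and $F_2 = \Real(hY)$. Since $h\in\tracelessTwoSphere$ satisfies $h^2 = -1$, we get $F_3 = \Real(h\cdot hY) = -\Real(Y) = -G_2$. And since $\bar h = -h$, cyclicity of $\Real$ gives $F_2 = \Real(hY) = -\Real(\bar h Y) = -\Real(Y\bar h) = -\Real(pq\bar h a\bar h) = -G_1$. Hence $G = -F$.

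For the equivalences: because $w = [\bar a h\bar q\bar p, h]$ by Proposition \ref{summ}, the $w_2$ condition $\rho(w) = -1$ and the equation $[\bar a h\bar q\bar p, h] = -1$ are literally the same statement, which handles the first $\Leftrightarrow$ third. For the first $\Leftrightarrow$ second, put $x = \bar a h\bar q\bar p$ and note $[x,h] = -1 \iff xhx^{-1} = -h \iff xh + hx = 0$. Writing $x = \Real(x) + \vec x$ and using $\Real(h) = 0$, one computes $xh + hx = 2\Real(x)\,\vec h - 2\langle\vec x,\vec h\rangle$; since $\vec h\neq 0$, this vanishes exactly when $\Real(x) = 0$ and $\langle\vec x,\vec h\rangle = 0$, i.e.\ when $G_2 = \Real(x) = 0$ and $G_1 = \Real(hx) = -\langle\vec h,\vec x\rangle = 0$, i.e.\ when $G(s,-) = (0,0)$.

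The only real obstacle is the word identity: it is \emph{not} a consequence of the group relations $[p,\lambda_p] = [q,\lambda_q] = 1$ alone — a naive rewrite stalls at needing $\bar p h = hp$ — so one genuinely must use the explicit slice $L_s$, specifically that $a,b,f,h$ are traceless elements of $\tracelessTwoSphere$. Once that identity is in hand, everything else reduces to routine quaternion computations with $\overline{(\cdot)} = -(\cdot)$ and $(\cdot)^2 = -1$ on $\tracelessTwoSphere$ together with cyclicity of $\Real$.
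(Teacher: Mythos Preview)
Your proof is correct and follows essentially the same route as the paper's. The only stylistic difference is in the word identity: the paper rewrites $\bar f q f=\bar a h q\bar h a$ directly from $[q,f\bar a h]=1$ and then invokes $ph=h\bar p$ (from tracelessness of $b,h$), whereas you conjugate the exponentials and reduce to $\Ima(bh)=-\Ima(hb)$; these are the same computation in different clothing, since $ph=h\bar p$ is equivalent to $h\Ima(bh)\bar h=-\Ima(bh)=\Ima(hb)$. Your treatment of the equivalences likewise just unpacks the fact the paper quotes, that $[x,h]=-1$ iff $x$ is traceless and perpendicular to $h$.
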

\begin{proof} 
 The commutator of two elements of $SU(2)$ equals $-1$ if and only if the elements are traceless and perpendicular.  Moreover, the inverse of a traceless element is its negative.

Proposition \ref{summ} implies  that $w=[\ba h \bq \bp , h]$. Since $\Real(h)=0$ this implies that $w=-1$ if and only if $\Real(\ba h \bq \bp )=0=\Real (h\ba h \bq \bp)$, that is, if and only if $G=(0,0)$.

Since $[p,bh]=1$ and $[q, f\ba h]=1$ and $\Real(h)=0=\Real(b)$ on $L_s(T\times \tracelessTwoSphere)$, $ph=h \bp,$ and
 $$ \bp a  \bff q f=\bp a \ba h q \bh a=\bp hq\bh a
 =hpq\bh a.
 $$
 It follows that 
 $$G_1=\Real\left(h \ba h \bq \bp \right)=\Real(p q \bh a \bh)=-\Real(h p q \bh a )
= -\Real(\bp a  \bff q f),$$
and
 $$G_2=\Real(\ba h \bq \bp)
 =\Real(p q \bh a)=\Real( \bh  \bp a  \bff q f)=-\Real(h \bp a\bff q f ).$$
\end{proof}

The  simple proof of the following proposition can be found in  \cite[Section 4.2]{CHKK}. 
\begin{proposition}\label{summary}  
The  involution $\hat\iota$ preserves $\hNAT_s$ and acts  freely,  and, 
when $s\ne 0$, the composite 
$$\hNAT_s\stackrel{L_s}{\hookrightarrow} \tNAT_s\to \NAT_s=\tNAT_s/SU(2)$$ induces an isomorphism of semi-analytic sets
$\hNAT_s/\hat\iota\cong \NAT_s.$\qed
\end{proposition}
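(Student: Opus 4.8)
\emph{Proof idea.} The plan is to exhibit $L_s(\hNAT_s)$ as a global slice for the diagonal conjugation action of $SU(2)$ on $\tNAT_s$, pinned down by the gauge condition $a=\bbi$ together with the requirement that $b$ and $f$ lie on the circle $C=\{e^{\gamma\bbk}\bbi\}$ of the $\bbi\bbj$-plane, and then to identify the residual symmetry of this slice with the $\ZZ/2$ generated by $\hat\iota$. The first step is the identity $L_s(\hat\iota(x))=\bbi\,L_s(x)\,\bbi^{-1}$ (diagonal conjugation of all six coordinates), valid for every $x\in T\times\tracelessTwoSphere$; this is a direct quaternion computation from Equation (\ref{Ls}), using $\bbi(e^{\gamma\bbk}\bbi)\bbi^{-1}=e^{-\gamma\bbk}\bbi$, that $\bbi$ commutes with $e^{t\bbi}$, and that $\bbi H\bbi^{-1}=-\bbi H\bbi$ is precisely the third coordinate of $\hat\iota$. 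Because every condition of Definition \ref{ptrep} is conjugation invariant and, by Lemma \ref{shuffle}, $x\in\hNAT_s$ exactly when $L_s(x)$ satisfies the $w_2$ condition, this identity immediately gives that $\hat\iota$ preserves $\hNAT_s$ and that $\hat\iota$-related points have conjugate images, so the composite in the statement descends to a map $\hNAT_s/\hat\iota\to\NAT_s$.

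For surjectivity I would gauge-fix. Given $\rho\in\tNAT_s$, conjugate so that $\rho(a)=\bbi$ (legitimate since $a$ is a traceless meridian), then use the residual circle $Z(\bbi)=\{e^{t\bbi}\}$ to move $\rho(b)$ onto $C$. Then $\rho(f)$ lands on $C$ automatically: since $e=ba\bar f$ is a meridian, $\rho(e)=e^{\gamma\bbk}\rho(f)$ is traceless, and a one-line computation gives that $\Real(e^{\gamma\bbk}\rho(f))$ equals $-\sin\gamma$ times the $\bbk$-component of $\rho(f)$, so that component vanishes when $\sin\gamma\neq0$; if $\sin\gamma=0$ a residual circle is still available to do the job. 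The perturbation clauses (3) of Definition \ref{ptrep} say exactly that the resulting $p,q$ agree with Equation (\ref{Ls}), so the conjugated representation lies in $L_s(T\times\tracelessTwoSphere)$ and, still satisfying the $w_2$ condition, in $L_s(\hNAT_s)$. For injectivity, suppose $g\,L_s(x)\,g^{-1}=L_s(y)$ with $x,y\in\hNAT_s$: the first coordinate forces $g\in Z(\bbi)$, and comparing the $b$- (and if inconclusive the $f$-) coordinate on $C$ forces $e^{2t\bbi}=\pm1$, hence $g\in\{\pm1,\pm\bbi\}$ and $y\in\{x,\hat\iota(x)\}$. The one exception is a corner-type point, with $\gamma,\theta\in\{0,\pi\}$, for which the whole circle $Z(\bbi)$ preserves the slice.

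The step I expect to be the main obstacle --- and the only place where $s\neq0$ is used essentially --- is to show that \emph{no corner-type point lies in $\hNAT_s$ when $s\neq0$}; this simultaneously completes injectivity and proves $\hat\iota$ acts freely (its fixed points on $L_s(T\times\tracelessTwoSphere)$ are precisely the corner-type points with $H=\pm\bbi$). Here one genuinely needs both components $G_1,G_2$ rather than just one: when $H=\pm\bbi$ a direct evaluation gives $\bar a h\bar q\bar p=\pm e^{\mp s\bbi}$, whose real part $\pm\cos s$ is nonzero, so the $w_2$ condition fails; and with $b=f=\bbi$ and $H=x\bbi+v$, writing $\ell=|v|$ with $x^2+\ell^2=1$, the vanishing of $G_2$ forces $x=\ell\tan(s\ell)$ while the vanishing of $G_1$ forces $x=-\ell\tan(s\ell)$, which are incompatible for small $s\neq0$; the remaining three sign choices for $(\gamma,\theta)\in\{0,\pi\}^2$ are dispatched by the identical calculation. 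This shows $\hNAT_s/\hat\iota\to\NAT_s$ is a continuous bijection.

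Finally, removing the corner-type points also removes every reducible representation from $\tNAT_s$ when $s\neq0$, so all stabilizers are central; hence $\tNAT_s\to\NAT_s$ is a principal $PSU(2)$-bundle in the semi-analytic category and $L_s(\hNAT_s)$ is a transverse slice meeting each fibre in a single $\ZZ/2$-orbit. Standard slice theory then promotes the continuous bijection $\hNAT_s/\hat\iota\to\NAT_s$ to an isomorphism of semi-analytic sets --- equivalently, for $s\neq0$ both sides are smooth compact surfaces and the map is a smooth bijection which is an immersion by that transversality, hence a diffeomorphism. This concluding step is routine, and is the content of \cite[Section 4.2]{CHKK}.
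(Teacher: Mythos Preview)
Your argument is correct and is precisely the standard gauge-fixing/slice argument one expects here; your explicit check that $G_2=0$ forces $x=\ell\tan(s\ell)$ while $G_1=0$ forces $x=-\ell\tan(s\ell)$ at a corner with $b=f=\bbi$ is accurate and cleanly rules out the corner-type points for small $s\ne0$. The paper gives no proof of its own for this proposition---it simply cites \cite[Section 4.2]{CHKK}---so there is nothing to compare against beyond noting that your approach is the natural one and almost certainly coincides with that reference.
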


  \subsection{Extension to the  bypass tangle}

 In this section we provide the counterparts of Propositions \ref{summ} and  \ref{summary} for the bypass tangle, enhanced with the perturbation curves  indicated in Figure \ref{earpertfig}.b.

The following variant of Proposition \ref{summ} is proved similarly. In the statement,  $\Pi'$ denotes the fundamental group of the complement of the enhanced bypass tangle.

\begin{proposition}\label{summ2}
The set of loops
$\{ a,b,  f,p,q, h\}$ generates $\Pi'$, and $\Pi'$ has the presentation
$$\Pi'=\langle  a,b,  f,p,q, h~|~ [p,\lambda_p], ~ [q,\lambda_q]\rangle,
$$
where  $\lambda_p=bh\text { and }\lambda_q=f\bar{a} h$ denote 
the longitudes of the perturbation curves. 
As in the earring case, the remaining labelled loops in Figure \ref{earpertfig}.b  are expressed in terms of these as 
$$c=\bq\bp b p q, ~ e=ba\bff, ~g= \bq e q =\bq ba \bff q,~d=\bc g f=\bq\bp \bb p  ba\bff q f
.$$

The inclusions of the boundary components $$ \FPS^i\subset  S^2\times I,~ i=\rout,\rin$$ induce peripheral subgroups $$\Pi^\rout=\langle a,b,e,f~|~ ba\bff \be\rangle, \qquad \Pi^\rin=\langle c,d,g,f~|~ cd\bff \bg\rangle,$$ 
with the inclusion homomorphisms $\Pi^\rout \to \Pi, ~ \Pi^\rin \to \Pi$ preserving the labeling of the generators.

\end{proposition}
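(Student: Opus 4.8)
The plan is to prove Proposition \ref{summ2} by adapting the argument for Proposition \ref{summ}, using the fact that the bypass and earring tangle complements differ only in a controlled way. First I would fix a projection of the enhanced bypass tangle (as in Figure \ref{earpertfig}.b) and run the Wirtinger algorithm, reading off one generator per arc of the diagram and one relation per crossing. Alternatively, since the bypass tangle in $S^2\times I$ is built from the same local pieces as the earring tangle except near the trivalent vertex/bypass arc, I would decompose $S^2\times I$ minus the enhanced tangle along a sphere separating the ``bypass part'' from the rest and apply van Kampen. The key observation to isolate is that the perturbation curves, their longitudes $\lambda_p = bh$ and $\lambda_q = f\bar a h$, and the peripheral structure are introduced in exactly the same way as in the earring case, so those parts of the presentation transfer verbatim; the only potential difference lies in how the strand-merging at the trivalent vertex imposes relations among $a,b,e,f,c,d,g$.

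The main steps, in order: (1) write down the Wirtinger presentation of $\Pi'$ from the diagram, obtaining generators $a,b,e,f,c,d,g,h,p,q$ (and $w$ if present) with the crossing relations; (2) use Tietze transformations to eliminate $c,d,e,g$ (and $w$) via the stated formulas $e = ba\bar f$, $g = \bar q e q$, $c = \bar q\bar p b p q$, $d = \bar c g f$, checking that each such elimination is forced by a Wirtinger relation at the corresponding crossing; (3) verify that after these eliminations the only surviving relations are the two perturbation-curve commutator relations $[p,\lambda_p]$ and $[q,\lambda_q]$ — in particular confirm that the relation coming from the bypass vertex becomes trivial or redundant once $e,g,c,d$ are expressed in the remaining generators; (4) identify the peripheral subgroups $\Pi^\rout$ and $\Pi^\rin$ by restricting to the boundary four-punctured spheres, reading off the single sphere relation $ba\bar f\bar e$ (resp.\ $cd\bar f\bar g$) as the product-of-meridians-equals-one relation on $(S^2,4)$, and checking that inclusion preserves the generator labels.

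The contrast with the earring case that I want to pin down precisely is the absence of the element $w$ and the $w_2$-condition machinery: in the earring tangle, the arc dual to $w_2$ punctures the boundary torus of the earring neighborhood and produces the generator $w = [\bar a h\bar q\bar p, h]$, whereas in the bypass tangle one works in the trivial bundle (as noted in the caption of Figure \ref{atomfig}), so no such arc or relation appears. Concretely this means the bypass tangle complement deformation-retracts onto a graph whose fundamental group is the earring-tangle group with the extra loop $w$ (and its defining relation) removed, which is exactly why the presentation of $\Pi'$ is identical in form to that of $\Pi$ but the $w_2$ condition of Definition \ref{ptrep}(1) is simply dropped.

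The hard part will be step (3): carefully tracking the Wirtinger relation at the trivalent vertex and the bypass crossings through the Tietze eliminations to confirm it collapses to a consequence of $[p,\lambda_p]$ and $[q,\lambda_q]$ rather than contributing a genuinely new relation — in other words, verifying that the bypass geometry, despite looking different from the earring, yields the same abstract group. I expect this to follow from the same kind of direct quaternion/loop bookkeeping used implicitly in Proposition \ref{summ}, with the main check being that the longitudes of the perturbation curves are unchanged because the perturbation curves are attached to the strands labeled $b, f, a, h$ in the same local configuration in both tangles.
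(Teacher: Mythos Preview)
Your Wirtinger-plus-Tietze plan is exactly what the paper intends: it offers no proof beyond saying the proposition ``is proved similarly'' to Proposition~\ref{summ}, which in turn is justified by Wirtinger's algorithm or direct topological arguments.

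One correction to your framing, though it does not break the argument: in the earring presentation $w$ is \emph{not} an additional generator with its own relation to be removed --- it is already the word $[\bar a h\bar q\bar p, h]$ in the six generators, and the presentations of $\Pi$ and $\Pi'$ are literally the same (the paper remarks explicitly that $\Pi\cong\Pi'$). So the passage from earring to bypass is not ``delete $w$ and its relation''; there is nothing to delete at the group level. The genuine difference is only in which conditions are imposed on \emph{representations}: $\rho(w)=-1$ for the earring versus the traceless condition on all three meridians at each bypass vertex. Your step~(3) concern --- that the trivalent-vertex relation must be shown redundant --- is the right thing to verify, but the reason it collapses is the same local Wirtinger bookkeeping as in the earring case, not the absence of $w$.
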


  For any $s\in \RR$, we call a homomorphism $\rho\in \Hom(\Pi',SU(2))$ an  {\em $s$-perturbed traceless $SU(2)$ representation }  if  $\Real(\rho(z))=0
$  for all meridians of the (unenhanced) bypass tangle, and the perturbation condition 
$$\rho(p)=e^{s \Ima(\rho(\lambda_p))},~\rho(q)=e^{s \Ima(\rho(\lambda_q))}$$
holds for the meridians of the perturbation curves.

 \bigskip

 As before, we abuse notation and consider
evaluation on the generators $(a,b,f,p,q,h)$ as 
an inclusion $\Hom(\Pi',SU(2))\subset SU(2)^6$.
Then, for fixed $s\in \RR$,   $$\widetilde\NAT_s'\subset \Hom(\Pi', SU(2))\subset SU(2)^6$$ is defined to be the (real analytic) set of $s$-perturbed traceless $SU(2)$ representations of $\Pi'$ and
 $$\NAT_s'=\widetilde\NAT_s'/SU(2)$$ its quotient by conjugation.

  The  embedding $L_s:T\times \tracelessTwoSphere\subset SU(2)^6$ of Equation (\ref{s-embed}) meets every orbit in $\widetilde\NAT_s'$.  
We introduce the replacement $G'$, in the case of the bypass tangle,   for  the function $G:T\times \tracelessTwoSphere\to \RR^2$ of Equation (\ref{initial F}).

Recall that on $T\times \tracelessTwoSphere$, the relations $[\lambda_p,p]=1$ and $[\lambda_q,q]=1$ hold.  Just as in the proof of Lemma \ref{shuffle},  $[p,bh]=1$, $[q, f\ba h]=1$, $\Real(h)=0=\Real(b)$ on $L_s(T\times \tracelessTwoSphere)$, $ph=h \bp,$ and
 $$ \bp a  \bff q f=\bp a \ba h q \bh a=\bp hq\bh a
 =hpq\bh a.
 $$
This  implies that 
$d$ can be rewritten 
$$
d=\bq\bp \bb p  ba\bff q f=\bq\bp^2 \bb   ba\bff q f=\bq \bp h p q \bh a.
$$

Define $G'(s,-):T\times \tracelessTwoSphere\to \RR^2$ by
\begin{align}\label{F'}G'(s,e^{\gamma\bbi},e^{\theta\bbi}, h)&=(G_1'(s,-),G_2'(s,-))=(\Real( d),\Real(\bh a))\\
\nonumber &= 
(\Real(\bq \bp h p q \bh a),\Real(\bh a)).\\
\nonumber \end{align}
Furthermore, define 
$$\hNAT_s'=(G'(s,-))^{-1}(0,0)\subset T\times \tracelessTwoSphere$$ to be the zero set of $G'(s,-)$.
The easily verified counterpart of Proposition \ref{summary}  is:
\begin{proposition}\label{summary2}  
The  involution $\hat\iota$ preserves $\hNAT_s'$ and acts  freely, and,  when $s\ne 0$,  the composite 
$$\hNAT_s'\stackrel{L_s}{\hookrightarrow}\tNAT_s'\to \NAT_s'=\tNAT_s'/SU(2)$$ induces an isomorphism of semi-analytic sets
$\hNAT_s'/\hat\iota\cong \NAT_s'.$\qed
\end{proposition}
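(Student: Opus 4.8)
The plan is to transcribe the proof of Proposition~\ref{summary} from \cite[Section 4.2]{CHKK}, changing only what the bypass presentation (Proposition~\ref{summ2}) and the definition of $G'$ force. The three ingredients are: (i)~identify $L_s^{-1}(\tNAT_s')$ with $\hNAT_s'$; (ii)~show the residual conjugation freedom on the slice $L_s(T\times\tracelessTwoSphere)$ is exactly $\hat\iota$, which acts freely on $\hNAT_s'$; (iii)~deduce that the composite is $2$-to-$1$ with fibers the $\hat\iota$-orbits, and that the resulting bijection $\hNAT_s'/\hat\iota\to\NAT_s'$ is a semi-analytic isomorphism.

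For (i): every point of $L_s(T\times\tracelessTwoSphere)$ already satisfies the perturbation relations and has $a,b,f,h$ traceless, so it determines an $s$-perturbed traceless representation of $\Pi'$ iff the remaining meridians are traceless. Using conjugation-invariance of $\Real$ and the commutation identities recorded just before the definition of $G'$, on the slice one has $\Real(c)=\Real(\bp b p)=\Real(b)=0$ and $\Real(g)=\Real(\bq e q)=\Real(e)$, while $e=ba\bff=e^{(\gamma+\theta)\bbk}\bbi$ is traceless directly from the formula for $L_s$; so the only surviving conditions are $\Real(d)=0$ and $\Real(\bh a)=0$, i.e.\ $(\gamma,\theta,H)\in\hNAT_s'$. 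For (ii): a one-line check gives $L_s\circ\hat\iota=\bigl(\bbi(\cdot)\bbi^{-1}\bigr)\circ L_s$, so $\hat\iota$ is the restriction of conjugation by $\bbi$; conjugation-invariance of $\Real$ then gives $G'\circ\hat\iota=G'$, so $\hat\iota$ preserves $\hNAT_s'$, and it is obviously an involution. Since $\Real(\bh a)=0$ forces $H\perp a=\bbi$, we have $\hNAT_s'\subseteq\{H\in\iperpcirc\}$, and on this set the $H$-coordinate of $\hat\iota$ is $H\mapsto-\bbi H\bbi=-H$, which fixes no unit vector; hence $\hat\iota$ acts freely on $\hNAT_s'$.

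For (iii), the only place the hypothesis $s\ne0$ is used: if $g\in SU(2)$ carries one slice point to another it fixes $a=\bbi$, so $g=e^{t\bbi}$, and inspecting the $b$- or $f$-coordinate forces $g\in\{\pm1,\pm\bbi\}$ — so the two points coincide or are $\hat\iota$-related — \emph{unless} $e^{\gamma\bbi},e^{\theta\bbi}\in\{\pm1\}$, the locus on which the slice stabilizer jumps to a full circle and conjugation by $e^{t\bbi}$ moves $h$ through all of $\iperpcirc$. The hard part will be to rule that locus out; here a direct computation with the bypass holonomies $p=e^{s\Ima(\lambda_p)}$, $q=e^{s\Ima(\lambda_q)}$ (the analogue of the one in \cite[Section 4.2]{CHKK}) gives $\Real(\rho(d))=\pm\sin^2(2s)$ there, which is nonzero for small $s\ne0$, so the locus is disjoint from $\hNAT_s'$. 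With the stabilizer pinned down, $\hNAT_s'\hookrightarrow\tNAT_s'\to\NAT_s'$ is surjective (the slice meets every orbit, as recalled above) and $2$-to-$1$ with fibers the $\hat\iota$-orbits, so it descends to a continuous bijection $\hNAT_s'/\hat\iota\to\NAT_s'$; that this is a semi-analytic isomorphism follows as in \cite{CHKK} from analyticity of $L_s$ and algebraicity of the $SU(2)$-action, $L_s$ providing analytic local sections of the (open) quotient map. Apart from the displayed $\Real(\rho(d))=\pm\sin^2(2s)$ calculation, every step is a line-by-line copy of the earring argument.
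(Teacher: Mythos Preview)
Your proposal is correct and follows precisely the approach the paper intends: the paper states this proposition with a terminal \qed\ and no explicit argument, labeling it the ``easily verified counterpart'' of Proposition~\ref{summary} (whose proof is in turn deferred to \cite[Section 4.2]{CHKK}), and you have faithfully carried out that adaptation to the bypass setting, including the key corner computation $\Real(\rho(d))=\pm\sin^2(2s)$ that rules out the jump-stabilizer locus when $s\ne 0$.
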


\noindent{\em Remark.} Although $\Pi$ and $\Pi'$ are
 isomorphic, their $s$-perturbed traceless character varieties $\NAT_s$ and $\NAT_s'$ are slightly different.  
 In contrast to the earring tangle, where the arc $w$ represents the obstruction to lifting     an $SO(3)$ bundle to an $SU(2)$ bundle,  traceless $s$-perturbed representations on the bypass tangle determine flat structures on the trivial bundle,  and hence the $w_2$ condition is absent in the definition of $\NAT' _s$.   The perturbation condition takes the same form (Condition (3) of Definition \ref{ptrep})  for 
 $\NAT_s$ and $\NAT_s'$. 
 
 In particular, the trivalent vertices have  different meaning in the earring and bypass markings: at a trivalent vertex on the earring, the meridian $w$ is sent to $-1$ and the meridians of the other two incoming edges are sent to traceless elements of $SU(2)$ by any representation in $\NAT_s$. In contrast, at a trivalent vertex on the bypass, all three meridians of incoming edges are sent   to traceless elements of $SU(2)$ by any representation in $\NAT_s'$.

The maps $G(s,-) $ and $G'(s,-)$, which cut out $\hNAT_s$ and $\hNAT_s'$ in $T\times \tracelessTwoSphere$, are different, but 
 our main results hold for both $u_s$ and $u'_s$, as a consequence of the calculation (Lemma \ref{estimate2})
that their second order expansions in $s$ are similar.

\subsection{The map $\NAT_s\to P_0$ induced by restriction to $\Pi_0$}

Going forward, we use ${\rm proj}$ to denote the projection from a product of spaces to a factor; which factor will be made clear by the context. 

\medskip

Recall from Proposition \ref{summ} that the two peripheral subgroups are given by 
  $\Pi^\rout=\langle a,b,e,f~|~ ba=ef\rangle$,  and $ \Pi^\rin=\langle c,d,g,f~|~ cd=gf\rangle.$ 
  These map to $\Pi$ and $\Pi'$  preserving labels, i.e., by the identical formulas for $c,d,e,$ and $g$ in  Propositions \ref{summ} and \ref{summ2}.
 
    We have the following commutative diagram:
\begin{equation} 
\begin{tikzcd}\label{lifttoP0}
\hNAT_s  
\arrow[r, hook] \arrow[d, "/ \hat \iota"] & T\times  \tracelessTwoSphere \arrow[r, "{\rm proj}"] &T   \arrow[d, "/ \iota"]\\
 \NAT_s= \hNAT_s / \hat \iota 
\arrow[r , "u_s"] 
& P_0 \times P_1 \arrow[r, "{\rm proj}"] & 
P_0=T/ \iota
\end{tikzcd}
\end{equation}
 as well as an identical diagram obtained by replacing $\NAT_s,  \hNAT_s$ by $\NAT_s',  \hNAT_s'$.

\bigskip

We summarize the discussion of this section in the following proposition.   Set $\mathbb{S}_{\bbi}^*=\mathbb{S}_{\bbi}\setminus \{\pm \bbi \}$.  The involution $\hat\iota$ acts freely on $T\times \mathbb{S}_{\bbi}^*$, so that $T\times_{\hat\iota} \mathbb{S}_{\bbi}^*$ is a smooth manifold. (The quotient $T\times_{\hat\iota} \mathbb{S}_{\bbi}$ has eight singular points with neighborhoods cones on $\mathbb{R}P^3$.)

\begin{proposition}\label{master}
  For any fixed $s$, the zero sets $\widehat\NAT_s, ~\widehat\NAT_s'$ of the two analytic functions
 \begin{equation*}
 G(s,-)=(\Real(p q \bh  a \bh), \Real(pq\bh a)), ~ G'(s,-)=(\Real(\bq \bp h p q \bh a),\Real(\bh a)),
\end{equation*}
 are analytic varieties   in $T\times \tracelessTwoSphere^*$.    These are preserved by the involution $\hat \iota$, and $L_s$ induces  isomorphisms $\widehat\NAT_s/\hat\iota\cong\NAT_s$ and $\widehat\NAT_s'/\hat\iota\cong\NAT_s'$.  Restriction to the boundary defines maps
 $$u_s:\NAT_s\to P_0  \times P_1  ~\text{ and }u'_s:\NAT'_s\to P_0  \times P_1.$$
 The composites of $u_s$, $u_s'$ with the projection to $P_0$ are induced by the projection $T\times \tracelessTwoSphere\to T$.
\end{proposition}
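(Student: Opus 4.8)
The plan is to assemble Proposition \ref{master} as a bookkeeping summary of the results already established in Section \ref{sumsec}, so the ``proof'' is essentially a sequence of citations to the preceding lemmas and propositions together with one small verification about the $\tracelessTwoSphere^*$-locus. First I would note that the formulas for $G(s,-)$ and $G'(s,-)$ are literally the formulas recorded in Equations (\ref{initial F2})--(\ref{initial F3}) (using Lemma \ref{shuffle} to pass to the symmetric form $\Real(pq\bh a\bh)$, $\Real(pq\bh a)$) and in Equation (\ref{F'}), so nothing is to be proved there beyond pointing at the equations. Analyticity of the zero sets follows because $G(s,-)$ and $G'(s,-)$ are real-analytic functions on the analytic manifold $T\times\tracelessTwoSphere$ (indeed they are compositions of the polynomial character maps with the analytic embedding $L_s$ of Equation (\ref{Ls}), whose only non-algebraic ingredients are the exponentials $e^{s\Ima(\cdots)}$); hence their zero sets are analytic varieties.

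Next I would address why these varieties lie in $T\times\tracelessTwoSphere^*$ rather than merely $T\times\tracelessTwoSphere$. The point is that if $H=\pm\bbi$ then $h$ commutes with $a=\bbi$, and one checks directly from $G(s,-)$ (resp.\ $G'(s,-)$) that the $w_2$/traceless conditions cannot be met: with $H=\pm\bbi$ the relevant word evaluates to a unit quaternion with nonzero real part, so $(0,0)$ is not attained. (Alternatively, this is exactly the statement recorded in the paragraph preceding Proposition \ref{master} that $\hat\iota$ acts freely on $T\times\tracelessTwoSphere^*$ while the full quotient has the eight cone points; the fibers over the corners of $P_0$ are the abelian loci, which are excised.) I would phrase this as a one-line computation or simply cite the discussion in \cite{CHKK}.

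Then the two displayed claims about $\hat\iota$-invariance and the induced isomorphisms $\hNAT_s/\hat\iota\cong\NAT_s$, $\hNAT_s'/\hat\iota\cong\NAT_s'$ are precisely Propositions \ref{summary} and \ref{summary2}, so I would cite them verbatim. The existence of the restriction maps $u_s:\NAT_s\to P_0\times P_1$ and $u_s':\NAT_s'\to P_0\times P_1$ is Equation (\ref{rest}) together with Proposition \ref{summ} (resp.\ \ref{summ2}), which identifies the peripheral subgroups $\Pi^\rout,\Pi^\rin$ and shows the inclusions preserve generator labels, so that restriction of representations is well defined on conjugacy classes. Finally, the statement that ${\rm proj}_0\circ u_s$ and ${\rm proj}_0\circ u_s'$ are induced by the coordinate projection $T\times\tracelessTwoSphere\to T$ is exactly the commutativity of diagram (\ref{lifttoP0}): under $L_s$ the generators $a,b,f$ of $\Pi^\rout$ are sent to $\bbi,\ e^{\gamma\bbk}\bbi,\ e^{\theta\bbk}\bbi$, which is the formula (\ref{TtoP0}) defining $T\to P_0$, and this is unaffected by the presence of $H$ or by the choice of earring versus bypass since $c,d,e,g$ have the same expressions in Propositions \ref{summ} and \ref{summ2}.

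There is essentially no obstacle here: the proposition is a restatement, and the only content not already displayed is the verification that the zero loci avoid $H=\pm\bbi$, which is a short direct computation (or a citation). I would therefore write the proof as a short paragraph collecting these references. Concretely:

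\begin{proof}
The formulas for $G(s,-)$ and $G'(s,-)$ are those of Equations (\ref{initial F2}), (\ref{initial F3}) (in the symmetric form provided by Lemma \ref{shuffle}) and (\ref{F'}).  Each is a real-analytic function on $T\times\tracelessTwoSphere$, being the composite of the polynomial character maps with the analytic embedding $L_s$ of Equation (\ref{Ls}); hence its zero set is an analytic variety.  If $H=\pm\bbi$ then $h=\pm\bbi$ commutes with $a=\bbi$, and a direct evaluation shows that the relevant word has nonzero real part, so neither $G(s,-)$ nor $G'(s,-)$ vanishes there; equivalently the corner fibers are excised, as recorded above.  Thus $\hNAT_s,\hNAT_s'\subset T\times\tracelessTwoSphere^*$.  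Invariance under $\hat\iota$ and the induced isomorphisms $\hNAT_s/\hat\iota\cong\NAT_s$, $\hNAT_s'/\hat\iota\cong\NAT_s'$ are Propositions \ref{summary} and \ref{summary2}.  Restriction of representations to the peripheral subgroups identified in Propositions \ref{summ} and \ref{summ2} gives the maps $u_s:\NAT_s\to P_0\times P_1$ and $u_s':\NAT_s'\to P_0\times P_1$ of Equation (\ref{rest}), the inclusions $\Pi^\rout\to\Pi,\Pi'$ preserving the labels $a,b,e,f$.  Finally, under $L_s$ the peripheral generators $a,b,f$ are sent to $\bbi,\,e^{\gamma\bbk}\bbi,\,e^{\theta\bbk}\bbi$, which is precisely the map $T\to P_0$ of Equation (\ref{TtoP0}); since $e,g$ are likewise computed from the same words in both Propositions \ref{summ} and \ref{summ2}, diagram (\ref{lifttoP0}) commutes for both $u_s$ and $u_s'$, i.e.\ ${\rm proj}_0\circ u_s$ and ${\rm proj}_0\circ u_s'$ are induced by the projection $T\times\tracelessTwoSphere\to T$.
\end{proof}
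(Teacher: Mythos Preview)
Your proposal is correct and matches the paper's treatment: the paper gives no explicit proof of Proposition~\ref{master}, presenting it explicitly as a summary of the preceding discussion, and your proof simply collects the relevant citations (Propositions~\ref{summ}, \ref{summ2}, \ref{summary}, \ref{summary2}, Lemma~\ref{shuffle}, and diagram~(\ref{lifttoP0})). One small caution: your ``direct evaluation'' that $G(s,-)$ does not vanish at $H=\pm\bbi$ is immediate for the bypass ($G_2'=\Real(\bh a)=\pm1$) but less obvious for the earring at arbitrary $s$; the paper sidesteps this by working in $T\times\tracelessTwoSphere^*$ from the outset and invoking the freeness of $\hat\iota$ from Proposition~\ref{summary}, so you may prefer simply to cite that rather than claim a computation.
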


\medskip

 \noindent{\em Remark.} The remainder of this article analyzes $u_s$ and $u'_s$. All theorems which follow are true for both $\NAT_s$ and $\NAT_s'$.   In all cases, the calculations and proofs are simpler for $\NAT_s'$ than $\NAT_s$.  The reason is that that $G_2'$  is a simpler function than $G_2$.

\section{Expansions in $s$ and the identification of $\NAT_s,\NAT_s'$ as genus three surfaces.}\label{coords}

The main result of this section is Lemma \ref{estimate2}, which provides   asymptotic expansions in $s$ of the functions $G$ and $G'$ of Proposition  \ref{master}.  These are then used, similarly   to the argument of \cite{CHKK}, to show that 
$(0,0)$ is a regular value of $G(s,-)$ and $G'(s,-)$  for $0<|s|$ sufficiently small, so that $\hNAT_s$ and $\hNAT_s'$ are smooth surfaces.   We show that $\hNAT_s$ and $\hNAT_s'$ are compact genus five surfaces on which the involution $\hat\iota$ acts freely, with quotients $\NAT_s$ and $\NAT_s'$ genus three surfaces.

\subsection{Coordinates on $T\times\tracelessTwoSphere$ }\label{sec3.1}

We  set notational conventions and introduce coordinates $s, \nu, \tau$.  We refer the reader to Equation (\ref{Ls}) where the $s$-dependent   embedding $L_s:T\times C_{\bbi}\hookrightarrow SU(2)^6$, with coordinates $(a,b,f,h,p,q)$,   is defined.

 First, to simplify formulas, we write $$(\gamma, \theta)\in(\RR/2\pi\ZZ)^2 \mbox{ for points } (e^{\gamma \bbi}, e^{\theta \bbi}) \in T.$$   
   Second, observe that when the perturbation parameter $s$ equals zero, $G=G'=(0, \Real(\bh a))$. Hence
there exists an $s_0>0$ so that for any perturbation parameter $s\in [-s_0,s_0]$, $G_1(s,e^{\gamma\bbi},e^{\theta\bbi},h)=0$  and $G'_2(s,e^{\gamma\bbi},e^{\theta\bbi},h)=0$ each imply that  $|\Real (h\bbi)|< \tfrac {1} { 2} $.
Thus, we introduce coordinates 
$(\nu,\tau)\in [-\tfrac1 2 ,\tfrac1 2 ]\times (\RR/2\pi\ZZ)$ on this annulus in $\tracelessTwoSphere$,   and set
\begin{equation*}h(\nu,\tau)=\nu \bbi +\sqrt{1-\nu^2}e^{\tau\bbi}\bbj.
\end{equation*}

Consider the  compact domain 
\begin{equation}
\label{domainD}
\mathcal D= [-s_0,s_0]\times (\RR/2\pi\ZZ)^2 \times [-\tfrac1 2,\tfrac 1 2]\times (\RR/2\pi\ZZ).
\end{equation}
and define
$\bar L:\mathcal D\to SU(2)^6$ by
\begin{equation}\label{Lbar} \bar L (s,\gamma,\theta,\nu,\tau)= (a,b,f,h,p,q)= (\bbi,e^{\gamma\bbk}\bbi, e^{\theta\bbk}\bbi, \nu \bbi +\sqrt{1-\nu^2}e^{\tau\bbi}\bbj,  e^{s\Ima(e^{\gamma\bbk}\bbi h)}, e^{s\Ima(e^{\theta\bbk}h)}).\end{equation}
By construction, the image of $\bar L(s,-)$ lies in $L_s(T\times \tracelessTwoSphere)$ and contains $\widehat\NAT_s$,  $\widehat\NAT_s'$ for $|s|<s_0$.

The involution $\hat \iota$  corresponds in these coordinates to \begin{equation}\label{iota vs tau plus pi}(s,\gamma, \theta, \nu, \tau)\mapsto (s,-\gamma, -\theta, \nu, \tau+\pi).\end{equation}  
We denote equivalence classes by $[s, \gamma, \theta, \nu, \tau]$.

 \medskip

\noindent{\em Remark.} In the remainder of this article, some assertions may hold for $|s|<s_0$  only after  $s_0$ has been shrunk further, and correspondingly restricting the domain $\mathcal{D}$.  We opt to use the phrase ``after shrinking $s_0$ if needed,'' rather than introducing new constants.

  \subsection{The maps $\pi_0$ and $\pi_1$}\label{pi0pi1}
 Define
 \begin{equation}\label{pizero}
\pi_0:\mathcal{D}/\hat{\iota}\to P_0, ~ \pi_0 ([ s,\gamma, \theta, \nu, \tau])=[\gamma, \theta].
\end{equation}
Equivalently, using Lemma \ref{mapstor3}, this map can be written as
$$
 \pi_0([ s,\gamma, \theta, \nu, \tau])=\big(\Real(\rho(b\bar a)),\Real(\rho(f\bar a)), \Real(\rho(b\bar f))\big)=(\cos\gamma,\cos\theta,\cos(\gamma-\theta))\in P_0\subset\RR^3.
$$
Define  \begin{equation}\label{pione}
\pi_1:\mathcal{D}/\hat{\iota}\to   \RR^3, ~ \pi_1([ s,\gamma, \theta, \nu, \tau])=\big(\Real(\rho(c\bar d)), \Real(\rho(f\bar d)),\Real(\rho(c\bar f))\big) \end{equation} where $c=\bq \bp b p q $ and $d=\bq \bp \bar b p b a \bff q f $ express $c,d$ in terms of the components of $\bar L$. 
By Lemma \ref{mapstor3}, the restriction of $\pi_1$ to either $\widehat \NAT_s$  or 
$\widehat \NAT'_s$ takes values in the pillowcase
$$\{x^2+y^2+z^2-2xyz=1, x^2\leq 1, y^2\leq 1, z^2\leq 1\}\subset\RR^3.$$

 The proof of the following lemma is clear.
\begin{lemma}\label{rs=p0p1}
 The map $u_s$  equals the composite
 $$u_s:\NAT_s=\{s\}\times \NAT_s\subset \mathcal{D}/ \hat{\iota}\xrightarrow{\pi_0\times \pi_1}P_0\times P_1.$$
The identical statement holds for $u_s'$ and $\NAT_s'$. \qed
\end{lemma}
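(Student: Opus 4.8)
The plan is to unwind the definitions and invoke Lemma~\ref{mapstor3} together with the parametrization $\bar L$ and Proposition~\ref{summary} (resp.~Proposition~\ref{summary2} in the bypass case). Recall that $u_s\colon\NAT_s\to P_0\times P_1$ is by definition (Equation~(\ref{rest})) the map on conjugacy classes induced by restricting a representation $\rho\in\tNAT_s$ to the peripheral subgroups $\Pi^\rout$ and $\Pi^\rin$. By Proposition~\ref{summary}, for $s\neq 0$ the slice $\hNAT_s\subset T\times\tracelessTwoSphere$ meets every conjugation orbit in $\tNAT_s$ and $L_s$ descends to an isomorphism $\hNAT_s/\hat\iota\cong\NAT_s$; since $\hNAT_s$ lies in the region where $|\Real(h\bbi)|<\tfrac12$ for $|s|<s_0$, the reparametrization $\bar L$ of $L_s$ furnishes the identification $\{s\}\times\NAT_s\subset\mathcal D/\hat\iota$ appearing in the statement. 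It therefore suffices to check that $\operatorname{proj}_0\circ u_s$ and $\operatorname{proj}_1\circ u_s$, transported through this identification, coincide with $\pi_0$ and $\pi_1$.

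For the first factor, Lemma~\ref{mapstor3} says $\operatorname{proj}_0\circ u_s$ sends $\rho$ to $\big(\Real(\rho(b\ba)),\Real(\rho(f\ba)),\Real(\rho(b\bff))\big)$. Substituting the components of $\bar L(s,\gamma,\theta,\nu,\tau)$, namely $a=\bbi$, $b=e^{\gamma\bbk}\bbi$ and $f=e^{\theta\bbk}\bbi$, gives $b\ba=e^{\gamma\bbk}$, $f\ba=e^{\theta\bbk}$ and $b\bff=e^{(\gamma-\theta)\bbk}$, whose real parts are $\cos\gamma$, $\cos\theta$ and $\cos(\gamma-\theta)$; under the embedding~(\ref{curvypillow}) this triple is exactly the point $[\gamma,\theta]\in P_0$, which is the defining formula~(\ref{pizero}) for $\pi_0$. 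For the second factor, Lemma~\ref{mapstor3} gives $\operatorname{proj}_1\circ u_s(\rho)=\big(\Real(\rho(c\bar d)),\Real(\rho(f\bar d)),\Real(\rho(c\bff))\big)$; substituting the same components into the expressions $c=\bq\bp b p q$ and $d=\bq\bp\bb p b a\bff q f$ from Proposition~\ref{summ} reproduces verbatim the map $\pi_1$ of~(\ref{pione}). Finally, each of these functions is a real part of a trace, hence invariant under $SU(2)$-conjugation; since the fibers of $\hNAT_s\to\NAT_s$ are conjugation orbits — indeed $\hat\iota$ corresponds under $L_s$ to conjugation by $\bbi$ — the product $\pi_0\times\pi_1$ descends to $\mathcal D/\hat\iota$ and agrees there with $u_s$ on $\{s\}\times\NAT_s$. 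The argument for $u_s'$ is word-for-word the same, using Propositions~\ref{summ2} and~\ref{summary2}.

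There is no genuine obstacle here: the content is purely bookkeeping, matching the generators named in Proposition~\ref{summ}/\ref{summ2} against the two peripheral subgroups and confirming that the character formulas of Lemma~\ref{mapstor3} are conjugation-invariant. The one point deserving an explicit sentence is that the isomorphism $\hNAT_s/\hat\iota\cong\NAT_s$ of Proposition~\ref{summary} is compatible with restriction to the boundary — that is, evaluating the peripheral characters before or after passing to the slice yields the same point of $P_0\times P_1$ — and this is immediate because those characters are conjugation-invariant and, by~\cite{Lin}, determine the points of the pillowcases.
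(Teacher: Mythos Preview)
Your proof is correct; the paper itself gives no argument beyond the remark ``The proof of the following lemma is clear'' and a \qed, so your proposal is precisely the routine unwinding of definitions that the authors leave implicit. Your verification that $\hat\iota$ corresponds to conjugation by $\bbi$ (hence that the character triples descend to $\mathcal D/\hat\iota$) is the only point requiring a moment's thought, and you handle it correctly.
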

 In what follows, we abuse notation slightly write 
 $$u_s=\pi_0\times \pi_1:\NAT_s\to P_0\times P_1$$ or simply $u_s=\pi_0\times \pi_1$ when its domain $\NAT_s\subset \mathcal{D}/ \hat{\iota},$ is understood.

\subsection{Expansions in $s$}  \label{expansions in s}

The  functions $G_1, G_1':\mathcal{D}\to \RR$ vanish when $s=0$ and hence there exist analytic functions 
 $\widehat G_1,~\widehat G_1':\mathcal{D}\to\RR$  satisfying $$G_1=s\widehat G_1\text{ and }G_1'=s\widehat G_1'.$$

An important observation for the sequel is that 
for fixed {\em non-zero} $s$, the zero locus  $\hNAT_s=\{G(s,-)=0\}$  coincides with the zero locus of  $(\widehat G_1(s,-), G_2(s,-)),
$ and similarly   $\hNAT_s'$  coincides with the zero locus of  $(\widehat G_1'(s,-), G_2'(s,-)).
$

\medskip

\begin{lemma} \label{estimate2} After shrinking $s_0$ if needed, there exist analytic real valued functions $R_i$, $i=1,\dots, 6$ on $\mathcal D$  so that for all $|s|\leq s_0$, 
  \begin{align*} \widehat G_1
& = -( \sin\gamma\sin\tau -\sin\theta\cos\tau )   +2 s\cos\gamma\cos\theta +s^2R_1+G_2 R_2,\\
G_2&= (1-sR_6)\big(\nu - s \cos\gamma  +s^2R_3\big),\\
 \widehat G_1'
 &=2( -(   \sin\gamma\sin\tau -\sin\theta\cos\tau )  +2s\cos\gamma\cos\theta) +s^2R_4+G_2' R_5  , \text{ and }\\ G_2'&=\nu.\end{align*} 
\end{lemma}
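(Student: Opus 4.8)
The plan is to compute the four functions $\widehat G_1, G_2, \widehat G_1', G_2'$ by direct quaternionic expansion of the words appearing in $G$ and $G'$ (Proposition \ref{master}), substituting the explicit coordinates of $\bar L$ from Equation (\ref{Lbar}), and then Taylor-expanding in $s$ to second order. The key simplifying input is that $p = e^{s\,\Ima(e^{\gamma\bbk}\bbi h)}$ and $q = e^{s\,\Ima(e^{\theta\bbk}h)}$ are $O(s)$-perturbations of the identity: writing $p = \cos(s|v_p|) + \sin(s|v_p|)\,\widehat v_p$ with $v_p = \Ima(e^{\gamma\bbk}\bbi h)$ and similarly for $q$, one has $p = 1 + s v_p + O(s^2)$ and $q = 1 + s v_q + O(s^2)$, where the $O(s^2)$ terms are analytic in all variables (this is where ``analytic'' in the statement comes from, and why $s_0$ may need shrinking — to stay inside a chart where the square-root/trig functions are analytic). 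All $R_i$ are then simply the analytic remainders collected from these expansions.

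\textbf{The $s^0$ and $s^1$ terms.} First I would compute the $s$-linear part. For $G_2' = \Real(\bh a) = \Real(\bh\bbi)$, substituting $h = \nu\bbi + \sqrt{1-\nu^2}e^{\tau\bbi}\bbj$ gives $\bh = -\nu\bbi - \sqrt{1-\nu^2}e^{\tau\bbi}\bbj$ (traceless, so $\bh=-h$), and $\Real(\bh\bbi) = \Real((-\nu\bbi - \sqrt{1-\nu^2}e^{\tau\bbi}\bbj)\bbi) = \nu$ exactly — giving the clean formula $G_2' = \nu$ with no remainder. For $G_2 = \Real(pq\bh a)$, since $pq = 1 + s(v_p+v_q) + O(s^2)$, the $s^0$ term is $\Real(\bh\bbi) = \nu$; the $s$-linear term is $\Real((v_p+v_q)\bh\bbi)$, which one computes to equal $-\cos\gamma$ after plugging in $v_p, v_q$ and using $\Real(h)=0$; collecting the rest into $s^2 R_3$ and absorbing the higher-order correction to the leading coefficient into the prefactor $(1-sR_6)$ gives the stated form. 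For $\widehat G_1 = G_1/s = \Real(pq\bh a\bh)/s$, note $\Real(\bh a \bh) = \Real(\bh\bbi\bh)$ is the $s^0$ piece of $G_1$ but must vanish (it does, since $G_1|_{s=0}=0$ by construction), so $\widehat G_1$'s value at $s=0$ is the $s$-linear coefficient of $G_1$, namely $\Real((v_p+v_q)\bh\bbi\bh)$; expanding this yields $-(\sin\gamma\sin\tau - \sin\theta\cos\tau)$. The $s$-linear term of $\widehat G_1$ is the $s^2$-coefficient of $G_1$, and its ``principal part'' works out to $2\cos\gamma\cos\theta$; the $G_2 R_2$ term appears because on $\hNAT_s$ one has $G_2 = 0$, so any multiple of $G_2$ can be freely added — this is used to clean up terms proportional to $\nu$. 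The computation for $\widehat G_1', G_2'$ on the bypass side is structurally identical using $d = \bq\bp hpq\bh a$ (derived in the excerpt), and the factor of $2$ in $\widehat G_1'$ reflects that $\Real(d)$ picks up contributions from both the $\bq\bp$ prefix and the $pq$ core.

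\textbf{Main obstacle.} The hard part is the bookkeeping of the $s^2$ terms: one must verify that after expansion the coefficient of $s^2$ in $G_1$ (equivalently, of $s$ in $\widehat G_1$) really does reduce to $2\cos\gamma\cos\theta$ modulo an analytic multiple of $G_2$ and an analytic $s\cdot R_1$ term, and that the analogous claim holds for the bypass function with the extra $\bq\bp$ factor producing exactly the factor $2$ and no stray leading-order discrepancy between $\widehat G_1$ and $\frac12\widehat G_1'$. This requires carefully expanding products like $\bq\bp hpq$ to second order, using the commutation relations $[p,bh]=1$, $[q,f\bar a h]=1$ (so $ph = h\bp$ etc., already exploited in Lemma \ref{shuffle}) to put words in a normal form, and repeatedly invoking $\Real(h)=\Real(b)=\Real(a)=0$ and $\Real(xy) = -\Real(\bar x\bar y)$ for traceless $x,y$. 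None of this is conceptually deep, but it is the step where an error is easy to make; I would organize it by first reducing every word to the form $(\text{const} + s\,\Ima(\cdot) + s^2(\cdots))$ times a fixed traceless quaternion, then taking real parts at the very end. Once Lemma \ref{estimate2} is in hand, the regular-value argument for $(0,0)$ — and hence smoothness of $\hNAT_s, \hNAT_s'$ — follows as in \cite{CHKK}, since the Jacobian of $(\widehat G_1, G_2)$ in the $(\tau,\nu)$ directions is, to leading order in $s$, a nonsingular $2\times 2$ matrix away from the fold locus.
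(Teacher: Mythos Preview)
Your proposal is correct and follows essentially the same approach as the paper: expand $p,q$ to second order in $s$, compute $G_2$ and $G_2'$ first, then work modulo these when computing $G_1,G_1'$ (the paper phrases this as the substitution $h = s\cos\gamma\,\bbi + e^{\tau\bbi}\bbj + O(G_2,s^2)$; your ``cleaning up terms proportional to $\nu$'' is equivalent), and on the bypass side exploit $hp=h\bar p$ and $qha=ha\bar q+O(G_2')$ to collapse $\Real(\bar q\bar p hpq\bar h a)$ to $\Real(\bar q^2\bar p^2 a)+O(G_2')$ before expanding. One small correction: the shrinking of $s_0$ is not for chart analyticity---all the functions are already analytic on $\mathcal D$ as defined---but to ensure $1-sR_6$ is a unit so it can be factored out.
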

\begin{proof} 
Given   analytic  functions $\phi_1,\cdots, \phi_n$ on $\mathcal{D}$, we use the  shorthand $O(\phi_1,\cdots, \phi_n)$ to denote  the ideal in the ring of analytic functions on $\mathcal{D}$ generated by the $\phi_1,\cdots, \phi_n$.

Since $p=1+s\Ima(bh)+s^2(\Ima(bh))^2+O(s^3)$ and  $q=1+s\Ima(f\ba h)+s^2(\Ima(f\ba h))^2+O(s^3)$, it follows that  
$$\Real(pq)=1+O(s^2)$$\text{ and }
$$\Ima(pq)=s(\Ima(bh + f\ba h))+ s^2\Ima(\Ima(bh)\Ima(f\ba h))+O(s^3).$$

  Then, \begin{align*}G_2=\Real(pq\bh a)&= \Real(pq)\Real(\bh a) - \Real(\Ima(pq)ha)\\
&=-\Real(ha) -s\Real((\Ima(bh + f\ba h)ha) + O(s^2)\\
&=-\Real(ha) -s\Real((bh + f\ba h)ha) +s\Real(bh+f\ba h)\Real(ha)+ O(s^2)
\\
&=-\Real(ha)(1-s\Real(bh + f\ba h)) -s\Real((bh + f\ba h)ha) +  O(s^2)\end{align*}
Set $R_6=\Real(bh + f\ba h):\mathcal{D}\to \RR$.  Since $\Real(ha)=-\nu$ and
$$\Real((bh + f\ba h)ha)=\Real(bh^2a -f)=-\cos\gamma,$$
$$G_2 =\nu (1-sR_6)-s\cos\gamma + O(s^2)=(1-sR_6)(\nu-s\cos\gamma) + O(s^2)
.$$
 For $s$ small enough,  $1-sR_6$ is non-zero on $\mathcal{D}$, and hence invertible. Therefore, shrinking $s_0$ if needed, 
$$G_2=(1-sR_6)(\nu -s\cos\gamma + O(s^2)),$$
 establishing the second assertion and showing that  
 $$\nu=s\cos\gamma+ O(G_2,s^2).$$
This implies that $\sqrt{1-\nu^2}=1+O(G_2,s^2)$ and therefore the component function $h$ of Equation (\ref{Lbar}), viewed as a quaternion-valued function  $h:\mathcal{D}\to \mathbb{H}$, is given by
\begin{equation}\label{ache} h=s\cos\gamma \bbi + e^{\tau\bbi}\bbj +O(G_2,s^2).\end{equation}
It easily follows that
$$s h \bbi h=s\bbi - 2 s^2 \cos\gamma e^{\tau\bbi}\bbj +O(G_2, s^3).
$$
Hence
\begin{align*}
G_1&=\Real(pqh\bbi h)=\Real(\Ima(pq)h\bbi h) \\
&=s \Real((\Ima(bh)+ \Ima(f\ba h)) ~h \bbi h ) + 
s^2\Real(\Ima(bh) \Ima(f\ba h) h \bbi h )+O(s^3)\\
&=s \Real((\Ima(bh)+ \Ima(f\ba h)) \bbi  ) \\&\qquad -2s^2\cos\gamma \Real((\Ima(bh)+ \Ima(f\ba h))  e^{\tau\bbi}\bbj)\\
&\qquad\qquad  + s^2\Real(\Ima(bh) \Ima(f\ba h)\bbi)+O(G_2,s^3)\\
&=\big(-s(\sin\gamma\sin\tau-\sin\theta\cos\tau)-
s^2\cos\gamma\cos\theta\big)\\
&\quad - 2s^2\cos\gamma(-\cos\theta)\\
&\qquad +s^2\big(\cos\gamma\cos\theta\big)
+O(G_2,s^3)\\
&=-s(\sin\gamma\sin\tau-\sin\theta\cos\tau)+2s^2\cos\gamma\cos\theta +O(G_2,s^3)
\end{align*}
The penultimate equality is a straightforward calculation of each of the three terms, replacing each occurrence of $h$ by the right side of Equation (\ref{ache}). Dividing by $s$ yields the first assertion.

\medskip

We turn now to the third and fourth assertions.
 In the coordinates on $\mathcal{D}$, 
 $$G_2'=\Real(h\ba)=\nu,$$ 
and therefore $h=e^{\tau\bbi}\bbj +O(G'_2)$. Since $\Ima(bh)$ is perpendicular to $h$, $hp= he^{s\Ima(bh)}=\bp h$.  
Similarly,  calculation yields that $qha=ha \bq+ O(G_2')$. Therefore,
\begin{align*}
G_1'&= \Real(\bq \bp h p q \bh a)\\
&=-\Real((\bp h p)(q h a \bq)) \\ 
& =-\Real(\bp^2 h^2 a \bq^2) +O(G_2')\\ 
&  =\Real( \bq^2\bp^2  a )+O(G_2')\\
&=2s\big(\Real(bha)+\Real(f\ba h a)\big) -4s^2\Real(\Ima(f\ba h) \Ima(bh)a)+O(s^3,G_2')\\
&=-2s(\sin\gamma\sin\tau-\sin\theta\cos\tau)\\
&\qquad+ 4s^2\Real(\Ima((\cos\theta + \sin\theta\bbk)h a)\Ima((\cos\gamma\bbi +\sin\gamma\bbj)h)a )+O(s^3,G_2')\\
&=-2s(\sin\gamma\sin\tau-\sin\theta\cos\tau)+ 4s^2\cos\theta\cos\gamma+O(s^3,G_2').
\end{align*}
Dividing by $s$ yields the desired calculation of $\widehat G_1'$.

\end{proof}

  To provide parallel arguments that $\NAT_s$ and $\NAT_s'$ are closed genus three surfaces, we replace $G$ and $G'$ with functions $\bf G$ and ${\bf G}'$, respectively, which have the same zero sets as $G,G'$, but with the convenient property that $\bf G$ and ${\bf G}'$ certain Taylor expansions in $s$ agree.    
Define \begin{align}\label{phis}
{\bf G}(s, \gamma, \theta, \nu , \tau) &=  (\widehat G_1-G_2R_2,G_2(1+sR_6))    \\
&=\big(\sin\theta\cos\tau -\sin\gamma\sin\tau    +2s \cos\gamma\cos\theta + s^2 R_1,  \nu - s \cos\gamma+ s^2R_3 \big) \nonumber    
\end{align}
and  
\begin{align}\label{phisp} 
{\bf G}'(s, \gamma, \theta, \nu , \tau) &=  (\tfrac1 2 \widehat G_1'-\tfrac 1 2 G_2'R_5,G_2')    \\
&= \big(\sin\theta\cos\tau-\sin\gamma\sin\tau +2s \cos\gamma\cos\theta +  s^2 \tfrac{R_4}2,  \nu\big)  \nonumber \end{align}

For any fixed $s$, we'll let  $${\bf G}_s={\bf G}(s,-),~{\bf G}' _s={\bf G}'(s,-)$$
and set $$V_s={\bf G}_s^{-1}(0,0), ~V'_s=({\bf G}_s ')^{-1}(0,0).$$
By construction, 
\begin{equation}\label{Veesubs} V_s=\hNAT_s \text{ and }V'_s=\hNAT'_s \text{  for all non-zero } s.\end{equation}

Setting $s=0$ in Equations (\ref{phis}) and (\ref{phisp}), one sees that \begin{equation}\label{Phi}
{\bf G}_0(\gamma,\theta,\tau,\nu)=(\sin\theta\cos\tau-\sin\gamma\sin\tau,\nu)={\bf G}'_0(\gamma,\theta,\tau,\nu).\end{equation}

\medskip

 The following corollary overlaps  with  Theorem 6.1 from \cite{CHKK} and extends the result to $ \NAT_s'$. 
  In preparation for its statement, define 
\begin{equation}
\label{localtriv}
\mathbb{V}=\{(s,v)~|~ |s|\leq s_0,~ v\in V_s\}={\bf G}^{-1}(0,0)\subset [-s_0,s_0]\times T\times \mathbb{S}_{\bbi}^*.
\end{equation}
and define  $\mathbb{V}'$ similarly.  The involution $\hat\iota$ acts freely on $[-s_0,s_0]\times T\times \mathbb{S}_{\bbi}^*$, preserving $\mathbb{V}$.

\begin{corollary}\label{genus53} 
The  composite map  
\begin{equation} 
V_0\hookrightarrow T\times \tracelessTwoSphere\xrightarrow{{\rm proj}} T \label{unperturbed crit fibers} \end{equation} 
forms a trivial  two-fold covering space over $T^*=T\setminus\{\text{fixed points}\}$.  The preimage of each fixed point is a smooth circle, and the union of these four circles is the critical set of the map (\ref{unperturbed crit fibers}).
The zero locus $V_0=V_0'={\bf G}_0^{-1}(0,0)$ is a smooth, closed, connected, orientable surface  of genus five.  

  After shrinking $s_0$ if needed,   for all $0<|s|\leq s_0$,
$\hNAT_s$ and $\hNAT_s'$ are smooth, closed, connected, orientable surfaces  of genus five, and $\NAT_s, \NAT_s'$ are orientable surfaces  of genus three,   and the projection $\operatorname{proj}:\mathbb{V}/\hat\iota\to [-s_0,s_0]$ is a smooth trivial genus three surface bundle with fiber $V_0/\hat\iota$ over $0$ and $V_s/\hat\iota=\NAT_s$ for $s\ne 0$.
\end{corollary}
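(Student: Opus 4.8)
The plan is to work with the model functions $\mathbf{G}$ and $\mathbf{G}'$ rather than $G$, $G'$ directly, exploiting Equation (\ref{Veesubs}) which identifies their nonzero-$s$ zero loci with $\hNAT_s$, $\hNAT_s'$, and Equation (\ref{Phi}) which says $\mathbf{G}_0 = \mathbf{G}_0'$. First I would analyze the unperturbed zero locus $V_0 = \mathbf{G}_0^{-1}(0,0)$. By Equation (\ref{Phi}), $\mathbf{G}_0(\gamma,\theta,\nu,\tau) = (\sin\theta\cos\tau - \sin\gamma\sin\tau, \nu)$, so $V_0 \subset T \times \mathbb{S}_{\bbi}^*$ is cut out by $\nu = 0$ and $\sin\theta\cos\tau = \sin\gamma\sin\tau$. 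Setting $\nu = 0$ identifies the relevant annulus in $\mathbb{S}_{\bbi}^*$ with the $\tau$-circle, so $V_0$ lives in $T \times S^1_\tau$. The equation $\sin\theta\cos\tau = \sin\gamma\sin\tau$ says $(\sin\gamma, \sin\theta) \perp (\cos\tau, \sin\tau)$ up to sign, i.e. $\tau$ is determined (as a point of $\mathbb{RP}^1$, hence two points of $S^1$) by $[\sin\gamma : \sin\theta]$ whenever $(\sin\gamma,\sin\theta)\ne (0,0)$. This exhibits the projection $V_0 \to T$ as a two-fold cover away from the four points where $\sin\gamma = \sin\theta = 0$ (the fixed points of $\iota$), and over each such fixed point the fiber is the entire $\tau$-circle. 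I would check smoothness of $V_0$ by computing the differential of $(\gamma,\theta,\nu,\tau) \mapsto (\sin\theta\cos\tau - \sin\gamma\sin\tau, \nu)$ and verifying it has rank two along the zero set — the $\nu$-component handles one direction, and near a fixed point a direct Taylor expansion shows the first component is a submersion transverse to $\nu=0$. Triviality of the covering over $T^*$ follows because the two sheets are globally distinguished (e.g. by the sign of $\tau$ relative to the line through $(\sin\gamma,\sin\theta)$), and the four critical circles are precisely the preimages of the fixed points.

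Next I would compute the genus of $V_0$. The surface $V_0$ is a double cover of $T$ branched — in the orbifold sense — over four circles rather than points; more precisely, $V_0$ is obtained from two copies of the open surface $T^*$ (a torus minus four points) glued along four circles, one pair of boundary circles at a time. Equivalently, $V_0 \to T$ restricted over $T^*$ is a trivial double cover, so $V_0$ minus four circles is two copies of $T^*$; reassembling, $\chi(V_0) = 2\chi(T^*) = 2(0 - 4) = -8$, giving genus five. (One must check $V_0$ is connected and orientable: connectedness because the two sheets meet along the critical circles, orientability because $V_0$ is a regular level set of a smooth map into $\mathbb{R}^2$, hence has trivial normal bundle in an orientable ambient manifold.) Then $\hat\iota$ acts freely on $T \times \mathbb{S}_{\bbi}^*$ (stated in the excerpt) and preserves $V_0$, so $\NAT_0$-analog $V_0/\hat\iota$ is a closed orientable surface with $\chi = -4$, i.e. genus three.

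For the perturbed statement, the key device is the parametrized map $\mathbf{G} \colon \mathcal{D} \to \mathbb{R}^2$ and the observation from Lemma \ref{estimate2} that $\mathbf{G}$ and $\mathbf{G}'$ have the same first-order ($s$-linear) behavior: their difference is $O(s^2)$ together with terms in the ideal generated by $G_2$, $G_2'$. I would argue that $(0,0)$ is a regular value of $\mathbf{G}_0$ on the compact set where it vanishes, hence — by continuity of the differential in $s$ and compactness of $\mathcal{D}$ — remains a regular value of $\mathbf{G}_s$ for all $|s| \le s_0$ after shrinking $s_0$. Thus $\mathbb{V} = \mathbf{G}^{-1}(0,0)$ is a smooth compact $3$-manifold with a submersion $\mathrm{proj} \colon \mathbb{V} \to [-s_0,s_0]$; since $[-s_0,s_0]$ is contractible and the map is proper, Ehresmann's fibration theorem gives a trivial fiber bundle, so every fiber $V_s$ is diffeomorphic to $V_0$, a genus five surface. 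The same applies verbatim to $\mathbf{G}'$. Passing to the quotient by the free action of $\hat\iota$ (which commutes with $\mathrm{proj}$ since $\hat\iota$ fixes $s$) yields the trivial genus three surface bundle $\mathbb{V}/\hat\iota \to [-s_0,s_0]$ with fiber $V_0/\hat\iota$ over $0$ and $V_s/\hat\iota = \hNAT_s/\hat\iota = \NAT_s$ for $s \ne 0$ by Equation (\ref{Veesubs}) and Proposition \ref{summary}; likewise for $\NAT_s'$.

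The main obstacle I anticipate is the regularity verification near the four critical circles of $V_0$: there the structure of $V_0 \to T$ degenerates (fiber jumps from two points to a whole circle), and one must confirm that $\mathbf{G}_0$ is nonetheless a submersion there — this requires carefully choosing coordinates transverse to the critical circle and expanding $\sin\gamma\sin\tau - \sin\theta\cos\tau$ to the relevant order, checking that the $(\gamma,\theta)$-derivatives do not both vanish. Once that local model is pinned down, everything else (Euler characteristic bookkeeping, Ehresmann, quotient by the free involution) is routine, and the parallel treatment of the bypass case is immediate because Lemma \ref{estimate2} already records the matching $s$-expansions.
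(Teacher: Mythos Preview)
Your proposal is correct and follows essentially the same route as the paper: explicit analysis of $\mathbf{G}_0$, the cover/critical-circle description of $V_0\to T$, Euler characteristic bookkeeping, and a compactness/Ehresmann argument for the bundle structure in $s$. Two small remarks: the equation $\sin\theta\cos\tau=\sin\gamma\sin\tau$ says $(\sin\gamma,\sin\theta)$ is \emph{parallel} (not perpendicular) to $(\cos\tau,\sin\tau)$, and your anticipated ``obstacle'' dissolves immediately---at a fixed point the $(\gamma,\theta,\tau)$-gradient of the first component is $(-\sin\tau,\cos\tau,0)$, which never vanishes, exactly as the paper checks via the Jacobian (\ref{diffPhi}).
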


\begin{proof}   
The  differential of ${\bf G}_0$  is 
\begin{equation}\label{diffPhi}
d{\bf G}_0=
\begin{pmatrix} -\cos\gamma \sin\tau&  \cos\theta\cos\tau &-\left( \sin\gamma\cos\tau +\sin\theta \sin\tau\right) &0 \\ 
0 &0& 0& 1
\end{pmatrix}.\end{equation}

If ${\bf G}_0(\gamma,\theta, \tau,\nu)=0,$ then $\sin\theta\cos\tau-\sin\gamma\sin\tau=0$, 
 which implies that  one of $\cos\gamma \sin\tau$, $\cos\theta\cos\tau$, or $\sin\gamma\cos\tau +\sin\theta \sin\tau $ is non-zero.  It follows that $(0,0)$ is a regular value for ${\bf G}_0$ and  $V_0$  is a smooth surface and a closed subset of  $T\times S^1\times [-\frac 1 2, \frac 1 2 ]$ which misses its boundary and has trivial normal bundle. Thus $V_0$  is closed, compact,  and orientable.
 It follows from  straightforward differential topology arguments that after perhaps shrinking  $s_0$, the composite
$$\mathbb{V}\subset [-s_0,s_0]\times T\times \mathbb{S}_{\bbi}^*\to [-s_0,s_0]$$
is a trivial  surface bundle.  

\medskip 

The  equation ${\bf G}_0(\gamma,\theta,\tau,\nu)=(0,0)$ is equivalent to 
 $$\nu=0,~ (\sin^2\gamma+\sin^2\theta)^{1/2}   (\cos\tau,\sin\tau)=\pm  (\sin\gamma,\sin\theta) .$$ 
For fixed  $(\sin\gamma,\sin\theta)\ne(0,0)$,
 there are precisely two solutions:  $(\nu,\tau)=\pm(0, ~\tau(\gamma,\theta))$, where
 \begin{equation}\label{sect}
  (\cos(\tau(\gamma,\theta)),\sin(\tau(\gamma,\theta)))=\pm \frac{(\sin\gamma,\sin\theta)}{\sqrt{\sin^2\gamma+\sin^2\theta}}.
\end{equation}
On the other hand, if   $(\sin\gamma,\sin\theta)=(0,0)$, there is precisely a circle of solutions, namely
$$
\nu=0,~ \tau \text{ arbitrary. }
$$
This proves the last statement in the corollary.  

  Hence the smooth, compact surface $V_0$ 
 is the union of two open disjoint four-punctured tori and four circles, the preimages of the fixed points in $T$. It follows that $V_0$ has Euler characteristic negative eight. The continuous path 
 $$
 \gamma=\arcsin(t),  \theta=0, \tau=0, \nu=0,~~ t\in (-\tfrac1 2 , \tfrac1 2 )$$
 lies in $V_0$ and its endpoints lie in different sheets of the covering space.  Hence $V_0$ is connected, and therefore of genus five.
 It now follows by the Riemann-Hurwitz formula that the quotient of $V_0$ by the orientation-preserving free involution $\hat \iota$ has genus three.   We conclude that $\mathbb{V}/\hat\iota\to [-s_0,s_0]$ is a smooth trivial genus three surface bundle.

  Exactly the same argument holds  for $\NAT_s',V_s', \mathbb{V'} $. 
  \end{proof}

 \section{restriction to $P_0^*$} We continue our
analysis of the restriction maps  
   $u_s=\pi_0\times \pi_1:V_s/\hat{\iota}=\NAT_s\to P_0\times P_1$ (resp. $u'_s$) in this section by proving that   for small non-zero  $s$, the map $\pi_0:\NAT_s\to P_0$  misses the corners and has only embedded fold singularities in the sense of Whitney \cite{Whitney}.  In Section \ref{bifolds}, we define appropriate terminology to describe the singularities.  In Section \ref{sect4.2}, we analyze the map to $P_0 $.  The corresponding analysis for the projection to $P_1 $ is carried out  in Section \ref{punchline}.

  \subsection{Fold singularities and bifold Lagrangian immersions of a surface into a product of surfaces.}  
\label{bifolds}

We remind the reader of the definition of a fold singularity and introduce  the concept of {\em bifold Lagrangian immersions} of a surface into a product of two symplectic surfaces.
We begin by recalling aspects of Whitney's theory about smooth maps between surfaces  \cite{Whitney}.  
\medskip

If  $f:F\to G$ is a smooth map between surfaces, we say $x\in F$ is a {\em singularity} if  $\ker df_x\ne 0$.  The map $f:F\to G$ is said to {\em have only fold singularities} provided  
the 1-jet of $f$ misses the rank zero 1-jets and meets the submanifold of rank one 1-jets
transversely, and  the set of singularities, which in this case is called the {\em fold locus}, is a smooth one-dimensional submanifold of $F$ which is immersed by $f$.  
These conditions can be restated as  \begin{enumerate}
\item at each $x$ in the fold locus, $\rank(df_x)=1$,
\item $0$ is a regular value of $\det(df)$, and
\item $f$ immerses the fold locus.
\end{enumerate}
  
  The terminology {\em fold} comes from Whitney's theorem \cite{Whitney} which asserts that if these three conditions hold at a singular point $x$, then there exist local  coordinates around $x\in F$ and $f(x)\in G$   such that $f$ takes the form $$(u,v)\mapsto (u^2,v).$$  The  fold locus  corresponds to the set where $u=0$ in this local form.    
  
 If  $f$   furthermore  self-transversely immerses the fold locus into $G$, then   $f$ is {\em stable}, that is, every smooth map $C^\infty$ close to $f$ is equal to $h_G\circ f\circ h_F$ for some diffeomorphisms $h_F:F\to F, h_G:G\to G$. These facts are stated and proven carefully in \cite[Section III.4]{GG}.  Note that if $f$ has only fold singularities and $f$ embeds the fold locus, then $f$ is stable; in this situation, we say $f$ has only {\em embedded fold singularities}, 
and we call the image of the fold locus under $f$ the {\em fold image}.

\bigskip

  Let $F_0,F_1$ and $F$ be   smooth  surfaces,  
 and let $f=f_0\times f_1:F\to F_0\times F_1$ be a smooth map.
Then  $f$ is called a {\em bifold immersion}  if $f$ is an immersion with the property that  $f_0$ and $f_1$ have only fold singularities, and their fold loci coincide.  For example, the map $(u,v)\mapsto (u,v^2, u+v, v^2)$ is a bifold immersion (in fact, embedding) $\RR^2\to \RR^2 \times \RR^2$, with fold locus $\{(u,0)\}$ and embedded fold image $\{(u,0,u,0)\}.$

 Not every   immersion into a product of surfaces is a bifold immersion.  For example, $(u,v)\mapsto (u^4, v, u^4,u+v)$ is an  embedding of $\RR^2$ into $\RR^2 \times \RR^2$
which is not a bifold immersion because $0$ is not a regular value of $\det df_0$ or $\det df_1$. Another interesting non-bifold is the immersion $(u,v)\mapsto (u\cos v, u\sin v, \tfrac{u^2}{2}, u+v)$ whose projection to the first $\RR^2$  is constant, and hence not an immersion, on the singular set $u=0$, while its projection to the second $\RR^2$ has only embedded fold singularities, with fold locus $v=0$.
We note that  these examples   are Lagrangian immersions into $\RR^2\times \RR^2$ with respect to the symplectic form $ -dx_1\wedge dy_1 + dx_2 \wedge dy_2$.  

\medskip

 While the examples above demonstrate that a Lagrangian immersion of a surface into a product of symplectic surfaces need not be a bifold immersion, the following lemma shows that the Lagrangian immersion property does put constraints on the behaviors of the factor maps.  Given symplectic surfaces $(F_0,\omega_0)$ and $(F_1,\omega_1)$, equip $F_0\times F_1$ with the symplectic form $-{\rm proj}_0^*(\omega_0)+{\rm proj}_1^*(\omega_1).$ We use the abbreviated notation $F_0^-\times F_1$.

\begin{lemma}\label{folding} 

Suppose  $f=(f_0, f_1):F\to F_0^-\times F_1$ is a Lagrangian immersion of a surface into a product of symplectic surfaces.  

Then,
for each $x\in F$, the derivatives $d  f_{0,x}$ and $df_{1,x}$ have equal non-zero rank. For  those  $x\in F$ satisfying 
 $\text{\rm rank } df_{0,x} =1$, the kernels of $df_{1,x}$ and $df_{0,x}$ are transverse.

\end{lemma}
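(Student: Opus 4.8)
The key point is that a Lagrangian immersion pulls back the symplectic form on $F_0^-\times F_1$ to zero, and this algebraic constraint forces the two factor differentials to have matching rank. First I would fix $x\in F$ and work in the tangent space $T_xF$, a two-dimensional symplectic-vector-space target via $df_x$. Since $f$ is an immersion, $df_x$ is injective, so $\ker df_{0,x}\cap \ker df_{1,x}=\ker df_x=0$. The Lagrangian condition says that for all $u,v\in T_xF$,
\[
-\omega_0\bigl(df_{0,x}u,\,df_{0,x}v\bigr)+\omega_1\bigl(df_{1,x}u,\,df_{1,x}v\bigr)=0.
\]
Call the two bilinear forms on $T_xF$ appearing here $\beta_0(u,v)=\omega_0(df_{0,x}u,df_{0,x}v)$ and $\beta_1(u,v)=\omega_1(df_{1,x}u,df_{1,x}v)$; each is skew-symmetric on a two-dimensional space, hence a scalar multiple of any fixed area form, and the Lagrangian condition reads $\beta_0=\beta_1$ as such. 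The radical (kernel) of a skew form on a $2$-dimensional space is everything if the form is zero and is $0$ otherwise; and $\beta_i$ is zero precisely when $df_{i,x}$ has rank $\le 1$ (since $\omega_i$ is nondegenerate on the $2$-dimensional target, $\beta_i$ is nondegenerate iff $df_{i,x}$ is an isomorphism). So from $\beta_0=\beta_1$ we immediately get: either both $df_{0,x},df_{1,x}$ are isomorphisms (rank $2$), or both have rank $\le 1$. The case rank $0$ for both is excluded because then $\ker df_{0,x}\cap\ker df_{1,x}=T_xF\ne 0$, contradicting injectivity of $df_x$. Hence both have rank exactly $1$ in the degenerate case, and in all cases the ranks are equal and nonzero.

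For the transversality statement, suppose $\operatorname{rank}df_{0,x}=1$; by the above also $\operatorname{rank}df_{1,x}=1$, so $\ker df_{0,x}$ and $\ker df_{1,x}$ are each one-dimensional subspaces of $T_xF$. Their intersection is contained in $\ker df_x=0$, so they intersect trivially; two distinct lines in a $2$-dimensional space are transverse, hence $T_xF=\ker df_{0,x}\oplus\ker df_{1,x}$, which is exactly the transversality assertion.

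I do not expect a genuine obstacle here: the whole argument is a short piece of symplectic linear algebra once one isolates the correct reformulation (comparing the two pulled-back skew forms and reading off radicals). The only mild care needed is to record explicitly why $\beta_i=0 \iff \operatorname{rank} df_{i,x}\le 1$ and why injectivity of $df_x$ rules out the simultaneous-rank-zero case; both are immediate. If anything, the "hard part" is purely expository — stating the linear-algebra lemma cleanly so that the geometric conclusion (equal folding behavior of the two factors, transverse fold kernels) drops out without case-chasing.
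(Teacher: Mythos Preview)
Your argument is correct and is essentially the same as the paper's: both use that the Lagrangian condition forces $f_0^*\omega_0=f_1^*\omega_1$ at each point, so the two pulled-back $2$-forms vanish or not together, and then the injectivity of $df_x$ (i.e., $\ker df_{0,x}\cap\ker df_{1,x}=0$) rules out rank zero and yields transversality of the one-dimensional kernels. One tiny expository point: when you write ``the case rank $0$ for both is excluded,'' you should also note that the mixed case (one rank $0$, one rank $1$) is excluded by the same kernel-intersection argument, since then the intersection of kernels would equal the nonzero kernel of the rank-$1$ map; this is implicit in your reasoning but worth stating.
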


\begin{proof}
Since $f$ is an immersion, the differential of $df_x$ is injective and maps the tangent space $T_xF$ to a subspace of the direct sum of subspaces $T_{f_0(x)}F_0 \oplus T_{f_1(x)}F_1$. Hence:  $$
  0=\ker df_x=\ker df_{0,x}\cap \ker df_{1,x}.
  $$
 
 If $df_{0,x}$ has rank zero or one, then the Lagrangian hypothesis implies that  $$0=f^*(-{\rm proj}_0^*(\omega_0)+{\rm proj}_1^*(\omega_1))_x=f_1^*(\omega_{F_1})_x,$$ and so $df_{1,x}$ cannot be an isomorphism, and similarly with the subscripts $0,1$ reversed.
Therefore, either  $df_{0,x}$ and  $df_{1,x}$ are both isomorphisms, or both have rank one, with transverse kernels.
\end{proof}

  Although it is not used in the present article, we note that  \cite{Zhang}  proves that each Lagrangian immersion of a surface into a product of surfaces  is Lagrangian regular homotopic to one whose compositions with the projection to the factors are stable (in the sense of Whitney).

\subsection{The critical set of 
$  \NAT_s, \NAT_s '\xrightarrow{{\pi_0}} P_0$ for small non-zero $s$}
\label{sect4.2}

 The projection map $V_0\to T$ is complicated, as seen in  Corollary \ref{genus53}; the fibers over the four  fixed points are circles, whereas over other points the map is a trivial   two-fold cover.  We prove below that when $|s|\ne 0$ is small, the map $\widehat \NAT_s=V_s\to T$ has only embedded fold singularities  
  and, furthermore, its image misses the four fixed points.  Composing with the quotient map $T\to P_0$ of the involution $\iota$, one concludes that the  map $\pi_0:\NAT_s\to P_0$ misses the four corner points and has only embedded fold singularities.   Since the arguments only depend on the first order Taylor expansion of the first component of ${\bf G}_s$, the arguments work equally well for ${\bf G}' _s$ with no modification.

 \medskip
 
      We produce coordinate charts on four open subsets of $T\times \tracelessTwoSphere$ that make it easier to analyze  the tubular neighborhoods of the four circles in $V_0$ which are the preimages of the fixed points in $T$.  
 
  To motivate the change of coordinates, note that when $s=0$, all the points in $V_0$ satisfy $\nu=0$. Consider the remaining conditions on  $(\gamma, \theta, \tau)$ in the three-torus.  The four two-tori in the three-torus defined by the equations $\gamma=\pm \tfrac \pi 2$ and $\theta=\pm \tfrac \pi 2$ decompose the three-torus into four disjoint solid tori.  One is 
  $$ \{ (\gamma, \theta, \tau) \mid |\gamma|<\tfrac \pi 2, |\theta | < \tfrac \pi 2 \},$$ and the others are obtained from this one by shifting $\gamma$ or $\theta$  by $\pi$.  The  formula $(\gamma, \theta, \tau)\mapsto (\sin \gamma, \sin \theta, \tau)$ defines a diffeomorphism from each of these solid tori to $(-1,1)\times (-1,1)\times \RR/2\pi \ZZ$.    The surface  $V_0$ intersects each of these solid torus in an annulus.  From Equation (\ref{Phi}), under these diffeomorphisms, these four annuli correspond to twisted annuli 
  $$\{ (u,v,\tau) \mid -u\sin \tau  + v\cos \tau  =0\}=\{ (u,v,\tau) \mid (u,v)\perp (-\sin \tau, \cos \tau) \}.$$  Our new coordinate system on each of these solid tori will be of the form $(x,y,\tau)$ where the twisted annulus is defined by the equation $y=0$.    
  \medskip 
   
 It is convenient in the calculations below to index the four solid tori by pairs of signs, \begin{equation}(\ep_\gamma,\ep_\theta)\in\{\pm 1\}^2 \text{ such that } \cos\gamma=\ep_\gamma \sqrt{1-\sin^2\gamma}\text{ and } \cos\theta=\ep_\theta \sqrt{1-\sin^2\theta}\label{cloverleaf}, \end{equation} on the solid torus.  
 
  Specifically,   on each solid torus, define   functions $x$ and $y$ of $(\gamma, \theta, \tau)$ by 
\begin{equation}
\begin{pmatrix}
 x\\y
\end{pmatrix}
=
\begin{pmatrix}
\cos\tau&\sin \tau\\
-\sin \tau&\cos\tau 
\end{pmatrix}
\begin{pmatrix}
 \sin\gamma \\ \sin\theta
\end{pmatrix}=R_{-\tau}
\begin{pmatrix}
 \sin\gamma \\ \sin\theta
\end{pmatrix},
\end{equation} 
where $$
R_t=
\begin{pmatrix}
\cos t&-\sin t\\
\sin t &\cos t
\end{pmatrix}
$$
  Then the map $(\gamma, \theta, \tau)\mapsto (x,y,\tau)$ is a diffeomorphism from each of the open solid tori in $T\times \RR/2\pi \ZZ$ to an open solid torus in $\RR^2 \times \RR/2\pi \ZZ$.  (In terms of the coordinates $(x,y,\tau,\nu)  $, the zero set of Equation (\ref{Phi}) is given by $\nu=0, y=0$.)  
   The inverse diffeomorphism expresses $\gamma $ and $\theta$ in terms of $x,y,\tau$, namely:
\begin{equation}\label{locinv}
 \begin{pmatrix}
\gamma\\ \theta
\end{pmatrix}
=A\circ
R_{\tau}
\begin{pmatrix}
x  \\ y
\end{pmatrix},
\end{equation} 
with  $A=\arcsin\times\arcsin$ interpreted appropriately depending on $(\epsilon_\gamma, \epsilon_\theta)$.  

\medskip
Going forward, fix $(\epsilon_\gamma, \epsilon_\theta)\in \{\pm 1\}^2$.  On the part of $\mathcal D$ where $(\gamma, \theta, \tau)$ is in the corresponding solid torus,  Equation (\ref{phis})  implies that, in the new coordinates,
 $${\bf G}(s, x,y,\tau, \nu) =  ( y ,\nu)+sR$$
  for some analytic function $R(s,x,y,\tau, \nu)$.
 The analytic implicit function theorem now implies the following lemma. 
\begin{lemma}\label{LP}  After shrinking $s_0$ if needed, 
 for each  $(\epsilon_\gamma, \epsilon_\theta)\in \{ \pm 1\}^2$, there exist an $x_0>0$,   and two analytic functions 
 $$Y(s,x,\tau),N(s, x,\tau):[-s_0,s_0]\times [-x_0 ,x_0] \times \RR/2\pi \ZZ\to \RR,
 $$
 so that, for all $|s|\leq s_0$ and $|x|\leq x_0$,  
 $${\bf G}_s(x,y,\tau,\nu)=0 \text{ if and only if } (y,\nu)=(Y(s,x,\tau), N(s, x,\tau)).$$
 
Moreover, there exists an analytic function     $R_6 (s,x,\tau)$  so that 
$$Y(s,x,\tau) +2s\cos \gamma(x, Y(s,x,\tau),\tau) \cos \theta(x, Y(s,x,\tau),\tau) = s^2R_6 (s,x,\tau). 
$$
\qed
\end{lemma}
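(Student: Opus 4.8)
The plan is to apply the analytic implicit function theorem to the system ${\bf G}_s = 0$ in the new coordinates $(x,y,\tau,\nu)$, and then to extract the asymptotic expansion of the solution function $Y$ from the second-order expansion of ${\bf G}$ in Lemma \ref{estimate2}. The setup is that, on the solid torus indexed by $(\epsilon_\gamma,\epsilon_\theta)$, Equation (\ref{phis}) in the $(x,y,\tau,\nu)$ coordinates reads ${\bf G}(s,x,y,\tau,\nu) = (y,\nu) + sR$ for an analytic remainder $R$; this is because the leading term $\sin\theta\cos\tau - \sin\gamma\sin\tau$ of the first component is exactly $-y$ by the rotation formula defining $(x,y)$, and the second component $\nu - s\cos\gamma + s^2R_3$ contributes $\nu$ plus $s$ times something analytic.

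First I would record that at $s=0$ the map $(y,\nu)\mapsto {\bf G}(0,x,y,\tau,\nu) = (y,\nu)$ has derivative the identity $2\times 2$ matrix, hence is invertible. The analytic implicit function theorem (the Cauchy estimates version, as in any standard reference on real-analytic functions, or the complexification argument) then produces, for $|s|\le s_0$ and $|x|\le x_0$ with $s_0,x_0$ shrunk as needed, unique analytic functions $Y(s,x,\tau)$ and $N(s,x,\tau)$ with ${\bf G}_s(x,y,\tau,\nu)=0 \iff (y,\nu) = (Y(s,x,\tau),N(s,x,\tau))$; uniformity in $\tau$ over the compact circle $\RR/2\pi\ZZ$ is automatic by compactness, which is why we may take $x_0$ independent of $(\epsilon_\gamma,\epsilon_\theta)$ after taking the minimum over the four charts. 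This establishes the first paragraph of the lemma.

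For the refined estimate, substitute $(y,\nu) = (Y,N)$ into the first component of ${\bf G}$. From Equation (\ref{phis}), the vanishing of the first component says $\sin\theta\cos\tau - \sin\gamma\sin\tau + 2s\cos\gamma\cos\theta + s^2R_1 = 0$ evaluated at the solution point, i.e. at $(\gamma,\theta)$ given by Equation (\ref{locinv}) with $y = Y(s,x,\tau)$. Since $\sin\theta\cos\tau - \sin\gamma\sin\tau = -Y$ identically in these coordinates (this is the defining rotation relation, now read off the second row), this is precisely $-Y + 2s\cos\gamma\cos\theta + s^2R_1 = 0$, so that $Y(s,x,\tau) + 2s\cos\gamma(x,Y,\tau)\cos\theta(x,Y,\tau) = s^2 R_1$, evaluated at the solution. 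Renaming the analytic function $s\mapsto R_1(\,\cdot\,)$ as $R_6(s,x,\tau)$ (legitimate since $R_1$ is analytic on $\mathcal D$ and $Y,N$ are analytic in $(s,x,\tau)$, so the composite is analytic) gives exactly the displayed identity. The only point requiring a little care is confirming that the leading part of the first component of ${\bf G}$ really is $-y$ and not, say, $+y$ up to sign conventions; I would double-check this against the rotation matrix $R_{-\tau}$ defining $(x,y)$, where the second coordinate is $y = -\sin\tau\sin\gamma + \cos\tau\sin\theta = \sin\theta\cos\tau - \sin\gamma\sin\tau$, confirming the sign.

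I do not expect any genuine obstacle here: the lemma is essentially a bookkeeping consequence of Lemma \ref{estimate2} together with the standard analytic implicit function theorem, and the main thing to get right is the coordinate change and sign tracking. The mild subtlety is making sure the various shrinkings of $s_0$ and the choice of $x_0$ can be done uniformly over the four charts and over $\tau\in\RR/2\pi\ZZ$, which follows from compactness of $[-s_0,s_0]\times\{|x|\le x_0\}\times\RR/2\pi\ZZ$ and taking the minimum of the finitely many chart-dependent thresholds.
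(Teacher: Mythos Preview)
Your approach is exactly the paper's: the lemma is stated with a \qed\ and the paper only says that ${\bf G}(s,x,y,\tau,\nu) = (y,\nu) + sR$ in the new coordinates, after which the analytic implicit function theorem gives the result. Your elaboration of both steps is correct in outline.

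There is, however, a sign muddle you should clean up. You first assert that the leading term of the first component of ${\bf G}$ is $-y$, derive $-Y + 2s\cos\gamma\cos\theta + s^2R_1 = 0$, and then conclude $Y + 2s\cos\gamma\cos\theta = s^2R_1$; that last step is not valid algebra from the preceding equation. In your final ``double-check'' you correctly compute $y = -\sin\tau\sin\gamma + \cos\tau\sin\theta = \sin\theta\cos\tau - \sin\gamma\sin\tau$, which shows the leading term is $+y$, not $-y$, contradicting your earlier claim rather than confirming it. With the correct sign, ${\bf G}_1 = y + 2s\cos\gamma\cos\theta + s^2R_1$, so vanishing at the solution gives $Y + 2s\cos\gamma\cos\theta = -s^2R_1$, and setting $R_6 = -R_1$ (evaluated along the solution) yields the displayed identity. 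The two errors happen to cancel in your final answer, but the argument as written is internally inconsistent.
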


We denote by $k_s:V_s\to T$ the composition map in the following diagram 
\begin{equation} 
\begin{tikzcd}\label{perturbed crit fibers}
V_s ={\bf G} _s^{-1}(0,0)
\arrow[r, hook] \arrow[rr, bend right, "k_s"] & T\times  \tracelessTwoSphere \arrow[r, "{\rm proj}"] &T   
\end{tikzcd}
\end{equation}

 When $s= x =0$, for the four $(\epsilon_\gamma, \epsilon_\theta)$ choices, Lemma \ref{LP} describes a parameterization by $\tau\in \RR/2\pi \ZZ$ of the circle in $V_0$ projecting to the fixed point in $T$  corresponding to $(\epsilon_\gamma, \epsilon_\theta)$.  Specifically, these four circles lie in the critical set of $k_0$, but $k_0$ is a covering map away from these circles, with each sheet of the covering a graph of a smooth function.  Thus the critical set of $k_0$ is precisely the union of these four circles.

  We now identify the critical set of $k_s$, for $|s|$ small but nonzero,  
and identify the image of this critical set in $T$.   Recall that the smooth family $V_s={\bf G}_s^{-1}(0,0)$ coincides with $\hNAT_s$ when $s\ne 0$. This allows us to analyze the map $\pi_0:\NAT_s \to P_0$, due to the commutative diagram (\ref{lifttoP0}).

\begin{proposition}\label{thmfold}  After shrinking $s_0$ if needed, then  the image of the composition $k_s$ in Equation (\ref{perturbed crit fibers})
misses the four elliptic fixed points for all $0<|s|\leq s_0$.  Moreover, the critical set of this map is a disjoint union of four circles, transversely cut out by the 1-jet of the map $k_s$.     The four critical circles are embedded   under $k_s$ with images  that are simple closed curves encircling  the four elliptic fixed points.   In other words, $k_s$ has  only embedded fold singularities and the four circles making up the fold image enclose the four elliptic fixed points.

\end{proposition}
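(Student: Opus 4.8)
The plan is to work inside one of the four solid-tori charts from Lemma~\ref{LP}, where the situation reduces to a one-parameter family of plane curves, and then to show that for $0<|s|\le s_0$ the family degenerates from a ``figure-eight-like'' critical locus at $s=0$ to a smooth embedded circle encircling the elliptic fixed point. Fix $(\ep_\gamma,\ep_\theta)\in\{\pm1\}^2$. By Lemma~\ref{LP}, on $[-s_0,s_0]\times[-x_0,x_0]\times\RR/2\pi\ZZ$ the zero set $V_s$ is the graph $(y,\nu)=(Y(s,x,\tau),N(s,x,\tau))$, so $k_s$ restricted to this chart is, in the coordinates of Equation~(\ref{locinv}),
\[
(x,\tau)\longmapsto \bigl(\gamma,\theta\bigr)=A\circ R_\tau\begin{pmatrix} x\\ Y(s,x,\tau)\end{pmatrix}.
\]
Since $A$ is a local diffeomorphism onto a neighborhood of the fixed point $(\ep_\gamma\tfrac\pi2,\ep_\theta\tfrac\pi2)$, the singularities of $k_s$ and the qualitative shape of its image are those of the map $h_s(x,\tau)=R_\tau\,(x,Y(s,x,\tau))^{T}$ into $\RR^2$. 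The first step is to compute $\det dh_s$ in terms of $Y$ and $Y_x,Y_\tau$: a direct calculation gives
\[
\det dh_s = x\,Y_x + Y + Y_\tau\cdot(\text{bounded})\ \text{-- more precisely } \det dh_s = \partial_\tau\!\bigl(\text{angular component}\bigr),
\]
but the clean way is to use the Lemma~\ref{LP} identity $Y(s,x,\tau) = -2s\cos\gamma\cos\theta + s^2 R_6(s,x,\tau)$. Substituting $\gamma,\theta$ from~(\ref{locinv}) and expanding at $x=0$, one finds $Y(s,0,\tau) = -2s\,\ep_\gamma\ep_\theta\cos(\cdots) + O(s^2)$; the leading term in $s$ is a nonvanishing function of $\tau$ (after shrinking $s_0$), so $Y(s,0,\tau)\neq 0$ for $0<|s|\le s_0$, which already forces $h_s(0,\tau)\neq (0,0)$: \emph{the image of $k_s$ misses the fixed point.}

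Next, the critical locus: solve $\det dh_s(x,\tau)=0$. At $s=0$ we have $Y\equiv 0$, $\det dh_0 = x$ (up to a unit), so the critical set is exactly $\{x=0\}$, the central circle — consistent with Corollary~\ref{genus53}. For $s\neq 0$, write $\det dh_s(x,\tau) = x\cdot(\text{unit}) + s\cdot\Phi(s,x,\tau)$ with $\Phi$ analytic; the $s$-term comes entirely from the $sR$ correction in ${\bf G}$ and from the $\tau$-dependence of $Y$. Apply the implicit function theorem in $x$ at $(s,x)=(0,0)$: since $\partial_x(\det dh_s)|_{s=0,x=0}$ is a nonzero unit, there is a unique analytic solution $x = \chi(s,\tau)$, $2\pi$-periodic in $\tau$, with $\chi(0,\tau)=0$. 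This is a single embedded circle in the $(x,\tau)$-chart — hence, after combining the four charts, the critical set of $k_s$ is a disjoint union of four circles, each transversely cut out by $\det dk_s$ (this is condition~(2) for a fold). Condition~(1), $\rank dk_s = 1$ on the critical set, is automatic from Lemma~\ref{folding} applied to the Lagrangian immersion $u_s$ (which was established in the quoted Theorem and in~\cite{CHK}), since $\pi_0$ cannot drop rank to $0$ without $\pi_1$ becoming a local diffeomorphism, contradicting the Lagrangian condition — actually one checks directly from~(\ref{diffPhi})-type expansion that $dk_s$ has rank $1$ there. Condition~(3), that $k_s$ immerses the critical circle, follows by computing $\tfrac{d}{d\tau}k_s(\chi(s,\tau),\tau)$ and seeing it is nonzero for small $s\neq 0$ (at $s=0$ it already parameterizes a circle, so nonvanishing is an open condition).

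Finally, to see the fold image is \emph{embedded} and \emph{encircles} the fixed point: restrict $h_s$ to $x=\chi(s,\tau)$ and track the resulting closed curve in $\RR^2$. Since $\chi(s,\tau)=O(s)$ and $Y(s,\chi(s,\tau),\tau)=O(s)$, the curve is an $O(s)$-perturbation of the circle $\tau\mapsto R_\tau(0,Y(0))=(0,0)$... which is degenerate, so one must go to first order: the rescaled curve $\tfrac1s h_s(\chi(s,\tau),\tau)$ converges as $s\to0$ to $\tau\mapsto R_\tau\cdot(\tfrac{\partial\chi}{\partial s}(0,\tau),\ \tfrac{\partial Y}{\partial s}(0,0,\tau))^{T} = R_\tau\cdot(\ast,-2\ep_\gamma\ep_\theta\cos(\ldots))$. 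Using $Y(s,0,\tau)=-2s\cos\gamma(0,0,\tau)\cos\theta(0,0,\tau)+O(s^2)$ and $\gamma(0,0,\tau)=\arcsin(\ep_\gamma?)$ — here $\gamma(0,0,\tau)$ and $\theta(0,0,\tau)$ are \emph{constant} equal to $\ep_\gamma\tfrac\pi2,\ \ep_\theta\tfrac\pi2$ so $\cos\gamma\cos\theta=0$; thus one must expand $Y$ to \emph{second} order in $x,\tau$ jointly with $s$. This is the step I expect to be the main obstacle: the leading nonvanishing term of the limiting fold curve, and hence its winding number around the fixed point, requires the \emph{joint} Taylor expansion of $Y$ in $(s,x)$, which in turn needs the $R_1$ (resp.~$R_4/2$) term from Lemma~\ref{estimate2}. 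The desired conclusion is that this limiting curve is a smooth embedded loop with winding number $\pm1$ about the origin; by openness of ``embedded with winding number $1$'' among $C^1$ loops, the same holds for the genuine fold image when $0<|s|\le s_0$ after shrinking $s_0$. Assembling the four charts, and pushing forward under the quotient $T\to P_0$ (which is an unramified double cover away from the fixed points, and near each fixed point is the orbifold chart), gives that $\pi_0:\NAT_s\to P_0$ has only embedded fold singularities, misses the four corners, and the four fold circles encircle the four corners.
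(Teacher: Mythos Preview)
Your overall strategy matches the paper's: work in the four solid-torus charts of Lemma~\ref{LP}, reduce to the map $h_s(x,\tau)=R_\tau\,(x,Y(s,x,\tau))^T$, apply the implicit function theorem to $\det dh_s$ to parameterize the critical circle as $x=\chi(s,\tau)$, and then analyze the rescaled fold image $\tfrac{1}{s}h_s(\chi(s,\tau),\tau)$. However, the argument derails at the final step because of a misidentification of the fixed points.

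The elliptic fixed points of $\iota$ on $T$ are at $(\gamma,\theta)\in\{0,\pi\}^2$, not at $(\gamma,\theta)\in\{\pm\tfrac\pi2\}^2$; the latter are the walls of the solid-torus decomposition. Consequently, at $(x,y)=(0,0)$ one has $\sin\gamma=\sin\theta=0$, hence $\cos\gamma=\ep_\gamma$ and $\cos\theta=\ep_\theta$, so $\cos\gamma\cos\theta=\ep_\gamma\ep_\theta=\pm1$, not $0$. This is exactly what makes the argument work at first order in $s$: from Lemma~\ref{LP} one gets $Y(s,x,\tau)=-2s\,\ep_\gamma\ep_\theta+O(sx^2,s^2x,s^3)$, and a short bootstrap (this is the paper's Lemma~\ref{fuglylemma}) yields $\chi(s,\tau)=O(s^2)$ and $Y(s,\chi(s,\tau),\tau)=-2s\,\ep_\gamma\ep_\theta+O(s^3)$. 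The rescaled fold curve therefore converges in $C^1$ to the round circle $\tau\mapsto 2\ep_\gamma\ep_\theta(\sin\tau,-\cos\tau)$, which is embedded with winding number $\pm1$ about the origin; no information about $R_1$ or $R_4$ is needed. Your belief that a second-order expansion is required stems entirely from the sign/location error above.

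A secondary gap: your verification of fold condition~(3) asserts that ``at $s=0$ it already parameterizes a circle, so nonvanishing is an open condition.'' This is false. At $s=0$ the critical circle $\{x=0\}$ maps under $h_0$ to the single point $(0,0)$, so condition~(3) genuinely fails at $s=0$ and cannot be obtained by openness. The paper handles immersivity, embeddedness, and the winding-number statement simultaneously by showing that $\alpha_s(\tau)/\|\alpha_s(\tau)\|:S^1\to S^1$ is a diffeomorphism for small $s\ne0$, which follows from the $C^1$ convergence of $\alpha_s/s$ to the round circle noted above.
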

\begin{proof}  Because $k_0$ is a local diffeomorphism except along the critical circles, it is sufficient to work in neighborhoods of the critical circles.  
 Consider the pair    $(\epsilon_\gamma, \epsilon_\theta)\in \{ \pm 1\}^2$ associated to a fixed point.   Referring to Equation (\ref{locinv}), since each of the four branches $A$ is a local diffeomorphism near $(0,0)$, it suffices   to prove that, for small enough $s$,  
$$\Gamma_s:[-x_0,x_0]\times S^1\to \RR^2,~ \Gamma_s(x,\tau)=
R_{\tau}\begin{pmatrix}
x  \\ Y(s,x,\tau)
\end{pmatrix}
=:\begin{pmatrix}
\sin\gamma(s,x,\tau)  \\ \sin\theta(s,x,\tau)\end{pmatrix}
$$
has  	a circle fold locus which embeds in $\RR^2$ enclosing the origin.

\medskip

The second assertion in Lemma \ref{LP}  implies that \begin{equation} \label{when s is zero} Y(0,x,\tau)=0 \text{ for all }(x,\tau).\end{equation}  Moreover,  $\cos(\gamma(0,0,\tau))\cos(\theta(0,0,\tau))=\epsilon_\gamma \epsilon_\theta= \pm 1$ for all $\tau$, so that   $\tfrac{\partial Y}{\partial s}(0,x,\tau)\ne 0$ 
for all $(x,\tau)$.  Hence for all small enough non-zero $s$, $Y(s,x,\tau)\ne 0$. This implies the first statement in Proposition \ref{thmfold}.

\medskip

We turn next to the examination of the critical set  of $k_s$, restricted to the solid torus indexed by $(\ep_1,\ep_2)$.   Denote this critical set by $C_s^{\epsilon_\gamma,\epsilon_\theta}$. By definition, $C_s^{\epsilon_\gamma,\epsilon_\theta}$  is the zero set of  
\begin{equation}\label{crit2}
\det(d\Gamma_s)=\det(\tfrac{\partial \Gamma_s}{\partial x},\tfrac{\partial \Gamma_s}{\partial \tau})=
\det
\left(R_{\tau}\begin{pmatrix} 1 &-Y \\  \tfrac{\partial Y}{\partial x}& \tfrac{\partial Y}{\partial \tau} + x \end{pmatrix}\right)=\tfrac{\partial Y}{\partial \tau} + x + Y \tfrac{\partial Y}{\partial x}.
\end{equation}
Equation (\ref{when s is zero})  implies that $\tfrac{\partial{ \left( \det(d\Gamma_s)\right)}}{\partial x}=1$ when $s=0$, so $0$ is a regular value of $\det(d\Gamma_s)$  for all small enough $s$.  Equivalently, the 1-jet of $\Gamma_0$  cuts out $C_0^{\epsilon_\gamma,\epsilon_\theta}$ transversely (see Section \ref{bifolds}).  
The implicit function theorem implies the existence of an analytic function $\widehat X(s,\tau)$, defined for all $s$ small enough and all $\tau\in S^1$,  so that $$\det(d\Gamma_s)(s,x,\tau)=0\text{ if and only if }x=\widehat X(s,\tau).$$  
 Shrink $s_0$ if needed so that $0$ is a regular value of $\det(d\Gamma_s)$ and $\widehat X(s,\tau)$ is defined for all $|s|\leq s_0$.

Set $$\widehat Y(s,\tau)=Y(s, \widehat X(s,\tau),\tau).$$
Then, for fixed $s$ with $|s|\leq s_0$, the map 
\begin{equation} \label{param critical circ}
e^{\tau\bbi}\mapsto(s,\widehat X(s,\tau),\widehat Y(s,\tau),\tau, N(s,\widehat X(s,\tau),\tau))\in V_s
 \end{equation} 
 parameterizes the critical circle $C_s^{\epsilon_\gamma,\epsilon_\theta}$. 
It follows that the restriction of $\Gamma_s$ to the critical circle $C_s^{\epsilon_\gamma,\epsilon_\theta}$ is parameterized as a map $\alpha_s:S^1\to T$ and expressed in the local coordinates $(\sin\gamma,\sin\theta)$ near the fixed point as
 \begin{equation}\label{416}\alpha_s(e^{\tau\bbi})=R_{\tau}\begin{pmatrix}
\widehat X(s,\tau)  \\ \widehat Y(s,\tau)
\end{pmatrix}.\end{equation}

We note here that, without any further analysis, the critical set when $s=0$ is given by $x=y=0$, parameterized by $\tau$, so \begin{equation} \label{easy part} 
\widehat X(0,\tau)=0 \text{ and } \widehat Y(0,\tau)=Y(0,\widehat X(0,\tau), \tau)=0.\end{equation}

 We use the following  lemma to finish the proof of Proposition \ref{thmfold}.

\begin{lemma}\label{fuglylemma}  Given any $(\epsilon_\gamma, \epsilon_\theta)\in \{ \pm 1\}^2$  and $|s|\leq s_0$, there exist analytic functions $T_1(s, \tau)$ and $T_2(s,\tau)$  so that 
$$\widehat X= s^2T_1 \text{ and } \widehat Y= -  2 \ep_\gamma\ep_\theta s + s^3T_2.$$   \end{lemma}
\begin{proof}

From the formula in Equation (\ref{locinv}) for $\gamma$ and $\theta$ in terms of $x,y,\tau$, 
$$\gamma(x,Y(x,s,\tau),\tau)=\ell_1 \pi +O(x,s) \mbox{ and }\theta(x, Y(x,s,\tau),\tau)=\ell_2 \pi +O(x,s) \mbox{ for some } \ell_i\in \ZZ.$$    Therefore, $$\cos \left( \gamma(x,Y(x,s,\tau),\tau)\right)  \cos \left( \theta(x, Y(x,s,\tau),\tau)\right) = \epsilon_\gamma \epsilon_\theta +O(x^2, sx, s^2).$$  It follows from the implicit formula for $Y(s,x,\tau)$ in Lemma \ref{LP} that \begin{equation} \label{better Y exp} 
Y(x,s,\tau)=-2s\epsilon_\gamma \epsilon_\theta +O(sx^2, s^2 x, s^3),\end{equation}  which implies that both $\tfrac{\partial Y}{\partial x}(s,x,\tau)$ and $\tfrac{\partial Y}{\partial \tau}(s,x,\tau)$ are $O(sx, s^2)$.

Equation (\ref{easy part}) implies that   
$\widehat X(s,\tau)=O(s)$, from which we conclude that $$\tfrac{\partial Y}{\partial x}(s,\widehat X(s,\tau),\tau)=O(s^2) \mbox{ and }\tfrac{\partial Y}{\partial \tau}(s,\widehat X(s,\tau),\tau) =O(s^2).$$ Using the defining equation for $\widehat X$ (i.e., Equation (\ref{crit2}) equal to zero), it follows that 
$$\widehat X(s,\tau)=-\tfrac{\partial Y}{\partial \tau}(s,\widehat X(s,\tau),\tau) - Y(s,\widehat X(s,\tau),\tau) \tfrac{\partial Y}{\partial x}(s,\widehat X(s,\tau),\tau)=O(s^2).$$ Also, Equation (\ref{better Y exp}) now implies 
$$ \widehat Y(s,\tau) = Y(s, \widehat X(s,\tau), \tau) = -2s \epsilon_\gamma \epsilon_\theta + O(s^3).$$

\end{proof}

With Lemma \ref{fuglylemma} in place,  Equation (\ref{416}) can be written in  the form
\begin{equation}\label{eq417}\alpha_s(\tau)=2 s \ep_\gamma\ep_\theta\begin{pmatrix}
-  \sin \tau    \\ \cos \tau  \end{pmatrix}+s^2 R_{\tau} \begin{pmatrix}T_1\\ sT_2\end{pmatrix}.
\end{equation}

By further shrinking $s_0>0$ if necessary, $0<|s|\leq s_0$ implies
\begin{enumerate}
\item $\alpha_s$ misses the fixed     point,   and 
\item the smooth map
$$\tfrac{\alpha_s(\tau)}{\|\alpha_s(\tau)\|}=\tfrac{\alpha_s(\tau)/s}{\|\alpha_s(\tau)/s\|}:S^1\to S^1$$
is defined and is a diffeomorphism.
 \end{enumerate}

Therefore, under the mapping $k_s:V_s\to T$, the critical circle $C_s^{\ep_\gamma,\ep_\theta}$ encircles the fixed point corresponding to $(\ep_\gamma,\ep_\theta) $ via   Equation (\ref{cloverleaf}).
This concludes the proof of Proposition \ref{thmfold}. 
\end{proof}

 In the following corollary, a {\em ray emerging from a corner} refers to  a path in $P_0$ of the form $[\gamma(t),\theta(t)]=[\ell_1\pi+t x,\ell_2\pi+ty]$, $t\in[0,\tfrac \pi 2 )$, $\ell_i\in \ZZ, (x,y)\in S^1$.

\begin{corollary} \label{JamesWattdied} After shrinking $s_0$ if needed, for all $s$ with $0<|s|\leq s_0$, the map $$ \pi_0:\NAT_s \to P_0$$ misses the corners  and  has only embedded fold singularities.  The fold locus consists of four critical circles. Each corner is enclosed by its corresponding   the  fold image   circle.  Any ray in $P_0$ emerging from a corner intersects the image circle encircling that corner transversely. 
The restriction of $\pi_0$ to the complement of the fold locus is a trivial  two-fold cover of its image in $P_0^*$.
\end{corollary}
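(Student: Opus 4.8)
The plan is to deduce Corollary~\ref{JamesWattdied} almost entirely from Proposition~\ref{thmfold} and the commutative diagram (\ref{lifttoP0}) connecting $k_s:V_s\to T$ to $\pi_0:\NAT_s\to P_0$. First I would recall that $\NAT_s=V_s/\hat\iota$ and that $\pi_0$ fits into the square whose top horizontal map is $k_s$ and whose right vertical map is the quotient $T\to T/\iota=P_0$; since $\hat\iota$ covers $\iota$ and $\iota$ permutes the four fixed points while fixing them setwise as a collection, the statements about $k_s$ push down directly. Concretely: Proposition~\ref{thmfold} says the image of $k_s$ misses the four elliptic fixed points, so the image of $\pi_0$ misses their images, which are exactly the four corners of $P_0$; this gives the first clause. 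The fold locus of $k_s$ is a disjoint union of four circles $C_s^{\epsilon_\gamma,\epsilon_\theta}$, transversely cut out by the 1-jet, and each is embedded by $k_s$ encircling the corresponding fixed point. Because the covering $T\to P_0$ is a local diffeomorphism away from the fixed points and the fold images avoid the fixed points, fold singularities, transversality of the 1-jet, and embeddedness are all local properties preserved under passing to the quotient, so $\pi_0$ likewise has only embedded fold singularities with a four-circle fold locus, each corner enclosed by its fold-image circle.

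Next I would address the transversality-of-rays claim. The statement is that any ray $[\ell_1\pi+tx,\ell_2\pi+ty]$, $(x,y)\in S^1$, emerging from a corner crosses the fold-image circle encircling that corner transversely. Working in the local coordinates $(\sin\gamma,\sin\theta)$ near the fixed point indexed by $(\epsilon_\gamma,\epsilon_\theta)$, the ray becomes (to first order in $t$) the straight segment $t(x,y)$, and the fold image is the curve $\alpha_s$ from Equation~(\ref{eq417}), namely $\alpha_s(\tau)=2s\epsilon_\gamma\epsilon_\theta(-\sin\tau,\cos\tau)+s^2R_\tau(T_1,sT_2)$. I would observe that after shrinking $s_0$ the map $\tau\mapsto \alpha_s(\tau)/\|\alpha_s(\tau)\|:S^1\to S^1$ is a diffeomorphism (this is conclusion (2) in the proof of Proposition~\ref{thmfold}); hence $\alpha_s$ is a smooth embedded circle about the origin which is $C^1$-close (after rescaling by $1/s$) to the round circle of radius $2$, and in particular is transverse to every radial line through the origin. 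A straight ray from the corner is a radial segment in these coordinates up to higher-order terms, so for $s_0$ small enough it meets $\alpha_s$ transversely; pulling back through the diffeomorphism $A\circ R_\tau$ of Equation~(\ref{locinv}) preserves transversality, giving the claim in $P_0$.

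Finally, for the covering-space statement I would argue that $\pi_0$ restricted to the complement of the fold locus is a proper submersion onto its image in $P_0^*$, hence a covering map; it has degree two because, by Corollary~\ref{genus53}, $k_0:V_0\to T$ restricted over $T^*$ is a trivial two-fold cover, and for $s$ small the family $V_s$ is a trivial surface bundle over $[-s_0,s_0]$ deforming $V_0$, so the mod-$2$ degree of $\pi_0$ off its fold locus is unchanged; triviality of the cover follows from the explicit section $t\mapsto[\arcsin t,0]$-type path constructed in Corollary~\ref{genus53} whose endpoints lie in different sheets, together with the fact that a degree-two cover of a connected surface-with-punctures whose total space separates appropriately is trivial. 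The main obstacle I anticipate is making the ray-transversality argument fully rigorous uniformly in the corner and in $\tau$: one must check that the higher-order correction $s^2R_\tau(T_1,sT_2)$ genuinely does not destroy transversality with a straight radial ray whose own curvature is $O(t^2)$, which requires comparing two small quantities carefully. Everything else is a routine descent of Proposition~\ref{thmfold} through the quotient and a standard identification of a proper submersion as a covering.
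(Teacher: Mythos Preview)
Your approach is essentially the paper's: descend Proposition~\ref{thmfold} through the quotient diagram~(\ref{lifttoP0}) and appeal to the $s=0$ case (Corollary~\ref{genus53}) for the covering statement. Two points deserve correction.

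First, embeddedness of the fold image is \emph{not} a local property, so it does not automatically survive the quotient. What you need is that $\hat\iota$ preserves each circle $C_s^{\epsilon_\gamma,\epsilon_\theta}$ individually (not merely their union): since $k_s$ is $\hat\iota$-$\iota$-equivariant and embeds the four critical circles onto disjoint loops near the four \emph{fixed} points of $\iota$, and since $\iota$ fixes each of those points, $\hat\iota$ must send $C_s^{\epsilon_\gamma,\epsilon_\theta}$ to itself. Then $k_s$ descends to a diffeomorphism $C_s^{\epsilon_\gamma,\epsilon_\theta}/\hat\iota \to k_s(C_s^{\epsilon_\gamma,\epsilon_\theta})/\iota$, giving the embedded fold image in $P_0^*$. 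This is the paper's route; your ``local property'' shortcut is a gap.

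Second, your triviality argument for the two-fold cover is confused. The path in Corollary~\ref{genus53} whose endpoints lie in different sheets shows $V_0$ is \emph{connected}; it passes through a critical circle and says nothing about triviality of the cover on the complement. The correct argument (which the paper leaves implicit) is that at $s=0$ the two sheets of $V_0\setminus\{\text{4 circles}\}\to T^*$ are each $\hat\iota$-invariant (check via Equation~(\ref{sect}) and $\hat\iota(\gamma,\theta,\tau)=(-\gamma,-\theta,\tau+\pi)$), so the quotient cover $\NAT_0\setminus C_0\to P_0^*$ has two components and is trivial; this persists for small $s$ by the bundle statement in Corollary~\ref{genus53}. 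Your ray-transversality paragraph, by contrast, is more detailed than the paper's (which omits a proof of that clause) and is correct.
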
 
\begin{proof}  The fact that $\pi_0$ misses the corners follows from the fact that $k_s$ misses the  fixed points.   Given that the quotient $T^*=T\setminus \{ \mbox{fixed points} \} \to P_0 ^*=T^*/\hat\iota$ is a local diffeomorphism, it follows from the diagram (\ref{lifttoP0}) that $\pi_0$ has only   fold singularities.

The diagram: \begin{equation} 
\begin{tikzcd}\label{Vsquare}
V_s  
\arrow[r,"\hat\iota"] \arrow[d, "k_s"] & V_s\arrow[d, "k_s"]\\
T\arrow[r,"\iota"]
& T
\end{tikzcd}
\end{equation}
commutes for all $s$, and hence, when $s\ne 0$,  the critical set  of $k_s$ is invariant under $\hat\iota$. 
Since the  fold image $k_s(C_s^{\ep_\gamma,\ep_\theta})$ converges to the corner corresponding to $(\ep_\gamma,\ep_\theta)$, $C_s^{\ep_\gamma,\ep_\theta}$ must be invariant under 
the involution $\hat \iota$.   Hence each of the four quotient circles $C_s^{\ep_\gamma,\ep_\theta}/\hat\iota$ maps diffeomorphically to $P_0^*$, encircling its corresponding  corner.

 Because of the local structure of a fold map, for any non-zero $s$ there exists an open  neighborhood  $W$ of the  the fold locus such that the restriction of $\pi_0$ to $W$   is two-to-one off the fold locus. When $s=0$, $\pi_0:\NAT_0\to P_0$ is two-to-one away from the critical set.  The last claim  about $\pi_0:\NAT_s \to P_0$ follows.   \end{proof}

  \begin{corollary} \label{Wattprime} 
  The statement in Corollary \ref{JamesWattdied} holds as well when $\NAT_s$ is replaced by  $\NAT_s'$.
  \end{corollary}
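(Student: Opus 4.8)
The plan is to observe that Corollary \ref{JamesWattdied} was proved using only structural features of the function $\mathbf{G}_s$ that Lemma \ref{estimate2} shows are shared, \emph{verbatim}, by $\mathbf{G}_s'$. Concretely, the entire argument in Section \ref{sect4.2} — the change to $(x,y,\tau)$ coordinates on each of the four solid tori, the local implicit-function description in Lemma \ref{LP}, Proposition \ref{thmfold}, and Lemma \ref{fuglylemma} — depends on $\mathbf{G}_s$ only through the facts that (i) its second component vanishes on $\{\nu = 0\}$ to first order (here in fact $G_2' = \nu$ exactly, which is \emph{simpler} than the earring case), and (ii) its first component has the form $\sin\theta\cos\tau - \sin\gamma\sin\tau + 2s\cos\gamma\cos\theta + s^2(\cdots)$, i.e. $y + 2s\cos\gamma\cos\theta + O(s^2)$ in the new coordinates. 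By Equations (\ref{phis}) and (\ref{phisp}), $\mathbf{G}$ and $\mathbf{G}'$ agree through order $s^1$ and have exactly this shape; only the $s^2$-coefficients ($R_1/R_3$ versus $R_4/2$, and $R_6$ versus $0$) differ, and none of the cited arguments touches those coefficients.

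First I would restate this explicitly: replace $\mathbf{G}_s$ by $\mathbf{G}_s'$ and $V_s$ by $V_s'$ throughout diagrams (\ref{perturbed crit fibers}) and (\ref{Vsquare}), obtaining a map $k_s':V_s'\to T$, and note that $V_s' = \hNAT_s'$ for $s\neq 0$ by (\ref{Veesubs}). Second, I would run Lemma \ref{LP} for $\mathbf{G}'$: in the $(x,y,\tau,\nu)$ coordinates on each solid torus, $\mathbf{G}'(s,x,y,\tau,\nu) = (y,\nu) + sR'$ for an analytic $R'$ (indeed with $\nu$-component literally $\nu$), so the analytic implicit function theorem yields $Y'(s,x,\tau), N'(s,x,\tau)$ with $\mathbf{G}_s'^{-1}(0,0) = \{(y,\nu) = (Y',N')\}$ locally, and the identity $Y'(s,x,\tau) + 2s\cos\gamma\cos\theta = s^2 R_6'(s,x,\tau)$ holds by the same manipulation (using (\ref{phisp})). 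Third, Proposition \ref{thmfold} goes through word for word: $Y'(0,x,\tau) = 0$, $\partial Y'/\partial s(0,x,\tau) = -2\cos(\gamma(0,0,\tau))\cos(\theta(0,0,\tau)) = -2\epsilon_\gamma\epsilon_\theta \neq 0$, so $k_s'$ misses the fixed points for small nonzero $s$; the determinant computation (\ref{crit2}) is identical since it only uses the form $\Gamma_s = R_\tau(x, Y')^{\mathsf{T}}$; and Lemma \ref{fuglylemma} holds verbatim, giving $\widehat X' = s^2 T_1'$ and $\widehat Y' = -2\epsilon_\gamma\epsilon_\theta s + s^3 T_2'$, hence $\alpha_s'$ has the form (\ref{eq417}) and encircles the fixed point. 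Finally, the commutative squares (\ref{lifttoP0}) and (\ref{Vsquare}) hold for $\NAT_s'$ by Proposition \ref{master}, so the deduction of embedded fold singularities, $\hat\iota$-invariance of the critical circles, and the two-fold cover structure transfers unchanged.

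The main obstacle — and it is a mild one — is simply verifying that no step secretly used a feature of $\mathbf{G}$ not present in $\mathbf{G}'$; in particular one should double-check the second assertion of Lemma \ref{LP} in the bypass case, since that $s^2R_6$ identity is the one place the $s^2$-terms enter, and confirm it follows from $\widehat G_1' = 2(-(\sin\gamma\sin\tau - \sin\theta\cos\tau) + 2s\cos\gamma\cos\theta) + s^2 R_4 + G_2' R_5$ exactly as the earring identity followed from the expansion of $\widehat G_1$. Since Lemma \ref{estimate2} was proved precisely so that ``their second order expansions in $s$ are similar,'' this is immediate. I would therefore write the proof of Corollary \ref{Wattprime} in one or two sentences: the proofs of Lemma \ref{LP}, Proposition \ref{thmfold}, Lemma \ref{fuglylemma}, and Corollary \ref{JamesWattdied} use $\mathbf{G}_s$ only through the first-order Taylor expansion of its first component and the fact that its second component cuts out $\{\nu = 0 + O(s)\}$; by Lemma \ref{estimate2} the function $\mathbf{G}_s'$ has the identical such data (with $G_2' = \nu$), so every step applies verbatim, using the diagrams (\ref{lifttoP0}) and (\ref{Vsquare}) for $\NAT_s'$ from Proposition \ref{master}.
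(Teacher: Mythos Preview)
Your proposal is correct and takes essentially the same approach as the paper: the paper's proof is just two sentences observing that the proof of Lemma \ref{LP} depends only on the first-order expansion of ${\bf G}_s$, hence works for ${\bf G}'_s$, and therefore Proposition \ref{thmfold} holds with $V_s$ replaced by $V_s'$. Your write-up is considerably more detailed than the paper's, but the underlying idea is identical.
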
 
  \begin{proof} To begin with, the proof of Lemma \ref{LP} depends only on the first order expansion of ${\bf G}_s$, so it works for ${\bf G}' _s$, too.    It follows that the statement in Proposition \ref{thmfold} holds with $V_s={\bf G}_s ^{-1} (0,0)$ replaced by $V_s '=({\bf G}' _s)  ^{-1} (0,0).$  
  \end{proof}

\section{Restriction to $P_0\times P_1$ }
\label{punchline}

  Having described the restriction $\pi_0:\NAT_s\to P_0$, we now describe the restriction
$\pi_1:\NAT_s\to P_1$ to the second pillowcase, making use of an involution of the tangle,   $U$,  which exchanges its boundary components. 
 The strategy used is to express $\pi_1$ as a composite $\pi_1=(\Psi\circ\Theta)\circ \pi_0\circ U_s$
where $U_s:\NAT_s\to \NAT_s$ is the involution induced by $U$,  $\pi_0:\NAT_s\to P_0$ continues to denote the composition of projection onto $T$ with the quotient $T\to T/\iota$,  and $ \Theta:P_0^*\to P_0^*$, $\Psi:P_0^*\to P_1^*$ are $s$-independent explicit symplectomorphisms. 
In addition, we introduce a  pair of involutions $W_1,W_2$ on $\NAT_s$ and $\NAT_s'$, used to streamline the proof of the main result.

 \subsection{The involution $U$}
 Denote by 
$$U:S^2\times [0,1]\to S^2\times [0,1]$$
the involution 
\begin{equation}
\label{defU}
U(x,y,z,t)=(-x,y,z,1-t).
\end{equation}
The involution $U$ is orientation-preserving, has fixed point set the circle $\{x=0\}\times\{\tfrac 1 2\}$,  and interchanges the two boundary components.  Moreover, the  two-sphere $S^2\times \{\tfrac 1 2\}$  is $U$-invariant.

 \begin{figure}[ht]
\begin{center}
\includegraphics[width=1\textwidth]{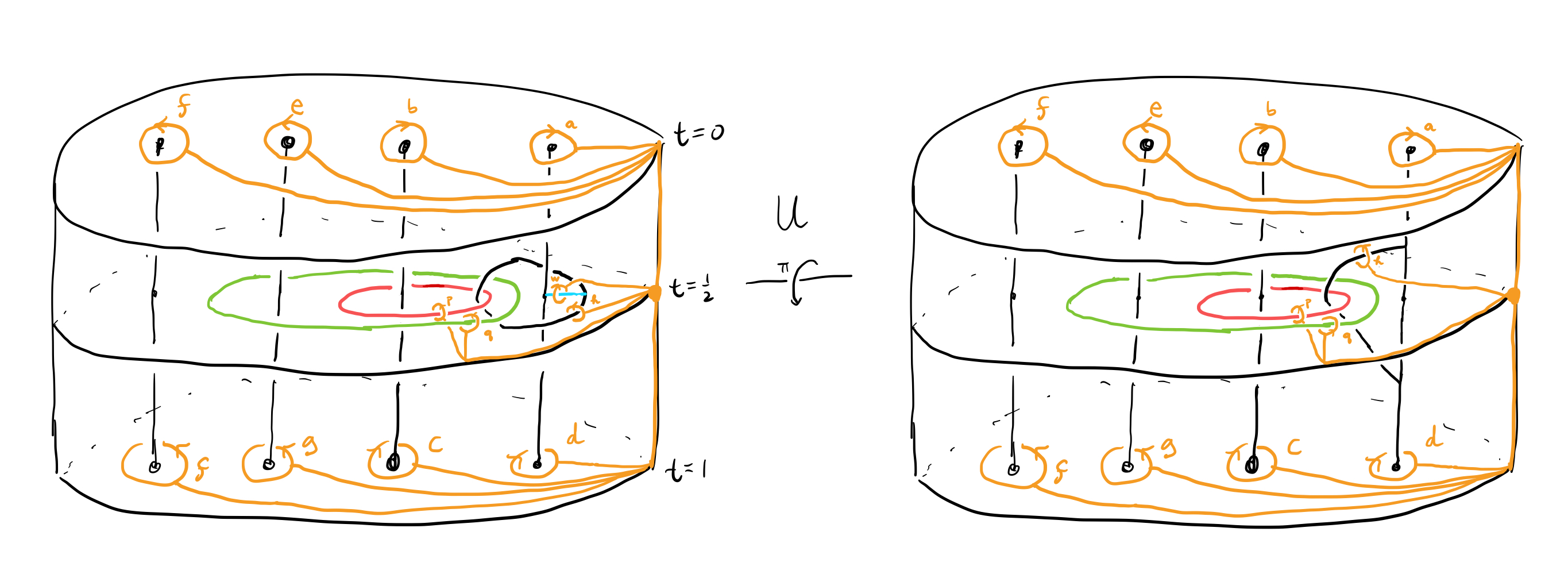}
 \caption{The earring and bypass tangles with their perturbation curves, loops generating $\Pi$ and $\Pi'$, and the involution $U$.\label{invoTfig}}
\end{center}
\end{figure}

The earring and  bypass tangles, endowed with their perturbation curves, may be isotoped to be $U$-invariant and to lie in $D^2\times I$, as illustrated in Figure \ref{invoTfig}.

Figure \ref{invoTfig} is simply a different perspective of Figure \ref{earpertfig}, and the  illustrated labelled   loops based at a fixed point of $U$  correspond to the labelled based loops in Figure \ref{earpertfig}.  In particular, generators and relations in the presentations given in Propositions \ref{summ} and \ref{summ2} agree.

\medskip

Rotating  Figure \ref{invoTfig} along the horizontal axis by the angle $\pi$ and keeping track of where homotopy classes of based loops are sent establishes the following lemma, which identifies the induced automorphism $U_\#$ of $\Pi$ (resp. $\Pi'$).

\begin{lemma}\label{actofT} In terms of the presentations given in Propositions \ref{summ} and \ref{summ2},  for both tangles,
$$
U_\#(a)=\bar d=\bff \bq  f \bar a \bar b \bp  b  p q, ~
U_\#(b)=\bar d \bar c d=(\bar f \bq f \bar a  \bar b \bp)\bar b (p b a \bar f q f),~U_\#(f)=\bar f,
$$
Since $U$ preserves the two  tori which bound tubular neighborhoods of the perturbation curves, there exist loops $\gamma_i$ so that  $$U_\#(p)=\gamma_1 \bar p\gamma_1^{-1},~
 U_\#(\lambda_p)=\gamma_1 \lambda_p^{-1}\gamma_1^{-1} 
,~U_\#(q)=\gamma_2 \bar q\gamma_2^{-1},~
 U_\#(\lambda_q)=\gamma_2 \lambda_q^{-1}\gamma_2^{-1} $$

In addition, 
\begin{itemize}

\item for the earring tangle, $U_\#(w)$ is conjugate to $w$ and $U_\#(h)=\bh \bar w$, and
\item for the bypass tangle $U_\#(h)=\ba h \bar q \bar p \bar h  p  q \bh a$. \qed
\end{itemize}

 \end{lemma}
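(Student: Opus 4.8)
The plan is to compute the induced automorphism $U_\#$ by carefully tracking the effect of the $\pi$-rotation of Figure \ref{invoTfig} on each generating loop, and then checking that the resulting words satisfy the defining relations of $\Pi$ (resp.\ $\Pi'$). First I would fix once and for all a base point on the fixed-point circle of $U$ and the system of straight paths used to turn the labelled arrows of Figure \ref{invoTfig} into based loops, exactly as described before Proposition \ref{summ}. Because the rotation preserves this base point and sends the picture to itself, each labelled arrow is carried to another arrow (possibly reversed), and reading off the image loop is a bookkeeping exercise: the meridian $f$ of the central strand is reversed, giving $U_\#(f)=\bff$; the meridian $a$ at one end is carried to (a conjugate of) the reverse of the meridian $d$ at the other end, and since $d$ is already expressed in terms of the generators in Propositions \ref{summ}, \ref{summ2} this yields the stated word for $U_\#(a)$; similarly $b$ is carried to (the reverse of) $\bc d c$ or the equivalent, giving $U_\#(b)$. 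The statements about $p,q,\lambda_p,\lambda_q$ follow from the single geometric fact that $U$ carries each perturbation torus to itself reversing both the meridian and longitude orientations, hence conjugates $p\mapsto \bp$ and $\lambda_p\mapsto \lambda_p^{-1}$ up to a path $\gamma_1$ (and likewise for $q$).

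The two bulleted statements require a little more care. For the earring, the boundary of the tubular neighborhood of the earring is a once-punctured torus with meridian $h$ and, by the remark following Proposition \ref{summ}, longitude $\ba h \bq\bp$, and the puncture arc is $w$; $U$ carries this punctured torus to itself. Tracking orientations shows $U$ reverses the meridian direction but the framing picks up the puncture class, which gives $U_\#(h)=\bh\bw$; the fact that $U_\#(w)$ is conjugate to $w$ follows because $w$ is (freely) the boundary of a meridian disk of the earring strand, which $U$ preserves setwise. For the bypass there is no distinguished arc $w$, and one simply reads the image of the based loop $h$ off the rotated picture; the claimed word $\ba h\bq\bp\bh pq\bh a$ is what the straightening procedure produces, and one double-checks it by verifying that it has real part zero on $L_s(T\times\tracelessTwoSphere)$ (consistent with $h$ being a meridian) and that $U_\#^2=\id$ on the quotient, i.e.\ conjugating the word for $U_\#(h)$ back through $U_\#$ returns $h$ up to the relations.

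I expect the main obstacle to be the bookkeeping for $U_\#(b)$ and the bypass formula for $U_\#(h)$: the straightening conventions matter, and an error in the choice of connecting path changes the answer by a conjugating factor, so the words have to be normalized consistently and then reconciled with the given presentations. The cleanest way to discharge this is not to trust the picture alone but to verify algebraically that the proposed assignments respect the relations $[p,\lambda_p]=[q,\lambda_q]=1$ (using $U_\#(\lambda_p)=\gamma_1\lambda_p^{-1}\gamma_1^{-1}$ and $U_\#(p)=\gamma_1\bp\gamma_1^{-1}$, so $U_\#([p,\lambda_p])$ is conjugate to $[\bp,\lambda_p^{-1}]=\overline{\lambda_p^{-1}[p,\lambda_p]\lambda_p}$, which is trivial), that $U_\#$ is an involution, and that it matches the identity $U_\#(w)=-1$-compatible behavior in Lemma \ref{shuffle}. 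Once these consistency checks pass, the lemma follows, since a homomorphism of a presented group is determined by the images of the generators and any assignment respecting the relators defines one.
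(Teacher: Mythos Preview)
Your proposal is correct and follows the same approach as the paper: the paper's entire proof is the single sentence preceding the lemma, namely ``Rotating Figure~\ref{invoTfig} along the horizontal axis by the angle $\pi$ and keeping track of where homotopy classes of based loops are sent establishes the following lemma,'' after which the statement is marked \qed. Your write-up is a more explicit elaboration of this same geometric bookkeeping, with the added (and sensible) algebraic consistency checks on the relators and on $U_\#^2=\id$, which the paper omits.
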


  Lemma \ref{actofT}  implies that the action of $U$ on $\Hom(\Pi, SU(2))$ (resp. on $\Hom(\Pi', SU(2))$) preserves the $s$-perturbed flat traceless representations, and, in the case of the earring tangle, the $w_2$ condition, and thereby determines an involution which we denote by $U_s:\NAT_s\to \NAT_s$ (and similarly $U_s:\NAT'_s\to \NAT_s'$).    
 Explicitly, if $x\in \Pi$ and $[\rho]\in \NAT_s$
$$
(U_s(\rho))(x)=\rho(U_\#(x)).
$$

  Denote by
\begin{equation}\label{hatU}\widehat U:P_0\to P_1\end{equation}
the isomorphism on character varieties induced by the restriction $U|_{S^2\times\{1\}}: S^2\times\{1\}\to S^2\times\{0\}$.  
Functoriality immediately implies the following 
(recall that we write $u_s=\pi_0\times\pi_1$; see the comment following Lemma \ref{rs=p0p1}).
 \begin{lemma}\label{nice} The diffeomorphism 
 $U$ induces an analytic involution $U_s:\NAT_s\to \NAT_s$ for any $0<|s|<s_0$.  The diagram
 \begin{equation} 
\begin{tikzcd}\label{Usquare}
\NAT_s  
\arrow[r,"U_s"] \arrow[d, "\pi_0"] & \NAT_s\arrow[d, "\pi_1"]\\
 P_0\arrow[r,"\widehat{U}"]
& P_1
\end{tikzcd}
\end{equation}
commutes.
The identical statement holds for $\NAT'_s$.\qed
\end{lemma}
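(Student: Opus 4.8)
The plan is to prove Lemma \ref{nice} as a direct consequence of functoriality of the character-variety construction, together with the analyticity and involution properties already in hand. First I would observe that $U_s:\NAT_s\to\NAT_s$ is an analytic involution: it is the restriction to $\NAT_s\subset\Hom(\Pi,SU(2))/SU(2)$ of the map $[\rho]\mapsto[\rho\circ U_\#]$, and $U_\#$ is an automorphism of $\Pi$ by Lemma \ref{actofT}; the explicit word formulas in that lemma are polynomial expressions in the generators $(a,b,f,h,p,q)$ and their inverses, so $\rho\mapsto\rho\circ U_\#$ is an algebraic (hence analytic) self-map of $\Hom(\Pi,SU(2))$, equivariant for conjugation, and it descends to the quotient. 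That it preserves the defining conditions of $\NAT_s$ — the traceless condition on meridians, the $w_2$ condition (for the earring), and the $s$-perturbation conditions $\rho(p)=e^{s\,\Ima(\rho(\lambda_p))}$, $\rho(q)=e^{s\,\Ima(\rho(\lambda_q))}$ — is exactly the content of the paragraph preceding the lemma, which in turn rests on the conjugacy statements $U_\#(w)\sim w$, $U_\#(p)=\gamma_1\bar p\gamma_1^{-1}$, $U_\#(\lambda_p)=\gamma_1\lambda_p^{-1}\gamma_1^{-1}$, etc., recorded in Lemma \ref{actofT}. (Traceless elements have real part $0$, which is preserved under inversion and conjugation; and $e^{s\,\Ima(x^{-1})}=\big(e^{s\,\Ima(x)}\big)^{-1}$ together with conjugation-equivariance of $\Ima$ gives the perturbation condition for $\bar p$ conjugated.) Since $U^2=\id$ on $S^2\times[0,1]$, $U_\#^2$ is an inner automorphism of $\Pi$, so $U_s^2=\id$ on the quotient; thus $U_s$ is an involution. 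The identical reasoning applies to $\Pi'$ and $\NAT_s'$, using Proposition \ref{summ2} and the bypass clause of Lemma \ref{actofT}.

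Next I would establish commutativity of the square (\ref{Usquare}). This is pure naturality of restriction-to-the-boundary. Recall $\pi_0$ sends $[\rho]\in\NAT_s$ to the conjugacy class of $\rho|_{\Pi^\rout}$, viewed in $P_0=\Hom(\Pi^\rout,SU(2))^{\text{traceless}}/SU(2)$, and $\pi_1$ sends $[\rho]$ to the class of $\rho|_{\Pi^\rin}$ in $P_1$. The involution $U$ carries the boundary component $S^2\times\{0\}$ to $S^2\times\{1\}$, inducing on fundamental groups a map $\Pi^\rin\to\Pi^\rout$ which, composed with the inclusion $\Pi^\rin\hookrightarrow\Pi$, fits into a commuting square with $U_\#:\Pi\to\Pi$ and the inclusion $\Pi^\rout\hookrightarrow\Pi$ — this is just the statement that $U$ restricted to a tangle-complement boundary neighborhood is compatible with $U$ on the whole complement. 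Concretely, the formulas $U_\#(a)=\bar d$, $U_\#(b)=\bar d\bar c d$, $U_\#(f)=\bar f$ of Lemma \ref{actofT} express the images under $U_\#$ of the $\Pi^\rout$-generators $a,b,f$ (with $e$ determined by the relation $ba\bff\be$) as words in the $\Pi^\rin$-generators $c,d,f$, and this is precisely the homomorphism $\widehat U^\#:\Pi^\rout\to\Pi^\rin$ dual to $\widehat U:P_0\to P_1$ of (\ref{hatU}). Therefore, for $[\rho]\in\NAT_s$,
$$
\pi_1(U_s[\rho])=[(\rho\circ U_\#)|_{\Pi^\rin}]=[\rho\circ(U_\#|_{\Pi^\rin})]=[\rho|_{\Pi^\rout}\circ\widehat U^\#]=\widehat U\big([\rho|_{\Pi^\rout}]\big)=\widehat U(\pi_0[\rho]),
$$
which is the desired commutativity. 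The $\NAT_s'$ case is word-for-word identical, substituting Proposition \ref{summ2}.

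I do not anticipate a genuine obstacle here — this lemma is essentially a bookkeeping consequence of functoriality — but the one point requiring care is to confirm that $\widehat U:P_0\to P_1$ as defined in (\ref{hatU}), via $U|_{S^2\times\{1\}}:S^2\times\{1\}\to S^2\times\{0\}$, agrees under the chosen labelings of peripheral generators with the homomorphism $\Pi^\rout\to\Pi^\rin$ read off from Lemma \ref{actofT}; i.e., that the identification of $\FPS^\rout$ with $\FPS^\rin$ implicit in both constructions is the same. This is guaranteed by the remark in the excerpt that Figure \ref{invoTfig} is "simply a different perspective of Figure \ref{earpertfig}" with the labelled based loops corresponding, so that the generator labels in Propositions \ref{summ}, \ref{summ2} are consistent with the $U$-picture; once that is noted, the square commutes on generators and hence on all of $\NAT_s$. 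The analyticity of $U_s$ for $0<|s|<s_0$ follows since on that range $\NAT_s$ is the smooth genus-three surface of Corollary \ref{genus53} and $U_s$ is the restriction of an algebraic conjugation-equivariant self-map of $\Hom(\Pi,SU(2))$.
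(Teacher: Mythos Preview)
Your argument is correct and matches the paper's: the lemma is stated with a terminal \qed\ and is preceded only by the sentence ``Functoriality immediately implies the following,'' so the paper offers no proof beyond the one-word justification you have spelled out in detail. One notational slip worth fixing: the group homomorphism dual to $\widehat U:P_0\to P_1$ runs $\Pi^\rin\to\Pi^\rout$ (since $U|_{S^2\times\{1\}}$ has domain $S^2\times\{1\}$), whereas the formulas $U_\#(a)=\bar d$, $U_\#(b)=\bar d\bar c d$, $U_\#(f)=\bar f$ from Lemma~\ref{actofT} describe the other restriction $U_\#|_{\Pi^\rout}:\Pi^\rout\to\Pi^\rin$; since $U$ is an involution these are mutual inverses up to inner automorphism, so your displayed chain of equalities is valid once the direction of $\widehat U^\#$ is corrected.
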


 In the following, $C_s\subset \NAT_s$ denotes the fold locus,
\begin{equation}\label{foldLocus}C_s =\{c\in \NAT_s~|~\ker (d\pi_0)_c\ne 0\}.\end{equation}
 Hence $C_s$ is the union of the four quotient circles $C_s^{\ep_\gamma,\ep_\theta}/\hat \iota$. The same holds for the  fold locus  $C_s'\subset \NAT_s'$.  

\medskip

\subsection{Relationship of $U_s$ to $\pi_1$}
We next express $\pi_1:\NAT_s\to P_1^*$ in terms of $U_s$, $\pi_0$, and a fixed identification $\Psi:P_0\cong P_1$.

Denote by $\psi:\Pi_1\to \Pi_0$ the isomorphism of peripheral subgroups taking the ordered triple of free generators $d,c,f$ to the free generators $a,b,f$.    Note that $\psi$ is induced by the obvious homeomorphism from $(S^2, 4)\times \{1\}$ to $(S^2,4)\times \{0\}$, which is orientation-reversing with respect to the boundary orientation on these two components.  Then set
\begin{equation}\label{Psi}\Psi:P_0\to P_1, ~  \Psi=\psi^*\end{equation}
the induced map on traceless character varieties. 

Also define 
\begin{equation}\label{Theta}\Theta:P_0\to P_0, ~ \Theta([\gamma,\theta])=[\gamma,-\theta]. \end{equation}
   Notice that $\Theta$ is orientation-reversing on $P_0 ^*$.   In terms the coordinates $P_0^*\subset \RR^3$ given by the characters of the loops $a,b,f$  (see Equation (\ref{curvypillow}):
  \begin{equation}
\label{theta2}
\Theta(x,y,z)=(x,y,2xy-z).
\end{equation}

\begin{lemma} \label{factor}\hfill
\begin{enumerate}
\item The map $\widehat{U}:P_0\to P_1$ factors as
$$
\widehat{U}= \Psi\circ  \Theta.$$
\item The maps $\pi_1:\NAT_s,\NAT' _s\to P_1^*$ factor as
\begin{equation*}\pi_1=\Psi\circ\Theta\circ\pi_0\circ U_s.
\end{equation*}
\item    The map
$u_s$ factors as \begin{equation*}
u_s:\NAT_s \xrightarrow{\pi_0\times (\Theta\circ\pi_0\circ U_s)} P_0^{*}\times P_0^*\xrightarrow{{\rm Id}\times \Psi}P_0^{*}\times P_1^*
\end{equation*}
and similarly for $u'_s:\NAT' _s\to P_0^* \times P_1 ^*$. 
\end{enumerate}
\end{lemma}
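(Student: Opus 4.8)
The plan is to prove the three assertions in order, since (2) and (3) are essentially formal consequences of (1) together with Lemma \ref{nice} and Lemma \ref{rs=p0p1}.

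\textbf{Proof of (1).} The map $\widehat U:P_0\to P_1$ is the isomorphism on traceless character varieties induced by the restriction $U|_{S^2\times\{1\}}$, which sends the peripheral subgroup $\Pi^\rin$ to $\Pi^\rout$. I would compute the induced homomorphism $\Pi^\rin\to\Pi^\rout$ on free generators using Lemma \ref{actofT}: modulo the conjugations appearing there (which do not affect the induced map on \emph{character} varieties), $U_\#$ sends $a\mapsto \bar d$, $b\mapsto\bar d\bar c d$, $f\mapsto\bar f$. Tracing through, the restriction $U|_{S^2\times\{1\}}$ induces the map $\Pi^\rin\to\Pi^\rout$ given (up to conjugacy and inversion) on the generators $c,d,f$ of $\Pi^\rin$, and on character varieties this becomes the map that, in the $[\gamma,\theta]$ coordinates of Equation (\ref{TtoP0}), is the composite of $\psi^*$ (relabeling $d,c,f\mapsto a,b,f$) with the sign change $\theta\mapsto-\theta$ coming from the orientation-reversal of $U$ on the boundary two-sphere. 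Concretely, one checks in the $\RR^3$ coordinates (\ref{curvypillow}) that $\widehat U$ agrees with $\Psi\circ\Theta$ on the three characters of $a,b,f$ using Equation (\ref{theta2}), which determines $\widehat U$ since those characters embed $P_0$ into $\RR^3$. The main subtlety is bookkeeping of orientations: $\psi$ is induced by an orientation-reversing homeomorphism of the boundary, and this must be reconciled with the fact that $U$ itself is orientation-preserving but swaps boundary components; the discrepancy is exactly absorbed by the orientation-reversing factor $\Theta$.

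\textbf{Proof of (2).} Lemma \ref{nice} gives the commuting square $\pi_1\circ U_s=\widehat U\circ\pi_0$. Since $U_s$ is an involution, $\pi_1=\widehat U\circ\pi_0\circ U_s$, and substituting the factorization from (1) yields $\pi_1=\Psi\circ\Theta\circ\pi_0\circ U_s$. The identical argument applies verbatim to $\NAT_s'$, since Lemma \ref{nice} and Lemma \ref{actofT} both cover the bypass case.

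\textbf{Proof of (3).} By Lemma \ref{rs=p0p1} we have $u_s=\pi_0\times\pi_1$. Substituting the expression for $\pi_1$ from (2),
\begin{equation*}
u_s=\pi_0\times(\Psi\circ\Theta\circ\pi_0\circ U_s)=(\mathrm{Id}\times\Psi)\circ\big(\pi_0\times(\Theta\circ\pi_0\circ U_s)\big),
\end{equation*}
which is exactly the claimed factorization; the target of $\pi_0\times(\Theta\circ\pi_0\circ U_s)$ is $P_0^*\times P_0^*$ because $\pi_0:\NAT_s\to P_0$ misses the corners by Corollary \ref{JamesWattdied}, $U_s$ is a diffeomorphism, and $\Theta$ preserves $P_0^*$. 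Again the same holds for $u_s'$ by Corollary \ref{Wattprime}. I expect the only real work is in (1); once the identity $\widehat U=\Psi\circ\Theta$ is established, parts (2) and (3) are immediate bookkeeping.
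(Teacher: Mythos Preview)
Your approach matches the paper's: establish (1) by computing the induced map on peripheral generators, then derive (2) and (3) formally from Lemma~\ref{nice} (using that $U_s$ is an involution) and $u_s=\pi_0\times\pi_1$. The paper carries out (1) in the $[\gamma,\theta]$ coordinates rather than via the $\RR^3$ embedding---it computes $\widehat U_\#\circ\psi^{-1}:a\mapsto\bar a,\ b\mapsto\bar a\bar b a,\ f\mapsto\bar f$ and then conjugates the resulting representation by $\bbj$ to read off $[\gamma,\theta]\mapsto[\gamma,-\theta]=\Theta$---but this is the same calculation in different coordinates.
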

 
\begin{proof}  We show that $\Psi^{-1}\circ \widehat{U}=\Theta$, which implies the first statement. 
Let $\widehat{U}_\#:\Pi_1\to \Pi_0$ denote the isomorphism induced on peripheral subgroups  by the involution $U$, so that  $(\widehat{U}_\#)^*=\widehat{U}$.  Using Lemma \ref{actofT}, we compute the action of $\widehat{U}_\#\circ \psi^{-1}$ on the peripheral subgroup $\Pi_0$:
$$\widehat{U}_\#\circ \psi^{-1}(a)=\widehat{U}_\#(d)=\widehat{U}_\#(\widehat{U}_\#^{-1}(\bar a))=\bar a
$$
$$\widehat{U}_\#\circ \psi^{-1}(b)=\widehat{U}_\#(c)=\widehat{U}_\#(\widehat{U}_\#^{-1}(\bar a \bar b a))=\bar a \bar b a,~\text{ and }
$$
$$\widehat{U}_\#\circ \psi^{-1}(f)=\widehat{U}_\#(f)=\bar f.
$$
Hence 
$$(\widehat{U}_\#\circ \psi^{-1})^*=\Psi^{-1}\circ \widehat{U}:P_0\to P_0$$ takes $[\gamma,\theta]\in P_0$ to the conjugacy class of 
$$a\mapsto \bar \bbi, ~b\mapsto \bar\bbi e^{\gamma\bbk}\bar \bbi\bbi= e^{-\gamma\bbk}\bar\bbi, ~f\mapsto e^{\theta\bbk}\bar\bbi.
$$
Conjugating by $\bbj$ yields $[\gamma,\theta]\mapsto[\gamma,-\theta]$, which equals $\Theta$.  This proves the first statement.  The second and third statements follow  from the first and Lemma \ref{nice}.

\end{proof}
Since $\Psi$ is induced by an orientation-reversing diffeomorphism  between  the two four-punctured  two-sphere boundary components,  $\Psi$ restricts to a symplectomorphism $\Psi:P_0^{*-}\to P_1^*$.
 For all $0<|s|<s_0$, the first map in the third statement of Lemma \ref{factor}, $\pi_0\times (\Theta\circ\pi_0\circ U_s):\NAT_s\to P_0^{*-}\times P_0^{*-}$,   is a   Lagrangian  immersion (see Theorem \ref{thm6.1}).  The second map,  ${\rm Id}\times \Psi:P_0^{*-}\times P_0^{*-}\to P_0^{*-}\times P_1^*$  is an ($s$-independent) symplectomorphism induced by the natural identification of the two boundary components of $S^2\times I$.

  \medskip

\begin{proposition}\label{finepeopleonbothsides}  For $0<|s|\leq s_0$,
 the induced morphism $U_s:\NAT_s\to \NAT_s$ is an orientation-reversing involution of a closed genus three surface. Each of the four circles  in the fold locus $C_s$ is $U_s$-invariant. 
 The fold locus separates $\NAT_s$ into two  surfaces with boundary,
$$\NAT_s=\NAT_s^+\cup_{C_s}\NAT^-_s,$$ 
each homeomorphic to a  two-sphere with four open discs removed.  Each surface $\NAT_s^\pm$ maps homeomorphically into $P_0^*$ by $\pi_0$ with image the complement of a neighborhood of the corners,   with boundary the fold image. Moreover, $U_s(\NAT_s^+)= \NAT_s^+$ and $U_s(\NAT_s^-)= \NAT_s^-$.  The identical statement holds for $\NAT'_s$.\end{proposition}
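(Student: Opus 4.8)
The plan is to deduce all five assertions from the structure of the projection $\pi_0$ established earlier (Corollaries \ref{JamesWattdied} and \ref{Wattprime}) together with the intertwining relations of Lemmas \ref{nice} and \ref{factor}; I carry out the argument for $\NAT_s$, the case of $\NAT_s'$ being word for word the same because every input used holds for both (Corollary \ref{Wattprime}, Theorem \ref{thm6.1}, Lemmas \ref{actofT}, \ref{nice}, \ref{factor}). First I record the easy inputs: $\NAT_s$ is a closed orientable genus-three surface (Corollary \ref{genus53}), $U_s$ is a well defined analytic involution (the paragraph following Lemma \ref{actofT}), and by Corollary \ref{JamesWattdied} the map $\pi_0\colon\NAT_s\to P_0$ is a fold map missing the corners whose fold locus $C_s$ is four disjoint embedded circles, one encircling each corner, with $\pi_0$ a trivial double cover of its image away from $C_s$. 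I then show $U_s$ preserves $C_s$ and each of its four circles: by Theorem \ref{thm6.1} the critical loci of $\pi_0$ and of $\pi_1$ coincide and equal $C_s$, and since $\pi_1\circ U_s=\widehat U\circ\pi_0$ with $\widehat U$ a diffeomorphism, $U_s$ carries $\operatorname{Crit}(\pi_0)$ onto $\operatorname{Crit}(\pi_1)=C_s$; being an involution it permutes the four circles, and to see it fixes each one I track which corner is encircled, using Corollary \ref{JamesWattdied} for $\pi_0$, the identity $\widehat U=\Psi\circ\Theta$ of Lemma \ref{factor}, and the fact that $\Psi$ and $\Theta$ send each corner to the corner with the matching label.

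For the separation statement I would invoke the Whitney normal form of a fold. Since $\pi_0$ is a fold map missing the corners, with embedded fold image the four circles bounding the disks removed from $P_0$ to form the four-holed sphere $P_0^\circ$, and a trivial double cover of $P_0^\circ$ away from $C_s$, the surface $\NAT_s$ is canonically the double of $P_0^\circ$ along its boundary. In particular $\NAT_s\setminus C_s$ has exactly two components $\NAT_s^\pm$, each mapped homeomorphically onto $P_0^\circ$ by $\pi_0$ with image the complement of a neighbourhood of the corners and boundary the fold image; the Euler characteristic count $2\cdot(-2)=-4$ is consistent with genus three.

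The step I expect to be the main obstacle is establishing simultaneously that $U_s$ reverses orientation and that $U_s(\NAT_s^\pm)=\NAT_s^\pm$. Fix an orientation of $\NAT_s$. The two-form $\sigma=\pi_0^{*}\omega_0$ vanishes exactly along $C_s$ and, by the fold normal form, changes sign across each of its four circles; hence the sign of $\sigma$ relative to the fixed orientation is constant on each of $\NAT_s^+$, $\NAT_s^-$ and opposite on the two, so it detects which half a point lies in. On the other hand, because $u_s$ is a Lagrangian immersion and $\pi_0\circ U_s=\widehat U^{-1}\circ\pi_1$, a one-line computation using the effect of $\Psi$, $\Theta$ on the Atiyah--Bott--Goldman forms relates $U_s^{*}\sigma$ to $\sigma$ up to sign; comparing this with the transformation law for the sign of $\sigma$ under $U_s$ shows that $U_s$ reverses orientation if and only if it preserves each half (and is orientation preserving if and only if it interchanges them). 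To break this dichotomy I would use the geometric involution $U$, whose fixed circle $\{x=0\}\cap(S^2\times\{\tfrac12\})$ meets the tangle: this produces a $U_s$-fixed conjugacy class lying off the fold locus, and a fixed point of $U_s$ forces the component of $\NAT_s\setminus C_s$ containing it to be $U_s$-invariant; equivalently, one checks directly from Lemma \ref{actofT} that $\operatorname{Fix}(U_s)$ is one-dimensional, which already forces $U_s$ to reverse orientation, and then the dichotomy delivers $U_s(\NAT_s^\pm)=\NAT_s^\pm$. The hard part is the bookkeeping: matching the sign in the symplectic computation with the sign change of $\sigma$ across a fold, and pinning down the $U_s$-fixed locus explicitly through the perturbation conditions of Definition \ref{ptrep} and the formulas for $U_\#$.
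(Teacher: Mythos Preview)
Your outline has two genuine gaps and one interesting idea worth comparing with the paper's route.

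\textbf{Circularity.} You invoke Theorem \ref{thm6.1} to conclude that the critical loci of $\pi_0$ and $\pi_1$ coincide. But the proof of Theorem \ref{thm6.1} in the paper explicitly appeals to Proposition \ref{finepeopleonbothsides}, so this is circular. The fix is easy: the coincidence of critical sets follows directly from Lemma \ref{folding} once one knows $u_s$ is a Lagrangian immersion, which is cited from \cite{CHK} and does not depend on Proposition \ref{finepeopleonbothsides}.

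\textbf{The corner-tracking argument is tautological.} Your argument that $U_s$ preserves each individual fold circle runs as follows: label the circles by the corner they encircle under $\pi_0$; then $\pi_1(U_s(C_i))=\widehat U(\pi_0(C_i))$ encircles $\widehat U(c_i^0)$, and since $\Theta$ and $\Psi$ ``fix corners'' we are done. But to conclude $U_s(C_i)=C_i$ you need to know that $C_i$ encircles $\widehat U(c_i^0)$ under $\pi_1$, i.e.\ you need the correspondence between the $\pi_0$-labelling and the $\pi_1$-labelling of the same four circles. Unwinding, $c_i^1=\widehat U(c_{\sigma(i)}^0)$ where $\sigma$ is the very permutation you are trying to show is trivial, so the argument closes on itself. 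The paper avoids this by deforming to $s=0$ (Corollary \ref{genus53} gives a trivial family $\mathbb V/\hat\iota\to[-s_0,s_0]$), computing $U_0$ explicitly in coordinates as $U_0([\gamma,\theta,0,\tau])=[-\gamma,\theta,0,\pi-\tau]$, and reading off directly that $U_0$ preserves each critical circle $\{[\ell_1\pi,\ell_2\pi,0,\tau]\}$; stability then carries this to small $s\ne 0$.

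\textbf{The dichotomy idea versus the paper's approach.} Your idea of using $\sigma=\pi_0^*\omega_0$, whose sign relative to a global orientation flips across each fold and therefore distinguishes the two halves, and then computing $U_s^*\sigma=\pm\sigma$ to set up a dichotomy between ``orientation-reversing'' and ``preserves the halves'', is a nice conceptual alternative. (A local check at the fixed point $[\tfrac\pi2,0,0,0]$ confirms $U_0^*\sigma=-\sigma$, giving exactly your stated dichotomy.) However, to break the dichotomy you still need to exhibit a $U_s$-fixed point off $C_s$, or to show $\operatorname{Fix}(U_s)$ is one-dimensional. Your proposal for this is vague: appealing to the geometric fixed circle of $U$ or to Lemma \ref{actofT} does not immediately produce a point of $\NAT_s$, since one must verify the traceless, perturbation, and $w_2$ conditions. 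The paper does precisely this computation, but at $s=0$: it checks that $[\tfrac\pi2,0,0,0]\in V_0/\hat\iota\setminus C_0$ is fixed by $U_0$, and again invokes stability. So your dichotomy packaging is different, but the substantive input (an explicit fixed point, found via the $s=0$ coordinate model) is the same. The paper's deformation-to-$s=0$ strategy also handles the individual-circle invariance in one stroke, which your approach does not.
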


\noindent{\em Remark. }  For concreteness, define $\NAT^+_s$ to be the (closure of) the path component of $\NAT_s\setminus C_s$ which contains the interior point $[\gamma,\theta,\nu, \tau]=[\tfrac\pi 2,0,0,0]$;   Corollary \ref{two points}  below shows that this point lies in both $\NAT_s$ and $\NAT_s'$ for any $s$, and is fixed by $U_s$.

\begin{proof} Note that  $U_s:\NAT_s\to \NAT_s $ is  an involution. 
Lemma \ref{folding} shows that the critical sets of $\pi_0:\NAT_s\to P_0^*$ and $\pi_1:\NAT_s\to P_1^*$ coincide. Lemma \ref{nice} then shows that  $U_s(C_s)=C_s$.   
Corollaries \ref{genus53} and \ref{JamesWattdied} imply that $\NAT_s^\pm$ are  four-punctured spheres, mapped homeomorphically by $\pi_0$ into $P_0^*$,
with image the complement of a neighborhood of the corners, and boundary the fold image.

\medskip

It remains to prove three assertions: $U_s$ is orientation-reversing, $U_s$ preserves the path components of $C_s$,  and $U_s(\NAT^+_s)=\NAT_s^+$.
These three properties are stable under small deformations. Therefore it suffices to check them when $s=0$, since 
  Equation (\ref{localtriv}) and Corollary \ref{genus53}  show that   $\{V_s/\hat\iota\} $ is a smooth family of genus three surfaces  over $[-s_0,s_0]$ with $V_s/\hat\iota =\NAT_s$ when $s\ne 0$.

Recall that $V_0/\hat\iota= \{{\bf G}_0=0\}/\hat\iota$. Lemma \ref{estimate2} identifies $V_0$ as
$$V_0=\{\sin\gamma\sin\tau-\sin\theta\cos\tau=0, \nu=0\},$$ and $\hat\iota(\gamma,\theta,\tau)=(-\gamma,-\theta,\tau+\pi)$  (Equation (\ref{iota vs tau plus pi})). 
 
Using Lemma \ref{actofT}, a straightforward calulation shows that, 
$$U_0(a)=\bar a=\bar \bbi, ~ U_0(b)=e^{-\gamma\bbk}\bar\bbi,~ U_0(f)=\bar f=e^{\theta\bbk}\bar\bbi, ~ U_0(h)=h
=e^{\tau\bbi}\bbj.$$  
Conjugating by $\bbk$ shows that the involution $U_0:V_0/\hat\iota\to V_0/\hat\iota$ is given, in the coordinates of Equation (\ref{iota vs tau plus pi}), by
\begin{equation}\label{Tatzero} U_0([\gamma,\theta,0,\tau])=[-\gamma,\theta,0,-\tau+\pi].\end{equation}
In particular, the  path components of the critical set $C_0=\{[\ell_1\pi,\ell_2\pi,0,\tau]\}$ are preserved by $U_0$.

The point $[\gamma,\theta,\nu,\tau]=[\tfrac \pi 2,0,0,0]$ lies in $V_0/\hat\iota$, since $\sin\gamma\sin\tau-\sin\theta\cos\tau=0$,  is fixed by $U_0,$  and does not lie in  $C_0$.  Thus $U_0$ preserves the path components of the decomposition along $C_0$.  

  Since $\Theta:P_0^{*-}\to P_0^{*-}$ is orientation-reversing and $\Psi:P_0^{*-}\to P_1^*$ is a symplectomorphism, Lemma \ref{factor} implies that $\widehat{U}$ is orientation-reversing.  Since $\pi_0$ and $\pi_1$ are local diffeomorphims away from $C_s$, the diagram (\ref{Usquare})  shows that  $U_s:\NAT_s\to\NAT_s$ is    orientation-reversing.

Since $V_0=V_0'$, the same argument applies to $\NAT_s'$
\end{proof}

\begin{theorem}\label{thm6.1} For $0<|s|<s_0$ , the restriction map $u_s:\NAT_s \to P_0^{*-}\times P_1^*$  is a Lagrangian bifold immersion with embedded fold singularities. The fold locus $C_s$ (Equation (\ref{foldLocus})), a disjoint union of four circles, separates $\NAT_s$ into two four-punctured two-spheres: $\NAT_s=\NAT_s^+\cup_{C_s} \NAT_s^-$.  

Each of the four composites 
$$\pi_i:\NAT^\pm_s\to P_i^*, i=0, 1$$
is a homeomorphism onto its image, and a diffeomorphism on the interior $\NAT_s^\pm\setminus C_s$, with image 
 the complement of  four disk neighborhoods of the corners.   As $s\to 0$, the circles converge to the corners.

 The identical statement holds for the bypass tangle and $\NAT_s'$.
\end{theorem}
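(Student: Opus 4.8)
The plan is to assemble the results of Sections~\ref{coords}--\ref{punchline} rather than to run new computations; the analytic work has already been spent in Lemmas~\ref{estimate2} and \ref{fuglylemma}. By \cite{CHK}, $u_s=\pi_0\times\pi_1$ is a Lagrangian immersion, and its image lies in $P_0^{*-}\times P_1^*$ because $\pi_0$ misses the corners (Corollary~\ref{JamesWattdied}) and, via the factorization $\pi_1=\Psi\circ\Theta\circ\pi_0\circ U_s$ of Lemma~\ref{factor}, $\pi_1$ misses the corners of $P_1$ since $\Theta$ and $\Psi$ carry corners to corners. To upgrade this to a bifold immersion with embedded fold locus I would argue as follows. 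Corollary~\ref{JamesWattdied} gives that $\pi_0$ has only embedded fold singularities, with fold locus $C_s$ (Equation~(\ref{foldLocus})), a disjoint union of four circles whose $\pi_0$-images encircle the four corners and are therefore pairwise disjoint for small $s$. Since $\Psi$ and $\Theta$ are diffeomorphisms and $U_s$ is a diffeomorphism with $U_s(C_s)=C_s$ (Proposition~\ref{finepeopleonbothsides}), the same factorization shows $\pi_1$ has only fold singularities, with fold locus $U_s^{-1}(C_s)=C_s$; hence the two factor maps share the same fold locus, which is exactly the bifold condition. (Alternatively, coincidence of the critical sets of $\pi_0$ and $\pi_1$ is forced by Lemma~\ref{folding}, $u_s$ being Lagrangian, but one still needs the factorization to transport the fold \emph{property} to $\pi_1$.) Embeddedness of the $\pi_1$-fold locus then follows since $\pi_1|_{C_s}=\Psi\circ\Theta\circ\pi_0\circ(U_s|_{C_s})$ is a composite of the embedding $\pi_0|_{C_s}$, the diffeomorphism $U_s|_{C_s}$ of the four circles, and the diffeomorphism $\Psi\circ\Theta$.

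Next, the decomposition $\NAT_s=\NAT_s^+\cup_{C_s}\NAT_s^-$ into two four-punctured two-spheres is exactly Proposition~\ref{finepeopleonbothsides}. For the four composites $\pi_i:\NAT_s^\pm\to P_i^*$, the case $i=0$ is again Proposition~\ref{finepeopleonbothsides}: $\pi_0$ is a homeomorphism of $\NAT_s^\pm$ onto the complement of four disk neighborhoods of the corners of $P_0$, a diffeomorphism on $\NAT_s^\pm\setminus C_s$ by Corollary~\ref{JamesWattdied}. For $i=1$ I would compose along $\pi_1=\Psi\circ\Theta\circ\pi_0\circ U_s$: $U_s$ restricts to a homeomorphism $\NAT_s^\pm\to\NAT_s^\pm$ (Proposition~\ref{finepeopleonbothsides}), then $\pi_0$ maps homeomorphically onto the complement of four corner-disks of $P_0$, and the diffeomorphisms $\Theta:P_0\to P_0$ (namely $[\gamma,\theta]\mapsto[\gamma,-\theta]$, which permutes the four corners) and $\Psi:P_0\to P_1$ (induced by the boundary identification, sending corners to corners) each carry a neighborhood of the corners to a neighborhood of the corners. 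Hence $\pi_1$ is a homeomorphism of $\NAT_s^\pm$ onto the complement of four disk neighborhoods of the corners of $P_1$, and a diffeomorphism on the interior $\NAT_s^\pm\setminus C_s$.

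That the four fold-image circles in $P_0$ converge to the corners as $s\to0$ reads off Equation~(\ref{eq417}): its leading term $2s\ep_\gamma\ep_\theta(-\sin\tau,\cos\tau)$ has norm $2|s|\to0$, so each image circle shrinks to its corner; and since the $P_1$ fold image is $\Psi\circ\Theta$ applied to the $P_0$ fold image, it too converges to the corners of $P_1$. Finally, every input used above---Corollary~\ref{genus53}, Corollary~\ref{JamesWattdied} (via Corollary~\ref{Wattprime}), Lemma~\ref{folding}, Lemma~\ref{factor}, and Proposition~\ref{finepeopleonbothsides}---is stated for $\NAT_s'$ as well, with $G,{\bf G}$ replaced by $G',{\bf G}'$, so the identical argument proves the statement for the bypass tangle.

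I do not expect a serious obstacle: the theorem is essentially a repackaging. The one place demanding genuine care is the bookkeeping through the factorization $\pi_1=\Psi\circ\Theta\circ\pi_0\circ U_s$---verifying both that the fold locus of $\pi_1$ is again $C_s$ (so that $u_s$ is a \emph{bi}fold) and that $\Theta$ and $\Psi$ preserve the ``complement of disk neighborhoods of the corners'' description---together with checking that the four $\pi_0$-image circles are disjoint for small $s$ so that ``embedded fold locus'' is meaningful. None of this is hard, but it is the step where an error would most plausibly creep in.
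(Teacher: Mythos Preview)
Your proposal is correct and follows essentially the same approach as the paper: invoke \cite{CHK} for the Lagrangian immersion, use Corollary~\ref{JamesWattdied} for the $\pi_0$ fold structure, transport it to $\pi_1$ via the diagram $\pi_1=\widehat U\circ\pi_0\circ U_s$ (the paper uses $\widehat U$ directly where you unwind it as $\Psi\circ\Theta$), and appeal to Proposition~\ref{finepeopleonbothsides} for the decomposition and the $U_s$-invariance of $C_s$ and of $\NAT_s^\pm$. Your write-up is, if anything, slightly more explicit than the paper's about why the two fold loci coincide and why the $\pi_1$-fold image is embedded.
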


\begin{proof} The fact that $u_s$ is a Lagrangian immersion into the traceless moduli space of the oriented boundary of the earring or bypass tangle (with respect to the Atiyah-Bott-Goldman symplectic form)  is a general feature of the restriction-to-the-boundary map for holonomy perturbed character varieties of  three-manifolds; an explicit theorem which applies in this case is proved in \cite{CHK}.

Commutativity of the Diagram   \ref{Usquare}, together with the fact that the horizontal maps are real-analytic isomorphisms,  implies that $U_s$ sends the critical set of $\pi_0$ bijectively onto the critical set of $\pi_1$.   The map $\widehat{U}$ sends corners to corners, and hence the image of $\pi_1$ misses the corners of $P_1$ and the four critical circles are sent by $\pi_1$ to  circles which encircle the four corners of $P_1$.  The property of being a fold map is coordinate independent, and therefore $\pi_1$ has only fold singularities.  The remaining statements  follow from Proposition \ref{finepeopleonbothsides} and Corollaries  \ref{JamesWattdied} and \ref{Wattprime}. \end{proof}

\medskip

We finish this section with a lemma which 
expresses the  map $\pi_0\circ U_s:\NAT_s\to P_0$ along the bottom edge in terms of the  characters associated to the loops $b\ba$, $ f\ba$,  and $b\bar f$  which embed $P_0$ in $\RR^3$.  Recall that (by definition)
\begin{align*}(\pi_0\circ U_s)(\rho) &=\big(\Real((U(\rho))(b\ba)), \Real((U(\rho))(f\ba)), \Real((U(\rho))(b\bar f))\big)\\
&=\big(\Real(\rho\circ U_\#(b\ba)),\Real(\rho\circ U_\#(f\ba))
,\Real(\rho\circ U_\#(b\bar f))\big)\end{align*}
(see Equation (\ref{curvypillow}) and Lemma \ref{mapstor3}).

   \begin{lemma} 
\label{actofT2}
Suppose that $[\rho]\in \NAT_s$ or $[\rho]\in \NAT'_s$ satisfies $\rho(f)=\rho(a)=\bbi$.  Then 
$$(\pi_0\circ U_s)(\rho) 
=(\Real(\rho(  \bar b   q \bbi\bq)) ,
  \Real( \rho(\bp^2) ), \Real( \rho(\bar b \bp^2 q \bbi \bq     ) ).$$

\end{lemma}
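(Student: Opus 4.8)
The plan is to compute the three characters $\Real(\rho\circ U_\#(b\ba))$, $\Real(\rho\circ U_\#(f\ba))$, and $\Real(\rho\circ U_\#(b\bar f))$ directly, using the formulas for $U_\#$ from Lemma \ref{actofT} together with the hypothesis $\rho(f)=\rho(a)=\bbi$ and the various relations valid on $L_s(T\times\tracelessTwoSphere)$ (namely $[p,\lambda_p]=1$, $[q,\lambda_q]=1$, $\lambda_p=bh$, $\lambda_q=f\bar ah$, $\Real(h)=0=\Real(b)$, and the shuffle identity $\bp a\bff qf=hpq\bh a$ from Lemma \ref{shuffle}). Since the answer is the same for both $\NAT_s$ and $\NAT_s'$ and the formulas for $U_\#$ on $a,b,f,p,q$ agree for both tangles, the computation only needs to be done once.

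First I would handle the easy coordinate. From Lemma \ref{actofT}, $U_\#(f)=\bff$, so $U_\#(f\ba)=\bff\cdot U_\#(\ba)=\bff\cdot d$ (since $U_\#(a)=\bd$). Using $\rho(f)=\bbi$ and the expression $d=\bq\bp\bb p\, ba\bff qf$, which on the slice simplifies via the shuffle identity to $d=\bq\bp hpq\bh a$ (as recorded just before Equation (\ref{F'})), one reduces $\Real(\rho(\bff d))$ to $\Real(\rho(\bp^2))$ after cancelling using $\rho(a)=\bbi=\rho(f)$ and the commutation $ph=h\bp$. I expect the bookkeeping here to collapse cleanly because $\rho(a)=\rho(f)=\bbi$ kills most of the conjugating factors.

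Next I would compute the first coordinate $\Real(\rho\circ U_\#(b\ba))$. We have $U_\#(b\ba)=U_\#(b)U_\#(\ba)=U_\#(b)\cdot d$, and $U_\#(b)=\bd\bc d$. So $U_\#(b\ba)=\bd\bc d\cdot d=\bd\bc d^2$; it is probably cleaner to instead use $U_\#(b\ba)$ directly as $(\bff\bq f\bar a\bar b\bp)\bar b(pbaq f)\cdot(\bff\bq f\bar a\bar b\bp b p q)$ and simplify with $\rho(a)=\rho(f)=\bbi$. After substitution, the target form is $\Real(\rho(\bar b\, q\bbi\bq))$, so the strategy is to track how the $b$ and $q$ survive while everything involving $a,f,p,h$ either cancels or gets absorbed; the identity $f\ba=\bbi\bar\bbi=1$-type simplifications and $e^{\theta\bbk}\bbi$ with $\theta=0$ effectively trivializing $f$ should drive this. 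The third coordinate $\Real(\rho\circ U_\#(b\bar f))$ is then $\Real(\rho(U_\#(b)\cdot f))$, and combining the simplifications from the first two coordinates yields $\Real(\rho(\bar b\bp^2 q\bbi\bq))$.

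The main obstacle will be the algebraic simplification in the first and third coordinates: the word $U_\#(b)$ is long and involves $p,q,h$ nontrivially, and one must repeatedly use that on the slice $p$ and $h$ (resp. $q$ and $f\ba h$) commute in a twisted way, that $\Real(h)=\Real(b)=0$, and that $\rho(a)=\rho(f)=\bbi$ forces $f\ba=1$ and turns conjugation by $h$ into conjugation by $e^{\tau\bbi}\bbj$. Organizing these cancellations so that exactly one $b$ and one $q\bbi\bq$ (and, for the third coordinate, an extra $\bp^2$) survive is the delicate part; I would do it by first rewriting $d=\bq\bp hpq\bh a$ and $c=\bq\bp bpq$ on the slice, then systematically pushing all $p$'s and $h$'s to one side using the commutation relations before invoking $\rho(a)=\rho(f)=\bbi$. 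Everything else is routine quaternion algebra.
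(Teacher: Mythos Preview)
Your approach is essentially the paper's: compute each of the three characters by plugging in the formulas for $U_\#$ from Lemma~\ref{actofT} and simplifying using the slice relations and $\rho(a)=\rho(f)=\bbi$. Two small points. First, for the first coordinate the paper immediately uses cyclicity of $\Real$ to reduce $\Real(\rho(\bar d\bar c d\cdot d))$ to $\Real(\rho(\bar c d))$ and then expands $\bar c d$; you wrote down $\bar d\bar c d^2$ but then abandoned it for a full expansion, which is unnecessarily messy. Second, your expanded word for $U_\#(b\bar a)$ is wrong: since $U_\#(a)=\bar d$, you need $U_\#(\bar a)=d=\bq\bp\bar b\,pba\bar f qf$, whereas your second factor $(\bar f\bq f\bar a\bar b\bp\,bpq)$ is $\bar d$, not $d$. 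With that corrected (and cyclicity used freely), the simplifications go through exactly as you outline; your alternative route for the middle coordinate via $d=\bq\bp hpq\bh a$ also works once you note that $\rho(f)=\rho(a)$ forces $\lambda_q=h$, so $q$ commutes with $h$.
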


\begin{proof}  We compute each term, referring to Lemma \ref{actofT}, Proposition \ref{summ}, and  Proposition \ref{summ2}, and plugging in $a=f=\bbi$ to simplify:
\begin{align*} 
\Real((U(\rho))(b\ba))&=\Real(\rho(U(b\ba))) 
 =\Real(\rho(\bar d \bar c d d))  
 =\Real(\rho(  \bar c d))\\ &= \Real(\rho(\bq b \bar a \bar f q f))   
 = \Real(\rho(\bq \bar b  q f))  = \Real(\rho( \bar b  q \bbi \bq))
\end{align*}
Similarly, since $\rho(\bar b p)=\rho(\bp \bar b)$,
\begin{align*} 
\Real((U(\rho))(f\ba))&=\Real(\rho(U(f\ba))) 
 =  \Real(\rho(\bar f d))\\
&=\Real(\rho(\bar f \bq \bp \bar b  p b a \bar f q f)= \Real(\rho(\bp^2 a\bar f))=\Real(\rho(\bp^2 )).
\end{align*}
Finally
\begin{align*} 
\Real((U(\rho))(b\bar f))& 
 =  \Real(\rho(\bar d \bar c d f))=\Real(\rho(\bq f \bar a \bar b \bp \bar b pba\bar f q f))\\
&=\Real(\rho(\bq f \bar a \bar b \bp^2a\bar f q f))=
\Real(\rho(\bq  \bar b \bp^2  q f))=\Real(\rho( \bar b \bp^2  q \bbi \bq )).
\end{align*}
\end{proof}

\subsection{  Two further involutions  $W_1, W_2$ on $\NAT_s$ and $\NAT' _s$
}\label{Wi}
The following lemma describes two more   involutions on $\NAT_s$ and $\NAT' _s$, induced by multiplying by a pair of center-valued representations.

\medskip

\begin{lemma}\label{2invols} The involutions $W_1,W_2$ on $T\times \tracelessTwoSphere$ given by 
$$W_1(\gamma,\theta,\nu,\tau)= (\gamma+\pi, \theta+\pi, -\nu,\tau+\pi) \mbox{,   } W_2(\gamma,\theta,\nu,\tau)=(\gamma+\pi,\theta, -\nu, -\tau+\pi)$$
preserve the subset $\widehat \NAT_s$ and they commute with the involution $\hat \iota$, so they define involutions (which we continue to call $W_1,W_2$) on $\NAT_s$.
Furthermore, the involutions preserve the fold loci, 
 $$W_i(C_s)=C_s, \mbox{ for }i=1,2,$$
 and commute with $U_s$
 $$W_i\circ U_s=U_s\circ W_i:\NAT_s\to \NAT_s.$$
 
 The identical statements hold with $ \widehat \NAT _s, \NAT_s, C_s$ replaced by $\widehat \NAT '_s, \NAT ' _s, C'_s$.  
\end{lemma}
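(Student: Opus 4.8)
The plan is to exhibit $W_1$ and $W_2$, at the level of representations, as ``twist by a central character, then conjugate by a fixed unit quaternion,'' and to read off all four assertions from this structure.

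First I would record the coordinate‑free form of the two maps. Writing a point of $T\times\tracelessTwoSphere$ as $((\gamma,\theta),h)$ with $h=\nu\bbi+\sqrt{1-\nu^2}\,e^{\tau\bbi}\bbj$, a short quaternion computation (using $\bbk\bbi\bbk^{-1}=-\bbi$, $\bbk\bbj\bbk^{-1}=-\bbj$, hence $\bbk\,e^{\tau\bbi}\bbj\,\bbk^{-1}=e^{(\pi-\tau)\bbi}\bbj$) identifies the maps of the lemma with
$$W_1((\gamma,\theta),h)=\big((\gamma+\pi,\theta+\pi),\,-h\big),\qquad W_2((\gamma,\theta),h)=\big((\gamma+\pi,\theta),\,\bbk h\bbk^{-1}\big).$$
In particular $W_1^2=W_2^2=\id$, and substituting into Equation (\ref{iota vs tau plus pi}) gives $W_i\circ\hat\iota=\hat\iota\circ W_i$ directly. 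The key computation is then: with $\epsilon\cdot\rho$ denoting the representation $x\mapsto\epsilon(x)\rho(x)$ and with $\epsilon_1,\epsilon_2$ the homomorphisms to the central subgroup $\{\pm1\}$ taking the values $(1,-1,-1,-1,1,1)$, resp.\ $(-1,1,-1,1,1,1)$, on the ordered generators $(a,b,f,h,p,q)$ (these extend over $\Pi$, and over $\Pi'$, because the abelianization is free on these generators, the relators being commutators), one has
$$L_s\circ W_1=\big(\rho\mapsto\epsilon_1\cdot\rho\big)\circ L_s,\qquad L_s\circ W_2=\big(\rho\mapsto\epsilon_2\cdot(\bbk\rho\bbk^{-1})\big)\circ L_s.$$
Checking this reduces to the quaternion identities above together with the fact that $\Ima(\lambda_p)=\Ima(bh)$ and $\Ima(\lambda_q)=\Ima(f\ba h)$ are fixed by $W_1$ and carried to their $\bbk$‑conjugates by $W_2$, so that the perturbation factors $p=e^{s\Ima(\lambda_p)}$, $q=e^{s\Ima(\lambda_q)}$ transform the same way.

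Given this, the first three assertions are short. By Lemma \ref{shuffle}, a slice point $((\gamma,\theta),h)$ lies in $\widehat\NAT_s$ iff its representation $\rho=L_s((\gamma,\theta),h)$ satisfies $\rho(w)=-1$; since $w=[\ba h\bq\bp,h]$ is a commutator, $\epsilon_i(w)=1$, so $(L_s\circ W_i)((\gamma,\theta),h)$ sends $w$ to $\epsilon_i(w)\kappa_i\rho(w)\kappa_i^{-1}=-1$, whence $W_i$ preserves $\widehat\NAT_s$ (and, being an involution, $W_i(\widehat\NAT_s)=\widehat\NAT_s$). For $\widehat\NAT'_s=(G'(s,-))^{-1}(0,0)$ with $G'=(\Real(d),\Real(\bh a))$ one uses $\Real\big((\epsilon_i\cdot\kappa_i\rho\kappa_i^{-1})(x)\big)=\epsilon_i(x)\Real(\rho(x))$ for $x=d,\bh a$ to get $W_i(\widehat\NAT'_s)=\widehat\NAT'_s$. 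Combined with $W_i\circ\hat\iota=\hat\iota\circ W_i$, Propositions \ref{summary} and \ref{summary2} give descended involutions $W_i$ on $\NAT_s$ and $\NAT'_s$. For the fold loci, note $W_i$ covers the diffeomorphism $\bar W_i$ of $T$ given by $(\gamma,\theta)\mapsto(\gamma+\pi,\theta+\pi)$, resp.\ $(\gamma+\pi,\theta)$; that is, $k_s\circ W_i=\bar W_i\circ k_s$ for the projection $k_s$ of (\ref{perturbed crit fibers}). Since $\bar W_i$ is a diffeomorphism, $W_i$ carries the critical set of $k_s$ onto itself, and passing to the $\hat\iota$‑quotient yields $W_i(C_s)=C_s$ and $W_i(C'_s)=C'_s$.

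The remaining and main point is that $W_i$ commutes with $U_s$. On representations $U_s$ is precomposition with $U_\#$ and $W_i$ is $\rho\mapsto\epsilon_i\cdot\kappa_i\rho\kappa_i^{-1}$ ($\kappa_1=1$, $\kappa_2=\bbk$); comparing $(W_i\circ U_s)(\rho)(x)=\epsilon_i(x)\kappa_i\rho(U_\#(x))\kappa_i^{-1}$ with $(U_s\circ W_i)(\rho)(x)=\epsilon_i(U_\#(x))\kappa_i\rho(U_\#(x))\kappa_i^{-1}$, the two agree exactly when $\epsilon_i\circ U_\#=\epsilon_i$. Since $\epsilon_i$ factors through $H_1(\Pi;\ZZ)\cong\ZZ^6$ and is valued in the $2$‑torsion group $\{\pm1\}$, it suffices to show $U_\#$ acts on $H_1$ by $-\id$. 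Abelianizing the formulas of Lemma \ref{actofT} together with the expressions of Propositions \ref{summ} and \ref{summ2}: $[c]=[b]$, $[d]=[a]$, $[w]=0$, so $U_\#$ sends $[a]\mapsto-[d]=-[a]$, $[b]\mapsto-[c]=-[b]$, $[f]\mapsto-[f]$, $[p]\mapsto-[p]$, $[q]\mapsto-[q]$, and $[h]\mapsto-[h]$ — for the earring because $U_\#(h)=\bh\bar w$, for the bypass because $U_\#(h)=\ba h\bq\bp\bh p q\bh a$, each abelianizing to $-[h]$ — and the same computation applies to $\Pi'$. Hence $\epsilon_i\circ U_\#=\epsilon_i$, so $W_i\circ U_s=U_s\circ W_i$ on $\NAT_s$ and on $\NAT'_s$. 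The main obstacle is bookkeeping rather than conceptual: pinning down the quaternion identities behind the $L_s\circ W_i$ formula (in particular $W_2$'s effect on the perturbation factors $p,q$) and carefully abelianizing the long words of Lemma \ref{actofT}; once $U_\#=-\id$ on $H_1$ is established, commutation with $U_s$ is automatic.
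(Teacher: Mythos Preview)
Your proof is correct and follows essentially the same approach as the paper: both realize $W_i$ on representations as multiplication by a central $\{\pm1\}$-character $\chi_i$ (followed by conjugation by $\bbk$ for $i=2$), deduce invariance of $\widehat\NAT_s$, $\widehat\NAT'_s$ and the fold loci from this, and obtain commutation with $U_s$ from $\chi_i\circ U_\#=\chi_i$. Your clean observation that $U_\#$ acts as $-\id$ on $H_1(\Pi)$, which forces $\chi_i\circ U_\#=\chi_i$ for any $\{\pm1\}$-valued character, is a nice way to package the verification the paper leaves implicit.
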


\begin{proof}  

 In the notation of Proposition \ref{summ}, define the representations
 $$\chi_1,\chi_2:\Pi\to \{\pm 1\}\subset SU(2)$$ by
$$\chi_1: a\mapsto 1, b\mapsto -1, f\mapsto -1, h\mapsto -1, p\mapsto 1, q\mapsto 1$$
 and
 $$\chi_2: a\mapsto -1, b\mapsto 1, f\mapsto -1, h\mapsto 1, p\mapsto 1, q\mapsto 1.$$
Since these are central, the pointwise product $\chi_i\cdot \rho:\Pi\to SU(2)$ with any representation $\rho$ is again a representation. The resulting involutions   on $\Hom(\Pi,SU(2))$  preserve the perturbation condition, the $w_2$ condition, and conjugacy classes, so they descend to involutions $\chi_i$ on $\NAT_s$. Since $\chi_i \circ U_\#=\chi_i$, then $U$ commutes with the action of $\chi_i$ on $\Hom(\Pi,SU(2))$ and on $\NAT_s$.

\medskip 

 Given the formulas for $W_i$ in the lemma, if  $L_s(\gamma, \theta, \nu, \tau) =(a,b,f,h,p,q)$, then $L_s( W_1(\gamma, \theta, \nu, \tau)) = (a,-b, -f, -h, p, q)$ and  $L_s( W_2(\gamma, \theta, \nu, \tau)) = (a,-b, f, \bbk h \bar \bbk, \bbk p \bar \bbk, \bbk q \bar \bbk)$.  The $\chi_1$ action on $\tNAT_s$ preserves the partially gauge-fixed subset $\hNAT_s$, and the action on $\hNAT_s$ agrees with $W_1$.  The $\chi_2$ action on $\tNAT_s$ does not preserve $\hNAT_s$, but the modification $\rho\mapsto \bbk (\chi_2 \cdot \rho) \bar \bbk$ preserves $\hNAT_s$ and agrees with $W_2$.   

We leave the reader to check, using Equation (\ref{iota vs tau plus pi}), that $W_1$ and $W_2$ commute with $\hat \iota$.  It follows that $W_i$ and $\chi_i$ induce the same action on $\NAT_s$ and the action commutes with $U_s$.

To verify the claim that $W_1, W_2$ preserve the fold locus, set  \begin{equation*}
\widehat{W}_1([\gamma,\theta])=[\gamma+ \pi, \theta+\pi],~~ \widehat{W}_2([\gamma,\theta])=[ \gamma+\pi ,\theta]; \end{equation*}
 then the diagrams
\begin{equation} \label{W1square}
\begin{tikzcd} 
\NAT_s  
\arrow[r,"W_i"] \arrow[d, "\pi_0"] & \NAT_s\arrow[d, "\pi_0"]\\
 P_0^*\arrow[r,"\widehat{W}_i"]
& P_0^*
\end{tikzcd}
\end{equation}
 commute with the horizontal maps diffeomorphisms. Hence $W_i$ preserves the critical set $C_s$. 
 
 Identical arguments work for the bypass case with primes added throughout.  \end{proof}

The homeomorphisms  $\widehat W_1, \widehat W_2:P_0\to P_0$ covered by $W_1, W_2$ will occasionally be useful in the rest of the paper, so we pause here to note  that 
 $\widehat{W}_1$ 
 lifts  to a rotation of $\RR^2$ about the point $(\tfrac \pi 2 , \tfrac \pi 2)$ by angle $\pi$, and therefore is orientation-preserving on $P_0^*$.
Similarly, $   \widehat{W}_2$
lifts to a rotation  of $\RR^2$ about the point $(\tfrac \pi 2 ,0)$ by angle $\pi$,  and so $\widehat{W}_2$   is also orientation-preserving.  By inspection, both $\widehat{W}_1,\widehat{W}_2$ commute with $\Theta$.

  \section{The correspondence $(\NAT_s)_*:\Lag(P_0^*)\to\Lag(P_1^*)$}   \label{corr}

Given a symplectic manifold $(M,\omega)$, let $\Lag(M)$ denote the set of Lagrangian immersions $g:L\to M$.
 If $M$ is compact we assume $L$ is compact.
When $M$ is non-compact, additional restrictions are placed on the Lagrangian immersion, including restricting to   proper Lagrangian immersions with appropriate behavior near the ends of $M$; we refer to the literature  for careful descriptions of those Lagrangian immersions which qualify as
 {\em objects of the immersed wrapped Fukaya category} of a non-compact symplectic manifold  \cite{seidel, fukaya,abou}.   An elementary definition of $\Lag(P_0^*)$ appropriate for the current article  is given in the following subsection.

 \subsection{Lagrangian composition}\label{composition1}

We recall a fundamental observation of Weinstein \cite{weinstein,WW2}.  A Lagrangian immersion into a product of symplectic manifolds
$$\phi=(\phi_0, \phi_1):N\to (M_0\times M_1,-\omega_{M_0}\oplus \omega_{M_1}).$$
defines   a function  
$$\phi_*:\Lag(M_0)^{\pitchfork_\phi}\to \Lag(M_1),$$
called {\em composition with the Lagrangian correspondence $\phi$}, where 
the domain $$\Lag(M_0)^{\pitchfork_\phi}\subset \Lag(M_0)$$
is defined to be  the subset of Lagrangian immersions $g:L\to M_0$ such that
$$g\times {\rm Id}_{M_1}:L\times M_1\to M_0\times M_1 
 ~~\text{ and }~~
 \phi:N\to M_0\times M_1$$
 are transverse. When $g:L\to M_0$ lies in $  \Lag(M_0)^{\pitchfork_\phi}$, then $$X:=\{(\ell,m_1,n)\in L\times M_1\times  N~|~  \phi(n)=(g(\ell),m_1) \}$$ is a smooth submanifold of $L\times M_1\times N$ called the {\em fiber product} of $g\times {\rm Id}_{M_1}$ and $\phi$. 
 
 Define
 $\phi_*(g)$  to be the restriction of the projection ${\rm proj}_1:L\times M_1\times N\to M_1$ to $X$, 
 $$\phi_*(g): X\xrightarrow{{\rm proj}_{M_1}} M_1,~~\phi_*(g)(\ell,m_1,n)=m_1.
 $$
 A foundational observation in this framework, observed by Weinstein, is the following theorem.
 \begin{theorem} [\cite{weinstein}] \label{Cain died, sadly}If $g\in\Lag(M_0)^{\pitchfork_\phi}$, then $\phi_*(g)$ is a Lagrangian immersion.\end{theorem}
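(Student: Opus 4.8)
The plan is to reduce the assertion to linear symplectic algebra carried out on the tangent space of $X$ at an arbitrary point. Fix $(\ell,m_1,n)\in X$, put $m_0=g(\ell)$, so that $\phi(n)=(m_0,m_1)$, and abbreviate $V_i=T_{m_i}M_i$; equip $V_0\oplus V_1$ with the symplectic form $\Omega=-\omega_{M_0}|_{m_0}\oplus\omega_{M_1}|_{m_1}$. Since $g$ and $\phi$ are Lagrangian immersions, $\mu:={\rm im}(dg_\ell)\subset V_0$ is a Lagrangian subspace of $(V_0,\omega_{M_0}|_{m_0})$ and $\Lambda:={\rm im}(d\phi_n)\subset V_0\oplus V_1$ is a Lagrangian subspace of $(V_0\oplus V_1,\Omega)$; in particular $dg_\ell$ and $d\phi_n$ are injective. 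The hypothesis $g\in\Lag(M_0)^{\pitchfork_\phi}$ is, at these points, exactly the linear transversality statement $(\mu\oplus V_1)+\Lambda=V_0\oplus V_1$. Since $m_1={\rm proj}_1(\phi(n))$ is recovered from $n$, we may regard $X=\{(\ell,n)\in L\times N : g(\ell)={\rm proj}_0(\phi(n))\}$ with $\phi_*(g)(\ell,n)={\rm proj}_1(\phi(n))$; differentiating the defining equation gives
\begin{align*}
T_{(\ell,n)}X&\cong\{(v,u)\in T_\ell L\oplus T_nN : dg_\ell(v)={\rm proj}_0(d\phi_n(u))\},\\
d(\phi_*(g))(v,u)&={\rm proj}_1(d\phi_n(u)).
\end{align*}
So three things remain to be checked: $\dim X=\tfrac12\dim M_1$, injectivity of $d(\phi_*(g))$, and $\phi_*(g)^*\omega_{M_1}=0$.

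The dimension count and the immersion property I would handle together. Because $dg_\ell$ and $d\phi_n$ are injective, the assignment $(v,u)\mapsto d\phi_n(u)$ identifies $T_{(\ell,n)}X$ with $\Lambda\cap(\mu\oplus V_1)$, and transversality forces $\dim\big(\Lambda\cap(\mu\oplus V_1)\big)=\tfrac12\dim V_1=\tfrac12\dim M_1$. Under the same identification, $\ker d(\phi_*(g))\cong\Lambda\cap(\mu\oplus 0)$. Now $\mu\oplus 0$ is $\Omega$-isotropic with $(\mu\oplus 0)^{\perp_\Omega}=\mu\oplus V_1$ (because $\mu$ is $\omega_{M_0}$-Lagrangian), and $\Lambda=\Lambda^{\perp_\Omega}$; combining the identity $\dim(A\cap B)=\dim A+\dim B-\dim(A+B)$ with $(A+B)^{\perp_\Omega}=A^{\perp_\Omega}\cap B^{\perp_\Omega}$ for $A=\Lambda$, $B=\mu\oplus 0$ gives
\[
\dim\big(\Lambda\cap(\mu\oplus 0)\big)=\dim(\mu\oplus 0)+\dim\big(\Lambda\cap(\mu\oplus V_1)\big)-\tfrac12\dim(V_0\oplus V_1),
\]
which vanishes after substituting $\dim(\mu\oplus 0)=\tfrac12\dim V_0$ and $\dim\big(\Lambda\cap(\mu\oplus V_1)\big)=\tfrac12\dim V_1$. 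Hence $d(\phi_*(g))$ is injective at every point of $X$.

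For isotropy, take $(v,u),(v',u')\in T_{(\ell,n)}X$. The Lagrangian condition on $\Lambda$ says $\Omega(d\phi_n(u),d\phi_n(u'))=0$, that is,
\[
\omega_{M_1}\big({\rm proj}_1 d\phi_n(u),\,{\rm proj}_1 d\phi_n(u')\big)=\omega_{M_0}\big(dg_\ell(v),\,dg_\ell(v')\big),
\]
and the right-hand side vanishes because $g$ is a Lagrangian immersion. Therefore $\phi_*(g)^*\omega_{M_1}$ vanishes on $TX$, so $\phi_*(g)$ is an isotropic immersion of a manifold of dimension $\tfrac12\dim M_1$, which is the definition of a Lagrangian immersion.

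The only step that is not purely formal — and hence the main obstacle — is the bookkeeping tying the hypotheses to the linear algebra: first, that transversality of the maps $g\times{\rm Id}_{M_1}$ and $\phi$ at the relevant points is equivalent to the linear condition $(\mu\oplus V_1)+\Lambda=V_0\oplus V_1$ (this uses that both maps are immersions, so the images of their differentials are genuine tangent subspaces), and second, that this linear transversality together with the self-orthogonality of Lagrangian subspaces forces $\Lambda\cap(\mu\oplus 0)=0$. Once the tangent space of the fiber product is identified correctly, the remainder is a routine dimension count.
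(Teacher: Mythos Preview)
Your argument is correct: the identification of $T_{(\ell,n)}X$ with $\Lambda\cap(\mu\oplus V_1)$, the dimension count via transversality, the vanishing of $\Lambda\cap(\mu\oplus 0)$ using $(\mu\oplus 0)^{\perp_\Omega}=\mu\oplus V_1$, and the isotropy computation are all sound. Note, however, that the paper does not supply its own proof of this statement; it is recorded as a foundational result of Weinstein and simply cited, so there is nothing in the paper to compare your approach against.
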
  
 When $M_1$ and $M_0$ are compact, the assignment $g\mapsto \phi_*(g)$ defines a function
\begin{equation}\label{eq6.2}\phi_*:\Lag^{\pitchfork_\phi}(M_0)\to \Lag(M_1).\end{equation}
 In the non-compact case,  the definitions of $\Lag^{\pitchfork_\phi}(M_0)$  and  $\Lag(M_1)$  may require additional restrictions to ensure that the function $\phi_*$  of Equation (\ref{eq6.2}) be defined.   
 
 A familiar instance of this construction is when 
 $N=M_0$, $\phi_-={\rm Id}_{M_0}$, and $\phi_1:M_0\to M_1$  a symplectomorphism.  Then $   \Lag(M_0)^{\pitchfork_\phi}=   \Lag(M_0)$ and $\phi_*(g)$ equals the composition (of functions) $\phi_1\circ g$.

\subsection{Composition with $\NAT_s$}\label{pitchforks}

 In what follows, we  apply the construction of the Section \ref{composition1} in the instance when the map $\phi$ is 
$$u_s=\pi_0\times \pi_1:\NAT_s\to P_0^{*-}\times P_1^*\in \Lag(P_0^{*-}\times P_1^*),$$
for small, non-zero $s$.

 When $J$ is a 1-manifold and  $\alpha:J\to P_0^*$  an immersion, the {\em Weinstein composition of $\alpha$ and $u_s:\NAT_s\to P_0^*\times P_1^*$} is given by the top horizontal row in the diagram
\begin{equation}
\begin{tikzcd}\label{Jsquare}
F
\arrow[r, "\alpha^*"] \arrow[d] & \NAT_s \arrow[d, "\pi_0"] \arrow[r,"\pi_1"]& P_1^* \\
 J\arrow[r,"\alpha" ]
 & P_0^*&
\end{tikzcd}
\end{equation}
with the square a pullback and $F\subset J\times \NAT_s$ the fiber product.  If $\alpha$ is transverse to the fold image, $F$ is a 1-manifold and $\pi_1\circ \alpha^*$ an immersion.  In this way composition with $u_s$ induces
 $$(u_s)_*:\Lag^{\pitchfork_{u_s}}(P^*_0)\to \Lag(P^*_1),$$   which  assigns an immersed curve in $P_1^*$ to any  immersed curve in $P^*_0$ satisfying the   transversality condition with respect to $u_s$.

 \medskip

\subsection{Immersed curves in the pillowcase}\label{LaginP}  We  provide an elementary definition of $\Lag(P_0^*)$, and some of its subspaces, suitable for the rest of this article.

\medskip

  Let  $$\WNb=S^1\times (-1,1),$$  equipped with the involution $(e^{x\bbi},t)\mapsto (e^{-x\bbi},-t)$,
 and denote by $\Nb$ its orbit space $$\Nb=\WNb / \{ \pm 1\}.$$ The involution on the annulus $\WNb$ has two fixed points; its orbit space $\Nb$ is a disk with two orbifold points (i.e.,   corners). The center circle $S^1\times\{0\}\subset \WNb$ is invariant and maps to an arc in $\Nb$ joining the  corners.    Denote by $\WNb^*$ the complement of these two fixed points
 and by $\Nb^*$ the complement of the two corners.

\medskip
The bottom edge of $P_0=T/\iota $ is parameterized
by \begin{equation}\label{bedge}\beta:[0,\pi]\to P_0, ~ \beta(x)=[x,0].\end{equation}  The map  \begin{equation}\label{eqn6.3}
\tilde\beta:\Nb\to P_0\text{ given by 
 } [e^{x\bbi},t]\mapsto[x,t]\end{equation}
is a continuous embedding, covered by  the smooth equivariant embedding $\WNb\to T, (e^{x\bbi},t)\mapsto (e^{x\bbi},e^{t\bbi})$. Moreover, $\beta(x)=\tilde\beta(e^{x\bbi},0)$ for $0\leq x\leq \pi$.
Thus $\tilde\beta$  is a  tubular (orbifold) neighborhood
of the the bottom edge $\beta$.

\medskip

These observations motivate the following definition.

Equip $S^1$ with the involution $e^{x\bbi}\mapsto e^{-x\bbi}, $ with fixed points $\pm1$.

 \begin{definition}\label{goodarc} Call  a map   $g:[0,\pi]\to  P_0$ {\em good} if it satisfies 
\begin{enumerate}
\item $g(0)$ and $g(\pi)$ are corners, and $g(x)\in P_0^*$ for $0<x<\pi$,  and
\item  there exists an equivariant immersion $\hat g:S^1\to T$ so that $[\hat g(e^{x\bbi})]=g(x)$ for $0\leq x \leq\pi$.

 \end{enumerate}
\end{definition}
In alternative language, a good map is a smooth orbifold immersion of the closed 1-dimensional  orbifold $S^1/\{\pm 1\}$  into  $P_0=T/\{\pm 1\}$.

\begin{definition}\hfill
\begin{enumerate}
\item Two proper immersions $g_i:J_i\to P_0^*, ~ i=1,2$  of 1-manifolds $J_1,J_2$ are called  {\em equivalent} if there exists a 
diffeomorphism $h:J_1\to J_2$ so that $g_2=g_1\circ h$.
\item 
An {\em immersed curve  in} $ P_0 ^* $ is defined to be an equivalence class of  immersions into $P_0^*$  with domain  a finite disjoint union of circles and open intervals such that the restriction  to each  open interval extends to   a good map of the closed interval.

\item  The set of  immersed curves in $P_0^*$ is denoted by $\Lag(P_0^*)$.
\end{enumerate}
 \end{definition}

  It can be proved, using the results of  \cite{HK,FKP}, that any  two-stranded tangle in a $\ZZ$-homology ball admits arbitrarily small holonomy perturbations so that its holonomy perturbed traceless $SU(2)$ character variety  lies in $\Lag(P_0^*)$.

   Denote by $$\Lag_{\rm cir}(P_0^*)\subset \Lag(P_0^*)$$  those immersed curves whose domain is a finite union of circles.  Denote by $$\Lag_{\rm int}(P_0^*)\subset \Lag(P_0^*)$$ those immersed curves whose domain is a finite union of open intervals. Note that $\Lag_{\rm cir}(P_0^*)$ is precisely the set of  immersed curves with compact domain.

\begin{definition}  Call two good curves $ g_1,  g_2:[0,\pi]\to P_0$ {\em regularly homotopic},   and write $[g_1]=[g_2]$,   if they admit equivariant lifts $\hat g_1,\hat g_2:S^1\to T$ that are equivariantly regularly homotopic.  This extends the usual notion of  regular homotopy classes for immersed circles  in $\Lag_{\rm cir}(P_0^*)$  to all of $\Lag(P_0^*)$.
Denote the set of regular homotopy classes by $$[\Lag(P_0^*)]=\Lag(P_0^*)/{\rm  regular~ homotopy}.$$

\end{definition}
\medskip

 The next  lemma provides an alternative   description of the transversality condition needed to compose with $u_s$.
 
 \begin{lemma}\label{preimageisgood} An    immersed curve   $g\in \Lag(P_0^*)$ lies in  $\Lag^{\pitchfork_{u_s}}(P^*_0)$  if and only if $g$ is transverse to the fold image $\pi_0(C_s)\subset P_0^*$. In particular, any $g\in \Lag(P_0^*)$ lies in $\Lag^{\pitchfork_{u_s}}(P^*_0)$ for all small enough non-zero $s$.
\end{lemma}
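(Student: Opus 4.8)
The plan is to unwind the definition of $\Lag^{\pitchfork_{u_s}}(P_0^*)$ from Section \ref{composition1} in the present case $\phi = u_s = \pi_0\times\pi_1:\NAT_s\to P_0^{*-}\times P_1^*$, and show that the transversality of $g\times{\rm Id}_{P_1^*}$ with $u_s$ is equivalent to transversality of $g$ with the fold image. First I would note that, by Theorem \ref{thm6.1}, $u_s$ is a Lagrangian bifold immersion with embedded fold locus $C_s$, and by Lemma \ref{folding} the critical sets of $\pi_0$ and $\pi_1$ coincide; away from $C_s$ both $\pi_0$ and $\pi_1$ are local diffeomorphisms. So the only place where transversality can fail is over the fold image $\pi_0(C_s)$. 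Concretely, take a point $(\ell, m_1, n)$ in the would-be fiber product, so $g(\ell) = \pi_0(n)$ and $m_1 = \pi_1(n)$. The transversality condition for $g\times{\rm Id}_{P_1^*}$ and $u_s$ at this point asks that
$$
d(g\times{\rm Id})_{(\ell,m_1)}\big(T_\ell J \oplus T_{m_1}P_1^*\big) + d(u_s)_n\big(T_n\NAT_s\big) = T_{g(\ell)}P_0^* \oplus T_{m_1}P_1^*.
$$
Projecting to the $P_1^*$ factor is automatically surjective (the $T_{m_1}P_1^*$ summand is already there), so the condition reduces to a statement purely about the $P_0^*$ factor: the image of $T_\ell J$ under $dg$, together with the image of $\ker(d\pi_1)_n$ under $d(\pi_0)_n$ (this is the part of $d(u_s)_n$ that dies in the $P_1^*$ direction), must span $T_{g(\ell)}P_0^*$.

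The key step is then the linear-algebra observation that follows from the bifold structure. When $n\notin C_s$, $d(\pi_0)_n$ is an isomorphism, so the condition holds trivially and $g$ is automatically composable near such points. When $n\in C_s$, $\ker(d\pi_0)_n = \ker(d\pi_1)_n$ is one-dimensional, and $d(\pi_0)_n(\ker(d\pi_1)_n) = 0$; the image of $d(\pi_0)_n$ is the tangent line to the fold image $\pi_0(C_s)$ at $g(\ell)$. Here I would use that $\pi_0$ embeds each fold circle $C_s^{\epsilon_\gamma,\epsilon_\theta}/\hat\iota$ onto a smooth simple closed curve in $P_0^*$ (Corollary \ref{JamesWattdied}, Proposition \ref{finepeopleonbothsides}); hence, by the normal form $(u,v)\mapsto(u^2,v)$ for a fold, the image of $d(u_s)_n$ into the $P_0^*$ factor, restricted to $\ker(d\pi_1)_n$, is exactly the tangent line $T_{g(\ell)}\pi_0(C_s)$. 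Therefore the transversality condition at $(\ell,m_1,n)$ becomes: $dg(T_\ell J) + T_{g(\ell)}\pi_0(C_s) = T_{g(\ell)}P_0^*$, i.e., $g$ is transverse to $\pi_0(C_s)$ at $\ell$. Running this over all points of the fiber product gives the stated equivalence.

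For the final sentence, I would argue that any $g\in\Lag(P_0^*)$ is transverse to $\pi_0(C_s)$ for all small nonzero $s$. By Corollary \ref{JamesWattdied} and Theorem \ref{thm6.1}, as $s\to 0$ the four fold-image circles shrink to the four corners; more precisely, by Equation (\ref{eq417}) and Lemma \ref{fuglylemma}, near each corner the fold image is, to leading order in $s$, a rescaled round circle of radius $\sim 2|s|$ about the corner, intersected transversely by every ray emerging from that corner (the last assertion of Corollary \ref{JamesWattdied}). A good immersed curve $g$ either misses a neighborhood of a given corner entirely, or approaches that corner; in the latter case, by Definition \ref{goodarc}, it extends to an equivariant immersion of $S^1$ into $T$, so near the corner (in the $(\gamma,\theta)$ coordinates, or their lift to $\RR^2$) its image is a smooth curve through a lift of the corner with a well-defined, nonzero tangent direction — hence it is asymptotically radial. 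Combining: for $|s|$ small enough that the fold image circles lie inside the coordinate charts where these estimates hold, $g$ meets each fold-image circle transversely (a curve with nonvanishing radial component crosses each small circle about the corner transversely). Since $g$ has compact image away from the corners, only finitely many corners are relevant and one small $s$ works for all of them. Thus $g\in\Lag^{\pitchfork_{u_s}}(P_0^*)$ for all small nonzero $s$, as claimed.

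The main obstacle I anticipate is making the identification ``image of $d(u_s)_n$ restricted to $\ker(d\pi_1)_n$, projected to $P_0^*$, equals the tangent line to the fold image'' fully precise — this is really the content of the bifold/fold normal form and needs Theorem \ref{thm6.1} quoted carefully, together with the fact that $\pi_1$ is also a fold with the same locus so that $\ker(d\pi_0) = \ker(d\pi_1)$; I would lean on the definitions in Section \ref{bifolds} rather than recomputing Whitney's normal form.
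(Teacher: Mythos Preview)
Your overall strategy matches the paper's (which dispatches the lemma in two sentences by invoking the local fold form and the definition of a good map), but your linear algebra at the fold locus contains a genuine error that you should fix.

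You assert that at $n\in C_s$ one has $\ker(d\pi_0)_n=\ker(d\pi_1)_n$ and hence $d(\pi_0)_n(\ker(d\pi_1)_n)=0$. This directly contradicts Lemma~\ref{folding}, which says that when the ranks are one the kernels of $d\pi_0$ and $d\pi_1$ are \emph{transverse} one-dimensional subspaces of $T_n\NAT_s$, not equal. Your own paragraph is internally inconsistent: you first say $d\pi_0(\ker d\pi_1)=0$, then two lines later say this image is the tangent line to $\pi_0(C_s)$. The reduction step is also misstated: since $\{0\}\oplus T_{m_1}P_1^*$ already lies in the image of $d(g\times{\rm Id})$, the transversality condition unwinds to
\[
dg(T_\ell J)+d(\pi_0)_n\big(T_n\NAT_s\big)=T_{g(\ell)}P_0^*,
\]
with the \emph{full} image of $d\pi_0$ on the left, not $d\pi_0(\ker d\pi_1)$. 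Off $C_s$ this image is all of $T_{g(\ell)}P_0^*$; on $C_s$ the Whitney normal form $(u,v)\mapsto(u^2,v)$ shows the image is exactly the tangent line to the fold image $\pi_0(C_s)$. That gives the equivalence with no need to mention $\ker d\pi_1$ at all, and is what the paper's one-line proof is gesturing at.

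Your argument for the second sentence (good curves are asymptotically radial near corners, rays meet the shrinking fold circles transversely by Corollary~\ref{JamesWattdied}) is correct and more explicit than the paper's ``immediate from the definition of a good map.''
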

 \begin{proof} The first statement follows  from the local description of a fold and the fact that $u_s$ is a local diffeomorphism off the fold locus. The second is immediate from the definition of a good map.
 \end{proof}

\subsection{Two geometric operations on immersed curves}
\label{operations}
\subsubsection{Doubling interior curves}

We define two  doubling operations $$D,\widetilde D:\Lag_{\rm cir}(P_0^*)\to\Lag_{\rm cir}(P_0^*).$$

\begin{definition}\label{double} Suppose that $ g:\sqcup_i S^1\to P^*_0$ lies in $\Lag_{\rm cir}(P_0^*)$.    Define the {\em double, $D( g)$,  of $ g$},  to be the composition of $ g$ with the trivial  two-fold covering of its domain:
$$D( g):\sqcup_i (S^1\sqcup S^1) \to \sqcup_i S^1\xrightarrow{ g} P^*_0.$$

Define the {\em twisted double, $\widetilde D( g)$, of $ g$}, by precomposing with the non-trivial  two-fold cover of each circle in the domain $$\widetilde D( g):\sqcup_i  S^1  \to\sqcup_i S^1\xrightarrow{ g} P^*_0.$$

    \end{definition}

Although not needed for the
  statements or proofs of our results, $\widetilde D$ provides a convenient way to describe certain elements of $\Lag_{\rm circ}(P_0^*)$ which show up in character variety examples.

\medskip

Clearly, $D$ and $\widetilde D$ induce well-defined functions
 $$D, \widetilde D:[\Lag_{\rm cir}(P_0^*)]\to [\Lag_{\rm cir}(P_0^*)]$$
on regular homotopy classes.

\subsubsection{Replacing  a proper immersed arc by a nearby  Figure Eight} \hfill

Next we define the {\em Figure Eight doubling operation} 
$$F8:[\Lag_{\rm int}(P_0^*)]\to[\Lag_{\rm cir}(P_0^*)].$$
We begin with a standard model.  For any $|s|\leq 1$, define $F8_s:S^1\to \mathcal{N}$ by the formula
$$  F8_s(e^{\bbi \sigma})=[\sigma, -2s\cos \sigma].$$
When $s\ne 0$, this is an immersed circle in $\mathcal{N}^*$ with   a single transverse double point at $F8_s(\tfrac\pi 2)=F8_s(-\tfrac\pi 2)=[\tfrac\pi 2, 0],$
 $F8_s$ and $F8_{-s}$ are equal as unparameterized immersions.
As $s\to 0$,  $F8_s$ converges to  a path which traverses from one corner of $\Nb$ to the other and then returns along the same path.

\medskip

Any   equivariant immersion $\hat g:S^1\to T$ lifting a good path $ g$ admits an equivariantly immersed tubular neighborhood, that is, an equivariant immersion 
$\tau_{\hat g}:\widetilde{\mathcal{N}}\to T$
satisfying
 $\tau_{\hat g}(e^{\sigma\bbi},0)=\hat g(e^{\sigma\bbi})$  and
$\tau_{\hat g}(e^{-\sigma\bbi},-t)=\iota(\hat g(e^{\sigma\bbi},t)).$   Taking the quotient by the involution gives an orbifold tubular neighborhood of $ g$, which we denote as
$$\tilde g:\mathcal{N}\to P_0.$$

\begin{definition} \label{deffig8} Given    $ g\in \Lag_{\rm int}(P_0^*)$ and $0<|s|\leq1$, we call an immersed circle in $P_0 ^*$ a {\em Figure Eight curve supported by $g$} if the immersion  can be expressed as $F8_s(\tilde g)$ for some immersed orbifold tubular neighborhood $\tilde  g:\mathcal{N}\to P_0$.   See Figure \ref{Fig5fig}.
\end{definition}

 \begin{figure}[ht]
\begin{center}
\def\svgwidth{2in}
\includegraphics[width=.9\textwidth]{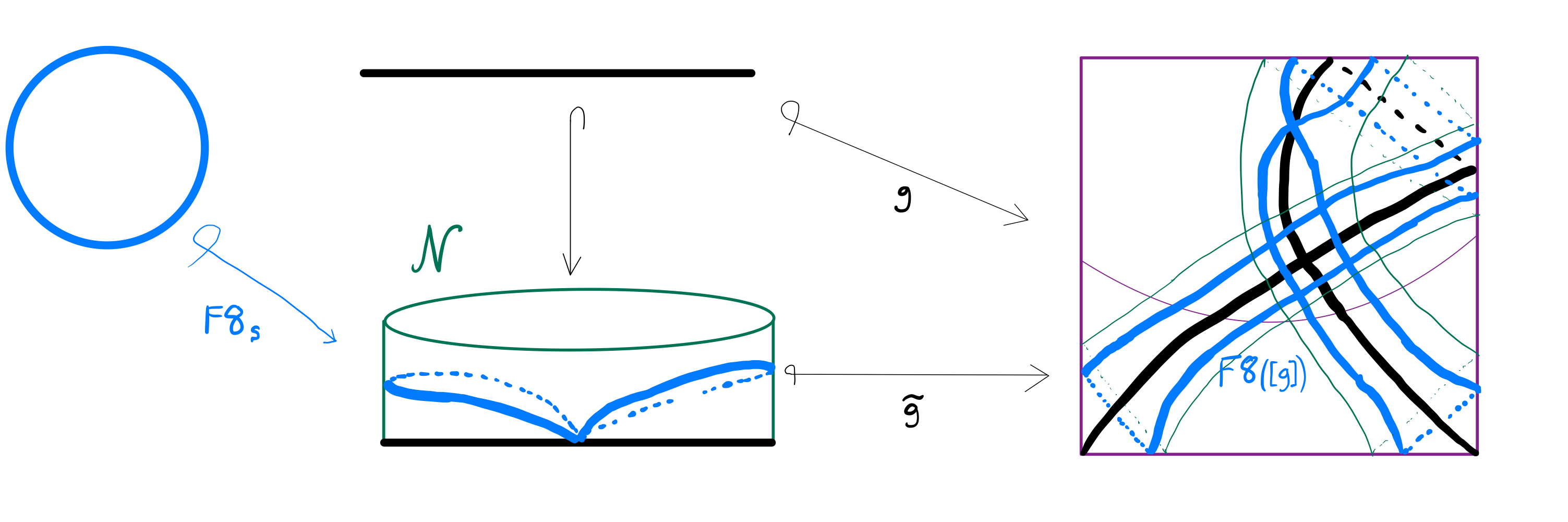}
 \caption{A good immersion $g$ of an  interval into $P_0^*$, its extension  $\tilde g:\mathcal{N}\to P_0$,  and   $F8([g])$.  \label{Fig5fig} }
\end{center}
\end{figure}

Note that $ F8_s(\tilde g)$ and $F8_{-s}(\tilde g)$ are equivalent as unparameterized immersions. Hence, the class of $F8_s(\tilde g)$ in $[\Lag_{\rm cir}(P_0^*)]$ is independent of (non-zero) $s$. Moreover, the class of $F8_s(\tilde g)$ depends only on the regular homotopy class of $g$. 
  Hence,  $F8_s$ well defines a map
 $$F8:[\Lag_{\rm int}(P_0^*)]\to [\Lag_{\rm cir}(P_0^*)].$$

\medskip

As a tautological example, for the bottom edge $\beta$ and the choice of extension $\tilde\beta$ given in Equations (\ref{bedge}) and (\ref{eqn6.3}),
\begin{equation}
\label{fig8inP}
F8_s(\tilde \beta)(\sigma)=
  [\sigma, -2s\cos\sigma]. 
\end{equation}
This can be rewritten in  terms of the three characters which embed $P_0$ into $\RR^3$ (see Equation (\ref{curvypillow})) as:
\begin{equation}
\label{fig8inr3}
 F8_s(\tilde\beta)(\sigma)=
(\cos\sigma,\cos(2s\cos\sigma), \cos(\sigma+2s\cos\sigma))
\end{equation}

\subsection{The correspondence on $[\Lag_{\rm cir}(P_0^*)]$}

The following proposition describes the action of the correspondence $(u_s)_*$ on $[\Lag_{\rm cir}(P_0^*)]$.

\begin{theorem} 
 \label{interiorc}
 Let $ g\in \Lag_{\rm cir}(P_0^*)$.  Then for all sufficiently small non-zero $s$, \begin{itemize} 
 \item $ g\in \Lag^{\pitchfork_s}(P_0^*)$, 
 \item the regular homotopy class of 
 $(u_s)_*( g)$ depends only on the class of $ g$ in  $[\Lag_{\rm cir}(P_0^*)]$,  and 
 \item $[(u_s)_*(g)] =[\Psi\circ D( g)]$  in $[\Lag_{\rm cir}(P_1^*)]$.
 \end{itemize} 
\end{theorem}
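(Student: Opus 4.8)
The plan is to reduce the statement about an arbitrary immersed circle $g\colon\sqcup_i S^1\to P_0^*$ to the geometry already established for $\pi_0$ and $\pi_1$ in Theorems \ref{thm6.1} and \ref{thmfold} plus the factorization $\pi_1=\Psi\circ\Theta\circ\pi_0\circ U_s$ of Lemma \ref{factor}. First I would invoke Lemma \ref{preimageisgood} to see that $g\in\Lag^{\pitchfork_s}(P_0^*)$ for all small non-zero $s$, so the fiber product $X=\{(\ell,n)\in (\sqcup_i S^1)\times\NAT_s \mid \pi_0(n)=g(\ell)\}$ is a closed $1$-manifold and $(u_s)_*(g)=\pi_1\circ\mathrm{proj}_{\NAT_s}|_X$ is a Lagrangian (hence immersed) curve by Theorem \ref{Cain died, sadly}. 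The key structural input is Theorem \ref{thm6.1}: $\pi_0\colon\NAT_s\to P_0^*$ restricts to a \emph{homeomorphism} on each of $\NAT_s^+$ and $\NAT_s^-$ onto the complement of small disk neighborhoods of the four corners. Since $g$ has compact image in $P_0^*$, for $s$ small enough the image of $g$ avoids those four disk neighborhoods, so $g$ lifts to exactly \emph{two} disjoint copies in $\NAT_s$, one in $\NAT_s^+$ and one in $\NAT_s^-$; call these lifts $\tilde g^+=(\pi_0|_{\NAT_s^+})^{-1}\circ g$ and $\tilde g^-=(\pi_0|_{\NAT_s^-})^{-1}\circ g$. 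This identifies $X$, up to the evident diffeomorphism, with the trivial two-fold cover of $\sqcup_i S^1$, and identifies $(u_s)_*(g)$ with $\pi_1\circ\tilde g^+\ \sqcup\ \pi_1\circ\tilde g^-$, i.e. with the double $D$ applied after post-composing each lift with $\pi_1$.

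Next I would compute $\pi_1\circ\tilde g^\pm$ using the factorization $\pi_1=\Psi\circ\Theta\circ\pi_0\circ U_s$ from Lemma \ref{factor}(2). By Proposition \ref{finepeopleonbothsides}, $U_s$ preserves each of $\NAT_s^+$ and $\NAT_s^-$ and is orientation-reversing; and on each half $\pi_0$ is a homeomorphism onto (essentially) $P_0^*$. Therefore $\pi_0\circ U_s\circ(\pi_0|_{\NAT_s^\pm})^{-1}$ is a self-homeomorphism of the relevant subset of $P_0^*$, and I would argue that as $s\to 0$ it converges (uniformly on compact subsets of $P_0^*$) to the identity: indeed $U_0$ acts on $V_0/\hat\iota$ by $[\gamma,\theta,0,\tau]\mapsto[-\gamma,\theta,0,-\tau+\pi]$ (Equation (\ref{Tatzero})), which on the $P_0$-coordinates $[\gamma,\theta]$ is $[\gamma,\theta]\mapsto[-\gamma,\theta]$ — but on the two-sheeted structure this self-map of $P_0$ \emph{swaps the two sheets}, so composing $\pi_0$ on $\NAT_s^+$ with $U_s$ and then $\pi_0$ again recovers (in the limit) the original point of $P_0$. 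Hence $\pi_0\circ U_s\circ(\pi_0|_{\NAT_s^\pm})^{-1}\to\mathrm{id}_{P_0^*}$, and so $\pi_1\circ\tilde g^\pm$ is, for small $s$, regularly homotopic to $\Psi\circ\Theta\circ g$. Since $\Theta([\gamma,\theta])=[\gamma,-\theta]$ is isotopic to the identity on a curve avoiding the bottom and top edges — more carefully, $\Theta$ is orientation-reversing but is realized by an ambient diffeomorphism of $P_0^*$ fixing the regular homotopy class precisely when we track things correctly — I would instead absorb $\Theta$ into the identification: the cleanest route is to note $\Theta\circ\pi_0\circ U_s$ differs from $\pi_0$ by a map converging to $\Theta$ composed with the sheet-swap, and since the sheet-swap on $P_0$ is $[\gamma,\theta]\mapsto[-\gamma,\theta]$ which equals $\Theta^{-1}$ followed by $\iota$, the two $\Theta$'s cancel in the limit, leaving $\pi_1\circ\tilde g^\pm$ regularly homotopic to $\Psi\circ g$.

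Assembling: $(u_s)_*(g)$ is regularly homotopic to $(\Psi\circ g)\sqcup(\Psi\circ g)=\Psi\circ D(g)$, which is the claimed identity $[(u_s)_*(g)]=[\Psi\circ D(g)]$ in $[\Lag_{\rm cir}(P_1^*)]$. The middle bullet — well-definedness on regular homotopy classes — follows from the same argument: a regular homotopy $g_t$ in $\Lag_{\rm cir}(P_0^*)$ has compact track, so for $s$ small it stays away from the corner neighborhoods, the two lifts $\tilde g_t^\pm$ vary smoothly, and $(u_s)_*(g_t)$ is a regular homotopy. The same proof applies verbatim to $\NAT_s'$ since Theorems \ref{thm6.1}, \ref{thmfold} and Lemma \ref{factor} were all stated for the bypass tangle as well. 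I expect the main obstacle to be making precise the ``sheet-swap cancels $\Theta$'' claim — i.e. showing rigorously that $\pi_1\circ\tilde g^\pm$ is regularly homotopic (not merely homologous) to $\Psi\circ g$ for all sufficiently small $s$, which requires carefully tracking orientations and the behavior of the two maps $\pi_0|_{\NAT_s^\pm}$ near (but not at) the fold circles, and verifying that the convergence $U_s\to U_0$ together with the explicit formula (\ref{Tatzero}) really does force the limiting composite to be the identity of $P_0^*$ up to isotopy. This is where I would spend the bulk of the technical effort, likely by writing the composite explicitly in the $(\gamma,\theta)$-coordinates on a compact subset and invoking that a $C^1$-small perturbation of the identity immersion is regularly homotopic to it.
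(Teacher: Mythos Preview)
Your overall approach is essentially the paper's (Lemma \ref{lem6.7}): lift $g$ to the two sheets $\NAT_s^\pm$ via the trivial two-fold cover away from the fold, use the factorization $\pi_1=\Psi\circ\Theta\circ\pi_0\circ U_s$, let $s\to 0$ where everything varies smoothly (Corollary \ref{genus53}), and compute explicitly at $s=0$ using Equation (\ref{Tatzero}). The first and second bullets are handled exactly as you describe.

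However, your execution of the $s=0$ computation is confused, and this is the step you flagged as the main obstacle. Your claim that $U_0$ ``swaps the two sheets'' is false: it directly contradicts Proposition \ref{finepeopleonbothsides} (and Corollary \ref{preserves}), which you cite and which explicitly states $U_s(\NAT_s^\pm)=\NAT_s^\pm$. No sheet-swap enters the argument, and your first intermediate claim $\pi_0\circ U_s\circ(\pi_0|_{\NAT_s^\pm})^{-1}\to\mathrm{id}_{P_0^*}$ is also false; the limit is $\Theta$, not the identity. The clean computation is a two-line affair: by Equation (\ref{Tatzero}), $U_0([\gamma,\theta,0,\tau])=[-\gamma,\theta,0,\pi-\tau]$, so $\pi_0\circ U_0\circ g_0^\pm(\ell)=[-\gamma,\theta]$; in the pillowcase $P_0=T/\iota$ one has $[-\gamma,\theta]=[\gamma,-\theta]=\Theta([\gamma,\theta])$, hence $\Theta\circ\pi_0\circ U_0\circ g_0^\pm=g$ and therefore $\pi_1\circ g_0^\pm=\Psi\circ g$. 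The two $\Theta$'s cancel because $\Theta$ is an involution and $\pi_0\circ U_0$ already induces $\Theta$ on $P_0$; there is no need to invoke any isotopy of $\Theta$ to the identity (which would be false anyway, $\Theta$ being orientation-reversing) or any sheet-swap mechanism.
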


We prove a  more general result, Lemma \ref{lem6.7}.  In the case $J$ is a circle the statement of the lemma is exactly Theorem \ref{interiorc}.  The other case, when $J$ is a closed interval, is used in the calculation of the action of 
$(u_s)_*$ on $[\Lag_{\rm int}(P_0^*)]$ below.

\begin{lemma}\label{lem6.7}
Let $  g:J\to P_0^* $ be an immersion.  Assume either that $J$ is a circle, or that $J$ is a closed interval.

 Then for all $s$ small enough, $ g$ misses the fold image.  The fiber product of $g\times {\rm Id}_{P_1^*}$ and $u_s$ is a disjoint union of two copies of $J$, 
and the Weinstein composition   $(u_s)_*( g)$ (see Equation (\ref{Jsquare})) is  given as a composite
\begin{equation}\label{eq6.a}(u_s)_*( g):J\sqcup J\xrightarrow{ g^+_s\sqcup  g^-_s} \NAT_s\xrightarrow{\pi_1}P_1^*,\end{equation}
where $ g_s^\pm:J\to \NAT_s^\pm$ are immersions satisfying $\pi_0\circ  g^\pm_s= g$ and which depend smoothly on $s$.   When $s=0$,  the map (\ref{eq6.a}) equals $   \Psi\circ D( g ).$

\end{lemma}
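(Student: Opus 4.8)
The plan is to analyze the fiber product explicitly using the description of $\NAT_s$ as a bifold over $P_0$ (Theorem \ref{thm6.1} and Corollary \ref{JamesWattdied}), and then to identify the $s=0$ limit with $\Psi\circ D(g)$ via the factorization $\pi_1=\Psi\circ\Theta\circ\pi_0\circ U_s$ of Lemma \ref{factor}. First I would invoke Lemma \ref{preimageisgood}: since $g$ is an immersion of a compact 1-manifold $J$ (circle or closed interval) into $P_0^*$, the fold image $\pi_0(C_s)$ converges to the corners as $s\to 0$ by Corollary \ref{JamesWattdied}, so for all small enough nonzero $s$ the image $g(J)$ is disjoint from $\pi_0(C_s)$ and $g$ is transverse to it. Hence $g\in\Lag^{\pitchfork_{u_s}}(P_0^*)$, and moreover $g(J)$ lies in the region of $P_0^*$ over which $\pi_0:\NAT_s\to P_0^*$ restricts to a \emph{trivial} two-fold covering of its image (the last sentence of Corollary \ref{JamesWattdied}), with the two sheets being $\NAT_s^+$ and $\NAT_s^-$ from Proposition \ref{finepeopleonbothsides}. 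This triviality is what forces the fiber product to be $J\sqcup J$ rather than a connected double cover: pulling back the trivial cover $\pi_0^{-1}(g(J))\to g(J)$ along $g$ gives two disjoint copies of $J$, and the two sheets are distinguished globally. The pullback square in Diagram (\ref{Jsquare}) then identifies the fiber product with $J\sqcup J$ and produces the two immersions $g_s^\pm:J\to\NAT_s^\pm$ with $\pi_0\circ g_s^\pm=g$; smooth dependence on $s$ follows from the smooth family structure $\mathbb{V}/\hat\iota\to[-s_0,s_0]$ of Corollary \ref{genus53}, since $g_s^\pm$ is obtained by the implicit function theorem from the local graph description of the sheets of $\pi_0$.

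Next I would compute $(u_s)_*(g)$ at $s=0$. By Lemma \ref{factor}(2), $\pi_1=\Psi\circ\Theta\circ\pi_0\circ U_s$, so $(u_s)_*(g)=\pi_1\circ(g_s^+\sqcup g_s^-)=\Psi\circ\Theta\circ\pi_0\circ U_s\circ(g_s^+\sqcup g_s^-)$. At $s=0$ one has explicit coordinates: $V_0/\hat\iota=\{\sin\gamma\sin\tau-\sin\theta\cos\tau=0,\ \nu=0\}/\hat\iota$ (Lemma \ref{estimate2}), the two sheets $\NAT_0^\pm$ correspond to the two choices of sign in the section $\tau=\tau(\gamma,\theta)$ of Equation (\ref{sect}), and $U_0$ acts by $[\gamma,\theta,0,\tau]\mapsto[-\gamma,\theta,0,-\tau+\pi]$ (Equation (\ref{Tatzero})). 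The key observation is that $U_0$ fixes the $P_0$-coordinate up to the involution $\Theta$: one checks $\pi_0\circ U_0([\gamma,\theta,0,\tau])=[-\gamma,\theta]=\Theta([\gamma,\theta])$ (using $[\gamma,\theta]=[-\gamma,-\theta]$), so $\Theta\circ\pi_0\circ U_0=\pi_0$ on $V_0/\hat\iota$. Therefore $(u_0)_*(g)=\Psi\circ\pi_0\circ(g_0^+\sqcup g_0^-)=\Psi\circ(g\sqcup g)$, which is precisely $\Psi\circ D(g)$ by Definition \ref{double}. (One should also check that $g_0^+$ and $g_0^-$ are genuinely distinct points of $\NAT_0$ over each point of $g(J)$, i.e. that $g(J)$ avoids the critical set $C_0=\{[\ell_1\pi,\ell_2\pi,0,\tau]\}$; this holds because $g(J)\subset P_0^*$ misses the corners.)

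The main obstacle I anticipate is the \emph{global} identification of the two sheets, i.e. showing that the fiber product is $J\sqcup J$ rather than a connected double cover, when $J$ is a circle — this is exactly the point where ``trivial two-fold cover'' in Corollary \ref{JamesWattdied} does the essential work, and I would want to state carefully that $\NAT_s=\NAT_s^+\cup_{C_s}\NAT_s^-$ with $\pi_0$ restricting to a \emph{homeomorphism} onto its image on each $\NAT_s^\pm$ (Proposition \ref{finepeopleonbothsides}), so that over the region $g(J)$ avoids (a neighborhood of the corners, hence a neighborhood of $C_s$) the preimage under $\pi_0$ is the disjoint union $(\NAT_s^+\cap\pi_0^{-1}(g(J)))\sqcup(\NAT_s^-\cap\pi_0^{-1}(g(J)))$, each mapping homeomorphically to $g(J)$. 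Everything else — the transversality, the smooth $s$-dependence, the $s=0$ computation — is routine once this structural fact is in hand. A secondary point requiring a short argument is that in the closed-interval case the endpoints $g(0),g(\pi)$ still lie in $P_0^*$ (they need not be corners, since here $g$ is merely an immersion of a closed interval into $P_0^*$, not a good map), so the same trivial-cover argument applies uniformly over all of $J$.
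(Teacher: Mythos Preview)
Your proposal is correct and follows essentially the same route as the paper: compactness of $g(J)$ in $P_0^*$ plus Corollary~\ref{JamesWattdied} forces $g(J)$ to miss the fold image for small $s$, the trivial two-sheeted structure $\NAT_s^+\cup_{C_s}\NAT_s^-$ from Proposition~\ref{finepeopleonbothsides} gives the fiber product as $J\sqcup J$ with lifts $g_s^\pm$ varying smoothly by Corollary~\ref{genus53}, and the $s=0$ identification comes from the factorization $\pi_1=\Psi\circ\Theta\circ\pi_0\circ U_s$ together with Equation~(\ref{Tatzero}). Your observation that $\Theta\circ\pi_0\circ U_0=\pi_0$ holds globally on $V_0/\hat\iota$ (since $[-\gamma,\theta]=[\gamma,-\theta]=\Theta([\gamma,\theta])$) is a mild streamlining of the paper's pointwise check, but the content is identical.
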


\begin{proof}  There exists an $s(g)>0$ so that if $0<|s|<s(g)$, 
 the image of $ g$ lies in the interior of the image of $\pi_0:\NAT_s\to P_0^*$ for all $s\leq s(g)$. 
For $s\leq s(g)$, the restriction $\NAT_s\setminus C_s\to P_0^*$ is a trivial  two-fold cover of its image, which contains $ g(J)$; the two sheets are ${\rm Int}(\NAT^\pm_s)$.  This makes sense at $s=0$ also, provided we substitute $V_0/\hat\iota$ for $\NAT_0$, which we do for the rest of this proof
 (see Corollary \ref{genus53} and Proposition \ref{finepeopleonbothsides}).    Hence there exist   $s$-dependent    immersions $$ g_s^\pm:J\to {\rm Int}(\NAT_s^\pm)\subset T\times_{\hat\iota}\tracelessTwoSphere$$ 
 which lift $ g$  for all $|s|<s(g)$. It follows that  the fiber product  is given by  $F=J\sqcup J$, and the Weinstein composition is given by  (see Equation (\ref{Jsquare}))
\begin{equation*} 
 \label{Jsquare2}
J\sqcup J\xrightarrow{g_s^+\sqcup g_s^-}\NAT_s\xrightarrow{\pi_1}P_1^*. 
\end{equation*}
 Lemma \ref{factor} implies  that the map of Equation (\ref{eq6.a}) is given by
$$
\pi_1\circ ( g_s^+\sqcup  g_s^-) =\Psi\circ \Theta\circ\pi_0\circ U_s\circ ( g_s^+\sqcup  g_s^-).
 $$
 The maps $\pi_1\circ  g_s^\pm$ are immersions. Corollary \ref{genus53}  implies that the  $ g_s^\pm$ vary smoothly in $s$.  In particular, $
\pi_1\circ ( g_s^+\sqcup  g_s^-)$ is a regular homotopy (if $J$ is an interval its endpoints need not be fixed).

\medskip

Fix $\ell\in J$ and write $ g(\ell)=[\gamma, \theta]$. Equation (\ref{sect}) implies that $ g_0^\pm$ is given by 
  $$ g_0^\pm(\ell)=[\gamma,\theta, 0,  \tau^\pm],$$ where $$ e^{\tau^\pm\bbi}=\pm\tfrac{1}{\sqrt{\sin^2 \gamma  +\sin^2 \theta   }}
\big(\sin \gamma   + \sin \theta  \bbi\big).$$
Hence, using   Equations (\ref{Theta})   and 
  (\ref{Tatzero}),
\begin{align*}
 \Theta\circ\pi_0\circ U_0\circ  g^+_0(\ell)&=  
 \Theta\circ\pi_0\circ U_0([\gamma , \theta ,0,\tau^+ ])  \\
 &=  
 \Theta\circ\pi_0([-\gamma , \theta ,0,\pi -\tau^+ ])  \\
 & 
  =[\gamma ,\theta ]= g(\ell) \end{align*}
and similarly for $ g^-_0$. 
It follows that  varying $s$ produces a regular homotopy from $\pi_1\circ ( g_s^+\sqcup  g_s^-)$  to $$\pi_1\circ ( g_0^+\sqcup  g_0^-)=\Psi( g\sqcup  g).$$
\end{proof}

\subsection{Special case: action of $(u_s)_*$ or $(u' _s)_*$ a particular good arc}

Recall from  Equation (\ref{bedge}) that
 $\beta\in \Lag_{\rm int}(P_0^*)$ denotes the  bottom edge: $\beta(\gamma)=[\gamma,0],~0<\gamma<\pi.$
  In this section we prove Proposition \ref{theta0}, which states that    $(u_s)_*(\beta)=\Psi\circ F8(\beta)$ in $[\Lag_{\rm cir}(P_1^*)]$. 
 
 \medskip

\subsubsection{Four   points in $\NAT_s$ and $\NAT_s'$}
 \begin{proposition} \label{two points} The four points $$[\gamma,\theta,\nu,\tau]=[\tfrac\pi 2,0,0,0], [\tfrac{\pi} {2},0,0,\pi], [\tfrac\pi 2,\pi,0,0],\text{ and } [\tfrac{\pi} {2},\pi,0,\pi]$$ lie in $\NAT_s\setminus C_s$ for  any $|s|<s_0$.  
 Moreover, 
  $U_s$ fixes each of  these points.  
The points $[\tfrac\pi 2,0,0,0],  [\tfrac\pi 2,\pi,0,0]$ lie in the path component $\NAT_s^+$ and the points  
 $[\tfrac\pi 2,0,0,\pi],  [\tfrac\pi 2,\pi,0,\pi]$ lie in $\NAT_s^-$. 
 The identical statement holds for $\NAT_s'$.
 
 \end{proposition}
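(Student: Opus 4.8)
The plan is to verify the claims directly by plugging the four candidate points into the defining equations for $\hNAT_s$ and $\hNAT_s'$, and then track the involutions. First I would check membership: for a point $[\tfrac\pi2,\ell\pi,0,\kappa]$ with $\ell,\kappa\in\{0,\pi\}$, we have $\sin\gamma=\pm1$, $\cos\gamma=0$, $\sin\theta=0$, $\nu=0$. Plugging into Lemma \ref{estimate2}, the second coordinate $G_2=(1-sR_6)(\nu-s\cos\gamma+s^2R_3)$; since $\nu=0$ and $\cos\gamma=0$ we must check the $s^2R_3$ term vanishes as well — here I would argue using the symmetries $W_1,W_2$ (Lemma \ref{2invols}), which fix these points (up to the $\hat\iota$-quotient) and force the relevant analytic functions to vanish there, rather than grinding through $R_3$. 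Similarly for $\widehat G_1$ (resp. $\widehat G_1'$): at these points $\sin\gamma\sin\tau-\sin\theta\cos\tau=\pm\sin\kappa-0=0$ since $\kappa\in\{0,\pi\}$, and $2s\cos\gamma\cos\theta=0$; the remaining $s^2R_1$ (resp. $s^2R_4/2$) and $G_2R_2$ terms vanish by the same symmetry argument (or because $G_2=0$ already kills the $G_2R_2$ term). So these points lie in $V_s=\hNAT_s$ and $V_s'=\hNAT_s'$ for all $|s|<s_0$, hence descend to $\NAT_s$ and $\NAT_s'$.

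Next I would check they avoid the fold locus $C_s$. By Corollary \ref{JamesWattdied} and Proposition \ref{thmfold}, the fold image circles encircle the four corners and, as $s\to0$, shrink to the corners; in the $(\sin\gamma,\sin\theta)$ coordinates near a fixed point the fold circle $\alpha_s$ is a loop of radius $\asymp 2|s|$ about the origin (Equation (\ref{eq417})). The four candidate points have $(\sin\gamma,\sin\theta)=(\pm1,0)$, which is at distance $1$ from every corner, hence for $|s|<s_0$ (shrinking $s_0$ if needed, though this was already arranged) they lie well outside all four fold circles, i.e.\ in $\NAT_s\setminus C_s$. Alternatively, one can see this intrinsically: at these points $\cos\gamma=0$ but $d\pi_0$ has rank $1$ only on $C_s$, and a direct computation of the Jacobian of ${\rm proj}:V_s\to T$ at $[\tfrac\pi2,\ell\pi,0,\kappa]$ shows it is nonsingular.

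Then I would compute the action of $U_s$. Since the three properties ``$U_s$ fixes each point,'' ``which path component each point lies in,'' are stable under small deformation (as in the proof of Proposition \ref{finepeopleonbothsides}), it suffices to check at $s=0$, using $V_0=V_0'$. By Equation (\ref{Tatzero}), $U_0([\gamma,\theta,0,\tau])=[-\gamma,\theta,0,-\tau+\pi]$. Applying this to $[\tfrac\pi2,0,0,0]$ gives $[-\tfrac\pi2,0,0,\pi]$, which equals $[\tfrac\pi2,0,0,0]$ after applying $\hat\iota$ (Equation (\ref{iota vs tau plus pi})): $\hat\iota([-\tfrac\pi2,0,0,\pi])=[\tfrac\pi2,0,0,2\pi]=[\tfrac\pi2,0,0,0]$. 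The same computation works for the other three points (with $\theta=\pi$ playing no role since $U_0$ fixes the $\theta$ coordinate, and $\tau=\pi\mapsto-\pi+\pi=0\equiv$ the right thing after $\hat\iota$). So all four points are $U_s$-fixed.

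Finally, the path-component assignment: by the Remark following Proposition \ref{finepeopleonbothsides}, $\NAT_s^+$ is defined as the closure of the component of $\NAT_s\setminus C_s$ containing $[\tfrac\pi2,0,0,0]$, so that point lies in $\NAT_s^+$ by fiat; I would connect $[\tfrac\pi2,0,0,0]$ to $[\tfrac\pi2,\pi,0,0]$ by an explicit path in $V_0/\hat\iota\setminus C_0$ (e.g.\ $\theta\mapsto[\tfrac\pi2,\theta,0,0]$, $\theta\in[0,\pi]$, which lies in $V_0$ since $\sin\tfrac\pi2\sin0-\sin\theta\cos0$... wait, this needs $\tau$ to vary; better: use the section $\tau(\gamma,\theta)$ from Equation (\ref{sect}), giving a path in $V_0$ with $\gamma\equiv\tfrac\pi2$, $\theta$ running $0\to\pi$, $\tau$ chosen continuously, which stays off $C_0$ since $\sin\gamma=1\ne0$), establishing both lie in $\NAT_s^+$; likewise $[\tfrac\pi2,0,0,\pi]$ and $[\tfrac\pi2,\pi,0,\pi]$ lie in $\NAT_s^-$ since they are obtained from the first two by the $\hat\iota$-lift with $\tau\mapsto\tau+\pi$, equivalently by the sheet-swap of the 2-fold cover $\NAT_s\setminus C_s\to P_0^*\setminus(\text{disks})$, which interchanges $\NAT_s^+$ and $\NAT_s^-$ away from... no — it does not interchange them; rather the two sheets over $[\tfrac\pi2,0]$ are precisely $[\tfrac\pi2,0,0,0]\in\NAT_s^+$ and $[\tfrac\pi2,0,0,\pi]\in\NAT_s^-$, so the claim is immediate from the identification ${\rm Int}(\NAT_s^\pm)$ = the two sheets. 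The identical argument applies verbatim to $\NAT_s'$, since $V_0=V_0'$ and all steps used only ${\bf G}_0={\bf G}_0'$ and the shared symmetries. The main obstacle I anticipate is the membership verification at nonzero $s$ — confirming that the higher-order terms $s^2R_1,s^2R_3,s^2R_4,G_2R_2$ actually vanish at these points; I expect the cleanest route is to invoke the $W_1$ and $W_2$ symmetries to constrain these analytic functions, rather than to expand them, since $W_1$ fixes $[\tfrac\pi2,\tfrac\pi2,\cdot]$-type points and compositions of $W_1,W_2$ relate the four candidate points to each other.
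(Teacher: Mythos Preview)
Your proposal has one genuine gap and one approach that diverges from the paper's.

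\textbf{The gap: $U_s$-fixedness is not stable.} You claim that ``$U_s$ fixes each point'' is stable under small deformation, citing the proof of Proposition~\ref{finepeopleonbothsides}. That proof invokes stability for three \emph{discrete} properties (orientation-reversing, permutation of the four components of $C_s$, permutation of $\NAT_s^\pm$), each of which takes finitely many values. By contrast, ``$U_s(p)=p$ for a specific point $p$'' is a closed, codimension-two condition; the fixed-point set of an orientation-reversing involution on a surface is a $1$-manifold, and as $s$ varies that $1$-manifold moves and need not pass through $p$. Your reduction to $s=0$ via Equation~(\ref{Tatzero}) therefore does not prove the claim for $s\ne 0$. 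The paper instead computes $U(\rho_\ep)$ directly for \emph{all} $s$: it writes down $\rho_\ep(c)$ and $\rho_\ep(d)$ using Propositions~\ref{summ}/\ref{summ2}, applies Lemma~\ref{actofT} to obtain $U(\rho_\ep)(a),U(\rho_\ep)(b),U(\rho_\ep)(f),U(\rho_\ep)(h)$, and conjugates by $\bbj$ to recover $\rho_\ep$.

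\textbf{Membership: different route.} Rather than invoke the expansion in Lemma~\ref{estimate2} and then argue that the higher-order remainders $s^2R_1,s^2R_3,\ldots$ vanish at these points via symmetry, the paper simply writes down the explicit representation
\[
a\mapsto\bbi,\quad b\mapsto\bbj,\quad f\mapsto\ep_1\bbi,\quad h\mapsto\ep_2\bbj,\quad p\mapsto 1,\quad q\mapsto e^{s\ep_1\ep_2\bbj}
\]
(this is exactly $\bar L(s,\tfrac\pi2,\ell\pi,0,\kappa)$ in the coordinates of Equation~(\ref{Lbar})) and checks $G=G'=0$ by a two-line computation, using that $q$ commutes with $h$. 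This is both shorter and, crucially, exactly what is needed to carry out the $U_s$-computation above for all $s$. Your symmetry idea can be made to work for membership (e.g.\ $\hat\iota\circ W_2$ fixes $(\tfrac\pi2,0,0,0)$ in $T\times\tracelessTwoSphere$, and $G\circ\hat\iota\circ W_2=-G$ since $\chi_2$ flips the sign of $a$, forcing $G=0$ there), but your proposal does not spell this out, and the parenthetical ``$W_1$ fixes $[\tfrac\pi2,\tfrac\pi2,\cdot]$-type points'' is off --- the points in question have $\theta\in\{0,\pi\}$.

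Your arguments for avoiding $C_s$ and for the path-component assignment are essentially the same as the paper's; your self-correction to use the section of Equation~(\ref{sect}) for the connecting path, and the two-sheets-over-$[\tfrac\pi2,0]$ argument for placing the $\tau=\pi$ points in $\NAT_s^-$, match the paper.
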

\begin{proof} 
Let $\ep=(\ep_1,\ep_2)\in\{\pm 1\}^2$ and denote by
$\rho_\ep$ the assignment
\begin{equation}\label{eqn5.10} a\mapsto \bbi,~ b\mapsto \bbj,~f\mapsto \ep_1\bbi,~ h\mapsto \ep_2 \bbj,~ p\mapsto 1, ~q\mapsto e^{s\ep_1\ep_2\bbj}.\end{equation}
Since $q$ commutes with $h$, it is easy to check, using Proposition \ref{master},  that
these satisfy $G=0=G'$ and hence lie on $\NAT_s$ and $\NAT_s'$ for any $s$.

\medskip

We turn to the second assertion. 
Note that $\rho_\ep(p)=1$ and  $\rho_\ep(q)=e^{\ep_2 s\bbj}$.
In addition, Propositions \ref{summ} and \ref{summ2}  imply that 
$$\rho_\ep(c)=\rho_\ep(\bq\bp b p q)=\bbj\text{ and }\rho_\ep(d)=\rho_\ep(\bq\bp \bb p  ba\bff q f)=\rho_\ep(
\bq a \bff q f)=\bbi
.$$
Lemma \ref{actofT} shows that $$U(\rho_\ep)(a)=\rho_\ep(\bar d)=-\bbi,~ U(\rho_\ep)(b)=\rho_\ep(\bar d \bar c d)=\bbj,~
U(\rho_\ep)(f)=\rho_\ep(\bar f)=-\ep_1\bbi,$$
and 
$$
U(\rho_\ep)(h)=\ep_2\bbj.
$$
Conjugating by $\bbj$ yields
$$U(\rho_\ep):a\mapsto\bbi, ~b\mapsto\bbj, f\mapsto\ep_1\bbi, h\mapsto\ep_2\bbj
$$
proving $U([\rho_\ep])=[\rho_\ep]$.

\medskip

  By definition, $\NAT^+_s$ denotes the path component of $\NAT_s\setminus C_s$ containing $[\tfrac\pi 2,0,0,0]$.   If $\tau(\theta)$ is defined by
$$e^{\tau(\theta)\bbi}=\frac{1-\sin\theta\bbi}{\sqrt{1+\sin^2\theta}}$$
then the path $[0,\pi]\ni\theta\mapsto [\tfrac \pi 2, \theta, 0 , \tau(\theta)]$  lies in $V_0/\hat\iota$ and misses the critical set  
$C_0$.  Hence its endpoints $[\tfrac\pi 2,0,0,0]$ and $[\tfrac\pi 2,\pi,0,0]$
lie in the same path component of  $(V_0/\hat\iota)\setminus C_0,$ and this remains true for the small deformations $\NAT_s\setminus C_s$.   Since $\tau(0)=\tau(\pi)=0$, $   [\tfrac\pi 2,\pi,0,0]\in \NAT_s^+$. 

Since $[\tfrac\pi 2,0,0,\pi]$ and $[\tfrac\pi 2,0,0,0]$ map by $\pi_0$ to the same point $[\tfrac\pi 2,0]$ in $P_0^*$, $[\tfrac\pi 2,0,0,0]\in \NAT_s^+$ implies $[\tfrac\pi 2,0,0,\pi]\in \NAT_s^-.$ Similarly $[\tfrac\pi 2,\pi,0,\pi]\in \NAT_s^-.$

\end{proof}
 
\begin{corollary}\label{preserves}
 The involutions $U_s$, $W_1$, $W_2$ satisfy
 $$U_s(\NAT_s^\pm)=\NAT_s^\pm,~
 W_1(\NAT_s^\pm)=\NAT_s^\pm,~\text{ and }
 W_2(\NAT_s^\pm)=\NAT_s^\pm.$$
 The identical statement holds for $\NAT'_s$.
\end{corollary}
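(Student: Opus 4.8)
The plan is to leverage Proposition \ref{finepeopleonbothsides}, which tells us that $C_s$ separates $\NAT_s$ into exactly two pieces $\NAT_s^+,\NAT_s^-$, each the closure of a path component of $\NAT_s\setminus C_s$. Each of $U_s,W_1,W_2$ is a homeomorphism of $\NAT_s$ carrying $C_s$ to itself --- for $U_s$ this is part of Proposition \ref{finepeopleonbothsides}, for $W_1,W_2$ it is Lemma \ref{2invols} --- so each of the three maps restricts to a self-homeomorphism of $\NAT_s\setminus C_s$, hence permutes its two path components and therefore their closures. Since there are only two components, each map either fixes both of $\NAT_s^\pm$ or interchanges them, and to decide which it suffices to track the image of a single interior point.

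For $U_s$ the assertion $U_s(\NAT_s^\pm)=\NAT_s^\pm$ is already part of Proposition \ref{finepeopleonbothsides} (alternatively it follows from Proposition \ref{two points}, which says $U_s$ fixes $[\tfrac\pi2,0,0,0]\in\NAT_s^+$). For $W_1$ and $W_2$ I would track the same point $[\tfrac\pi2,0,0,0]$, which lies in $\NAT_s^+\setminus C_s$ by Proposition \ref{two points}. Substituting into the formulas of Lemma \ref{2invols} gives
$$W_1\bigl[\tfrac\pi2,0,0,0\bigr]=\bigl[\tfrac{3\pi}2,\pi,0,\pi\bigr],\qquad W_2\bigl[\tfrac\pi2,0,0,0\bigr]=\bigl[\tfrac{3\pi}2,0,0,\pi\bigr],$$
and reducing modulo the relation $[\gamma,\theta,\nu,\tau]=[-\gamma,-\theta,\nu,\tau+\pi]$ of Equation (\ref{iota vs tau plus pi}) together with $2\pi$-periodicity in $\gamma,\theta,\tau$, one obtains $W_1[\tfrac\pi2,0,0,0]=[\tfrac\pi2,\pi,0,0]$ and $W_2[\tfrac\pi2,0,0,0]=[\tfrac\pi2,0,0,0]$. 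Both of these lie in $\NAT_s^+$ by Proposition \ref{two points}, so $W_1$ and $W_2$ each carry $\NAT_s^+$ into itself, hence fix both components.

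The bypass case is verbatim the same: Propositions \ref{finepeopleonbothsides} and \ref{two points} and Lemma \ref{2invols} are all stated for $\NAT_s'$, $C_s'$ as well, and the computation above is unchanged. There is no substantial obstacle here; the only place where care is needed is the bookkeeping in pushing the $W_i$-images back into a fundamental domain via $\hat\iota$ and matching them against the four distinguished points of Proposition \ref{two points} --- a routine arithmetic step that is nonetheless easy to botch.
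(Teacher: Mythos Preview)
Your proof is correct and follows essentially the same approach as the paper's: cite Proposition \ref{finepeopleonbothsides} for $U_s$, then track the point $[\tfrac\pi2,0,0,0]$ under $W_1$ and $W_2$ using the formulas of Lemma \ref{2invols} and the $\hat\iota$-identification, landing on points that Proposition \ref{two points} places in $\NAT_s^+$. Your version is slightly more explicit about why tracking a single interior point suffices, but the argument is the same.
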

\begin{proof}
Proposition \ref{finepeopleonbothsides} contains the first statement. By definition, $[\tfrac\pi 2, 0, 0, 0]\in \NAT_s^+$.  Lemma \ref{2invols} implies 
 $$W_1[\tfrac\pi 2, 0, 0, 0]=[\tfrac{3\pi}{2}, \pi, 0,\pi]=[\tfrac\pi 2, \pi,0,0]\in \NAT_s^+.$$
 Hence $W_1$ takes $\NAT_s^+$ to itself.
 Similarly, $W_2[\tfrac\pi 2, 0, 0, 0]=[\tfrac\pi 2, 0,0,0]$. Thus $W_2(\NAT^+_s)=\NAT^+_s$, as claimed.
\end{proof}

\subsubsection{A parameterization of the preimage of the bottom edge}

 For non-zero $s$, denote by
 $$K_s=\{[\gamma,0, \nu, \tau]\in \NAT_s\},$$ 
 the preimage  of $\beta$ under $\pi_0:\NAT_s\to P_0^*$.
 Lemma \ref{preimageisgood} and Equation (\ref{eq417}) imply that for all small enough non-zero $s$, $\beta\in \Lag^{\pitchfork_{u_s}}(P_0^*)$ and intersects the fold image transversely twice.   Therefore, for such $s$, $K_s$ is a smooth circle which meets $C_s$ transversely in two points.  The identical discussion applies to  $K_s'=\pi_0^{-1}(\beta)\cap\NAT_s'$.

\begin{lemma}
 \label{magicformula}\hfill

   \begin{enumerate}
\item[$(\NAT'_s)$] Assigning, to $\sigma\in \RR/2\pi \ZZ$, the conjugacy class of 
$$a\mapsto \bbi, ~ b\mapsto e^{sh}  e^{-\sigma e^{\sigma\bbi}\bbk}\bbi e^{-sh}, ~~ f\mapsto\bbi, h\mapsto e^{\sigma\bbi}\bbj,
   ~~ p\mapsto e^{sh}e^{s \cos\sigma e^{\sigma\bbi}\bbk}e^{-sh},~~ q\mapsto e^{sh},$$  defines an embedding $S^1\to \NAT_s'$ parameterizing  $K'_s$,  for  sufficiently small non-zero $s$.
 
 \item[$(\NAT_s)$] Assigning, to $\sigma\in \RR/2\pi \ZZ$, the conjugacy class of 
$$a\mapsto  \bbi, ~
b\mapsto  e^{sh} e^{-\sigma e^{\sigma\bbi}\bbk}\bbi e^{-sh}, ~f\mapsto  \bbi, ~
h\mapsto  e^{\eta e^{\sigma\bbi}\bbk}e^{\sigma\bbi}\bbj e^{-\eta e^{\sigma\bbi}\bbk},
~p\mapsto  e^{sh}e^{-2\eta e^{\sigma\bbi}\bbk }e^{-sh}, ~q\mapsto  e^{sh},$$
with $\eta=\eta(s,\sigma)$ the unique solution to $2\eta= -s\cos(\sigma+2\eta)$,
 defines a defines an embedding $S^1\to \NAT_s$ parameterizing  $K_s$,  for  sufficiently small non-zero $s$.

\end{enumerate}
\end{lemma}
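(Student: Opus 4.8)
The plan is to verify both displayed assignments by hand. Fix $\sigma$ and write $\rho_\sigma$ for the corresponding assignment. I would (i) check that $\rho_\sigma$ is a well-defined representation of $\Pi'$ (resp.\ $\Pi$) lying in $\tNAT'_s$ (resp.\ $\tNAT_s$); (ii) check that its conjugacy class lies over the bottom edge of $P_0$, hence in $K'_s$ (resp.\ $K_s$); and (iii) check that $\sigma\mapsto[\rho_\sigma]$ is a smooth immersion, so that, being an immersion of $S^1$ into the (a priori smooth) circle $K'_s$ (resp.\ $K_s$) discussed before the lemma, it is a covering map which a fibre count shows has degree one, hence a diffeomorphism.

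For (i), put $u_\sigma=e^{\sigma\bbi}\bbj$ and $v_\sigma=e^{\sigma\bbi}\bbk$, so that $\{\bbi,u_\sigma,v_\sigma\}$ is a right-handed orthonormal quaternion frame ($\bbi u_\sigma=v_\sigma$, $u_\sigma v_\sigma=\bbi$, $v_\sigma\bbi=u_\sigma$). The assignments are normalised by conjugating the $b$- and $p$-coordinates by $e^{sh}$; the effect is that the longitudes $\lambda_p=bh$, $\lambda_q=f\bar a h$ and the meridians $p,q$, read off in the conjugated frame, are exponentials of elements of the commuting subalgebra $\Span_\RR\{1,v_\sigma\}$ (resp.\ $\Span_\RR\{1,u_\sigma\}$), so the relations $[p,\lambda_p]=1$, $[q,\lambda_q]=1$ of Propositions \ref{summ2} and \ref{summ} hold identically. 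Tracelessness is immediate, since $a=f=\bbi$ and every meridian generator is conjugate to $\bbi$ ($h$ and $b$ being conjugates of purely imaginary quaternions, and $c,d,e,g$ being conjugates of these by Propositions \ref{summ}, \ref{summ2}); and a short computation with the frame verifies $\rho_\sigma(p)=e^{s\,\Ima\rho_\sigma(\lambda_p)}$, $\rho_\sigma(q)=e^{s\,\Ima\rho_\sigma(\lambda_q)}$. For $\NAT_s$ there is in addition the $w_2$ condition $\rho_\sigma(w)=-1$; by Lemma \ref{shuffle} this is equivalent to $G(s,-)=0$ at the underlying point $\big(\gamma(\sigma),0,-\sin2\eta(\sigma),\sigma\big)$ of $T\times\tracelessTwoSphere$, and the rotation by $\eta$ is inserted exactly so this holds: the defining equation $2\eta=-s\cos(\sigma+2\eta)$ gives $\nu=-\sin2\eta=s\cos(\sigma+2\eta)+O(s^3)$, matching the root of $G_2$ furnished by Lemma \ref{estimate2}, while a parallel computation yields $G_1=0$.

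For (ii), since $\rho_\sigma(a)=\rho_\sigma(f)=\bbi$ the character $\Real\rho_\sigma(f\bar a)=1$, so $\pi_0([\rho_\sigma])$ lies on the bottom edge and $[\rho_\sigma]\in K'_s$ (resp.\ $K_s$). A direct computation with the frame gives $\Real\rho_\sigma(b\bar a)=\cos(2s)\cos\sigma$ in the $\NAT'_s$ case (and $\cos\sigma+O(s)$ in the $\NAT_s$ case), so $\pi_0([\rho_\sigma])=[\gamma(\sigma),0]$ with $\cos\gamma(\sigma)$ a small perturbation of $\cos\sigma$; for small nonzero $s$ this is a two-to-one map of $S^1$ onto a closed subarc of $\beta$ with exactly two critical points (near $\sigma=0,\pi$), consistent with $\pi_0\colon K'_s\to\beta$ being a fold with two fold points (Corollaries \ref{JamesWattdied}, \ref{Wattprime}).

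For (iii), away from the two fold parameters $\pi_0$ is a local diffeomorphism and $\cos\gamma(\sigma)$ has nonvanishing derivative, so $\sigma\mapsto[\rho_\sigma]$ is immersive there. At a fold parameter $\pi_0$ folds, so I would instead differentiate $\rho_\sigma$ on two generators: since $\rho_\sigma(a)\equiv\bbi$, any infinitesimal conjugation tangent to the path has the form $c\,\adop{\bbi}$, but $\tfrac{d}{d\sigma}\rho_\sigma(h)$ and $\tfrac{d}{d\sigma}\rho_\sigma(b)$ at the fold parameter force incompatible values of $c$ (for $\NAT'_s$ at $\sigma=0$ one gets $c=\tfrac12$ from $h$ and $c=\tfrac{\sin2s-1}{2\sin2s}$ from $b$), so the tangent to the path is not an infinitesimal conjugation and the map is immersive everywhere. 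Being an immersion $S^1\to K'_s$ it is a covering of some degree $d\ge1$; comparing fibres over a regular value such as $[\tfrac\pi2,0]$ (which has two preimages in $K'_s$, while $\cos\gamma(\sigma)=0$ is solved by exactly two values of $\sigma$) forces $d=1$, so $\sigma\mapsto[\rho_\sigma]$ is a diffeomorphism onto $K'_s$, and likewise onto $K_s$. I expect the main obstacle to be step (i): the explicit $SU(2)$-computations checking the relations, the perturbation conditions, and — for $\NAT_s$ — the $w_2$ condition, where the conjugation-by-$e^{sh}$ normalisation and the frame $\{\bbi,u_\sigma,v_\sigma\}$ are what keep the algebra under control.
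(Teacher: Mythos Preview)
Your overall plan is sound and closely parallels the paper's proof: verify the representation-theoretic conditions, check the image lies over the bottom edge, and then argue the parameterization is onto $K_s'$ (resp.\ $K_s$). Two points deserve comment.

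\textbf{A genuine gap in the $w_2$ verification.} For the earring case you propose to check $\rho_\sigma(w)=-1$ by identifying ``the underlying point $(\gamma(\sigma),0,-\sin 2\eta(\sigma),\sigma)$ of $T\times\tracelessTwoSphere$'' and then matching $\nu=-\sin 2\eta$ to the root of $G_2$ given asymptotically by Lemma~\ref{estimate2}. This does not work as stated. First, the displayed representations are \emph{not} in the partially gauge-fixed form of Section~\ref{eliminating} (indeed $\Real(b\bbk)\ne 0$ in general, as the paper remarks), so you cannot simply read off $(\gamma,\theta,\nu,\tau)$; the conjugation needed to put $b$ into the form $e^{\gamma\bbk}\bbi$ while keeping $a=\bbi$ moves $h$, so the $(\nu,\tau)$ you wrote are not the gauge-fixed ones. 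Second, and more seriously, Lemma~\ref{estimate2} is an asymptotic expansion: even if you correctly identified the slice coordinates, matching $\nu$ to $s\cos\gamma$ modulo $O(s^2)$ only shows $G_2=O(s^2)$, not $G_2=0$. The fix is the one the paper uses: since $G_1=\Real(pq\bar h a\bar h)$ and $G_2=\Real(pq\bar h a)$ are traces of words in the generators, they are conjugation-invariant and can be evaluated directly on the given $\rho_\sigma$. Using $ph=h\bar p$, $qh=hq$, and the identity $h=e^{(2\eta+\tfrac\pi2)v_\sigma}\bbi$ (in your frame notation), a short exact quaternion computation gives $pq\bar h a=e^{sh}e^{\sigma\bbi}\bbk$, whence $G_2=\sin s\,\Real(he^{\sigma\bbi}\bbk)=0$ and $G_1=-\cos s\,\Real(he^{\sigma\bbi}\bbk)=0$ exactly. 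The bypass case $G_1'=G_2'=0$ is similar and simpler.

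\textbf{A simpler route to ``embedding''.} Your step~(iii) is correct but more work than necessary. The paper avoids the immersion-plus-degree argument by computing one more character, namely $\Real(b\bar h)$. For $\NAT'_s$ one finds $\Real(b\bar a)=\cos(2s)\cos\sigma$ and $\Real(b\bar h)=-\sin\sigma$; for $|s|<\tfrac\pi4$ the map $\sigma\mapsto(\cos(2s)\cos\sigma,-\sin\sigma)$ embeds $S^1$ as an ellipse in $\RR^2$. Since both characters factor through conjugacy classes, the map $S^1\to\NAT'_s$ is injective, hence (landing in the circle $K'_s$) a homeomorphism onto $K'_s$. The same trick works for $\NAT_s$: there $\Real(b\bar h)=-\sin(\sigma+2\eta)=-\sin\sigma+O(s)$ and $\Real(b\bar a)=\cos\sigma+O(s)$, which again embed $S^1$ for small $s$. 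This replaces your differential computation at the fold parameters and the fibre-counting argument with one line.
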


\noindent{\em Remark:} These circles of representations are not partially gauge fixed in the form described in Section \ref{eliminating}, because $\Real(b\bbk)$ need not  be zero.  
\begin{proof}

 $(\NAT'_s)$.  Since $f=a$, $\Ima(f\bar a h)=h$ and so $ e^{sh}=e^{s\Ima(\bar f a h)}.$ Therefore $q$ satisfies the perturbation condition.
 Furthermore, 
\begin{equation}\label{imbha}bh=e^{sh} e^{-\sigma e^{\sigma\bbi}\bbk}\bbi e^{-sh}h=e^{sh}e^{\sigma h\bbi}\bbi he^{-sh}=\sin\sigma +e^{sh}(\cos\sigma e^{\sigma\bbi}\bbk)e^{-sh},
\end{equation}
 and so 
 $e^{sh}e^{s \cos\sigma e^{\sigma\bbi}\bbk}e^{-sh}=e^{s\Ima(bh)}$. Thus the perturbation condition holds for $p$ as well.
 
Note that $ph=h\bar p$ and $qa=a\bar q$, and that, by construction, $hq=qh$. Using Proposition \ref{master}, $G_2'=\Real(ah)=0$ and
\begin{align*}
G_1'&=\Real(\bq \bp h p q h a) =\Real(\bq^2\bp^2 \bar a)  \\
&=\Real(e^{-2sh}e^{sh}e^{-2s \cos\sigma e^{\sigma\bbi}\bbk} e^{-sh}\bar \bbi)\\
&=\Real(e^{- sh}e^{-2s \cos\sigma e^{\sigma\bbi}\bbk} \bar \bbi e^{sh})\\
&=\Real(e^{-2s \cos\sigma  e^{\sigma\bbi}\bbk}\bar \bbi)
=0,
\end{align*}
and so this smooth circle of representations, parameterized by $\sigma$, maps  into $\NAT_s'$. Since $f=a=\bbi$ for all $\sigma$, the image lies in $K'_s$.

To see that this circle parameterizes $K_s'$, we calculate the characters 
$\Real(b\bar a)$ and $\Real(b\bar h)$ on this circle of representations:
$$\Real(b\bar a)=\Real(e^{sh}  e^{-\sigma e^{\sigma\bbi}\bbk}\bbi e^{-sh}\bar\bbi)=\Real(e^{2sh}  e^{-\sigma e^{\sigma\bbi}\bbk})=\cos(2s)\cos\sigma,
$$
and, using (\ref{imbha}),
$$\Real(b\bar h)=-\sin\sigma.$$
These factor through $\NAT_s'$ (since they are characters) and embed the circle into $\RR^2$ if $|s|<\tfrac\pi4$. Therefore, the circle embeds in $\NAT_s'$ with image in $K_s'$, and must therefore be a parameterization of $K_s'$ for small enough $s$.

\bigskip

$(\NAT_s)$.   Notice first that for $|s|<1$, there is a unique solution $\eta=\eta(s,\sigma) $ to the equation $2\eta= -s\cos(\sigma+2\eta)$ for any $\sigma$; $\eta(s,\sigma)$ varies smoothly with $\sigma$ and $s$.

For the given representation,  $e^{sh}=e^{s\Ima(\bar f a h)}$.   Thus the perturbation condition holds for $q$.
Moreover, $$h=e^{\eta e^{\sigma\bbi}\bbk}e^{\sigma\bbi}\bbj e^{-\eta e^{\sigma\bbi}\bbk}=e^{2\eta e^{\sigma\bbi}\bbk}e^{\sigma\bbi}\bbj =e^{(2\eta+\tfrac\pi 2) e^{\sigma\bbi}\bbk}\bbi.$$
From this one calculates:
\begin{align}\label{imbhb}
bh&=e^{sh} e^{-\sigma e^{\sigma\bbi}\bbk}\bbi e^{-sh}h=e^{sh}e^{-\sigma e^{\sigma\bbi}\bbk}\bbi   e^{(2\eta+\tfrac\pi 2) e^{\sigma\bbi}\bbk}\bbi e^{-sh}=
e^{sh} e^{ (-\sigma -2\eta+\tfrac \pi 2)e^{\sigma\bbi}\bbk}           e^{-sh}\\
&=\sin(\sigma+2\eta)+e^{sh}\cos(\sigma+2\eta)e^{\sigma\bbi}\bbk e^{-sh}.\nonumber
\end{align}
and hence
$e^{s\Ima(bh)}=e^{sh}e^{s \cos(\sigma+2\eta)e^{\sigma\bbi} \bbk}e^{-sh}= 
e^{sh}e^{-2\eta e^{\sigma\bbi} \bbk}e^{-sh}$ and the perturbation condition holds for $p$ also.

Since $\Real(h)=0$, $\bar h=-h=-e^{\eta e^{\sigma\bbi}\bbk}e^{\sigma\bbi}\bbj e^{-\eta e^{\sigma\bbi}\bbk}=
-e^{2\eta e^{\sigma\bbi}\bbk}e^{\sigma\bbi}\bbj $, and so
$$
 pq\bar h a = -e^{sh}e^{-2\eta e^{\sigma\bbi}\bbk }
e^{2\eta e^{\sigma\bbi}\bbk}e^{\sigma\bbi}\bbj
  \bbi
 = e^{sh}e^{\sigma\bbi}\bbk.
$$
 Hence,
 $$G_2=\Real(pq\bar h a)=\sin s \Real(h e^{\sigma\bbi}\bbk)=\sin s\Real(e^{\sigma\bbi}\bbj e^{\sigma\bbi}\bbk)=0$$
and 
$$G_1=
\Real(pqhah)=-\Real(e^{sh}e^{\sigma\bbi}\bbk h)
=-\Real(he^{sh}e^{\sigma\bbi}\bbk )=-\cos s \Real( h e^{\sigma\bbi}\bbk)=0,$$
and therefore this circle of representations maps into $\NAT_s$ and, since $f=a=\bbi$ for all $\sigma$, into $K_s$.

The   characters of $b\bar h$ and $b\bar a$ evaluated along this circle satisfy $$\Real(b\bar h)= -\sin(\sigma+2\eta)=-\sin\sigma+O(s),\text{ and }\Real(b\bar a)=\cos\sigma + O(s),$$
and therefore this circle must parameterize  $K_s$ for all small enough $s$,  completing the proof of Lemma \ref{magicformula}.
\end{proof}

\subsubsection{ The action of $(u_s)_*$ and $(u' _s)_*$ on the bottom edge}

\begin{proposition}\label{theta0} Let  $\beta\in \Lag_{\rm int}(P_0^*)$ be the  bottom edge, $\beta(\gamma)=[\gamma,0],~0<\gamma<\pi.$ Then, for sufficiently small non-zero $s$,   
 $\beta\in \Lag^{\pitchfork_{u_s}}(P_0^*)$ and $(u_s)_*(\beta):K_s\to P_1^*$ is an immersion of a circle with a single transverse double point.  Moreover,  
 $$(u_s)_*(\beta)=\Psi\circ F8(\beta)\text{ in } [\Lag_{\rm cir}(P_1^*)].$$ 
The identical statement holds for $(u'_s)_*$.
\end{proposition}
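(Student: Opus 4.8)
The plan is to compute $(u_s)_*(\beta)$ directly, using the explicit $\sigma$-parameterization of the circle $K_s=\pi_0^{-1}(\beta)$ supplied by Lemma~\ref{magicformula}, and to recognize the answer as $\Psi\circ F8_s(\tilde\beta)$. The first task is to identify the Weinstein composition with $\pi_1|_{K_s}$. By the discussion preceding Lemma~\ref{magicformula} (Lemma~\ref{preimageisgood} together with Equation~(\ref{eq417})), for all small nonzero $s$ we have $\beta\in\Lag^{\pitchfork_{u_s}}(P_0^*)$, and $K_s$ is a smooth circle meeting $C_s$ transversely twice. Since $\beta$ is injective on $(0,\pi)$ and $\pi_0$ misses the corners, the projection to $\NAT_s$ carries the fiber product of $\beta\times\mathrm{Id}_{P_1^*}$ with $u_s$ diffeomorphically onto $K_s$, and under this identification $(u_s)_*(\beta)=\pi_1|_{K_s}$; note that Lemma~\ref{lem6.7} does \emph{not} apply here, precisely because $\beta$ meets the fold image. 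By Lemma~\ref{factor}, $\pi_1=\Psi\circ\Theta\circ\pi_0\circ U_s$, and since every representation in the Lemma~\ref{magicformula} circle satisfies $\rho(f)=\rho(a)=\bbi$, Lemma~\ref{actofT2} expresses $\pi_0\circ U_s$ on $K_s$ through the three characters $\Real(\rho(\bar b q\bbi\bq))$, $\Real(\rho(\bp^2))$, $\Real(\rho(\bar b\bp^2 q\bbi\bq))$.

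The next step is a routine quaternion computation of those three characters on the $\sigma$-parameterized circle. In the bypass case the answer is $\big(\cos\sigma,\ \cos(2s\cos\sigma),\ \cos(\sigma-2s\cos\sigma)\big)$, i.e.\ the point $[\sigma,2s\cos\sigma]\in P_0$; applying $\Theta$ (Equation~(\ref{Theta})) and comparing with Equation~(\ref{fig8inP}) gives $\Theta\circ\pi_0\circ U_s|_{K_s'}=F8_s(\tilde\beta)$ exactly, whence $(u_s)_*(\beta)=\Psi\circ F8_s(\tilde\beta)$, which represents $\Psi(F8(\beta))$ in $[\Lag_{\rm cir}(P_1^*)]$. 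In the earring case the implicitly-defined $\eta=\eta(s,\sigma)$ enters and one obtains $\big(\cos\sigma,\cos4\eta,\cos(\sigma+4\eta)\big)=[\sigma,-4\eta]$, so $\Theta\circ\pi_0\circ U_s|_{K_s}$ is $\sigma\mapsto[\sigma,4\eta(s,\sigma)]$ with $4\eta=-2s\cos(\sigma+2\eta)=-2s\cos\sigma+O(s^2)$. Linearly interpolating the $\theta$-coordinate between $4\eta(s,\sigma)$ and $-2s\cos\sigma$ produces, for $s$ small, a regular homotopy of immersed circles in $P_0^*$ from $\sigma\mapsto[\sigma,4\eta]$ to $F8_s(\tilde\beta)$ (each intermediate curve is an immersion and misses the corners), so again $[(u_s)_*(\beta)]=\Psi(F8(\beta))$.

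For the assertion that $(u_s)_*(\beta)$ is an immersed circle with a single transverse double point: in the bypass case this is immediate, since $(u_s)_*(\beta)=\Psi\circ F8_s(\tilde\beta)$, the model $F8_s$ has exactly one transverse double point (at $[\tfrac\pi2,0]$), and $\tilde\beta,\Psi$ are an embedding and a diffeomorphism. In the earring case I would check directly that $[\sigma_1,4\eta(s,\sigma_1)]=[\sigma_2,4\eta(s,\sigma_2)]$ with $\sigma_1\ne\sigma_2$ forces $\sigma_2=-\sigma_1$ and $\eta(s,\sigma_1)+\eta(s,-\sigma_1)=0$; since $\eta(s,\pm\tfrac\pi2)=0$ and $\eta(s,\sigma)+\eta(s,-\sigma)=-s\cos\sigma+O(s^2)$, for small $s$ the only solutions are $\sigma_1=\pm\tfrac\pi2$, giving a single transverse double point, which $\Psi$ preserves.

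I expect the main obstacle to be not the (mechanical) quaternion algebra, but rather two points. First, the careful identification of the Weinstein composition with $\pi_1|_{K_s}$ in the presence of the two transverse intersections of $\beta$ with the fold image, where Lemma~\ref{lem6.7} is unavailable and one must argue on the fiber product by hand. Second, the bookkeeping in the earring case, where $K_s$ is parameterized through the implicit function $\eta(s,\sigma)$ rather than a clean $\cos\sigma$, so that one needs the interpolation argument to reach $F8(\beta)$ up to regular homotopy and a separate small computation (using $\eta(s,\pm\tfrac\pi2)=0$) to pin down the single double point.
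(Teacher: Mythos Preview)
Your proposal is correct and follows essentially the same route as the paper: identify $(u_s)_*(\beta)$ with $\Psi\circ\Theta\circ\pi_0\circ U_s$ restricted to the circle $K_s$ parameterized in Lemma~\ref{magicformula}, evaluate the three characters of Lemma~\ref{actofT2}, recognize the bypass answer as $F8_s(\tilde\beta)$ exactly, and in the earring case interpolate to reach the bypass formula by a short regular homotopy (the paper uses $[\sigma,-2s\cos(\sigma+(1-u)2\eta)]$ rather than your linear interpolation in $\theta$, but the verification is the same). Your double-point analysis in the earring case, carried out in $[\gamma,\theta]$ coordinates via $\eta(s,\sigma)+\eta(s,-\sigma)=-s\cos\sigma+O(s^2)$ and $\eta(s,\pm\tfrac\pi2)=0$, is a clean variant of the paper's character-coordinate argument; just remember to record the tangent-vector computation at $\sigma=\pm\tfrac\pi2$ (the paper's Equation~(\ref{transverse})) to certify transversality of the self-intersection.
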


\begin{proof}  
Lemma \ref{preimageisgood} implies  that $\beta\in \Lag^{\pitchfork_{u_s}}(P_0^*)$ and intersects the fold image transversely twice for sufficiently small  non-zero $s$.

Lemma \ref{factor} and the top row of Diagram (\ref{Jsquare})  imply that  
 $(u_s)_*(\beta)$ is given by the composite
\begin{equation}\label{nats} (u_s)_*(\beta):K_s\subset \NAT_s\xrightarrow{\pi_0\circ U_s}P_0^*\xrightarrow{\Psi\circ \Theta}P_1^* \mbox{ and similarly for $u'_s$, $K'_s$}.\end{equation}

Apply Lemmas \ref{actofT2} and  \ref{magicformula} to calculate  $\pi_0\circ U_s:K_s\to P_0^*\subset \RR^3$. The first coordinate is given by
\begin{equation}\label{cosghat}\Real(U(\rho)(b\bar a)) =\Real( \bar b q  \bbi \bq )=
\Real(\bq  \bar b q \bbi )=\Real(e^{-\sigma e^{\sigma\bbi}\bbk}\bar\bbi\bbi)=\cos\sigma. 
\end{equation}

All assertions and calculations made so far in this proof are true, for identical reasons, for $u_s'$ and $\NAT_s'$. The calculations of the remaining two coordinates differ slightly in the two cases of $u_s$ and $u_s'$.    Lemma \ref{magicformula}  and two calculations in its proof, Equations (\ref{imbha}) and (\ref{imbhb}), yield:
\begin{equation}\label{costhat}\Real(U(\rho)(f\bar a))=\Real(\bar p^2)
= 
\begin{cases}\cos(2s\cos\sigma)&\NAT_s'\\
 \cos(2s\cos(\sigma+ 2\eta))&\NAT_s
\end{cases}
\end{equation}
and:
\begin{equation}\label{cosgthat}
\Real(U(\rho)(b\bar f))=
\Real( \rho(\bar b \bp^2 q \bbi \bq )    )=\begin{cases} \cos(\sigma+2s\cos\sigma)&\NAT_s'\\
\cos(\sigma +2s\cos(\sigma+2\eta))&\NAT_s.
\end{cases}
\end{equation}
 (with $\eta(s,\sigma)$ the unique solution to $2\eta= -s\cos(\sigma+2\eta)$). 
 
 \medskip

Suppose that $\sigma_1\ne \sigma_2\in \RR/2\pi\ZZ$ are sent to the same point by $\pi_0\circ U_s:K'_s\to P_0^*$.   From Equation (\ref{cosghat}), we can assume $\sigma_2=-\sigma_1$ with $0<|\sigma_1|<\pi$.    Then, by Equation (\ref{cosgthat}), 
$$\cos(\sigma_1+2s\cos\sigma_1)=\cos(\sigma_2+2s\cos\sigma_2).
$$ 
This implies that that 
$$\sin\sigma_1\sin(2s\cos\sigma_1)=\sin\sigma_2\sin(2s\cos\sigma_2)=-\sin\sigma_1\sin(2s\cos\sigma_1)
$$
and therefore $\sigma_1=\pm \tfrac\pi 2$.  Hence 
$\pi_0\circ U_s:K'_s\to P_0^*$ has a single double point.

The argument for the earring tangle is similar. 
Suppose that $\sigma_1\ne \sigma_2\in \RR/2\pi\ZZ$ are sent to the same point by $\pi_0\circ U_s:K_s\to P_0^*$.
Then 
$$\cos\sigma_1=\cos\sigma_2,~\cos(2s\cos(\sigma_1+2\eta_1))=\cos(2s\cos(\sigma_2+2\eta_2)),$$
and $$\cos(\sigma_1+2s\cos(\sigma_1+2\eta_1))=\cos(\sigma_2+2s\cos(\sigma_2+2\eta_2)),
$$ 
where $2\eta_i=-s\cos(\sigma_i+2\eta_i)$.  These three equations are equivalent, through angle addition formulas, to 
$$\cos\sigma_1=\cos\sigma_2,~\cos(4\eta_1)=\cos(4\eta_2), 
\text{ and  }\cos(\sigma_1-4\eta_1)=\cos(\sigma_2-4\eta_2),
$$

It follows that $\sigma_2=-\sigma_1\ne0,\pi$,  $\eta_1=\pm \eta_2$, and 
$$\sin\sigma_1\sin(4\eta_1)=\sin\sigma_2\sin(4\eta_2)=-\sin\sigma_1\sin(4\eta_2),
$$
and therefore  $\eta_2=-\eta_1$.   Since $2\eta_i=-s(\cos\sigma_i\cos2\eta_i-\sin\sigma_i\sin2\eta_i),$
$$
2\eta_1=-s(\cos\sigma_1\cos2\eta_1-\sin\sigma_1\sin2\eta_1)=
 -s(\cos\sigma_2\cos2\eta_2-\sin\sigma_2\sin2\eta_2)=2\eta_2,
$$
which implies that $\eta_1=\eta_2=0$ since $|s|$ is small and non-zero.  Hence
$\sigma_1=\pm \tfrac\pi 2$.  Therefore, 
$\pi_0\circ U_s:K_s\to P_0^*$ also has a single double point.
A straightforward calculation, using Equations (\ref{cosghat}), (\ref{costhat}), and (\ref{cosgthat})  
 shows that 
\begin{equation}\label{transverse}
\left. \frac{d}{dt}\right|_{t=0} \pi_0\circ U_s(\pm (\tfrac{\pi} {2}+t))
=\begin{cases} (-1,0,-1\pm 2s)& \NAT'_s ,\\
 (-1,0,-1\pm 2s-4s\tfrac{d\eta}{dt})&\NAT_s.\end{cases}
\end{equation}
 Since $\eta=O(s)$, this shows $\pi_0\circ U_s$ on both $K_s$ and $K'_s$ have single transverse double points (after possibly shrinking the bound  $s_0$ in the case of the earring).   
Since $\Psi\circ\Theta:P_0^*\to P_1^*$ is a diffeomorphism,   $\sigma=\pm \tfrac\pi 2$ is a transverse self-intersection point of $(u'_s)_*(\beta)$ and $(u_s)_*(\beta)$.  This completes the proof of the first statement in the proposition in both cases.

 \medskip

   We first prove the second statement for the bypass case.  It follows from Equation (\ref{fig8inr3}) that the 
  composition of the parameterization of $K_s'$ from Lemma \ref{magicformula} with $\pi_0\circ U_s:\NAT_s'\to P_0^*$ equals
  $\tilde\beta\circ F8_s$,
with $\tilde \beta$ the extension of $\beta$ given in Equation (\ref{eqn6.3}).  Therefore,
 $\pi_0\circ U_s:K_s'\to P_0^*$ equals $F8(\beta)$ in $[\Lag_{\rm cir}(P_0^*)]$.
Equations (\ref{Theta}) and (\ref{fig8inP}) imply that  $\Theta:P_0\to P_0$ preserves $F8(\beta)$ (as an unparameterized immersion),  completing the proof of Proposition \ref{theta0} for the bypass tangle.
 
 \medskip

 For the earring tangle, the formulas are less explicit so we construct a regular homotopy $H:S^1\times [0,1]\to P_0$ from $\pi_0 \circ U_s$ on $K_s$ to $\pi_0 \circ U_s $ to $K_s '$.   Define
  $$H(\sigma,u)=\big(\cos\sigma, \cos(2s\cos(\sigma+(1-u)2\eta)),\cos(\sigma +2s\cos(\sigma+(1-u)2\eta) )\big), ~u\in[0,1].
$$

 Since $|s|\neq 0$ is small,   if $|\cos\sigma|=1$, then
$$\cos(\sigma +2s\cos(\sigma+(1-u)2\eta)) 
=\cos\sigma\cos(2s\cos(\sigma+(1-u)2\eta))\ne \pm 1.$$
Hence the image of $H$ misses the corners  of $P_0$.

We prove $H$ is a regular homotopy, that is, $H(-,u)$ is an immersion for each $u\in [0,1]$.  For fixed $u$, the  derivative of the first coordinate of $H(-,u)$ is non-zero unless $|\cos\sigma|=1$.

The derivative of the third component of $H(-,u)$ equals
$$
\tfrac{\partial}{\partial\sigma}\big(\cos(\sigma +2s\cos(\sigma+(1-u)2\eta)\big)=
\sin(\sigma +2s\cos(\sigma+(1-u)2\eta))\cdot (1+2s\tfrac{\partial}{\partial\sigma}(\cos(\sigma+(1-u)2\eta)).
$$
When $s\ne 0$ is small and $\sigma=0$ or $\pi$, 
the second factor is non-zero and the  first factor equals 
$
 \pm\sin(2s\cos((1-u)2\eta)). 
$
 This is non-zero for $s\neq 0$ sufficiently small, since $ \eta(\sigma,s)=O(s)$.
 It follows that $H$ is a regular homotopy in $P_0^*$ from  
$$(\pi_0\circ U_s):K_s\to P_0^*\text{ to } F8(\beta)=(\pi_0\circ U_s):K_s'\to P_0^*.$$ 
Composing with $\Psi\circ \Theta$ gives the  regular homotopy between 
$
(u_s)_*(\beta)$ and $(u_s')_*(\beta)=\Psi(F8(\beta))$.
  \end{proof}

  This completes the analysis of the special case of a good arc (the bottom edge)  that we set out to describe in this section.  Before moving on to discuss more general good arcs,  we note that the proof of Lemma \ref{theta0} also gives a characterization of the action of $(u_s)_*,(u' _s)*$ on any closed sub-arc of the bottom edge containing the bottom left corner.

    \begin{lemma}\label{fig8p} Fix some $0<\ep< \pi  $ and
 let $\beta_{\rm bl}:[0,\ep]\to P_0$ be given by $\beta_{\rm bl}(\sigma)=[\sigma,0]$.
  Then the fiber product of $\beta_{\rm bl}\times{\rm Id}$ and $u_s$ is a closed interval,     the Weinstein composition $(u'_s)_*(\beta_{\rm bl})$  can be parameterized as   
the composite $$[-\ep,\ep]\ni \sigma\mapsto\Psi \big([\sigma, -2s\cos \sigma])$$ and  the Weinstein composition  
$(u_s)_*(\beta_{\rm bl})$ can be parameterized as   
 $$[-\ep,\ep]\ni \sigma\mapsto\Psi  \big(  [\sigma, -2s\cos( \sigma+\eta(s,\sigma))]).$$
  \qed  
    \end{lemma}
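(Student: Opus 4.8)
The statement is essentially a localized version of Proposition~\ref{theta0}, restricted from the full bottom edge $\beta:[0,\pi]\to P_0$ to the sub-arc $\beta_{\rm bl}:[0,\ep]\to P_0$ containing the bottom-left corner. The plan is to reuse the machinery already built in the proof of Proposition~\ref{theta0} almost verbatim. First I would invoke Lemma~\ref{preimageisgood} (equivalently the transversality observation used in Proposition~\ref{theta0}) to conclude that for sufficiently small non-zero $s$ the arc $\beta_{\rm bl}$ lies in $\Lag^{\pitchfork_{u_s}}(P_0^*)$ and meets the fold image transversely; but because $\beta_{\rm bl}$ terminates at the corner $[0,0]$ rather than passing through two corners, its preimage under $\pi_0$ will be a \emph{closed interval} rather than a circle. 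Concretely, the circle $K_s'$ (resp.\ $K_s$) parameterized in Lemma~\ref{magicformula} is cut down: the parameter $\sigma$ now ranges over $[-\ep,\ep]$ (the two sheets $\NAT_s^\pm$ over $\beta_{\rm bl}$ being glued along the single fold point lying over $[0,0]$, corresponding to $\sigma=0$), so the fiber product of $\beta_{\rm bl}\times \mathrm{Id}$ and $u_s$ is $[-\ep,\ep]$.

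Next I would compute $(u_s')_*(\beta_{\rm bl})$ (resp.\ $(u_s)_*(\beta_{\rm bl})$) by the same route as in Proposition~\ref{theta0}: by Lemma~\ref{factor} (and the top row of Diagram~(\ref{Jsquare})) the Weinstein composition equals $\Psi\circ\Theta\circ\pi_0\circ U_s$ restricted to the cut-down preimage. Using Lemma~\ref{actofT2} together with the explicit parameterization of Lemma~\ref{magicformula}, the three coordinates of $\pi_0\circ U_s$ are exactly Equations~(\ref{cosghat}), (\ref{costhat}), (\ref{cosgthat}): for $\NAT_s'$ this gives $\sigma\mapsto(\cos\sigma,\cos(2s\cos\sigma),\cos(\sigma+2s\cos\sigma))$, which by Equation~(\ref{fig8inr3}) is precisely $\tilde\beta\circ F8_s$ in $\RR^3$-coordinates, i.e.\ $[\sigma,-2s\cos\sigma]$; for $\NAT_s$ one gets $\sigma\mapsto(\cos\sigma,\cos(2s\cos(\sigma+2\eta)),\cos(\sigma+2s\cos(\sigma+2\eta)))$ with $2\eta=-s\cos(\sigma+2\eta)$, which in $[\gamma,\theta]$-coordinates on the pillowcase reads $[\sigma,-2s\cos(\sigma+\eta)]$ (matching the slight discrepancy in the stated formulas, which comes from the $\eta$ vs.\ $2\eta$ convention and can be checked by a direct angle-addition computation identical to the one in Proposition~\ref{theta0}). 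Post-composing with $\Psi$ yields the asserted parameterizations. Since $0<\ep<\pi$, the self-intersection at $\sigma=\pm\tfrac\pi2$ from Proposition~\ref{theta0} is \emph{absent} on a sub-arc with $\ep<\tfrac\pi2$, and present as a single transverse double point if $\ep>\tfrac\pi2$, but in either case the formula for the parameterized image is the one stated, so no case distinction is needed for the conclusion as written.

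The one point requiring genuine (though routine) care is verifying that the fiber product really is a single closed interval and that the gluing of the two sheets $\NAT_s^\pm$ over $\beta_{\rm bl}$ happens smoothly at the fold point over $[0,0]$ — this is where $\beta_{\rm bl}$ differs structurally from both a circle (Lemma~\ref{lem6.7}) and the full edge $\beta$. This follows from Whitney's normal form for a fold (Section~\ref{bifolds}): near the fold point $\pi_0$ looks like $(u,v)\mapsto(u^2,v)$, so the preimage of a transverse arc ending on the fold image is a smooth arc, not two disjoint arcs. The main (and only real) obstacle is thus bookkeeping: tracking the parameter ranges and the $\eta$-substitution so that the stated formulas come out exactly right; there is no new geometric input beyond what Proposition~\ref{theta0}, Lemma~\ref{magicformula}, and Lemma~\ref{factor} already provide.
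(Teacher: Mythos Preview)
Your proposal is correct and matches the paper's approach exactly: the paper's own proof is just the \qed\ symbol, with the preceding sentence noting that ``the proof of Lemma~\ref{theta0} also gives a characterization of the action of $(u_s)_*,(u'_s)_*$ on any closed sub-arc of the bottom edge containing the bottom left corner.'' Your plan---restrict the parameterization of Lemma~\ref{magicformula} to $\sigma\in[-\ep,\ep]$ and read off the formulas from Equations~(\ref{cosghat})--(\ref{cosgthat})---is precisely what the paper has in mind, and the text immediately following the lemma confirms this by defining $\alpha_{{\rm bl},s}$ as that restriction.

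One small geographical slip: the fold point is \emph{not} over the corner $[0,0]$. The image of $\pi_0$ misses the corners entirely (Corollary~\ref{JamesWattdied}); the fold image is a small circle \emph{around} $[0,0]$, and $\beta_{\rm bl}$ crosses it transversely at some $[\gamma_*,0]$ with $\gamma_*\approx 2|s|$. In the $\sigma$-parameterization this crossing does occur near $\sigma=0$ (where $\cos\gamma=\cos(2s)\cos\sigma$ is maximal), so your identification of $\sigma=0$ with the fold crossing is right---it just doesn't sit over the corner. This doesn't affect your argument: the Whitney normal form reasoning you give for why the fiber product is a single closed interval is correct and applies at the actual fold crossing.
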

    
  The notation $\beta_{\rm bl}$ is used to highlight  that this is an embedded arc in $P_0$ which starts at the bottom left corner and lies in the bottom edge.  Restricting the parameterization in Lemma \ref{magicformula} to the interval $[-\epsilon, \epsilon]$ defines a parameterization   
 $\alpha_{{\rm  bl},s}:[-\ep, \ep]\to \NAT_s$  
 for the preimage  of $\beta_{\rm bl}$ which is transverse to the fold locus in one point. 
 One can check by plugging in $\sigma=\pm \tfrac \pi 2$  that the parameterization  $\alpha_{{\rm  bl},s}$  starts in $\NAT_s^+$ and ends in $\NAT_s^-$ (note that $\sigma=\pm \tfrac \pi 2$ implies  $\eta(s,\sigma)=0$).   The formula 
\begin{equation} \label{alphas}  \pi_1\circ \alpha_{{\rm  bl},s}(\sigma)=
  \Psi\big( [\sigma, -2s\cos( \sigma+\eta(s,\sigma))]\big)\end{equation} 
  gives the parameterization of the Weinstein composition $(u_s)_*(\beta_{\rm bl})$ in Lemma \ref{fig8p}.

Using the symmetries $W_1,W_2$,  we can easily extend this to sub-arcs of the top or bottom edge starting at either corner.   
For example,  $\beta_{\rm br}=\widehat{W}_2\circ \beta_{\rm bl}$ is a sub-arc of the bottom edge starting at the bottom right corner and $\alpha_{{\rm br},s}=W_2\circ \alpha_{{\rm bl},s}$     parameterizes the preimage of $\beta_{\rm br}$.  Again, 
$ \alpha_{{\rm  br},s}$ starts in $\NAT_s^+$ and ends in $\NAT_s^-$ (by Corollary \ref{preserves}), and $ \alpha_{{\rm  bl},s}$ meets the fold locus $C_s$ transversely in one point. Moreover, the Weinstein composition $(u_s)_* (\beta_{\rm br}) $ is parameterized by 
$\pi_1\circ W_2 \circ \alpha_{{\rm bl}, s}.$
Similarly, define $\beta_{\rm tr} =\widehat{W}_1\circ \beta_{\rm bl}$, 
 $\alpha_{{\rm tr},s}=W_1\circ \alpha_{{\rm bl},s} $,  
 $\beta_{\rm tl} =\widehat{W}_1\circ \widehat{W}_2 \circ \beta_{\rm br}$,  and 
 $\alpha_{{\rm tl},s}=W_1\circ  W_2\circ  \alpha_{{\rm br},s} $.  This notation is illustrated in Figure \ref{alphasfig}.

A similar approach works for the bypass tangle. In particular, the parameterization of $(u'_s)_* (\beta_{{\rm bl},s})$ analogous to Equation (\ref{alphas}) is  $\Psi \left( [ \sigma, -2s\cos(\sigma)]\right).$

\begin{figure}[ht]
\begin{center}
\includegraphics[width=.95\textwidth]{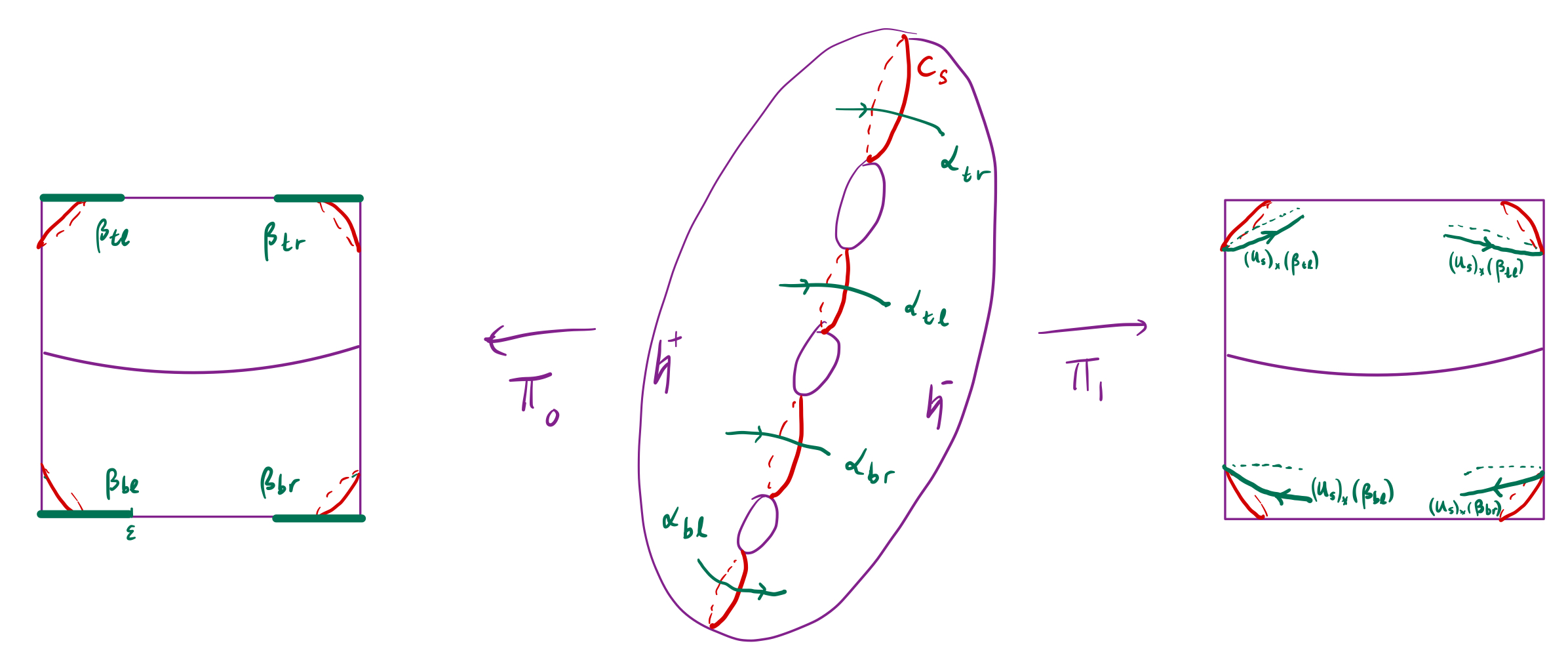}
 \caption{The map from $\NAT_s$ to the product of two pillowcases.     The four arcs $\alpha_{\rm br},\alpha_{\rm bt},\alpha_{\rm tr},\alpha_{\rm tl} $, making up the  preimage of the four horizontal arcs  $\beta_{\rm br},\beta_{\rm bt},\beta_{\rm tr},\beta_{\rm tl} $ under $\pi_0$ as well as their image under $\pi_1$ are illustrated. Also illustrated is the fold locus $C_s$ and the fold images $\pi_0(C_s)$ and $\pi_1(C_s)$. \label{alphasfig}}
\end{center}
\end{figure}

\subsection{Correspondence theorem for arbitrary good arcs}

In this section we establish a similar result to  Proposition   \ref{theta0}  for arbitrary good arcs.

\begin{theorem}\label{FIG8baby} Let $g\in \Lag_{\rm int}(P_0^*)$.  Then for all sufficiently small non-zero $s$, \begin{itemize} 
 \item $ g\in \Lag^{\pitchfork_s}(P_0^*)$, 
 \item the regular homotopy class of 
 $(u_s)_*( g)$ depends only on the class of $ g$ in  $[\Lag_{\rm int}(P_0^*)]$,  and 
 \item $[(u_s)_*(g)] =\Psi\circ F8( [g])$  in $[\Lag_{\rm cir}(P_1^*)]$.
 \end{itemize} 
 \end{theorem}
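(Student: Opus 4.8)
The plan is to reduce an arbitrary good arc $g$ to the bottom edge $\beta$, for which Proposition \ref{theta0} (together with Lemma \ref{fig8p}) already gives the answer. First I would record the preliminary structure for $g$: Lemma \ref{preimageisgood} gives $g\in\Lag^{\pitchfork_{u_s}}(P_0^*)$ for all small nonzero $s$, so the first bullet is immediate. Since the fold image circles shrink to the corners as $s\to 0$ (Theorem \ref{thm6.1}) and the compact set $g([0,\pi])$ meets the corners only at the two endpoints $g(0),g(\pi)$, for $s$ small the fiber product $\pi_0^{-1}(g)\subset\NAT_s$ is a single embedded circle meeting $C_s$ transversally in exactly two points, one inside the fold circle at $g(0)$ and one at $g(\pi)$, using that a ray from a corner meets the encircling fold image transversally (Corollary \ref{JamesWattdied}); and $\pi_1$ immerses this circle, by the bifold structure of $u_s$ (Theorem \ref{thm6.1}) together with Lemma \ref{folding}.

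Second, I would establish the regular‑homotopy invariance asserted in the second bullet: given a regular homotopy $g_u$, $u\in[0,1]$, of good arcs, compactness of $[0,1]$ yields $s_1>0$ so that for $0<|s|<s_1$ every $g_u$ is transverse to the fold image; then $\bigcup_u\pi_0^{-1}(g_u)$ is a smooth copy of $S^1\times[0,1]$ in $\NAT_s$, and $\pi_1$ restricted to it is a regular homotopy in $P_1^*$ from $(u_s)_*(g_0)$ to $(u_s)_*(g_1)$. Since $F8$ is already defined on regular homotopy classes, this lets me replace $g$ by any convenient representative of $[g]$.

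Third, using Step 2 I would regular‑homotope $g$ so that near each endpoint it agrees with the $\widehat{W}_i$‑translate of an initial segment $\beta_{\rm bl}$ of the bottom edge emanating from the corresponding corner. This is possible because the germ of a good arc at a corner is a straight ray (by equivariance of the lift), the space of such ray‑germs at a fixed corner is connected, and any ray from a corner immediately enters $P_0^*$, so the straightening and rotation can be carried out by a regular homotopy supported near the endpoints without leaving $P_0^*$ or destroying immersivity. With $g$ in this normal form I would cut $[0,\pi]=[0,\epsilon]\cup[\epsilon,\pi-\epsilon]\cup[\pi-\epsilon,\pi]$: over each end segment the fiber product is, by Lemma \ref{fig8p} and its $W_i$‑translates, a closed interval (starting in $\NAT_s^+$, ending in $\NAT_s^-$, meeting $C_s$ once) whose image under $\pi_1$ converges as $s\to 0$ to $\Psi$ of the ``fold‑back'' of $F8$ at that corner; over the middle segment, by Lemma \ref{lem6.7}, the fiber product is $g_s^+\sqcup g_s^-$ with $\pi_1\circ(g_s^+\sqcup g_s^-)$ converging to $\Psi\circ D(g|_{[\epsilon,\pi-\epsilon]})$. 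Because $\pi_0$ is injective on each $\NAT_s^\pm$, these three pieces glue uniquely into the single circle $\pi_0^{-1}(g)$, and their $s\to 0$ limits assemble to precisely $\Psi(F8_s(\tilde g))$; running the family in $s$ then produces the regular homotopy $[(u_s)_*(g)]=\Psi\circ F8([g])$.

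I expect the main obstacle to be Step 2, the regular‑homotopy invariance, and within it the verification that $\pi_1$ genuinely immerses the fiber product at the two fold points and that this persists in families. The subtlety is that the tangent line to $\pi_0^{-1}(g_u)$ at a fold point coincides with $\ker d\pi_0$ there, so immersivity of $\pi_1$ on it is not automatic from $u_s$ merely being an immersion; it requires the transversality of $\ker d\pi_0$ and $\ker d\pi_1$ from Lemma \ref{folding}, followed by a fold‑normal‑form argument made uniform in $u$ and $s$.
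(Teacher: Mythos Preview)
Your approach is essentially the paper's: establish regular-homotopy invariance of $(u_s)_*$ on good arcs, normalize $g$ near its endpoints to segments $\beta_{\rm bl},\beta_{\rm br},\beta_{\rm tl},\beta_{\rm tr}$ of the top/bottom edges, decompose the fiber product into two end arcs (handled by Lemma \ref{fig8p} and its $W_i$-translates) and two middle arcs (handled by Lemma \ref{lem6.7}), and recognize the result as $\Psi\circ F8([g])$. Two remarks, one minor and one substantive.

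First, the obstacle you flag is not one. Immersivity of $\pi_1$ on the fiber product at fold points, and its persistence in one-parameter families, follows directly from Theorem \ref{Cain died, sadly} (Weinstein's observation that composition of a transverse Lagrangian with a Lagrangian correspondence is a Lagrangian \emph{immersion}); no separate normal-form analysis of $\ker d\pi_0$ versus $\ker d\pi_1$ is needed beyond what is packaged into $u_s$ being Lagrangian. The paper invokes exactly this.

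Second, your Step 3 has a genuine gap in how the limit is taken. Lemma \ref{fig8p} gives the end pieces as $\sigma\mapsto\Psi([\sigma,-2s\cos\sigma])$; sending $s\to 0$ collapses these to a doubled segment of the bottom edge, which is \emph{not} an immersion, so ``their $s\to 0$ limits'' do not yield the teardrop of a Figure Eight. Conversely, for fixed $s\ne 0$ the middle pieces $\pi_1\circ\delta_s^\pm$ are not $\Psi\circ\delta$. You cannot send $s\to 0$ uniformly. The paper's fix is a bump function $\mu:\RR/2\pi\ZZ\to[0,1]$ vanishing near the fold parameters $\sigma=0,\pi$ and equal to $1$ on the middle, with the homotopy $H(\sigma,t)=L(s(1-t\mu(\sigma)),\sigma)$: this keeps $s$ fixed near the folds (so the end pieces remain immersed teardrops) while sending $s\to 0$ on the middle (so those pieces become $\Psi\circ\delta$ and $\Psi\circ\overline\delta$, by Lemma \ref{lem6.7}). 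Regularity of $\pi_1\circ H(-,t)$ on the transition intervals is then checked from the explicit formula (\ref{alphas}), where the first pillowcase coordinate is $\sigma$, hence monotone for every $t$. Finally one must verify that the two teardrops wind around their corners with \emph{opposite} orientations, so that the assembled curve is indeed $F8([g])$ rather than its framed double; the paper does this by tracking the sign in Equations (\ref{soon1})--(\ref{soon2}) and the fact that $\widehat W_1$ is orientation-preserving.
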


\begin{proof}  It suffices to provide a proof for  $g:[0,\pi]\to P_0^*$ so that the restriction to $(0,\pi)$ is   a good immersion. 
Choose an $s_1\in (0,s_0)$ and $\epsilon\in (0,\tfrac \pi 2)$ so that $g$ meets the fold image transversely in two points  and the fold image lies in an $\epsilon$ neighborhood of the corners for all $0<|s|<s_1$.  
 Then $g$  can be  regularly homotoped, through good immersions which meet the fold circles transversely in two points, to a path $g'$ which is a concatenation of three paths,  
 $g'=g_1*g_2*g_3$  where, as illustrated on the right side in Figure \ref{reghomfig},

\begin{enumerate}
\item The path $g_1$ is one of $\beta_{\rm bl}, \beta_{\rm br},\beta_{\rm tl},\beta_{\rm tr}$, with domain $[0,\epsilon]$.  In particular $g_1$   is an embedded path in the top or bottom edge starting at a corner.
\item The path $g_3$ is obtained from one of $\beta_{\rm bl}, \beta_{\rm br},\beta_{\rm tl},\beta_{\rm tr}$
by precomposing with the linear orientation-reversing homeomorphism $[\pi-\ep, \pi]\to[0,\ep]$. In particular $g_3$   is an embedded path in the top or bottom edge ending at a corner.
\item The path $g_2$ has domain $[\epsilon, \pi-\epsilon]$, starts at the endpoint of $g_1$, ends at the initial point of $g_3$, and stays outside the $\epsilon$ neighborhoods of the corners containing the fold image.
\end{enumerate}

\begin{figure}[ht]
\begin{center}
\includegraphics[width=.75\textwidth]{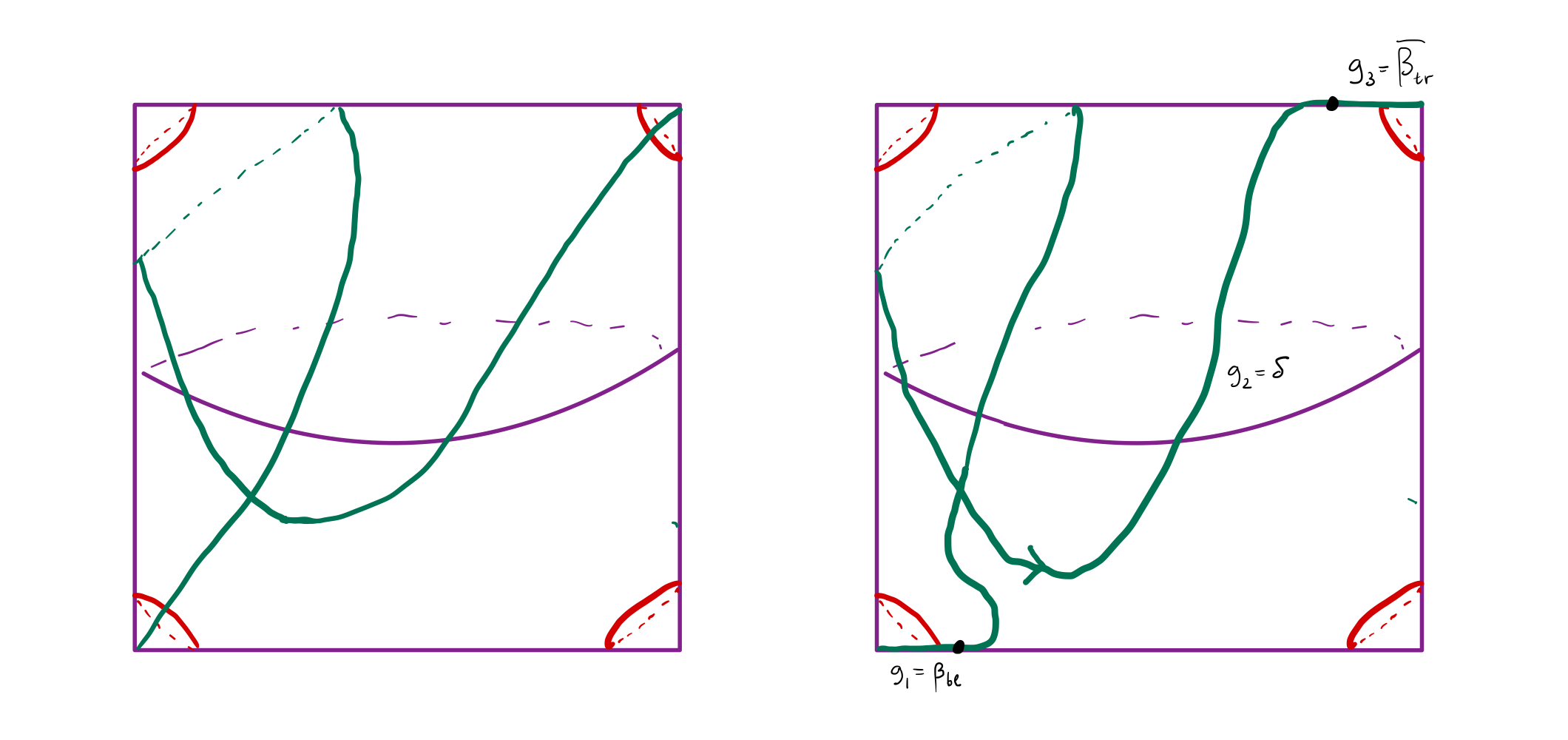}
 \caption{A    regular homotopy  in $\Lag_{\rm int}^{\pitchfork_{u_s}}(P_0^*)$,   starting from a good path and ending at $g=g_1*g_2*g_3=\beta_{\rm bl}*\delta*\overline{\beta_{\rm tr}}$.\label{reghomfig}}
\end{center}
\end{figure}

Let $M(t,u)$ be such a regular homotopy from $g(t)$ to $g'(t)$.   The local form of a fold map and the fact that, away from the fold, $\pi_0:\NAT_s\to P_0$ is a trivial cover over its image, imply that the preimage $L_u\subset \NAT_s$ of the immersed path $M(-,u)$ under $\pi_0:\NAT_s\to P_0$ is a smoothly immersed circle transverse to the fold locus for each $u$.   These preimages can be parameterized to depend smoothly on $u$ to create a regular homotopy $\widetilde{M}:S^1\times[0,1] \to \NAT_s$ from $L_0$ to $L_1$ such that $\widetilde{M}(-,u) $ parameterizes $L_u$. Since $u_s$ is a Lagrangian immersion, Theorem \ref{Cain died, sadly} implies the composite 
$\pi_1\circ \widetilde{M}(-,u) :S^1\to \NAT_s\to P_1$  
is an immersion for each $u\in [0,1]$ and so  $\pi_1\circ \widetilde{M}$ is a regular homotopy from $(u_s)_*(g)$ to  
$(u_s)_*(g')$.   Hence it suffices to prove the theorem for  good curves of the form $g=g_1*g_2*g_3$, with $g_1,g_2,g_3$ satisfying the conditions listed above.

\medskip

\medskip

 To keep the exposition simple, we will discuss the earring case and we assume that 
 \begin{equation*} g=g_1*g_2*g_3=\beta_{\rm bl} * \delta * \overline{\beta_{\rm tr}}.\end{equation*}  An example is illustrated on the right  in Figure \ref{reghomfig}.   Other cases are handled similarly.
 With these choices, the fiber product of $g$ with $u_s$ is an immersed circle in $\NAT_s$ (or the analogous object in the $\NAT'_s$ case), which can be smoothly parameterized as follows.  Recall that $\delta_s ^{\pm}$ are the two lifts of $\delta$ in $\NAT_s^\pm\subset \NAT_s$, so that $(u_s)_*(\delta)=\pi_1\circ\delta_s^+\sqcup \pi_1\circ\delta_s^-$, as in Lemma \ref{lem6.7}.
For $\sigma\in \RR/2\pi\ZZ$ and $|s|<s_1$, set
\begin{equation} \label{fourfold concat2} L(s,\sigma)=
\begin{cases}
\alpha_{{\rm bl},s}(\sigma)& \text{ if } \sigma\in[-\ep, \ep],\\
\delta^-_s(\sigma)& \text{ if } \sigma\in[\ep, \pi-\ep],\\
W_1\circ \alpha_{{\rm bl},s}(\pi-\sigma)& \text{ if } \sigma\in[\pi-\ep, \pi+\ep],\\
\delta^+_s(2\pi-\sigma)&  \text{ if } \sigma\in[\pi+\ep, 2\pi-\ep].
\end{cases}
\end{equation}

The argument is finished by proving that 
the immersed circle of Equation (\ref{fourfold concat2})  
lies in the regular homotopy class $F8([g])$. To further ease the notation, we denote the paths in the concatenation $$L(s,\sigma)=L_1(s,\sigma)*L_2(s,\sigma)*L_3(s,\sigma)*L_4 (s,\sigma).$$

\medskip

 Let $\mu:\RR/2\pi\ZZ\to [0,1] $ be a smooth function which equals 0 on $[-\tfrac\ep 2,\tfrac\ep 2]\cup[\pi-\tfrac\ep 2,\pi+\tfrac\ep 2]$ and equals 1 outside 
 $[- \ep,  \ep ]\cup[\pi- \ep  ,\pi+ \ep ]$.
 Then, for $0<|s|<s_1$, the smooth homotopy 
 $$
H(\sigma, t)= L(s(1-t\mu(\sigma)), \sigma).$$
satisfies $$ H(\sigma,0)=L(s,\sigma)$$   and   
 \begin{equation}\label{H1}H(\sigma, 1)=L_1(s(1-\mu(\sigma)), \sigma)*\delta_0 ^- (\sigma)* L_3(s(1-\mu(\sigma)), \sigma) * \delta^+ _0 (2\pi -\sigma).\end{equation}

Lemma \ref{lem6.7} implies that the restriction of $H(-,t)$ to $[\epsilon, \pi-\epsilon] \cup [\pi+\epsilon, 2\pi -\epsilon]$  is a {\em regular} homotopy (and hence so is its restriction to a slightly larger open set).  
The restriction of $H(-,t)$  to $[-\tfrac \epsilon 2, \tfrac \epsilon 2]\cup [\pi -\tfrac \epsilon 2, \pi+ \tfrac \epsilon 2]$ is a stationary homotopy, hence a regular homotopy because $H(-,0)$ is an immersion.  By Theorem \ref{Cain died, sadly}, $\pi_1\circ H(-,t)$ is a regular homotopy on this portion of the domain.

On the transition regions $\sigma\in [-\epsilon, -\tfrac \epsilon 2] \cup [\tfrac \epsilon 2, \epsilon]$, we take advantage of the explicit description of $\alpha_{{\rm bl},s}$ in Equation (\ref{alphas}) to observe that 
\begin{equation}\label{so close}
\pi_1\circ H(\sigma,t)=\pi_1 \circ \alpha_{{\rm bl}, s(1-t\mu(\sigma))} (\sigma) = \Psi([\sigma, -2s(1-t\mu(\sigma))\cos( \sigma+\eta(s,\sigma))])\end{equation} is a regular homotopy, because $\Psi$ is a diffeomorphism and the first pillowcase coordinate is $\sigma$, it is clear that this is an immersion for each $t$ so this is a regular homotopy.
On the other transition regions $\sigma\in [\pi-\epsilon, \pi-\tfrac \epsilon 2] \cup [\pi+\tfrac \epsilon 2, \pi+\epsilon]$, 
\begin{eqnarray*} 
\pi_1\circ H(\sigma,t)&=&\pi_1 \circ W_1 \circ \alpha_{{\rm bl},(1-t\mu(\sigma))}(\pi-\sigma)\\
&=& \Psi\circ \Theta \circ \pi_0\circ U_{s(1-t\mu(\sigma))} \circ W_1 \circ \alpha_{{\rm bl},s(1-t\mu(\sigma))}(\pi-\sigma)\\
&=& \Psi\circ \widehat{W}_1\circ  \Theta \circ  \pi_0\circ U_{s(1-t\mu(\sigma))}  \circ \alpha_{{\rm bl},(1-t\mu(\sigma))}(\pi-\sigma)\\
&=& \left( \Psi\circ \widehat{W}_1\right) ( [  \pi -\sigma, -2s(1-t\mu(\sigma))\cos( \pi-\sigma+\eta(s,\pi-\sigma)) ]) 
\end{eqnarray*}  As above, this is a regular homotopy since the first pillowcase coordinate is $\pi- \sigma$.

\medskip

It remains to prove that  the smooth immersion $\pi_1\circ H(\sigma,1)$ lies in the regular homotopy class $\Psi (F8([g]))$, or equivalently, that $\Psi^{-1}\circ \pi_1\circ H(\sigma,1)$ lies in the regular homotopy class $F8([g])$.    Note first that  by Lemma \ref{lem6.7}, $\Psi^{-1}\circ\pi_1\circ H(\sigma,1)$ sends the second and fourth intervals to $ \delta$ and $\overline{\delta}.$

Equation (\ref{so close}) shows that $\Psi^{-1}\circ \pi_1\circ L_1(s(1-\mu(\sigma)), \sigma)$ is  a path in   $P_0^*$   of  
 the form \begin{equation}\label{soon1}[-\ep,\ep]\ni\sigma\mapsto [\sigma, -2s(1-\mu(\sigma))\zeta_1(\sigma,s)]\end{equation} with $\zeta_1(\sigma,s)$ strictly positive. 
  Therefore, the image of $\Psi^{-1}\circ \pi_1\circ H(-,1)$ on a neighborhood of $[-\ep,\ep]$ is teardrop shaped; it travels left along the bottom edge, then 
 wraps around the bottom left corner and returns to the bottom edge, traveling  right. This is illustrated   in Figure \ref{fig11fig}.

Similarly, 
$\Psi^{-1}\circ  \pi_1\circ L_3(s(1-\mu(\sigma)), \sigma)$ is  obtained by applying the orientation-preserving $\widehat{W}_1$ to a path in $P_0^*$ of  
 the form \begin{equation}\label{soon2}[\pi-\ep,\pi+\ep]\ni\sigma\mapsto [\pi-\sigma,-2s(1-\mu(\sigma))\zeta_2(\sigma,s)]\end{equation} with $\zeta_2(\sigma,s)$ strictly positive.  In this case the teardrop wraps around the top right corner in the opposite direction, since the first coordinates in Equations (\ref{soon1}) and (\ref{soon2}) move in opposite directions.

Using Equation (\ref{H1}), we conclude that, as $\sigma$ increases on  $[-\ep ,2\pi-\ep]$, $\Psi^{-1}\circ \pi_1\circ H(\sigma,1)$ is a path in $P_0^*$ which moves left along  the bottom edge, then winds monotonically around the bottom left corner (the initial point of $g$) and returns to bottom edge forming a teardrop shape.  It then follows $  \delta$ to the top edge. Next, it wraps monotonically around the top right corner (the terminal point of $g$), again in a teardrop shape, in the opposite direction  from the wrapping around the bottom left corner.  Finally the path returns to its starting point, following $  \delta$ backwards.   Such a path is clearly regularly homotopic to $F8([g])$. 
 \end{proof}
\begin{figure}[ht]
\begin{center}
\includegraphics[width=.65\textwidth]{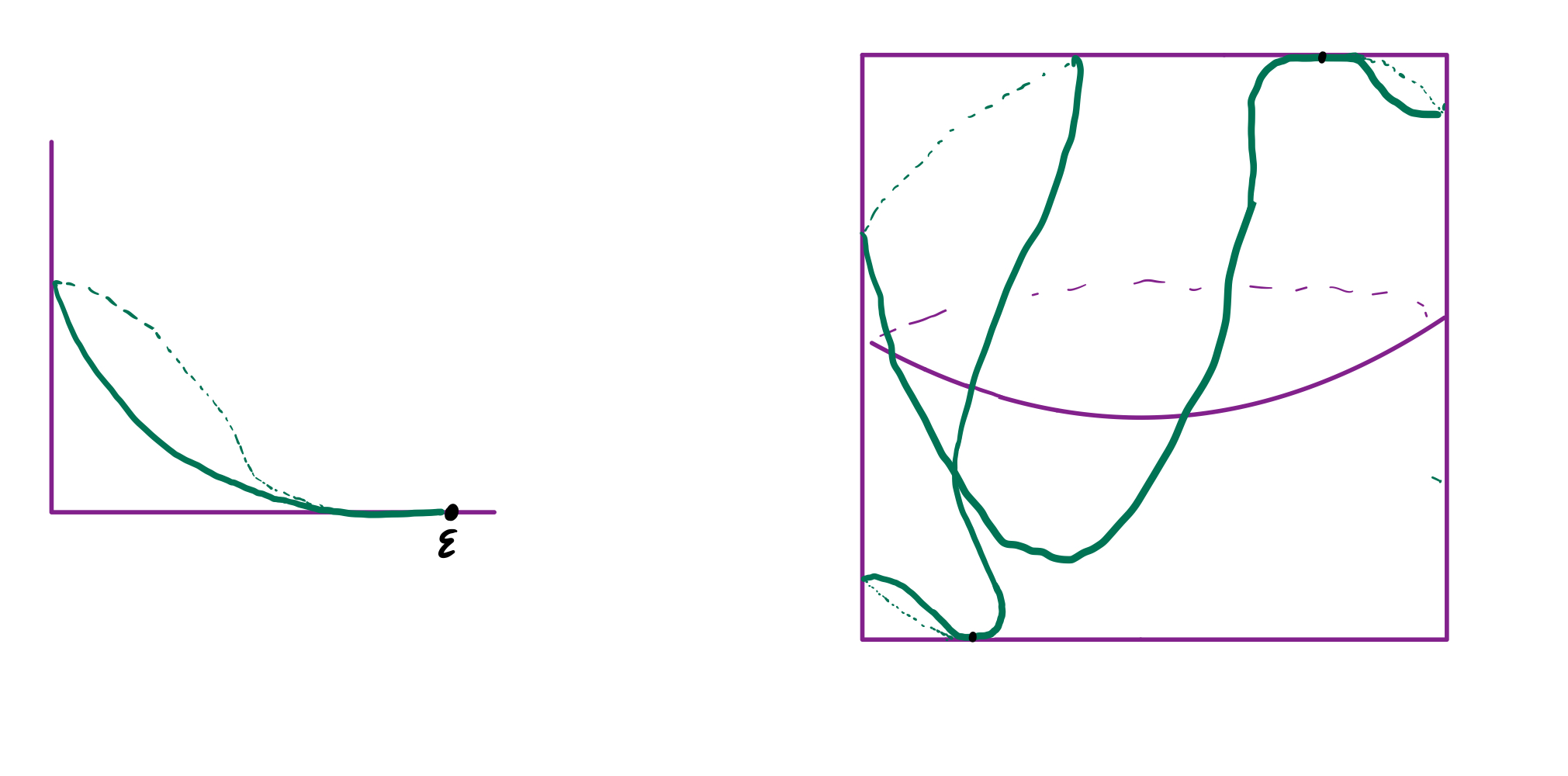}
 \caption{The image of $\Psi^{-1}\circ \pi_1\circ H(1,\sigma), ~\sigma\in\RR/2\pi\ZZ$ on the right, for the good arc  illustrated on the right in Figure \ref{reghomfig} . A close-up of the teardrop $\Psi^{-1}\circ\pi_1\circ H(1,\sigma)=\Theta\circ \pi_0\circ U_{s(1-\mu(\sigma)) } \circ  \alpha_{{\rm bl},{s(1-\mu(\sigma)) } }, \ \sigma\in[-\ep,\ep] $  on the left.  \label{fig11fig}}
\end{center}
\end{figure}

\section{Illustration by example}\label{IBE}
Following Floer, Kronheimer-Mrowka  construct the  {\em reduced singular instanton homology} $I^\nat(K)$ of a knot $K$ as the Morse homology of an appropriate  Chern-Simons function on a space  of  connections which are singular along $K$ with prescribed asymptotic  
holonomy.  Their construction begins by   placing a strong marking, e.g. an earring or bypass, at a base point on $K$.
The Atiyah-Floer conjecture  proposes that $I^\nat(K)$ should equal the appropriate Lagrangian-Floer homology of flat moduli spaces associated to  a decomposition of $(S^3,K)$ along a disjoint union of punctured surfaces. 
We explore this statement,   by assigning a generalized Lagrangian of length three to a   decomposition of $(S^3,K)$ along   the boundary of a tubular neighborhood of a Conway sphere passing through the marking. There are  two ways to obtain a pair of Lagrangians in a pillowcase using Weinstein composition.  We compare the two pairs of Lagrangians obtained by composing along in these two different ways.

In \cite{FKP, HHK2}  a decomposition of the $p,q$ torus knot $K_{p,q}$ along a Conway sphere is constructed
$$(S^3, K_{p,q})=(B^3, W_{p,q})\cup_{(S^2,4)}(B^3,V),$$
where $(B^3,V)$ denotes the trivial  two-stranded tangle  in a  three-ball and $(B^3,W_{p,q})$ denotes a more complicated  two-tangle in the  three-ball.  The traceless character varieties of $(B^3,V)$ and $(B^3, W_{p,q})$ and their image in the pillowcase $P=R(\partial(B^3,V))=R(S^2,4)$ associated to the Conway sphere are identified for many pairs $p,q$.

The traceless character variety of $(B^3,V)$ is an arc, parameterized by the character associated to the product of the two meridians of   $V$.   Denote this arc by $A_1$.   The  restriction to its boundary pillowcase 
\begin{equation}\label{r0}u_V:A_1\to P\end{equation}
maps $A_1$ linearly with slope one, joining the bottom left  and the top right corners. Thus, $A_1$ is a good arc, and so  $ u_V \in \Lag_{\rm int}(P^*).$   This is illustrated on the left in Figure \ref{fig9fig}.

\medskip

The traceless character variety of $(B^3, W_{3,7})$ is computed in \cite{FKP} to be a disjoint union of an arc and two circles.  The restriction map to its boundary pillowcase   takes the arc, which we call $A_2$, to an embedded arc of slope two limiting to the two bottom corners. It takes each of the two circles to an immersed circle regularly homotopic to $\widetilde D(B_{\rm ver})$, where $B_{\rm ver}$ denotes the  embedded vertical circle  $\{[\tfrac\pi 2,\theta]~|~ \theta\in \RR/2\pi\}$, and $\widetilde D(B_{\rm ver})$ denotes the twisted double of $B_{\rm ver}$ (see the paragraph after Definition \ref{double}).  Hence \begin{equation}\label{r1}u_{W_{3,7}}:A_2\sqcup D(\widetilde D (B_{\rm ver})) \to P\end{equation}
This is illustrated on
 the right in Figure \ref{fig9fig} (this is equivalent to Figure 7 of \cite{FKP}).  

\begin{figure}[ht]
\begin{center}
\includegraphics[width=.5\textwidth]{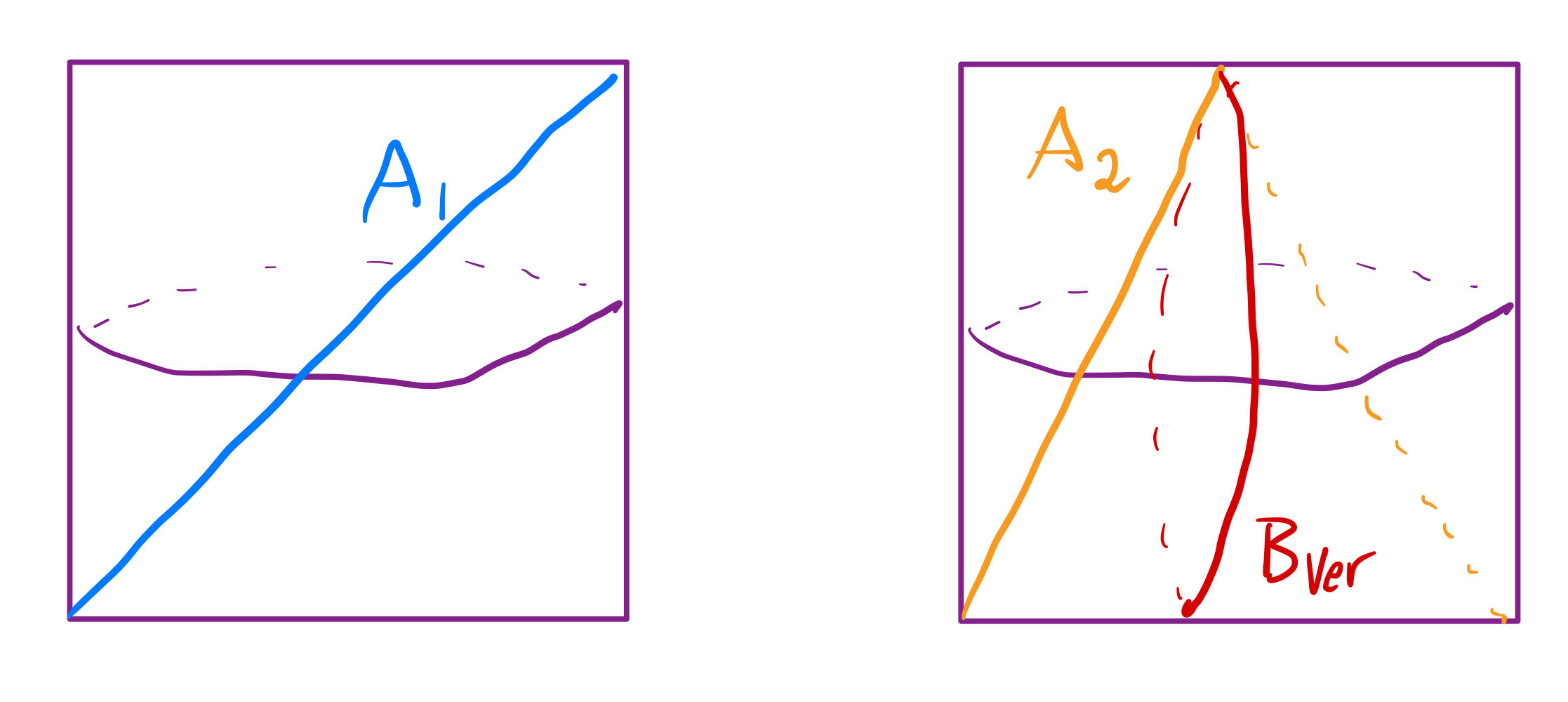}
 \caption{ The traceless character varieties of two tangles that decompose the torus knot $K_{3,7}$.  The embedded arc $A_1=R(B^3,V)$ in blue on the left. The embedded arc $A_2$ in orange and immersed  circle $B_{\rm ver}$ in red on the right,  with $R(B^3,{W_{3,7}})=A_2\sqcup D(\widetilde{D}(B_{\rm ver}))$. \label{fig9fig}}
\end{center}
\end{figure}

Placing the bypass or earring strong marking at a point where the knot intersects the  Conway sphere corresponds to replacing a neighborhood of this  two-sphere  with the bypass or earring tangle of Figure \ref{atomfig2}. 
This decomposition  of the marked knot $(S^3,K_{3,7})$ into three components,
$$(B^3,V),~\text{ the earring (or bypass) tangle, }~(B^3,{W_{3,7}}),$$ produces an immersed  generalized Lagrangian of length three  \cite{WW1}
\begin{equation}\label{2comp}\big(
u_V:A_1\to P_0^*, ~
u_s:\NAT_s\to P_0^*\times P_1^*,~u_{W_{3,7}}:A_2\sqcup D(\widetilde D(B_{\rm ver}))\to P_1^*
\big).
\end{equation}

 Theorem \ref{thm7.1} 
 describes the result of Weinstein composition corresponding to gluing along either   Conway sphere.
These two gluings 
give rise to two  pairs  of Lagrangians in the  two-dimensional pillowcases $P_0^*$ or $P_1^*$, 
 namely (with the proper interpretation of the notation):
 $$((u_s)_*(u_V), u_{W_{3,7}}) \text{ in } P_1^*$$
 and
 $$(u_V, (u_s)^*(u_{W_{3,7}})) \text{ in } P_0^*.$$

The conclusion of Theorem \ref{thm7.1} for the two compositions is
 illustrated  in Figure \ref{fig10fig}.   In either case,   the resulting pair of Lagrangians   have Lagrangian-Floer homology of rank nine (in the figure these have nine transverse intersections and there are no bigons.) These  are isomorphic to $I^\nat(K_{3,7})$.
\begin{figure}[ht]
\begin{center}
\includegraphics[width=\textwidth]{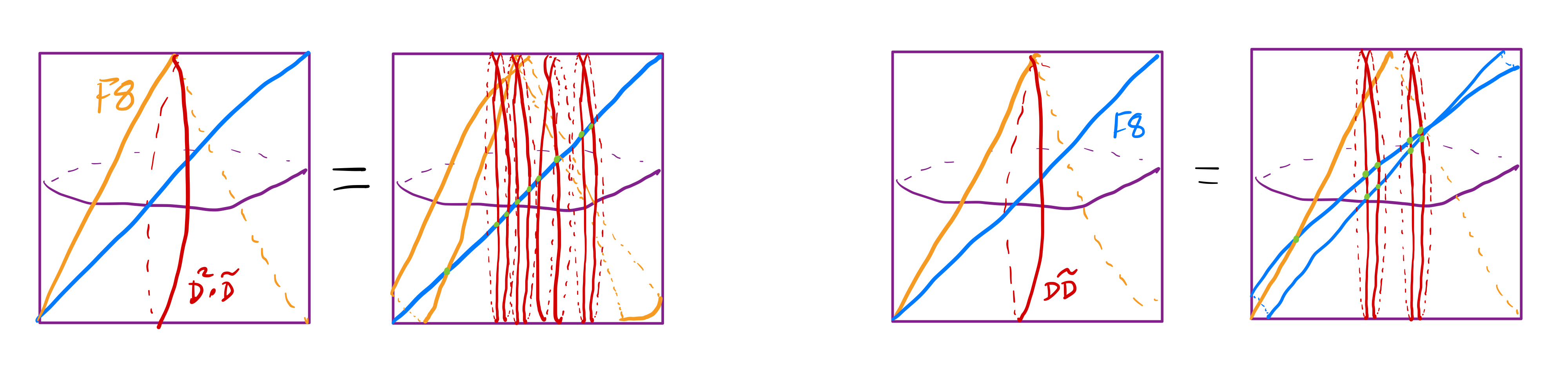}
 \caption{ The two compositions of the generalized Lagrangian of length three of Equation (\ref{2comp}). On the left the pair $(u_V, (u_s)^*(u_{W_{3,7}}))$ in $P_0$, on the right the pair $((u_s)_*(u_V),u_{W_{3,7}})$ in $P_1$ (each is presented in simplified form and in detail).
\label{fig10fig}}
\end{center}
\end{figure}

 \section{Some questions and problems}
 
We close this article with a discussion of some open questions and problems.

\subsection{} Is the Fukaya $\mathcal{A}_\infty$ algebra of  the immersion $u_s$ flat  or curved \cite{fukaya}? Is it unobstructed and, if so, what are the possible bounding cochains?
These questions concern  
properties of compactifications of the moduli spaces of $J$-holomorphic polygons in $P_0^*\times P_1^*$ with consecutive boundary segments in self-transverse push offs of $u_{s}(\NAT_{s})$.

\subsection{} Find is a suitable definition of {\em good Lagrangian immersions} into a product of pillowcases, $g:L\to P^{*,-}\times P_0 ^*$, so that the composition map 
\begin{equation} \label{enhancedB} \lim_{s\to 0} (u_s)_*: [\Lag(P ^{*,-}\times P_0 ^*)]\to [\Lag(P ^{*,-}\times P_1 ^*)]\end{equation} 
is well-defined and identify this endomorphism.  This definition should include, besides compact immersed surfaces,  some proper immersions of punctured surfaces with ends limiting to $\{\mbox{corners}\}\times \{\mbox{corners}\}$.  We expect that the top stratum of the perturbed traceless character varieties of any tangle in a homology cylinder with two Conway sphere boundary components to have this kind of structure, by arguments similar to \cite{Heraldthesis, HK, CHK}.  
Identifying the map in Equation (\ref{enhancedB}), i.e., generalizing  Theorem \ref{thm7.1} to this context, would describe the effect of  adding a marking to one Conway sphere of such a tangle. Note that when $g:L\to P^{*,-}\times P_0 ^*$ has compact domain and $s$ is small and non-zero,  $(u_s)_*(g)$ simply doubles $g$.  

Both the question about Weinstein composition above and the application to adding markings to tangles extend to more pillowcase factors and tangles with more Conway sphere boundary components, as those considered by \cite{KaiSmith}. 

 \bigskip

\newcommand*{\arxivPreprint}[1]{ArXiv preprint \href{http://arxiv.org/abs/#1}{#1}}
\newcommand*{\arxiv}[1]{ArXiv:\ \href{http://arxiv.org/abs/#1}{#1}}


\end{document}